\newtheorem{thm}{Theorem}[section]
\newtheorem{defn}[thm]{Definition}
\newtheorem{lem}[thm]{Lemma}
\newtheorem{pro}[thm]{Proposition}
\newtheorem{cor}[thm]{Corollary}
\newtheorem{rmk}[thm]{Remark}
\numberwithin{equation}{section}
\newcommand{\Hom}{\mathrm{Hom}}
\newcommand{\PHom}{\mathrm{PHom}}
\newcommand{\IHom}{\mathrm{IHom}}
\newcommand{\rank}{\mathrm{rank}}
\newcommand{\s}{\mathcal{S}}
\DeclareMathOperator{\Rep}{Rep}
\DeclareMathOperator{\rep}{rep}
\DeclareMathOperator{\Img}{Im}
\DeclareMathOperator{\Ext}{Ext}
\DeclareMathOperator{\Coker}{coker}
\DeclareMathOperator{\add}{add}
\DeclareMathOperator{\red}{red}
\DeclareMathOperator{\In}{in}
\DeclareMathOperator{\out}{out}
\DeclareMathOperator{\E}{E}
\DeclareMathOperator{\e}{e}
\begin{document}
\title{$H$-BASED QUIVERS WITH POTENTIALS AND THEIR REPRESENTATIONS }
\author{XIAOYUE LIN }
\address{School of Mathematical Sciences, Shanghai Jiao Tong University, China}
\email{xiao\_yue\_lin@sjtu.edu.cn}

\subjclass[2020]{Primary 13F60; Secondary  16G10, 16G20}
\thanks{The author was supported in part by National Natural Science Foundation of China (No. 12131015 and No. 12571038).}
\keywords{Quivers with Potentials, Mutation, Generalized Cluster Algebra, $g$-vector, $F$-polynomial}
\date{}
\dedicatory{}

\begin{abstract}
We generalize Derksen-Weyman-Zelevinsky's theory of quivers with potentials (QPs) \cite{derksen2008quivers} to an $H$-based setting by considering quivers with exactly one loop at each vertex, asking the loops to be nilpotent and so attaching a truncated polynomial ring $H_i$ to each vertex.  The algebra is then defined by taking the quotient of the complete path algebra by relations arising from analogs of the Jacobian ideals of a given potential. We develop the mutation theory for such $H$-based QPs and their decorated representations in general position.  As an application, we consider generalized cluster algebras introduced by Chekhov-Shapiro \cite{Chekhov2013}. For those algebras corresponding to $H$-based quivers $(Q,\mathbf{d})$ that have mutation degree $d_k\leq 2$ at each vertex $k$ and admit nondegenerate potentials $\s$ making $(Q,\mathbf{d},\mathcal{S})$ locally free, we provide a representation-theoretic interpretation of $\mathbf{g}$-vectors and $F$-polynomials. When the exchange matrix $B(Q)$ has full rank, we further construct  generic character for upper generalized cluster algebras.
\end{abstract}
\maketitle\setcounter{tocdepth}{2}
\tableofcontents
\setlength{\parindent}{2em}
 \section{Introduction}
    The study of quivers with potentials (QPs) was systematically developed by Derksen, Weyman, and Zelevinsky in their foundational work \cite{derksen2008quivers,derksen2010quivers}. They introduced the notions of quivers with potential and their decorated representations as a framework to categorify Fomin and Zelevinsky's \cite{fomin2001,fomin2006cluster} cluster algebras. DWZ's construction focused on the skew-symmetric case of cluster algebras, which is encoded by quivers without loops and oriented 2-cycles. 
    
    In this paper, we introduce and study $H$-based quivers with potentials ($H$-based QPs for short), a generalization of the classical QPs, that incorporates controlled loop structures. Specifically, an $H$-based QP is a triple $(Q,\mathbf{d},\s)$, where $Q$ is a quiver with exactly one loop $\varepsilon_i$ at each vertex $i$ and no oriented 2-cycles,  $\mathbf{d}=(d_i)$ is an $|Q_0|$-tuple of positive integers, and $\s$ is a potential containing no loop-only cycles. Each loop $\varepsilon_i$ is nilpotent of degree $d_i$. In this setup, each vertex $i$ has an associated truncated polynomial ring $H_i=K[\varepsilon_i]/\langle\varepsilon_i^{d_i}\rangle$. We associate to $\mathbf{d}$ the algebra $H=\bigoplus_{i\in Q_0}H_i$, called the \textbf{base} of $(Q,\mathbf{d})$. For a potential $\s$, the Jacobian ideal $J(\s)$ is generated by the partial derivatives of $\s$ with respect to the non-loop arrows of $Q$ (unlike classical QPs, where derivatives are taken with respect to all arrows). The Jacobian algebra $\mathcal{P}(Q,\mathbf{d},\s)$ is an $H$-algebra and is determined by non-loop partial derivatives.

The original motivation for our study comes  from the theory of generalized cluster algebras  introduced by Chekhov and
Shapiro \cite{Chekhov2013}. In contrast to cluster algebras where cluster variables satisfy binomial exchange relations, generalized cluster algebras are characterized by reciprocal polynomial exchange relations. This generalized structure appears naturally in several important contexts including  the Teichm{\"u}ller spaces of Riemann surfaces with holes and orbifold points, WKB analysis \cite{Iwaki2015}, representations of quantum affine algebras \cite{gleitz2014}, the Drinfeld double of
$\mathrm{GL}_n$ \cite{Gekhtman2017}.

Remarkably, generalized cluster algebras preserve several fundamental properties of cluster algebras, including
the Laurent phenomenon, finite type classification \cite{Chekhov2013}, tropical dualities phenomenon between $C$-matrices
and $G$-matrices \cite{Nakanishi_2015}, the existence of greedy bases in rank 2 cases \cite{rupel2013},  as well as various combinatorial results \cite{cao2019,cheung2022,Mou2024}.

In this paper, we focus on the skew-symmetric case of generalized cluster algebras. In this case,  the exchange matrix is  a skew-symmetric integer $n\times n$ matrix $B=(b_{i,j})$ and the mutation degree is an $n$-tuple of positive integers $\mathbf{d}=(d_i)$. They correspond to a quiver $Q(B,\mathbf{d})$ with vertices $1,\ldots,n$, where for each pair $(i,j)$ with $b_{i,j}>0$, there are $b_{i,j}$ arrows from $j$ to $i$, and each vertex $i$ has exactly one nilpotent loop  $\varepsilon_i$ of degree $d_i$. For every vertex $k$, the mutation at $k$ transforms $B$ into another skew-symmetric integer $n\times n$ matrix $\mu_k(B)=\overline{B}=(\overline{b}_{i,j})$, whose entries $\overline{b}_{i,j}$ are computed by equation (\ref{muation of B(A)}). The corresponding quiver $Q(\overline{B},\mathbf{d})$ can be obtained from $Q(B,\mathbf{d})$ through the following three-step procedure (see Proposition \ref{mutation of Q}):

\begin{enumerate}
    \item[Step 1.] For each pair $i \xrightarrow{a} k \xrightarrow{b} j$, create $d_k$ \enquote{composite} arrows $[b \varepsilon_k^l a] \colon i \to j$ for $0 \leq l \leq d_k - 1$; thus, whenever $b_{j,k} b_{k,i} > 0$, we create $d_k b_{j,k} b_{k,i}$ new arrows from $i$ to $j$.
    
    \item[Step 2.] Reverse the direction of all arrows incident to $k$; that is, replace each arrow $a \colon i \to k$ with $a^{\star} \colon k \to i$, and each arrow $b \colon j \to k$ with $b^{\star} \colon k \to j$.
    
    \item[Step 3.] Remove a maximal disjoint collection of oriented $2$-cycles (which may appear after creating new arrows in Step 1).
\end{enumerate}

The main results of this paper extend the work of Derksen, Weyman, and Zelevinsky by constructing the mutations of $H$-based QPs and their (decorated) representations. Crucially, just as in the classical case, we develop the Splitting Theorem  for $H$-based QPs.
\begin{thm}[Theorem \ref{splitting thm}]\label{splitting thm in introduction}
 For every $H$-based QP $(H,A,\mathcal{S})$ with the trivial 
 arrow span $A_{\text{triv }}$ and the reduced  arrow span $A_{\text{red }}$, there exist a trivial $H$-based QP $\left(H,A_{\text{triv}}, \mathcal{S}_{\text{triv}}\right)$ and a reduced $H$-based QP $\left(H,A_{\text{red}}, \mathcal{S}_{\text{red}}\right)$ such that $(H,A,\mathcal{S})$ is right-equivalent to the direct sum
 \begin{align*}
    \left(H, A_{\text{triv}}, S_{\text{triv}}\right) \oplus\left(H, A_{\text{red}}, S_{\text {red}}\right).
\end{align*}
 Furthermore, the right-equivalence class of each of the $H$-based QPs $\left(H,A_{\text{triv}}, \s_{\text{triv}}\right)$ and $\left(H,A_{\text{red}}, \s_{\text{red}}\right)$ is determined by the right-equivalence class of $(H,A,\s)$.
\end{thm}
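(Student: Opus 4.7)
The plan is to adapt Derksen--Weyman--Zelevinsky's proof \cite{derksen2008quivers} to the $H$-based setting, treating loops via the $H$-action rather than as independent arrows. First I would write the potential as $\s = \s^{(2)} + \s^{(\geq 3)}$, where $\s^{(2)}$ collects cyclic classes of paths containing exactly two non-loop arrows (possibly decorated by powers of loops), and $\s^{(\geq 3)}$ collects cyclic classes of paths with three or more non-loop arrows. Since Jacobian derivatives are taken only with respect to non-loop arrows, the trivial summand $A_{\text{triv}}$ is characterized as the summand on which the non-loop derivatives of $\s^{(2)}$ pair nondegenerately, while $A_{\text{red}}$ is the $H$-bimodule complement. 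The defining hypothesis that $\s$ contains no loop-only cycles guarantees that these derivatives live in the arrow ideal rather than in $H$ itself, so the decomposition is well posed.

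Next, I would construct a convergent sequence of $H$-algebra automorphisms $\varphi_n$ of the complete path algebra that successively eliminate the coupling between $A_{\text{triv}}$ and $A_{\text{red}}$ in the potential. At the $n$-th stage, I use the non-loop partial derivatives of $\s^{(2)}$ along $A_{\text{triv}}$ to solve for the $A_{\text{triv}}$-part of an error term in terms of paths of strictly greater non-loop length, and define $\varphi_n$ as a unitriangular substitution $a \mapsto a + u_a$ for $a$ running over a basis of $A_{\text{triv}}$, with correction $u_a$ lying deeper in the filtration by non-loop arrow count. Convergence is with respect to the $\mathfrak{m}$-adic topology, and the composite $\varphi = \lim_n \varphi_n \circ \cdots \circ \varphi_1$ transports $\s$ to a sum $\s_{\text{triv}} + \s_{\text{red}}$ supported on the two summands, providing the desired right-equivalence to the direct sum.

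Uniqueness of the decomposition up to right-equivalence follows from a standard functoriality argument: given any right-equivalence between two $H$-based QPs that both admit such splittings, composing with the splitting isomorphisms and rerunning the same iterative procedure produces compatible right-equivalences on the trivial and reduced components separately.

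The main obstacle, distinct from the classical case, is arranging convergence in the presence of loops. In the DWZ framework, convergence is controlled by the path-length filtration and each $\varphi_n$ is unitriangular with respect to it. Here, path length is not the right filtration, because inserting a loop $\varepsilon_i$ between two arrows leaves the essential combinatorial structure of a path unchanged; the correct filtration must count only non-loop arrows. Verifying that this non-loop filtration is compatible with the non-loop Jacobian ideal, and that each correction $u_a$ lies strictly deeper in it, requires combining the exclusion of loop-only cycles from $\s$ with the nilpotency relations $\varepsilon_i^{d_i} = 0$. Together these ensure that the non-loop Jacobian ideal of $\s^{(2)}$ is large enough on $A_{\text{triv}}$ to solve for the required corrections at every step, which is the technical heart of the argument.
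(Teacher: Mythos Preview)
Your approach is essentially the paper's: adapt the DWZ iterative-automorphism argument, using the filtration by non-loop arrow count for convergence. Two points deserve sharpening.

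First, the convergence concern you flag as ``the technical heart'' is actually resolved by a one-line observation the paper makes early on (Lemma~\ref{two topologies concides}): since $H^\circ$ is nilpotent, for every $k$ there is $n$ with $\mathfrak{m}^n \subseteq (\mathfrak{m}^\circ)^k$, so the $\mathfrak{m}$-adic and $\mathfrak{m}^\circ$-adic topologies on $\overline{T_H(A)}$ coincide. Once this is noted, DWZ's convergence argument (which is phrased in the $\mathfrak{m}$-adic topology) transfers verbatim; there is no separate compatibility check to perform. Relatedly, the paper takes $\s^{\circ(2)}$ to be the component of $\s$ lying in $(A^\circ)^2$ (no loop decorations at all), not all cyclic paths with two non-loop arrows; the loop-decorated degree-two terms are absorbed into the $u_k,v_k$ corrections in the normal form $\s=\sum_k(a_kb_k+a_ku_k+v_kb_k)+\s'$.

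Second, your uniqueness sketch (``rerunning the same iterative procedure'') is too loose to be a proof. The paper's argument is more specific: it shows (Proposition~\ref{reduce in split thm}) that if $(H,A\oplus C,\s+\mathcal{T})$ and $(H,A\oplus C,\s'+\mathcal{T})$ are right-equivalent with $(H,C,\mathcal{T})$ trivial, then $(H,A,\s)$ and $(H,A,\s')$ are already right-equivalent. This relies on an auxiliary result (Proposition~\ref{lem in reduce in split thm}) that $\s'-\s\in J(\s)^2$ forces right-equivalence, combined with showing that projecting a given right-equivalence $\varphi$ of $\overline{T_H(A\oplus C)}$ onto $\overline{T_H(A)}$ yields an automorphism $\psi$ with $\s'-\psi(\s)$ cyclically equivalent to an element of $\psi(J(\s))^2$. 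Your functoriality remark does not supply this step.
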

While the first two steps of the DWZ's mutation procedure extend straightforwardly to $H$-based QPs, Theorem \ref{splitting thm in introduction} ensures the  validity of Step 3, that is, the removal of oriented 2-cycles can be accompanied by a modification of the potential that preserves the Jacobian algebra. Furthermore, to ensure that arbitrary sequences of mutations can be applied without creating oriented 2-cycles, we prove in Theorem \ref{eixstence of nondenerate GQP}, as in the classical QPs, that there exists a nondegenerate $H$-based QP for every underlying quiver. 

To establish a framework for mutations of representations, we require a specific class of $H$-based QPs, namely the locally free $H$-based QPs, a notion inspired by the work of Geiss–Leclerc–Schröer \cite{Geiss_2016}. We adopt the following definition:
\begin{defn}[Definition \ref{defn of locally free}]
    For a given finite dimensional $H$-based QP $(H,A,\s)$, we say $(H,A,\s)$ (or the Jacobian algebra $\mathcal{P}(H,A,\s)$) is \textbf{locally free} if $e_k\mathcal{P}(H,A,\s)$ and $\mathcal{P}(H,A,\s)e_k$ are free as left and right $H_k$-modules, respectively. 
\end{defn}

Several mutation invariants for $H$-based QPs are established.
\begin{cor}[Corollary \ref{sum of mutation invariants}, Proposition \ref{locally free is mutation invariants}]
    Suppose $(H,A,\s)$ is a reduced $H$-based QP satisfying (\ref{condition on mutation of GQP 1}), and let $(H,\overline{A}, \overline{\s})=$ $\mu_{k}(H,A,\s)$ be a reduced $H$-based QP obtained from $(H,A,\s)$ by the mutation at $k$. Then the algebras $\mathcal{P}(H,A,\s)_{\hat{k}, \hat{k}}$ and $\mathcal{P}(H,\overline{A}, \overline{\s})_{\hat{k}, \hat{k}}$ are isomorphic to each other, and $\mathcal{P}(H,A, \s)$ is finite-dimensional (resp. locally free) if and only if so is $\mathcal{P}(H,\overline{A}, \overline{\s})$.
\end{cor}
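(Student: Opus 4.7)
My plan is to reduce the corollary to an analysis of the pre-mutation, then handle the two finiteness conditions by exploiting the local structure at the vertex $k$.

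\textbf{Step 1: Reduce to the pre-mutation.} Following the standard DWZ pattern, I would factor the mutation as $\mu_k = \mu_k^{\mathrm{red}} \circ \tilde{\mu}_k$, where $\tilde{\mu}_k$ is the pre-mutation (Steps 1 and 2, creating composite arrows $[b\varepsilon_k^l a]$ and reversing the arrows at $k$, together with the corresponding modification of the potential) and $\mu_k^{\mathrm{red}}$ is the reduced part produced by the Splitting Theorem (Theorem \ref{splitting thm in introduction}). Since the Splitting Theorem asserts a right-equivalence $\tilde{\mu}_k(H,A,\s)\sim (H,\overline{A},\overline{\s})\oplus (H,A_{\mathrm{triv}},\s_{\mathrm{triv}})$ and a trivial $H$-based QP has Jacobian algebra isomorphic to $H$, the Jacobian algebras $\mathcal{P}(H,\tilde{A},\tilde{\s})$ and $\mathcal{P}(H,\overline{A},\overline{\s})$ are isomorphic as $H$-algebras. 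Hence it suffices to compare $\mathcal{P}(H,A,\s)$ and $\mathcal{P}(H,\tilde{A},\tilde{\s})$.

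\textbf{Step 2: Isomorphism of the $\hat{k},\hat{k}$ parts.} I would construct an explicit $H_{\hat{k}}$-algebra homomorphism between the complete path algebras, fixing every arrow not incident to $k$ and sending each composite path $b\varepsilon_k^l a$ through $k$ (with $0\le l\le d_k-1$) to the composite arrow $[b\varepsilon_k^l a]$ in $\tilde{A}$. Since the non-loop cyclic derivatives at arrows of $\tilde{Q}$ not incident to $k$ correspond precisely to the non-loop derivatives at arrows of $Q$ not incident to $k$ (once we substitute $b\varepsilon_k^l a \leftrightarrow [b\varepsilon_k^l a]$), this homomorphism descends to a well-defined map modulo the respective Jacobian ideals when restricted to $(1-e_k)\cdot(\cdot)\cdot(1-e_k)$. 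The derivatives at the arrows incident to $k$ in $\s$ are accounted for separately via the compatibility between $\partial_a \s$ and the relations $\partial_{[b\varepsilon_k^l a]}\tilde{\s}$. The resulting map on the $\hat{k},\hat{k}$ block is the desired isomorphism.

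\textbf{Step 3: Finite dimensionality.} I would decompose $\mathcal{P}(H,A,\s)$ as a vector space into four blocks according to the idempotents $e_k$ and $1-e_k$. The block $\mathcal{P}_{\hat{k},\hat{k}}$ has been handled in Step 2. For the remaining three blocks, arrows and inverse arrows incident to $k$ together with the finite-dimensional ring $H_k$ control their dimensions: every element of $e_k\mathcal{P}(1-e_k)$ is an $H_k$-linear combination of paths starting at $k$ with the initial loop factor $\varepsilon_k^l a^*$ (or in the original, $b^*$ pointing into $k$), followed by a path of $\mathcal{P}_{\hat{k},\hat{k}}$. Symmetric reasoning handles $(1-e_k)\mathcal{P}e_k$ and $e_k\mathcal{P}e_k$. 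Since the number of arrows at $k$ and the degree $d_k$ are preserved under mutation, finite-dimensionality of $\mathcal{P}(H,A,\s)$ is equivalent to finite-dimensionality of $\mathcal{P}_{\hat{k},\hat{k}}$, which is a mutation invariant by Step 2.

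\textbf{Step 4: Local freeness.} For local freeness I would combine Step 2 with an $H_k$-module analysis at the vertex $k$. The left $H_k$-module $e_k\mathcal{P}(H,A,\s)$ splits as $H_k\oplus \bigoplus_a H_k\cdot a^*\cdot \mathcal{P}_{\hat{k},\ast}$, and similarly in the mutated algebra, the arrows $a^*$ coming out of $k$ are, up to the reduction step, in bijection with the arrows into $k$ in $Q$. Using the correspondence from Step 2 and the fact that the reduction step removes oriented 2-cycles in matching pairs (which does not disturb the $H_k$-module structure since trivial summands have Jacobian algebra $H_k$ itself), freeness of $e_k\mathcal{P}(H,A,\s)$ over $H_k$ transfers to $e_k\mathcal{P}(H,\overline{A},\overline{\s})$. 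The right $H_k$-module case is symmetric.

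The main obstacle I expect is Step 4: controlling the behavior of the $H_k$-module structure under the reduction step, where removing 2-cycles at $k$ could, a priori, disturb how the loop $\varepsilon_k$ acts on $e_k\mathcal{P}$. Making this precise requires a careful tracking of how the substitutions performed in the Splitting Theorem interact with $\varepsilon_k$-multiplication, and is where the reduction to the pre-mutation and the explicit identification in Step 2 do most of the work.
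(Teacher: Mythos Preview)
Your Steps 1--3 match the paper's argument almost exactly: the reduction to the pre-mutation $\widetilde{\mu}_k$ via the Splitting Theorem and Proposition \ref{trival part of jacobian}, the explicit isomorphism $u\mapsto [u]$ on the $\hat{k},\hat{k}$-block (Lemmas \ref{lem in jacobian algebra mutation invariants} and \ref{lemma 2 in jacion algebra invaraints}), and the block decomposition reducing finite-dimensionality of $\mathcal{P}$ to that of $\mathcal{P}_{\hat{k},\hat{k}}$ (Lemma \ref{lem in finite dim is invariants}) are precisely how the paper proves Corollary \ref{sum of mutation invariants}.

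Step 4 is where your proposal diverges from the paper, and where there is a genuine gap. Your decomposition ``$e_k\mathcal{P}\cong H_k\oplus\bigoplus_a H_k\cdot a^\star\cdot\mathcal{P}_{\hat{k},*}$'' is not correct as stated: paths starting at $k$ can return through $k$ arbitrarily many times, so $e_k\mathcal{P}$ is not simply a sum of copies of $\mathcal{P}_{\hat{k},*}$ tensored over $H_k$. More importantly, even granting a correct filtration, the claim that the reduction step ``does not disturb the $H_k$-module structure'' is exactly the nontrivial point, and you have not supplied an argument for it. The substitutions in the Splitting Theorem replace arrows by elements of $\mathfrak{m}^\circ$, and tracking $H_k$-freeness through these substitutions directly is delicate.

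The paper does \emph{not} prove Proposition \ref{locally free is mutation invariants} by any such direct $H_k$-module analysis. Instead, the proof is deferred to Section \ref{section: mutation of presentatoins}, where it falls out of the proof of Lemma \ref{mutation of pre and rep}. There one shows, by explicit construction, that the indecomposable projectives $P_i=\mathcal{P}e_i$ behave well under $\widetilde{\mu}_k$: specifically, $\widetilde{\mu}_k(P_{\hat{k}})\cong\widetilde{P}_{\hat{k}}$ and $\widetilde{\mu}_k$ of the canonical presentation involving $P_k$ yields the corresponding presentation on the mutated side. Local freeness of the mutated algebra is then read off from these isomorphisms of projectives. This is considerably more machinery than your direct attempt, but it is what actually closes the argument.
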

 
The proof of these results heavily depend on an adapted framework of complete path algebras, as originally introduced by Derksen, Weyman, and Zelevinsky \cite{derksen2008quivers}, with essential modifications to suit our $H$-based setting. Having fixed the base $H$ of $Q$, we define for any vertices $i,j$ of $Q$ the bimodule
$$_{i}A_{j}=H_i \textrm{Span}_K\{a\in Q_1^\circ\mid ha=i,ta=j\}H_j=\textrm{Span}_K\{\varepsilon_i^la\varepsilon_j^s\mid a \in Q_1^\circ,1\leq l\leq d_i,1\leq j\leq d_i\},$$ 
where $Q^\circ_1$is the set of arrows obtained by removing all loops from $Q$.
We then associate such pair $(Q,\mathbf{d})$ with the \textbf{arrow span}
\begin{align*}
    A:=\bigoplus_{j\to i\in Q^\circ_1} {_{i}A_{j}},
\end{align*}
which forms a finite-dimensional $H$-bimodule. The \textbf{complete path algebra} of $(Q,\mathbf{d})$ is then defined as the graded tensor algebra  $$\overline{T_H(A)}=\prod_{d=0}^{\infty}A^d,$$ where $A^d$ 
stands for the $d$-fold tensor power of $A$ as an $H$-bimodule. We view 
 $\overline{T_H(A)}$ as a topological algebra via the $m^\circ$-adic topology, where $m^\circ$ is the two-sided ideal generated by $A$.

In studying the mutations of $H$-based QP-decorated representations, we find it difficult to construct mutations for arbitrary decorated representations. To overcome this limitation, we focus exclusively on the mutations of  general (decorated) representations and general presentations \cite{derksen2015general}. More precisely, following \cite{fei2023general}, for any $g\in\mathbb{Z}^{Q_0}$, we consider the projective presentation space
$$\PHom_{\mathcal{P}}(g):=\PHom_{\mathcal{P}}(P([g]_+),P([-g]_+)).$$
where $P(\beta):=\bigoplus_{k\in Q_0}\beta(k)P_k$ for a vector $\beta\in \mathbb{Z}^{Q_0}$, and $P_k$ is the indecomposable projective representation corresponding to vertex $k$. The vector $g$ is called the \textbf{weight vector} of the projective presentation space. As explained in \cite{fei2023general}, there is an open subset $U$ of $\PHom_{\mathcal{P}}(g)$ such that the cokernels of presentations in $U$ lie entirely in an irreducible component $\mathrm{PC}(g)$ of the representation variety of the Jacobian algebra $\mathcal{P}$. A general representation in the \textbf{principal component} $\mathrm{PC}(g)$ is called a \textbf{general representation of weight $g$}. As shown in \cite{plamondon2012}, the principal component is exactly the \textbf{strongly reduced component} introduced in \cite{Gei2011}. Similarly, for the injective case, the injective presentation space $\IHom(\check{g})$ and the irreducible component $\mathrm{PC}(\check{g})$ are defined dually.

\begin{thm}[Theorem \ref{mutation rule of g}, Proposition \ref{general is mutation invarints}]
Let $(Q(B),\mathbf{d},\s)$ be a reduced and locally free $H$-based QP satisfying \eqref{condition on mutation of GQP 1}. Then the mutation at any vertex $k$ sends a general (decorated) representation of weight $g$ (resp. $\check{g}$) to a general representation of weight $g'$ (resp. $\check{g}'$), where $g'$ and  $\check{g}'$ are given by 
\begin{align}
    g'(i)=\begin{cases}
    -g(k) &\text{if $i=k$}\\
    g(i)+d_k[-b_{i,k}]_+[g(k)]_+-d_k[b_{i,k}]_+[-g(k)]_+& \text{if $i\neq k$}.
    \end{cases}
\end{align}
\begin{align}
    \Check{g}'(i)=\begin{cases}
    -\Check{g}(k) &\text{if $i=k$}\\
    \Check{g}(i)+d_k[-b_{k,i}]_+[\check{g}(k)]_+-d_k[b_{k,i}]_+[-\check{g}(k)]_+& \text{if $i\neq k$}.
    \end{cases}
\end{align}
\end{thm}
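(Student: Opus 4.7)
The plan is to deduce the $g$-vector formula by tracking minimal projective presentations through the mutation of decorated representations constructed earlier in the paper, and then to show that the mutation operation preserves the generic locus of each presentation space. The $\check{g}$-vector identity then follows by dualizing to minimal injective presentations, so I will concentrate on the projective side.

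First, for a decorated representation $\mathcal{M}=(M,V)$ of $\mathcal{P}=\mathcal{P}(H,A,\mathcal{S})$, fix a minimal projective presentation $P(\beta_1)\xrightarrow{f}P(\beta_0)\to M\to 0$. Local freeness makes each $P_i=e_i\mathcal{P}$ free over $H_i$, so the multiplicities $\beta_0(i),\beta_1(i)$ are well-defined invariants, and the weight vector reads
\begin{equation*}
g(\mathcal{M})(i)=\beta_0(i)-\beta_1(i)-\dim_{H_i}V_i.
\end{equation*}
This identification reduces the target formula to a numerical comparison between the minimal presentations of $\mathcal{M}$ and of $\mu_k(\mathcal{M})$.

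Next, I apply the representation-level mutation $\mu_k$ defined in the previous section. At the vertex $k$, the positive and negative summands of rank $|g(k)|$ are interchanged, so $g'(k)=-g(k)$ follows immediately. At any other vertex $i$, arrows incident to $k$ feed into the new presentation through the $d_k$ composite arrows $[b\,\varepsilon_k^l\, a]$ of Step~1 of the quiver mutation. Collecting these contributions according to the signs of $g(k)$ and $b_{i,k}$, with each composite weighted by the $d_k$ loop insertions, produces exactly the stated formula; the factor $d_k$ relative to the classical DWZ formula reflects precisely the presence of the nilpotent loop $\varepsilon_k$. Dualizing the argument to minimal injective presentations $I(\check{\beta}_0)\to I(\check{\beta}_1)$ gives the formula for $\check{g}'$.

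For the preservation of generality, I exhibit $\mu_k$ as a rational isomorphism between dense open subsets of presentation spaces. Writing $(H,\overline{A},\overline{\mathcal{S}})=\mu_k(H,A,\mathcal{S})$ with Jacobian algebra $\overline{\mathcal{P}}$, the assignment $f\mapsto \mu_k(f)$ gives a rational map $\PHom_{\mathcal{P}}(g)\to \PHom_{\overline{\mathcal{P}}}(g')$ whose inverse is the analogous mutation at $k$, using Theorem~\ref{splitting thm in introduction} to identify the reduced part of $\mu_k^2(H,A,\mathcal{S})$ with $(H,A,\mathcal{S})$ up to right-equivalence. Local freeness makes the $H$-module dimensions of the two presentation spaces compatible; combined with the identification of $\mathrm{PC}(g)$ with the strongly reduced component of \cite{Gei2011} as in \cite{plamondon2012}, one concludes that the cokernel of a generic presentation in $\PHom_{\mathcal{P}}(g)$ is sent to a generic representation in $\mathrm{PC}(g')$.

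The main obstacle is this last step: verifying that the adapted mutation on presentations lands in $\PHom_{\overline{\mathcal{P}}}(g')$ rather than some other component, and that the construction descends cleanly to the level of principal components. This requires simultaneous bookkeeping of the $d_k$ loop insertions in Step~1 of the quiver mutation, the sign conventions defining $\mu_k$ on decorated representations, and the trivial/reduced splitting of Step~3; the local freeness hypothesis is precisely what makes the relevant rank counts match on both sides of the mutation.
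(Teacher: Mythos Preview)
Your proposal has the right general shape but misses the paper's two key technical devices, and the second half contains a genuine gap.

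For the $g$-vector formula, the paper does not directly track a minimal presentation through the representation-level mutation $\mu_k$. Instead it introduces an \emph{extension construction}: given $d_{\mathcal{V}}:P(\beta_-)\to P(\beta_+)$, one adjoins a new simple vertex $v$ with $\beta_+(j)$ arrows $v\to j$ and $\beta_-(i)$ arrows $i\to v$, and adds to $\mathcal{S}$ the potential encoding the matrix entries of $d_{\mathcal{V}}$. Mutation of the presentation is then \emph{defined} as mutation of this extended QP at $k$, followed by reading off the presentation from the arrows incident to $v$. The formula for $g'$ then falls out from the three-step quiver mutation rule applied to the arrows at $v$, and the substantive work (Lemma~\ref{mutation of pre and rep}) is to check that this agrees with $d_{\mu_k(\mathcal{V})}$, done by an explicit case analysis on the mutation of $P_{\hat k}$, of $P_k$, and then of a general $\mathcal{M}$. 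Your ``collecting contributions according to the signs'' is a placeholder for this machinery. (Also, with the paper's conventions the $g$-vector of $P(\beta_-)\to P(\beta_+)$ is $\beta_--\beta_+$, and the decoration $V_k$ contributes $+\operatorname{rank}_{H_k}V_k$; your displayed formula for $g(\mathcal{M})(i)$ has both signs reversed.)

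For the preservation of generality, your plan to exhibit $\mu_k$ as a rational isomorphism $\PHom_{\mathcal{P}}(g)\dashrightarrow\PHom_{\overline{\mathcal{P}}}(g')$ is not how the paper proceeds, and it is not clear it can be made to work: the mutation of representations involves choices of splitting data and passage to the reduced part, so one does not get a morphism of varieties for free. The paper instead argues by dimension count. One first shows (Corollary~\ref{cor1}) that $\e_{\mathcal{P}}(d_{\mathcal{M}})=\e_{\widetilde{\mathcal{P}}}(\widetilde{\mu}_k(d_{\mathcal{M}}))$, using the identification $\Hom/[\add E_k]\cong\Hom/[\add\overline{E}_k]$ under mutation. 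Then one forms the constructible, irreducible incidence set $W\subset U\times\Hom(\widetilde P(\beta'_-),\widetilde P(\beta'_+))$ of pairs $(d_{\mathcal{M}},d_{\mathcal{M}'})$ with $d_{\mathcal{M}'}$ isomorphic to $\widetilde{\mu}_k(d_{\mathcal{M}})$; since both projections have fibres of the same codimension $e$ (namely $\operatorname{Aut}$-orbits), the image $p_2(W)$ has full dimension in the target and is therefore dense. This is what forces $\mu_k(d_{\mathcal{M}})$ to be general. The mutation invariance of the $\E$-pairing is the ingredient your outline does not supply.
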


Recognizing that $F$-polynomials play a fundamental role in classical QP theory, we present an analogous generalization for $H$-based QP representations (with $d_k \leq 2$)  by incorporating Jordan type stratifications of quiver Grassmannians $\mathrm{Gr}_{\mathbf{e},\mathbf{t}}(M)$, where $\mathbf{t}$ records the Jordan block structure of the nilpotent operators $M(\partial_{\varepsilon_k}\s)$ acting on $\ker M(\varepsilon_k)/\mathrm{Im}\,M(\varepsilon_k)$. More precisely, we denote by $t_{k,l}$ the number of Jordan blocks of size $l$ associated with $M(\partial_{\varepsilon_k}\mathcal{S})$, and call $\mathbf{t} = (t_{k,l})_{k,l}$ the \textbf{Jordan type} of $M$.  The quasi-projective variety $\mathrm{Gr}_{\mathbf{e},\mathbf{t}}(M)$ is defined by
\begin{equation}
    \mathrm{Gr}_{\mathbf{e},\mathbf{t}}(M)=\{N\in \mathrm{Gr}_{\mathbf{e}}(M)\mid \text{the Jordan type of $N$ is $\mathbf{t}$}\}.
\end{equation}
\begin{defn}[Definition \ref{defn of F-pplynomial}]
Given a decorated representation $\mathcal{M}=(M,V)$ of $(Q,\mathbf{d},\s)$, its \textbf{$F$-polynomial} is defined as 
\begin{equation}
F_\mathcal{M}(\mathbf{y},\mathbf{z})=\sum_{\mathbf{e},\mathbf{t}} \chi(\mathrm{Gr}_{\mathbf{e},\mathbf{t}}(M)) \mathbf{f}^\mathbf{t}(\mathbf{z})\mathbf{y}^\mathbf{e},
\end{equation}
where $\mathbf{f}^\mathbf{t}(\mathbf{z})=\prod_{i,l}f_{i,l}^{t_{i,l}}(z_{i})$, $\mathbf{y}^\mathbf{e}=\prod_i y_i^{e_i}$, and $\chi(-)$ denotes the topological Euler characteristic, and the polynomials $\mathbf{f}^{\mathbf{t}}(\mathbf{z})$ defined by the recurrence relation \eqref{eq:recursion}.
\end{defn} 

Next we recall the $\mathbf{y}$-seeds and their mutations in generalized cluster algebras. A \textbf{$\mathbf{y}$-seed} in a semifield $\mathbb{P}$ is a pair $(\mathbf{y},B)$, where 
\begin{itemize}
    \item $\mathbf{y}$ is an $n$-tuple of elements in $\mathbb{P}$, and
    \item $B$ is an $n\times n$ skew-symmetrizable integer matrix.
\end{itemize} 
The \textbf{$\mathbf{y}$-seed mutation} at $k\in[1, n]$ transforms $(\mathbf{y}, B)$ into a $\mathbf{y}$-seed $\mu_k(\mathbf{y}, B)= (\mathbf{y'}, B')$ ,
where $B'= \mu_k(B)$, and $\mathbf{y}'$ is given by the mutation rule \eqref{mutation of y}.

As one of our main results, we establish the mutation rule for $F$-polynomials of $H$-based QPs with $d_k\leq 2$:
\begin{thm}[Theorem \ref{mutation of F-polynomial}]
    Let $(Q(B),\mathbf{d},\mathcal{S})$ be a locally free and nondegenerate $H$-based QP with $d_k\leq 2$ for some vertex $k\in Q_0$.  Let $\mathcal{M}=(M,V)$ be a general representation of $(Q(B),\mathbf{d},\mathcal{S})$, and let $\overline{\mathcal{M}}=(\overline{M},\overline{V})=\mu_k(\mathcal{M})$. Suppose that the $\mathbf{y}$-seed $(\mathbf{y}',B_1)$ in $\mathbb{Q}_{\mathrm{sf}}(y_1,\cdots,y_n)$ is obtained from $(\mathbf{y},B)$ by mutation at $k$.
Then the $F$-polynomial $F_{\mathcal{M}}$ and $F_{\overline{\mathcal{M}}}$ are related by 
\begin{equation}
(\sum_{l=0}^{d_k}z_{k,l}y_k^l)^{- [-\check{g}(k)]_+}F_{\mathcal{M}}(y_1,\cdots,y_n)= (\sum_{l=0}^{d_k}z_{k,l}(y'_k)^l)^{-[-\check{g}'(k)]_+}F_{\overline{\mathcal{M}}}(y'_1,\cdots,y'_n),
\end{equation}
where $z_{k,l}=1$ for $l\leq d_k$ when $d_k=1$, and $z_{k,0}=z_{k,2}=1$ with $z_{k,1}=z_k$ when $d_k=2$.
\end{thm}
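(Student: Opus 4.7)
The plan is to follow the strategy pioneered by Derksen, Weyman and Zelevinsky in \cite{derksen2010quivers} for classical QPs, enriched so as to keep track of the Jordan-type stratification intrinsic to the $H$-based setting. To begin, I would expand both $F$-polynomials as sums over strata $\mathrm{Gr}_{\mathbf{e},\mathbf{t}}(M)$ and $\mathrm{Gr}_{\overline{\mathbf{e}},\overline{\mathbf{t}}}(\overline{M})$, substitute the mutation rule $\mathbf{y}\mapsto \mathbf{y}'$, and clear the factors $(\sum_l z_{k,l} y_k^l)^{-[-\check g(k)]_+}$ and $(\sum_l z_{k,l}(y'_k)^l)^{-[-\check g'(k)]_+}$. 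The asserted identity is then reduced to a stratum-by-stratum comparison, where after fixing the profile of a subrepresentation at every vertex other than $k$, the contributions can be organized by the $H_k$-submodule at $k$ together with its Jordan type.

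Next I would perform the geometric comparison via the mutation of representations established in Proposition \ref{general is mutation invarints}. Given a subrepresentation $N\subset M$, its restriction away from $k$ is unchanged by the mutation procedure at $k$, while the $H_k$-submodule at $k$ is replaced by a new $H_k$-submodule of $\overline{M}_k$ built from $\ker M(\varepsilon_k)$ and $\mathrm{Im}\, M(\varepsilon_k)$ and the triangle underlying the mutation. The fibers of this correspondence decompose into affine bundles over smaller pieces, so that $\chi$ collapses to a binomial-type count; in the $H$-based case this count is \emph{deformed} into the polynomial $\sum_{l=0}^{d_k}z_{k,l}y_k^l$, which is precisely the factor appearing on either side. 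The Jordan type of the induced nilpotent $N(\partial_{\varepsilon_k}\mathcal{S})$ on $\ker N(\varepsilon_k)/\mathrm{Im}\, N(\varepsilon_k)$ transforms in a prescribed way, and the recursion \eqref{eq:recursion} defining $f_{k,l}$ is engineered so that the weights $\mathbf{f}^{\mathbf{t}}(\mathbf{z})$ on one side reorganize into the weights $\mathbf{f}^{\overline{\mathbf{t}}}(\mathbf{z})$ on the other, up to multiplication by the $z_k$-deformed generating factor.

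Finally I would sum the contributions over all strata and invoke the mutation rule for $\check g(k)$ from Theorem \ref{mutation rule of g} to verify that the exponents $-[-\check g(k)]_+$ and $-[-\check g'(k)]_+$ match the powers of the deformed factor produced by the geometric comparison, using the standard facts that $\chi$ is additive on locally closed stratifications and multiplicative on affine fibrations. The main obstacle is the \emph{Jordan-type bookkeeping}: unlike the classical DWZ argument where only the dimension of $N_k$ is tracked, one must now show that the Jordan blocks of $N(\partial_{\varepsilon_k}\mathcal{S})$ evolve compatibly with the recursion for $f_{k,l}$. This is exactly where the hypothesis $d_k\le 2$ becomes essential: it restricts the Jordan type at $k$ to a single pair $(t_{k,1},t_{k,2})$ and reduces the combinatorial matching to one $z_k$-deformed binomial identity, which can be verified directly and then lifted to the full identity by the stratification argument above.
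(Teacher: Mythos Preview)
Your high-level strategy is correct and matches the paper's: stratify the quiver Grassmannian by the data away from $k$, express each stratum as a fiber bundle whose fiber is a space of $H_k$-submodules at vertex $k$, compute the fiber contribution, and then compare the two sides via a geometric identity of the base strata for $M$ and $\overline{M}$.

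However, there is a genuine gap in your Jordan-type bookkeeping. Your claim that $d_k\le 2$ ``restricts the Jordan type at $k$ to a single pair $(t_{k,1},t_{k,2})$'' is mistaken: the hypothesis $d_k\le 2$ controls the $H_k$-module structure of $M_k$ (type $(r_1,r_2)$), but $M(\partial_{\varepsilon_k}\mathcal S)$ acting on $\ker M(\varepsilon_k)/\operatorname{Im}M(\varepsilon_k)$ can have Jordan blocks of arbitrarily large size, so $\mathbf t_k=(t_{k,l})_{l\ge 1}$ is a genuinely infinite tuple. More seriously, the ordinary Jordan type $\mathbf t_k$ does \emph{not} transform by a simple rule under mutation, so stratifying only by $(\hat{\mathbf e},\hat{\mathbf t},r,s)$ is not fine enough to produce matching base varieties for $M$ and $\overline M$.

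The paper resolves this by introducing a \emph{generalized Jordan type} $\phi=(\phi_l^{(u,v)})_{1\le u,v\le 2}$ of the composite $\Phi=\iota\circ\partial_{\varepsilon_k}\mathcal S$ on $\beta_k^{-1}(N_{k,\mathrm{out}})_1$, which records how the Jordan filtration of $\Phi$ interacts separately with the embedding $\iota$ and with $\partial_{\varepsilon_k}\mathcal S$. Only with this refinement does one obtain the key identity $Z_{\hat{\mathbf e},\hat{\mathbf t};\phi,r,s}(M)=Z_{\hat{\mathbf e},\hat{\mathbf t};\overline\phi,\overline r,\overline s}(\overline M)$, with $\overline\phi$ given by the simple swap $\overline\phi_l^{(1,1)}=\phi_{l-1}^{(2,2)}$, $\overline\phi_l^{(1,2)}=\phi_l^{(2,1)}$, $\overline\phi_l^{(2,1)}=\phi_l^{(1,2)}$, $\overline\phi_l^{(2,2)}=\phi_{l+1}^{(1,1)}$. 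Establishing this requires an explicit commutative diagram intertwining the triangles for $M$ and $\overline M$. The fiber Euler characteristics are then computed via a torus action and $\chi(X)=\chi(X^T)$, yielding a product of binomials indexed by the $\phi_l^{(u,v)}$; this is where the recursion for $f_{k,l}$ actually enters, producing four auxiliary generating polynomials $g_l^{(u,v)}(y_k,z_k)$ rather than a single deformed binomial. None of this machinery is visible in your sketch, and without it the stratum-by-stratum comparison cannot be carried out.
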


These results on $\check{g}$-vectors and  $F$-polynomials for $H$-based QP-representations naturally lead us to consider their roles in the context of generalized cluster algebras. Analogous to classical cluster algebras, the structure of generalized cluster algebras is controlled by a family of integer vectors called \textbf{$\mathbf{g}$-vectors}, and a family of integer polynomials called \textbf{$F$-polynomials}. In more detail, we consider the $n$-regular tree $\mathbb{T}_n$ with edges labeled by the set $\{1,\ldots,n\}$ and write $t \overset{k}{\rule[3pt]{6mm}{0.05em}} t'$ if the vertices $t$ and $t'$ of $\mathbb{T}_n$ are connected by an edge labeled with $k$. Additionally, we fix a vertex $t_0\in \mathbb{T}_n$ and a skew-symmetrizable $n\times n$ integer matrix $B$. Associated with $t_0$ and B, we define a family of integer vectors $\mathbf{g}_{l;t}^{B,t_0}$ and a family of integer polynomials $F_{l;t}^{B,t_0}\in \mathbb{Z}[\mathbf{y},\mathbf{z}]$ by the recurrence relations \eqref{defn of g in gca}-\eqref{mutation of F}, where $l=1,\ldots,n$ and $t\in \mathbb{T}_n$.

In the case where the exchange matrices are skew-symmetric and the mutation degree satisfies $d_k\leq 2$ for each $k$, we apply Theorem \ref{mutation rule of g} and Theorem \ref{mutation of F-polynomial} to establish an interpretation of $\mathbf{g}$-vectors and $F$-polynomials in terms of representations of locally free $H$-based QPs. Specifically, as stated in Theorem \ref{thm: interpretation of g and F}, we construct, for given $t_0$, $l$, and $t$ above, an indecomposable $H$-based QP-representation $\mathcal{M}_{l;t}^{B;t_0}$ of $Q(B)$ such that 
$$\mathbf{g}_{l;t}^{B,t_0}=\check{\mathbf{g}}_{\mathcal{M}_{l;t}^{B;t_0}};\quad F_{l;t}^{B,t_0}=F_{\mathcal{M}_{l;t}^{B;t_0}}.$$
This interpretation allows us to derive several properties of $\mathbf{g}$-vectors and $F$-polynomials as detailed in Corollaries \ref{apply 1}-\ref{apply 3}.

Moreover, we extend the construction of the generic character developed in \cite{plamondon2012} for cluster algebras to generalized cluster algebras.  Let $\mathcal{A}(Q,\mathbf{d})$ be the corresponding generalized cluster algebra. We assume that the coefficient semifield of the generalized cluster algebra $\mathcal{A}(Q,\mathbf{d})$ is the tropical semifield $\mathrm{Trop}(\mathbf{z})$ and all $y$-variables are set to 1. 
We define the \textbf{generic character} $C_{\text{gen}}:\mathbb{Z}^n\to \mathbb{Z}[\mathbf{x}^{\pm 1},\mathbf{z}^{\pm 1}]$ by
\begin{equation}
    C_{\text{gen}}(\check{g})=\mathbf{x}^{\check{g}}F_{\ker(\check{g})}(\hat{\mathbf{y}},\mathbf{z}),
\end{equation}
where $\hat{\mathbf{y}}$ is given by $\prod_{j=1}^{n} x_{j}^{b_{j ,k}}$.

Analogous to the upper cluster algebra, the \textbf{upper generalized cluster algebra} $\overline{\mathcal{A}}(Q,\mathbf{d})$ is defined as the intersection:
\begin{equation*}
    \overline{\mathcal{A}}(Q,\mathbf{d})=\bigcap_{(\mathbf{x},\mathbf{y},B)\in \Sigma}\mathcal{L}_{\mathbf{x}},
\end{equation*}
where $\mathcal{L}_{\mathbf{x}}=\mathbb{Z}[\mathbf{x}^\pm,\mathbf{z}]$.
\begin{thm}[Theorem \ref{bases in gca}]\label{bases in gca introduction}
    Suppose that $(Q, \mathbf{d},\mathcal{S})$ is a nondegenerate and locally free $H$-based QP with $d_k\leq 2$ for all vertices $k$, and $B(Q)$ has full rank. Then the generic character $C_{\text{gen}}$ maps $\mathbb{Z}^n$ (bijectively) to a set of linearly independent elements in $\overline{\mathcal{A}}(Q,\mathbf{d})$ containing all cluster monomials.
\end{thm}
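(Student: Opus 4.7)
The plan is to adapt Plamondon's strategy for constructing generic bases of upper cluster algebras to the generalized setting, using the mutation rules for $\check{g}$-vectors (Theorem \ref{mutation rule of g}) and $F$-polynomials (Theorem \ref{mutation of F-polynomial}) already established. I would verify four statements in order: (i) $C_{\mathrm{gen}}(\check{g})\in\mathcal{L}_{\mathbf{x}'}$ for every cluster $\mathbf{x}'$, so that $C_{\mathrm{gen}}(\check{g})\in\overline{\mathcal{A}}(Q,\mathbf{d})$; (ii) every cluster monomial lies in the image of $C_{\mathrm{gen}}$; (iii) $C_{\mathrm{gen}}$ is injective on $\mathbb{Z}^n$; and (iv) the image is linearly independent.

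For (i), fix $\check{g}\in\mathbb{Z}^n$ and consider a general decorated representation $\ker(\check{g})$ of weight $\check{g}$, which by Proposition \ref{general is mutation invarints} transforms under mutation at vertex $k$ into a general representation of weight $\check{g}'$. I would check that the rational function $\mathbf{x}^{\check{g}}F_{\ker(\check{g})}(\hat{\mathbf{y}},\mathbf{z})$ is invariant under seed mutation at $k$, i.e.\ coincides with $(\mathbf{x}')^{\check{g}'}F_{\overline{\ker(\check{g})}}(\hat{\mathbf{y}}',\mathbf{z})$ after substituting the exchange relation $x_k x'_k=\sum_{l=0}^{d_k}z_{k,l}y_k^l\,\mathbf{x}^{\ldots}$. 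This reduces to combining three pieces: the mutation formula for $\check{g}$, the $\hat{\mathbf{y}}$-variable mutation coming from the $B$-matrix, and the key identity
\[
\Bigl(\textstyle\sum_{l=0}^{d_k}z_{k,l}y_k^l\Bigr)^{-[-\check{g}(k)]_+}F_{\ker(\check{g})}=\Bigl(\textstyle\sum_{l=0}^{d_k}z_{k,l}(y'_k)^l\Bigr)^{-[-\check{g}'(k)]_+}F_{\overline{\ker(\check{g})}},
\]
supplied by Theorem \ref{mutation of F-polynomial}. Since mutations are arbitrary, $C_{\mathrm{gen}}(\check{g})$ is a Laurent polynomial in every cluster; the full rank hypothesis on $B(Q)$ then gives membership in $\overline{\mathcal{A}}(Q,\mathbf{d})$ by the standard intersection definition.

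For (ii), every cluster monomial at a seed labelled by $t\in\mathbb{T}_n$ is, by the standard expansion formula in terms of $\mathbf{g}$-vectors and $F$-polynomials, determined by its $\mathbf{g}$-vector and $F$-polynomial. By Theorem \ref{thm: interpretation of g and F} each cluster variable $x_{l;t}$ arises as $C_{\mathrm{gen}}(\check{\mathbf{g}}_{\mathcal{M}_{l;t}^{B;t_0}})$, and cluster monomials correspond to direct sums of the relevant indecomposable representations (which preserve $\check{\mathbf{g}}$-vectors additively and $F$-polynomials multiplicatively on the general locus). For (iii) and (iv), I would expand $C_{\mathrm{gen}}(\check{g})$ in the initial cluster and examine the lowest $\mathbf{x}$-monomial in the dominance order: it is $\mathbf{x}^{\check{g}}$ times a nonzero polynomial in $\mathbf{z}$ coming from the constant term of $F_{\ker(\check{g})}$. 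Full rank of $B(Q)$ ensures that the pairing $\check{g}\mapsto \mathbf{x}^{\check{g}}$ detects $\check{g}$, so a nontrivial linear relation $\sum c_i C_{\mathrm{gen}}(\check{g}_i)=0$ with distinct $\check{g}_i$ would force the leading $\mathbf{x}$-monomial at a minimal $\check{g}_i$ to vanish, a contradiction. This yields injectivity and linear independence simultaneously.

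The main obstacle will be Step (i): one must verify rigorously that the $F$-polynomial mutation identity, combined with the factor $\sum_{l=0}^{d_k}z_{k,l}y_k^l$ appearing in the generalized exchange relation, reproduces the $x_k x'_k$ relation exactly. Unlike the binomial classical case, tracking the $z_k$-coefficients through the Jordan-type stratification of Grassmannians, and matching them with the polynomial exchange, requires careful bookkeeping; this is where the hypothesis $d_k\leq 2$ is essential, since only then does the $F$-polynomial mutation from Theorem \ref{mutation of F-polynomial} apply. A subsidiary issue is to show that $\ker(\check{g})$ can be chosen compatibly across all seeds—i.e.\ that the representation obtained by iterated mutation is still a general representation of the correct weight—which follows by induction from Proposition \ref{general is mutation invarints}, but needs to be stated precisely before invoking the invariance argument.
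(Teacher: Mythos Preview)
Your proposal is correct and follows essentially the same route as the paper: the paper packages step (i) as Lemma \ref{commute with C} (mutation invariance of the cluster character, then membership in $\overline{\mathcal{A}}$ via the upper-bounds Theorem \ref{one step mutation}), steps (iii)--(iv) as Lemma \ref{bases} (distinct $\check g$-vectors force linear independence when $B(Q)$ has full rank), and step (ii) via Theorem \ref{thm: interpretation of g and F}; Theorem \ref{bases in gca} is then stated as an immediate consequence. One small remark: in your step (i) the full-rank hypothesis is not what gives membership in $\overline{\mathcal{A}}$---once you iterate mutations using Proposition \ref{general is mutation invarints} you already have Laurentness in every cluster; full rank enters only in the linear-independence step (or, as the paper does, to invoke the one-step upper-bounds reduction instead of iterating).
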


 This paper is organized as follows. In Section \ref{section: quiver with potential}, we establish the algebraic foundation for $H$-based quivers, introducing their path algebras and potentials. After defining the right-equivalence relation, we prove the Splitting theorem (Theorem \ref{splitting thm}). In Section \ref{section:mutation of GQP}, we develop the mutation theory for $H$-based QPs.  In Section \ref{section:Mutation Invariants in QP}, we provide some mutation invariants in $H$-based QPs (Proposition \ref{jacobian algebra mutation invariants}, Proposition \ref{finite dim is invariants}, Proposition \ref{locally free is mutation invariants}). In Section \ref{section: nondegenerate}, we characterize nondegenerate $H$-based QPs, establishing their existence for all underlying quivers (Corollary \ref{eixstence of nondenerate GQP}). 

The representation theory of $H$-based QPs is developed in subsequent sections. In Section \ref{section: Decorated Representations}, we introduce decorated representations of $H$-based QPs and their right-equivalence (Definitions \ref{defn of decorated Representations} and \ref{defn of right-equivalence of rep}). In Section \ref{section: general pre}, we begin with a brief review of general presentations \cite{derksen2015general}, and  proceed to study some properties around general presentations, including  Theorem \ref{pc} and Proposition \ref{general rep is locally free}. Using results from Section \ref{section: general pre}, we construct the mutation of $H$-based QP representations in Section \ref{section: mutation of decorated Representations}. In Section \ref{section: mutation of presentatoins}, we then follow the strategy of \cite{fei2024crystalstructureuppercluster} to define the mutation of presentations, proving their compatibility with the mutation of representations (\ref{mutation of pre and rep}) and deriving the $g$-vector mutation rule (Theorem \ref{mutation rule of g}). Section \ref{section: some Mutation invariants in Decorated representations} gives some mutation invariants in decorated representations. We prove that general presentations are well-behaved under mutations (Proposition \ref{general is mutation invarints}), which guarantees the applicability of arbitrary mutation sequences. In Section \ref{section: F-polynomial}, we introduce the $F$-polynomials of $H$-based QPs-representations and their mutation rules (Theorem \ref{mutation of F-polynomial}). We begin Section \ref{section: Application to generalized cluster algebras} by  briefly reviewing generalized cluster algebras, then present in Theorem \ref{thm: interpretation of g and F} and Theorem \ref{bases in gca} two key applications of our results.

In Appendix, we provide detailed proofs for selected technical results from Sections \ref{section: quiver with potential}, \ref{section:mutation of GQP}, \ref{section:Mutation Invariants in QP}, \ref{section: nondegenerate}, \ref{section: mutation of decorated Representations}, where we adapt the stratification technique from \cite{derksen2008quivers} to the $H$-based setting. 
\section{\texorpdfstring{$H$}{}-based Quiver with Potentials}\label{section: quiver with potential}
In this section, we give the concept of $H$-based quivers with potentials and some results in \cite{derksen2008quivers} by generalizing the settings in \cite{derksen2008quivers}. 
\subsection{Quivers and Complete Path Algebras}\label{section: Quivers and Complete Path Algebras}
Let  $Q^\circ:=(Q_0,Q^\circ_1,h,t)$ be a quiver without loops where  $Q_0:=\{1,\cdots,n\}$ is the vertex set and $Q_1^\circ$ is the arrow set with $a:ta\to ha$ for each directed edge $a\in Q_1^\circ$. Now we construct a quiver $Q=(Q_0,Q_1,h,t)$ with the vertex set $Q_0$ and the arrow set $Q_1=Q_1^\circ\cup \{\varepsilon_i,1\leq i\leq n\}$, where $\varepsilon_i$ is a loop at vertex $i$.  Clearly, $Q$ is obtained from $Q^\circ$ by adding a loop at each vertex.

Throughout, let $K$ be a field. Let $Q$ be a quiver defined above and $\mathbf{d}=(d_1,\cdots,d_n)$ be an $n$-tuple of positive integers. We  denote by $H_i$  the  truncated polynomial ring $K[\varepsilon_i]/\langle\varepsilon_i^{d_i}\rangle$ for each vertex $i$. For all $i,j$ in $Q_0$, we define $$_{i}A_{j}=H_i \textrm{Span}_K\{a\in Q_1^\circ,ha=i,ta=j\}H_j.$$ Then $_i A_j$ is an $H_i$-$H_j$ bimodule, which is free as a left $H_i$-module and free as a right $H_j$-module. Now we can associate such pair $(Q,\mathbf{d})$ two objects
\begin{align*}
    H:=\bigoplus_{i=1}^n H_i\quad \text{and}\quad A:=\bigoplus_{j\to i\in Q^\circ_1} {_{i}A_{j}}.
\end{align*}
Clearly, $A$ is an $H$-bimodule. We call $H$ the \textbf{base} of $(Q,\mathbf{d})$ and  call $A$ the \textbf{arrow span} of $(Q,\mathbf{d})$.
\begin{defn}
    The \textbf{path algebra} of $(Q,\mathbf{d})$ is defined as the graded tensor algebra  $$T_H(A)=\bigoplus_{d=0}^{\infty}A^d$$ where $A^d$ denote the $H$-bimodule 
   \begin{align*}
    \underbrace{A\otimes_H A\otimes_H\cdots\otimes_H A}_{d},
\end{align*}
with the convention $A^0=H$. 
\end{defn}

 We denote by $R=K^{Q_0}=\text{Span}_K\{e_1,\cdots,e_n\}$ the \textbf{span of idempotent} of $T_H(A)$, and let $A^\circ:=K^{Q_1^\circ}=\text{Span}_K\{a,a\in Q^\circ_1\}$.  In what follows we write $\otimes _{i}$ for a tensor product $\otimes_{H_i}$. If there is no danger of misunderstanding, we also just write $\otimes$ instead of $\otimes_i$.
\begin{defn}\label{complete path algebra}
The \textbf{complete path algebra} of $Q$ with respect to $\mathbf{d}$ is defined as 
\begin{align*}
    \overline{T_H(A)}:=\prod_{d=0}^{\infty} A^d.
\end{align*}
\end{defn}
Thus the elements of $\overline{T_H(A)}$ are (possibly infinite) $K$-linear combinations of the elements of a path basis in $T_H(A)$; and the multiplication in $\overline{T_H(A)}$ naturally extends the multiplication in $T_H(A)$. 

Let $\mathfrak{m}^\circ=\mathfrak{m}^\circ(A)$ denote the (two-side) ideal of $\overline{T_H(A)}$ given by
\begin{equation}
    \mathfrak{m}^\circ=\prod_{d=1}^{\infty} A^d.
\end{equation}
Thus the power of $\mathfrak{m}^\circ$ are given by $$(\mathfrak{m}^\circ)^n=\prod_{d=n}^{\infty} A^d.$$ 
We view $\overline{T_H(A)}$ as a topological $K$-algebra via the \textbf{$\mathfrak{m}^\circ$-adic topology} having the powers of $\mathfrak{m}^\circ$ as a basic system of open neighborhoods of 0. Thus the closure of any subset $U\subseteq\overline{T_H(A)}$ is given by 
\begin{align}\label{toplogy in complete algebra}
    \overline{U}=\bigcap_{n=0}^\infty (U+(\mathfrak{m}^\circ)^n).
\end{align}
We denote by $H^\circ$ the ideal of $H$ generated by $\varepsilon_i, 1\leq i\leq n=|Q_0|$, and denote by $\mathfrak{m}=\mathfrak{m}^\circ+H^\circ$. 
For any $k\geq 1$, there exists $n\in \mathbb{N}^+$ large enough such that $\mathfrak{m}^n\subseteq(\mathfrak{m}^\circ)^k$ since $H^\circ$ is nilpotent. Recalling the definition of topology in $\overline{T_H(A)}$ (see (\ref{toplogy in complete algebra})), we have that $\mathfrak{m}^\circ$-adic topology and $\mathfrak{m}$-adic topology on $\overline{T_{H}(A)}$ coincide.
\begin{lem}\label{two topologies concides}
   The $\mathfrak{m}^\circ$-adic topology and $\mathfrak{m}$-adic topology on $\overline{T_{H}(A)}$ are equivalent.
\end{lem}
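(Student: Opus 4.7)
The plan is to verify that the descending filtrations $\{(\mathfrak{m}^\circ)^n\}_{n\geq 1}$ and $\{\mathfrak{m}^n\}_{n\geq 1}$ are cofinal in one another, which is the standard criterion for two linear topologies on the same additive group to coincide. Since $\mathfrak{m}^\circ \subseteq \mathfrak{m}$, the inclusion $(\mathfrak{m}^\circ)^n \subseteq \mathfrak{m}^n$ is immediate and handles one of the two cofinality conditions. For the other direction I need to produce, for every $n \geq 1$, an integer $N$ such that $\mathfrak{m}^N \subseteq (\mathfrak{m}^\circ)^n$; this is where the nilpotency of $H^\circ$ enters.

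Set $D = \max_{i} d_i$, so that $(H^\circ)^{D} = 0$, since $H^\circ = \bigoplus_i \varepsilon_i H_i$ and $\varepsilon_i^{d_i} = 0$ for every $i$. I would claim that $N = nD$ works. Using $\mathfrak{m} = \mathfrak{m}^\circ + H^\circ$, the ideal $\mathfrak{m}^N$ is spanned by products of the form $x_1 x_2 \cdots x_N$ with each $x_i$ lying in either $\mathfrak{m}^\circ$ or $H^\circ$. Fix such a product and let $k$ denote the number of positions $i$ with $x_i \in \mathfrak{m}^\circ$; the remaining $H^\circ$-factors then partition into at most $k+1$ maximal consecutive runs whose total length is $N - k$.

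The argument then splits into two cases. If $k \leq n-1$, then $N - k \geq nD - n + 1$ factors lie in $H^\circ$ and are distributed over at most $n$ runs, so by pigeonhole some run has length at least $\lceil (nD - n + 1)/n \rceil = D$; that run multiplies to zero in $(H^\circ)^D$, forcing the entire product to vanish. If instead $k \geq n$, I exploit the fact that $\mathfrak{m}^\circ$ is a two-sided $H$-submodule of $\overline{T_H(A)}$ (because $H\cdot A^d \subseteq A^d$ and $A^d \cdot H \subseteq A^d$ for every $d \geq 1$) to absorb each $H^\circ$-factor into an adjacent $\mathfrak{m}^\circ$-factor. This rewrites the product as one of $k$ factors in $\mathfrak{m}^\circ$, placing it in $(\mathfrak{m}^\circ)^k \subseteq (\mathfrak{m}^\circ)^n$.

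I expect the real obstacle to be the combinatorial balancing in the first case, namely choosing $N$ just large enough that no arrangement of fewer than $n$ $\mathfrak{m}^\circ$-factors among $N$ positions can avoid creating an $H^\circ$-run of length $\geq D$; the value $N = nD$ is essentially sharp. The absorption step in the second case is routine bookkeeping with the $H$-bimodule structure, with the only minor subtlety being that a leading or trailing $H^\circ$-run must be absorbed into the first (respectively last) $\mathfrak{m}^\circ$-factor, which exists because $k \geq n \geq 1$. Combining the two cases gives $\mathfrak{m}^{nD} \subseteq (\mathfrak{m}^\circ)^n$, and the equivalence of the two topologies follows.
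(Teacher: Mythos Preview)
Your proposal is correct and follows the same approach as the paper: both arguments use the nilpotency of $H^\circ$ to establish that for each $k$ there is an $N$ with $\mathfrak{m}^N \subseteq (\mathfrak{m}^\circ)^k$, together with the trivial inclusion $(\mathfrak{m}^\circ)^k \subseteq \mathfrak{m}^k$. The paper records this as a one-line observation immediately before the lemma, whereas you make the pigeonhole and absorption steps explicit and give the sharp bound $N = nD$.
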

In \cite{derksen2008quivers}, if $A$ and $A'$ are finite-dimensional $H$-bimodules, then any  algebra homomorphism  $\varphi:\overline{T_H(A)}\to \overline{T_H(A')}$ such that  $\varphi|_R=\text{Id}$ is continuous with respect to the  $\mathfrak{m}$-adic topology. It follows from Lemma \ref{two topologies concides} that 
\begin{lem}
    Let $A$ and $A'$ be two finite-dimensional $H$-bimodules, then any algebra homomorphism between $\overline{T_H(A)}$ and $\overline{T_H(A')}$ such that $\varphi|_H=\text{Id}$ is continuous.
\end{lem}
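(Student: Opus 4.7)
My plan is to deduce this directly from the cited DWZ result (quoted immediately above the lemma) combined with Lemma \ref{two topologies concides}. The key observation is that the idempotent span $R=\text{Span}_K\{e_1,\dots,e_n\}$ is contained in $H=\bigoplus_i H_i$, because each $e_i$ is the unit of $H_i$. Thus the hypothesis $\varphi|_H=\text{Id}$ is strictly stronger than $\varphi|_R=\text{Id}$, which places $\varphi$ inside the hypothesis of the preceding DWZ-style statement. That statement then yields continuity of $\varphi$ with respect to the $\mathfrak{m}$-adic topology on both source and target. Finally, Lemma \ref{two topologies concides} identifies the $\mathfrak{m}$-adic and $\mathfrak{m}^\circ$-adic topologies on $\overline{T_H(A)}$ and $\overline{T_H(A')}$, so continuity in the $\mathfrak{m}$-adic sense is literally the same statement as continuity in the $\mathfrak{m}^\circ$-adic sense. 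Both directions of homomorphism are handled symmetrically.

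As a sanity check, I would also spell out a self-contained direct argument. For any non-loop arrow $a\colon j\to i$ in $Q_1^\circ$ (so $i\neq j$), the equalities $\varphi(e_i)=e_i$ and $\varphi(e_j)=e_j$ coming from $\varphi|_H=\text{Id}$ give $\varphi(a)=e_i\varphi(a)e_j$. Decomposing $\varphi(a)=h+x$ with $h\in H$ and $x\in\mathfrak{m}^\circ(A')$ and applying the idempotents on both sides shows $h\in e_iHe_j=0$, since $H_l\subseteq e_l\overline{T_H(A')}e_l$ and $i\neq j$. Hence $\varphi(a)\in\mathfrak{m}^\circ(A')$. Because $A$ is generated as an $H$-bimodule by the non-loop arrows, it follows that $\varphi(A)\subseteq\mathfrak{m}^\circ(A')$, and multiplicativity of $\varphi$ together with $\varphi|_H=\text{Id}$ (so that $\varphi$ commutes with the $H$-bimodule structure) gives $\varphi\bigl((\mathfrak{m}^\circ(A))^n\bigr)\subseteq(\mathfrak{m}^\circ(A'))^n$ for every $n\geq 1$ on the dense subalgebra $T_H(A)$, which is exactly the condition needed to extend $\varphi$ continuously to the completion.

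I do not anticipate any real obstacle here: the content of the lemma is entirely that the slightly stronger $H$-fixing hypothesis (together with the topological equivalence in Lemma \ref{two topologies concides}) allows one to import the classical DWZ continuity statement into the $H$-based framework. The only mild point to be careful about is confirming that $e_iHe_j=0$ for $i\neq j$, which is immediate from $H=\bigoplus_iH_i$ with $H_i\subseteq e_i\overline{T_H(A)}e_i$.
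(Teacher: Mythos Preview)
Your proposal is correct and follows essentially the same approach as the paper: the paper derives the lemma directly from the DWZ continuity statement (for $\varphi|_R=\mathrm{Id}$, with respect to the $\mathfrak{m}$-adic topology) together with Lemma~\ref{two topologies concides}, and you make exactly this deduction, noting that $R\subseteq H$ so $\varphi|_H=\mathrm{Id}$ implies $\varphi|_R=\mathrm{Id}$. Your additional self-contained argument is a nice sanity check but is not needed for the paper's purposes.
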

Therefore, $\varphi$ is uniquely determined by its restriction to $A^1=A$. which is an $H$-bimodule homomorphism $\varphi|_A:A\to \mathfrak{m}^\circ(A')=A'\oplus\mathfrak{m}^\circ(A')^2$. We write $\varphi|_A=(\varphi^{(1)},\varphi^{(2)})$, where $\varphi^{(1)}:A\to A'$ and $\varphi^{(2)}:A\to \mathfrak{m}^\circ(A')^2$ are $H$-bimodule homomorphisms.

\begin{pro}\label{determined morphism}
Any pair $(\varphi^{(1)},\varphi^{(2)})$ of $H$-bimodule homomorphisms $\varphi^{(1)}:A\to A'$ and $\varphi^{(2)}:A\to \mathfrak{m}^\circ(A')^2$ give rise to a unique homomrophism of topological algebras $\varphi:\overline{T_H(A)}\to
\overline{T_H(A')}$ such that $\varphi|_H=\text{Id}$, and $\varphi|_A=(\varphi^{(1)},\varphi^{(2)})$.  Furthermore, $\varphi$ is an isomorphism if and only if $\varphi^{(1)}$ is an $H$-bimodule isomorphism $A\to A'$.
\end{pro}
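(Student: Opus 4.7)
The plan is to proceed in two stages: first build $\varphi$ and verify uniqueness, then characterize when it is an isomorphism.

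For existence, I would first assemble $\varphi^{(1)}$ and $\varphi^{(2)}$ into a single $H$-bimodule map $\tilde{\varphi}\colon A \to \mathfrak{m}^\circ(A')$ sending $a \mapsto \varphi^{(1)}(a)+\varphi^{(2)}(a)$. The universal property of $T_H(A)$ as the free $H$-algebra on the $H$-bimodule $A$ then delivers a unique $H$-algebra homomorphism $\varphi_0\colon T_H(A) \to \overline{T_H(A')}$ extending $\tilde\varphi$. Since each tensor factor in $A^d$ is sent into $\mathfrak{m}^\circ(A')$, one gets $\varphi_0(A^d) \subseteq \mathfrak{m}^\circ(A')^d$, so $\varphi_0$ is continuous in the $\mathfrak{m}^\circ$-adic topology. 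Completeness of the target then allows a unique continuous extension $\varphi\colon\overline{T_H(A)}\to\overline{T_H(A')}$: for $x=\sum_{d\geq 0}x_d$ with $x_d\in A^d$, the series $\sum_d \varphi_0(x_d)$ converges since its tails lie in increasing powers of $\mathfrak{m}^\circ(A')$. Uniqueness is then immediate, since any algebra map $\varphi'$ satisfying the hypotheses agrees with $\varphi$ on the subalgebra $T_H(A)$ and extends continuously by the preceding continuity lemma for $H$-fixing maps, with Lemma \ref{two topologies concides} ensuring the two topologies coincide.

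For the isomorphism statement, the forward implication is short: writing $\mathfrak{m}^\circ$ and $\mathfrak{m}^\circ(A')$ as the kernels of the canonical projections onto $H$ and using that $\varphi$ fixes $H$, one checks $\varphi(\mathfrak{m}^\circ)=\mathfrak{m}^\circ(A')$ and consequently $\varphi((\mathfrak{m}^\circ)^2)=\mathfrak{m}^\circ(A')^2$, so $\varphi$ induces an $H$-bimodule isomorphism $A \cong \mathfrak{m}^\circ/(\mathfrak{m}^\circ)^2 \to \mathfrak{m}^\circ(A')/\mathfrak{m}^\circ(A')^2\cong A'$, which is precisely $\varphi^{(1)}$. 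For the reverse, assume $\varphi^{(1)}$ is an iso. Applying the existence part to the pairs $(\varphi^{(1)},0)$ and $((\varphi^{(1)})^{-1},0)$ produces mutually inverse topological $H$-algebra isomorphisms $\varphi_{\mathrm{lin}}$ and its inverse between $\overline{T_H(A)}$ and $\overline{T_H(A')}$, a fact verified by checking both compositions on $H$ and on $A$ and invoking uniqueness. Setting $\psi=\varphi_{\mathrm{lin}}^{-1}\circ\varphi$ then reduces the problem to showing that every $H$-algebra endomorphism $\psi$ of $\overline{T_H(A)}$ whose linear part is $\mathrm{id}_A$ and whose higher part lands in $\mathfrak{m}^\circ(A)^2$ is an automorphism.

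The principal obstacle is exactly this final claim, since $\psi$ is not literally an identity-plus-contraction in any immediate ring-theoretic sense. My approach is to argue filtration-wise. Because $\psi$ preserves the $\mathfrak{m}^\circ$-adic filtration and the correction term lies in $\mathfrak{m}^\circ(A)^2$, the induced map on each associated graded piece $A^n=(\mathfrak{m}^\circ)^n/(\mathfrak{m}^\circ)^{n+1}$ is the identity. Since $A$ and $H$ are finite dimensional, each truncation $\overline{T_H(A)}/(\mathfrak{m}^\circ)^N$ is a finite-dimensional $K$-space on which $\psi$ induces a filtered endomorphism that is the identity on every subquotient. A short induction on $N$, or equivalently the five-lemma applied to the short exact sequences $0\to(\mathfrak{m}^\circ)^{N-1}/(\mathfrak{m}^\circ)^N \to \overline{T_H(A)}/(\mathfrak{m}^\circ)^N \to \overline{T_H(A)}/(\mathfrak{m}^\circ)^{N-1}\to 0$, shows the induced map is an isomorphism at each level. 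Passing to the inverse limit, which equals $\overline{T_H(A)}$ by completeness, assembles these into a two-sided continuous inverse for $\psi$, and then $\varphi^{-1}=\psi^{-1}\circ\varphi_{\mathrm{lin}}^{-1}$ completes the proof.
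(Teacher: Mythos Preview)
Your proposal is correct and follows essentially the same approach as the paper, which in fact does not spell out the argument at all but simply refers to \cite[Proposition 2.4]{derksen2008quivers}; your filtration argument via the associated graded and inverse limits is the standard route used there, adapted to the $H$-based setting.
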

\begin{proof}
The proof is similar to the one given in \cite[Proposition 2.4]{derksen2008quivers}.
\end{proof}
\begin{defn}
 Let $\varphi$ be the automorphism of $\overline{T_H(A)}$ corresponding to a pair $\left(\varphi^{(1)}, \varphi^{(2)}\right)$ as in Proposition \ref{determined morphism}. If $\varphi^{(2)}=0$, then we call $\varphi$ a \textbf{diagonal automorphism}. Furthermore, $\varphi^{(1)}$ is uniquely determined by $\varphi^{(1)}|_{A^{\circ}}:A^\circ\to A^\circ \oplus (A/A^\circ)$. Similarly, we can write $\varphi^{(1)}|_{A^{\circ}}=(\varphi^{(1,1)},\varphi^{(1,2)})$ where  $\varphi^{(1,1)}:A^\circ\to  A^\circ $ and $\varphi^{(1,2)}:A^\circ\to (A/A^\circ)$ are $R$-linear maps. If $\varphi^{(1,2)}=0$ and $\varphi^{(2)}=0$, we call $\varphi$ a \textbf{change of arrows}. If $\varphi^{(1,1)}$ is the identity automorphism of $A^\circ$, we call $\varphi$  a \textbf{quasi-unitriangular automorphism}. If $\varphi^{(1)}$ is the identity automorphism of $A$, we say that $\varphi$ is a \textbf{unitriangular automorphism}.
\end{defn}

\subsection{Potentials and their Jacobian ideals}
In this section we introduce some of our main objects of study: potentials and their Jacobian ideals in the complete path algebra $\overline{T_H(A)}$ given by Definition \ref{complete path algebra}. We fix a path basis in $T_H(A)$; recall that it consists of the elements $\varepsilon_{i}^{l_i} \in H=A^{0}$ for $0\leq l_i\leq d_i-1$ together with the products $\varepsilon_{ha_1}^{l_1}a_{1} \varepsilon_{ta_1}^{l_2}\cdots \varepsilon_{ha_d}^{l_d}a_{d}\varepsilon_{ta_d}^{l_{d+1}}$ (paths) such that all $a_{k}$ belong to $Q^\circ_{1}$, and $t\left(a_{k}\right)=$ $h\left(a_{k+1}\right)$ for $1 \leq k<d$. Then each space $A^{d}$ has a direct $H$-bimodule decomposition $A^{d}=\bigoplus_{i, j \in Q_{0}} A_{i, j}^{d}$, where the component $A_{i, j}^{d}$ is spanned by the paths $\varepsilon_{ha_1}^{l_1}a_{1} \varepsilon_{ta_1}^{l_2}\cdots \varepsilon_{ha_d}^{l_d}a_{d}\varepsilon_{ta_d}^{l_{d+1}}$ with $h\left(a_{1}\right)=i$ and $t\left(a_{d}\right)=j$.
\begin{defn} 
We have the following definition:
\begin{itemize}
    \item For each $d \geq 1$, we define the \textbf{cyclic part} of $A^{d}$ as the sub-$R$-bimodule $A_{\text {cyc }}^{d}=$ $\bigoplus_{i \in Q_{0}} A_{i, i}^{d}$. Thus, $A_{\text {cyc }}^{d}$ is the span of all paths $\varepsilon_{ha_1}^{l_1}a_{1} \varepsilon_{ta_1}^{l_2}\cdots \varepsilon_{ha_d}^{l_d}a_{d}\varepsilon_{ta_d}^{l_{d+1}}$ with $h\left(a_{1}\right)=t\left(a_{d}\right)$; we call such paths \textbf{cyclic}.
    \item We define a closed vector subspace $\overline{T_H(A)}_{\text {cyc}} \subseteq \overline{T_H(A)}$ by setting
    $$\overline{T_H(A)}_{\text{cyc}}=\prod_{d=1}^{\infty} A_{\mathrm{cyc}}^{d}$$
    and call the elements of $\overline{T_H(A)}_{\text {cyc }}$ \textbf{potentials}.
    \item For every $\xi \in (A^\circ)^{\star}$, we define the \textbf{cyclic derivative} $\partial_{\xi}$ as the continuous $K$-linear map $\overline{T_H(A)}_{\mathrm{cyc}} \rightarrow \overline{T_H(A)}$ acting on paths by
    \begin{equation}\label{derivative}
        \begin{aligned}
            \partial_{\xi}\left(\varepsilon_{ha_1}^{l_1}a_{1} \varepsilon_{ta_1}^{l_2}\cdots \varepsilon_{ha_d}^{l_d}a_{d}\varepsilon_{ta_d}^{l_{d+1}}\right)=& \sum_{k=1}^{d} \xi\left(a_{k}\right) \varepsilon_{ha_{k+1}}^{l_{k+1}} a_{k+1} \varepsilon_{ta_{k+1}}^{l_{k+2}} \cdots \varepsilon_{ha_{d}}^{l_{d}} a_{d} \varepsilon_{ta_{d}}^{l_{d+1}}\\ &\varepsilon_{ha_{1}}^{l_{1}}a_{1} \varepsilon_{ta_1}^{l_{2}} \cdots \varepsilon_{ha_{k-1}}^{l_{k-1}} a_{k-1} \varepsilon_{ta_{k-1}}^{l_k}.
        \end{aligned}
    \end{equation}
    \item  For every potential $\mathcal{S}$, we define its \textbf{Jacobian ideal} $J(\mathcal{S})$ as the closure of the (two-sided) ideal in $\overline{T_H(A)}$ generated by the elements $\partial_{\xi}(\mathcal{S})$ for all $\xi \in (A^\circ)^{\star}$; clearly, $J(\mathcal{S})$ is a two-sided ideal in $\overline{T_H(A)}$.
    \item We call the quotient $\overline{T_H(A)} / J(\mathcal{S})$ the \textbf{Jacobian algebra} of $\mathcal{S}$, and denote it by $\mathcal{P}(Q,\mathbf{d}, \mathcal{S})$ or $\mathcal{P}(H,A,\mathcal{S})$.
\end{itemize}
\end{defn}
The definitions imply that  every potential $\mathcal{S}$ belongs to $\mathfrak{m}^\circ(A)^{2}$. An easy check shows that a cyclic derivative $\partial_{\xi}: \overline{T_H(A)}_{\mathrm{cyc}} \rightarrow \overline{T_H(A)}$ does not depend on the choice of a path basis. Furthermore, cyclic derivatives do not distinguish between the potentials that are equivalent in the following sense.
\begin{defn}\label{def of cyc eq}
Two potentials $\mathcal{S}$ and $\mathcal{S}^{\prime}$ are \textbf{cyclically equivalent} if $\mathcal{S}-\mathcal{S}^{\prime}$ lies in the closure of the span of all elements of the form $a_{1} \cdots a_{d}-a_{2} \cdots a_{d} a_{1}$, where $a_{1} \cdots a_{d}$ is a cyclic path with $a_i$ possibly in $A\backslash A^\circ$.
\end{defn}
\begin{pro}\label{cyceq}
If two potentials $\mathcal{S}$ and $\mathcal{S}^{\prime}$ are cyclically equivalent, then $\partial_{\xi}(\mathcal{S})=$ $\partial_{\xi}\left(\mathcal{S}^{\prime}\right)$ for all $\xi \in (A^\circ)^{\star}$, hence $J(\mathcal{S})=J\left(\mathcal{S}^{\prime}\right)$ and $\mathcal{P}(H,A, \mathcal{S})=\mathcal{P}\left(H,A, \mathcal{S}^{\prime}\right)$.
\end{pro}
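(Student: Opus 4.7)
The plan is to reduce the statement to showing that $\partial_\xi$ vanishes on the generating elements $a_1 \cdots a_d - a_2 \cdots a_d a_1$ of the cyclic equivalence relation, and then to conclude via continuity.

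First, I would check that the cyclic derivative $\partial_\xi$ is well-defined and continuous on $\overline{T_H(A)}_{\mathrm{cyc}}$. The map is defined by formula \eqref{derivative} on the path basis, extended $K$-linearly, and it takes $(\mathfrak{m}^\circ)^n \cap \overline{T_H(A)}_{\mathrm{cyc}}$ into $(\mathfrak{m}^\circ)^{n-1}$ for every $n$, so it is continuous in the $\mathfrak{m}^\circ$-adic topology. Therefore it suffices to prove $\partial_\xi(a_1 \cdots a_d) = \partial_\xi(a_2 \cdots a_d a_1)$ for each cyclic product $a_1 \cdots a_d$ with $a_i \in A$ (allowing $a_i \in A \setminus A^\circ$, i.e.\ having loop factors).

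The main computational step is a ``cyclic rotation invariance'' lemma: for any cyclic word written in the standard path basis form $w = \varepsilon_{ha_1}^{l_1} a_1 \varepsilon_{ta_1}^{l_2} \cdots \varepsilon_{ha_d}^{l_d} a_d \varepsilon_{ta_d}^{l_{d+1}}$ (so $a_k \in Q_1^\circ$ and $h(a_1) = t(a_d)$, with the rightmost and leftmost loop factors combining cyclically via $\varepsilon_{ha_1}^{l_1 + l_{d+1}}$), cyclic rotation by a single basis factor preserves $\partial_\xi$. This is purely combinatorial: rewriting the sum in \eqref{derivative} after reindexing shows that each term $\xi(a_k)(\text{rotation after } a_k)$ in $\partial_\xi(w)$ matches exactly one term in the rotated word, because the derivative records only what sits after each arrow position in cyclic order, and cyclic rotation permutes positions without changing this relative order.

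For the original generator $a_1 \cdots a_d - a_2 \cdots a_d a_1$, I would expand each $a_i$ into its basic factors (powers of loops $\varepsilon_j$ and arrows in $Q_1^\circ$); rotating by the block $a_1$ is a finite composition of single-factor rotations, each of which preserves $\partial_\xi$ by the lemma above. Hence $\partial_\xi$ annihilates every such difference, and by continuity $\partial_\xi(\mathcal{S}-\mathcal{S}') = 0$. Since the Jacobian ideal is by definition the closure of the two-sided ideal generated by $\{\partial_\xi(\mathcal{S}) : \xi \in (A^\circ)^\star\}$, the equalities $J(\mathcal{S}) = J(\mathcal{S}')$ and $\mathcal{P}(H,A,\mathcal{S}) = \mathcal{P}(H,A,\mathcal{S}')$ follow immediately.

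The only mildly subtle point, and what I expect to be the main bookkeeping obstacle, is handling the loop factors $\varepsilon_i^{l_i}$ correctly: derivatives are taken only with respect to $\xi \in (A^\circ)^\star$, so loops never contribute a term on their own, but they do get carried around and concatenated under rotation. One needs to verify that the standard-basis expression is cyclically well-behaved (in particular, that when the rotation brings two loop blocks at the same vertex next to each other, their product $\varepsilon_i^{l} \varepsilon_i^{l'} = \varepsilon_i^{l+l'}$ in $H_i$ yields exactly the corresponding loop block in the rotated word's standard form). Once this identification is made, the argument above runs without further complication.
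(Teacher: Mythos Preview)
Your approach is correct and is exactly the standard argument; the paper itself does not give a proof, treating the proposition as an immediate consequence of the definitions (it is introduced by ``Furthermore, cyclic derivatives do not distinguish between the potentials that are equivalent in the following sense'' and stated without further justification). Your write-up is simply a careful unpacking of that implicit check, including the loop-factor bookkeeping the paper glosses over.
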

It is easy to see that the definition of cyclical equivalence does not depend on the choice of a path basis. In fact, it can be given in more invariant terms as follows which can be found in \cite{derksen2008quivers}.
\begin{defn}
    For any topological $K$-algebra $U$, its \textbf{trace space} $\operatorname{Tr}(U)$ is defined as $\operatorname{Tr}(U)=U /\{U, U\}$, where $\{U, U\}$ is the closure of the vector subspace in $U$ spanned by all commutators. We denote by $\pi=\pi_{U}: U \rightarrow \operatorname{Tr}(U)$ the canonical projection.
\end{defn}
\begin{pro}\label{trace}
Two potentials $\mathcal{S}$ and $\mathcal{S}^{\prime}$ are cyclically equivalent if and only if $\pi_{\overline{T_H(A)}}(\mathcal{S})=\pi_{\overline{T_H(A)}}\left(\mathcal{S}^{\prime}\right)$. Thus, the Jacobian ideal and the Jacobian algebra of a potential $\mathcal{S}$ depend only on the image of $\mathcal{S}$ in $\operatorname{Tr}(\overline{T_H(A)})$.
\end{pro}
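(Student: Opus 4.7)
The plan is to prove the equivalence of cyclic equivalence and trace-space equality; the final assertion about $J(\mathcal{S})$ and $\mathcal{P}(H,A,\mathcal{S})$ then follows by combining this equivalence with Proposition \ref{cyceq}. The forward direction is the easy one: each generator $a_{1}\cdots a_{d}-a_{2}\cdots a_{d}a_{1}$ of cyclic equivalence equals, by associativity, the commutator $[a_{1},\,a_{2}\cdots a_{d}]$, and so lies in $\{\overline{T_H(A)},\overline{T_H(A)}\}$. Because the latter is defined as a closure of a span, taking finite $K$-linear combinations and then limits shows that cyclically equivalent potentials have the same image under $\pi$.

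For the converse I would assume $\pi(\mathcal{S})=\pi(\mathcal{S}')$, so that $\mathcal{S}-\mathcal{S}'=\lim_n x_n$ where each $x_n$ is a finite $K$-linear combination of commutators $[u,v]$ in $\overline{T_H(A)}$. Since $\mathcal{S}$ and $\mathcal{S}'$ are potentials, $\mathcal{S}-\mathcal{S}'\in\overline{T_H(A)}_{\mathrm{cyc}}$; the projection $\pi_{\mathrm{cyc}}$ of $\overline{T_H(A)}$ onto $\overline{T_H(A)}_{\mathrm{cyc}}$ along $\prod_{i\ne j}\overline{T_H(A)}_{i,j}$ is continuous, so applying $\pi_{\mathrm{cyc}}$ term by term reduces the problem to the following local claim: for any two path-basis monomials $p,q\in T_H(A)$, the cyclic part $\pi_{\mathrm{cyc}}([p,q])$ is a finite $K$-linear combination of elements $c_{1}\cdots c_{d}-c_{2}\cdots c_{d}c_{1}$ with $c_{i}\in A$. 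To verify the claim, I would write $p=\alpha_{1}\otimes\cdots\otimes\alpha_{k}$ and $q=\beta_{1}\otimes\cdots\otimes\beta_{l}$ as tensor products of single factors in $A$, absorbing boundary elements of $H$ into neighboring factors via the $H$-bimodule structure of $A$ (the edge case $p\in H$ is handled separately, reducing to commutativity of the local rings $H_i$). One checks that $\pi_{\mathrm{cyc}}(pq)\ne 0$ and $\pi_{\mathrm{cyc}}(qp)\ne 0$ are governed by the same vertex-matching conditions, namely $t(\alpha_k)=h(\beta_1)$ and $t(\beta_l)=h(\alpha_1)$; when these hold, $pq$ and $qp$ are the same cyclic word read from two starting points $k$ letters apart, so $pq-qp$ telescopes as a sum of single-position rotations of the concatenated sequence $(\alpha_{1},\ldots,\alpha_{k},\beta_{1},\ldots,\beta_{l})$, each of which is a cyclic relation in the sense of Definition \ref{def of cyc eq}. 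When the matching fails, $\pi_{\mathrm{cyc}}(pq)=\pi_{\mathrm{cyc}}(qp)=0$ and the claim is trivial.

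The main obstacle, specific to the $H$-based setting, is that the nilpotency relation $\varepsilon_{i}^{d_{i}}=0$ can force $pq$ or $qp$ to vanish in $T_H(A)$ even when the vertex matchings are satisfied, since a combined $\varepsilon$-power at some junction may exceed $d_{i}$. I would resolve this by noting that Definition \ref{def of cyc eq} explicitly permits the $c_{i}$ to lie in $A\setminus A^\circ$ and imposes no nonvanishing requirement on either of the two summands, so every single-shift expression $c_{1}c_{2}\cdots c_{d}-c_{2}\cdots c_{d}c_{1}$ remains a formally valid cyclic relation regardless of any $\varepsilon$-induced collapse; the telescoping identity then continues to hold in $T_H(A)$, with any vanishing summand contributing $0$ automatically. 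Once the local claim is established, continuity of $\pi_{\mathrm{cyc}}$ together with the fact that the space of cyclically trivial elements is the closure of the span of cyclic relations shows that $\mathcal{S}-\mathcal{S}'$ is a limit of sums of cyclic relations, hence $\mathcal{S}$ and $\mathcal{S}'$ are cyclically equivalent, completing the converse and thereby the proposition.
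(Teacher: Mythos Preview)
The paper does not supply its own proof of this proposition; it is stated immediately after the definition of the trace space with an implicit deferral to \cite{derksen2008quivers}. Your argument is correct and fills in the details the paper omits: the forward direction, the telescoping of $[p,q]$ into single-step cyclic relations, and your observation that Definition~\ref{def of cyc eq} allows the $c_i$ to be arbitrary elements of $A$ (so that possible nilpotency-induced vanishing of intermediate rotations causes no trouble) are all sound.

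One small correction: the edge case $p\in H$ does not quite ``reduce to commutativity of the local rings $H_{i}$'' unless also $q\in H$. When $p=\varepsilon_{i}^{r}$ and $q=\beta_{1}\cdots\beta_{l}$ is a cyclic path of positive length, the commutator
\[
[p,q]=(\varepsilon_{i}^{r}\beta_{1})\beta_{2}\cdots\beta_{l}-\beta_{1}\cdots\beta_{l-1}(\beta_{l}\varepsilon_{i}^{r})
\]
is generally nonzero, and commutativity of $H_i$ says nothing about it. The fix is to absorb $\varepsilon_{i}^{r}$ into an adjacent $A$-factor and then apply your general telescoping: rotating the first word one step to $\beta_{2}\cdots\beta_{l}(\varepsilon_{i}^{r}\beta_{1})$ and using the $H$-balancing of the tensor product over $H$ gives $\beta_{2}\cdots(\beta_{l}\varepsilon_{i}^{r})\beta_{1}$, which is a rotation of the second word. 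With this adjustment the proof is complete.
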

For $a \in Q_{1}^\circ$, we will use the notation $\partial_{a}$ for the cyclic derivative $\partial_{a^{\star}}$, where $(Q_{1}^\circ)^{\star}=\left\{a^{\star} \mid a \in Q_{1}^\circ \right\}$ is the dual basis of $Q_{1}^\circ$ in $A^{\star}$. As in \cite{derksen2008quivers},  we have some \enquote{differential calculus}.

For every $\xi \in (A^\circ)^{\star}$, we can also define a continuous $K$-linear map
$$
\Delta_{\xi}: \overline{T_H(A)} \rightarrow \overline{T_H(A)} \widehat{\otimes} \overline{T_H(A)}:=\prod_{0\leq d,e}A^d\otimes_K A^e
$$
by setting $\Delta_{\xi}(\varepsilon)=0$ for $\varepsilon \in H=A^{0}$, and
\begin{align}\label{defn of delta}
    \Delta_{\xi}\left(\varepsilon_{ha_1}^{l_1} a_{1} \cdots \varepsilon_{ha_d}^{l_d}a_{d}\varepsilon_{ta_d}^{l_{d+1}}\right)=\sum_{k=1}^{d} \xi\left(a_{k}\right) \varepsilon_{ha_1}^{l_1}a_{1} \cdots a_{k-1} \varepsilon_{ha_{k}}^{l_k}\otimes \varepsilon_{ta_{k}}^{l_{k+1}} a_{k+1} \cdots a_{d} \varepsilon_{ta_d}^{l_{d+1}}
\end{align}
for any path $\varepsilon_{ha_1}^{l_1} a_{1} \cdots \varepsilon_{ha_d}^{l_d}a_{d}\varepsilon_{ta_d}^{l_{d+1}}$ of length $d \geq 1$. Note that $\Delta_{\xi}$ does not depend on the choice of a path basis.

Next, we denote by $(f, g) \mapsto f \square g$ a continuous $K$-bilinear map $\left(\overline{T_H(A)} \widehat{\otimes} \overline{T_H(A)}\right) \times$ $\overline{T_H(A)} \rightarrow \overline{T_H(A)}$ given by
\begin{align}\label{defn of  square}
    (u \otimes_K v) \square g=v g u
\end{align}
for $u, v \in T_H(A)$.

Compared with the cyclic derivative $\partial_\xi$ and differential calculus $\Delta_{\xi}$ defined in \cite{derksen2008quivers}, we simply restrict $\partial_\xi$ and  $\Delta_{\xi}$ to $A^\circ$. Therefore, both the cyclic Leibniz rule and the cyclic chain rule remain valid for $\overline{T_H(A)}$. 

\begin{lem}[Cyclic Leibniz rule]\label{cyclic leibniz rule}
  For any finite sequence of vertices $i_{1}, \ldots, i_{d}, i_{d+1}=i_{1}$, let $f_{1}, \ldots f_{d}\in \overline{T_H(A)}$ such that $f_{k} \in \overline{T_H(A)}_{i_{k}, i_{k+1}}$. For every $\xi \in (A^{\circ})^\star$, we have
$$\partial_{\xi}\left(f_{1} \cdots f_{d}\right)=\sum_{k=1}^{d} \Delta_{\xi}\left(f_{k}\right) \square\left(f_{k+1} \cdots f_{d} f_{1} \cdots f_{k-1}\right).$$
\end{lem}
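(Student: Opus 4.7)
The plan is to mimic the proof of the cyclic Leibniz rule in Derksen--Weyman--Zelevinsky \cite{derksen2008quivers}, with the only novelty being bookkeeping of the loop factors $\varepsilon_i^l$ that decorate every path in the $H$-based setting. First I would observe that all three of $\partial_\xi$, $\Delta_\xi$, and the binary operation $\square$ are continuous (as defined just before and after \eqref{defn of delta}--\eqref{defn of  square}), and that path monomials span a dense subspace of $\overline{T_H(A)}_{i_k,i_{k+1}}$. Hence by multilinearity and continuity it is enough to verify the identity in the special case where each $f_k$ is a single decorated path
$$f_k = \varepsilon_{i_k}^{l_{k,0}} a_{k,1} \varepsilon^{l_{k,1}} a_{k,2} \cdots a_{k,m_k} \varepsilon_{i_{k+1}}^{l_{k,m_k}},$$
with all $a_{k,j}\in Q_1^\circ$ and the source/target compatibilities holding.

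Next I would expand both sides of the claimed identity explicitly. The concatenated product $f_1 f_2 \cdots f_d$ is again a single path basis element in $\overline{T_H(A)}_{i_1,i_1}$ (the loop factors at the junctions multiply inside $H_{i_{k+1}}$, which is allowed because the tensor products defining $A^d$ are taken over $H$, not over $R$). Applying the defining formula \eqref{derivative} for $\partial_\xi$ to this cyclic path produces one summand for each occurrence of an arrow $a_{k,j}$ in the concatenation; the summand is $\xi(a_{k,j})$ times the cyclic rotation that begins just after $a_{k,j}$ and ends just before it. I then group these summands according to the index $k$ telling which factor $f_k$ contains the arrow. For fixed $k$, expanding $\Delta_\xi(f_k)$ by \eqref{defn of delta} and then applying $\square (f_{k+1}\cdots f_d f_1 \cdots f_{k-1})$ via \eqref{defn of  square} yields exactly those same summands: the right-hand tensor factor of $\Delta_\xi(f_k)$ becomes the prefix starting immediately after $a_{k,j}$, the subsequent factors $f_{k+1}\cdots f_{k-1}$ wrap around the cycle, and the left-hand tensor factor of $\Delta_\xi(f_k)$ closes up just before $a_{k,j}$. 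Matching these two expansions term by term gives the identity.

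The one step that requires genuine care is verifying that the loop decorations match on both sides. On the left, when \eqref{derivative} rotates the cyclic path past the arrow $a_{k,j}$, the loop factor $\varepsilon_{i_k}^{l_{k,j-1}}$ stays to the left of $a_{k,j}$ and $\varepsilon^{l_{k,j}}$ stays to the right. On the right, the same splitting is produced by $\Delta_\xi$ at position $j$ inside $f_k$, and the $\square$-operation places the ``post'' piece on the left and the ``pre'' piece on the right, with the intervening $f_{k+1}\cdots f_{k-1}$ sandwiched correctly because all tensor products are over $H$, so the boundary $\varepsilon$'s absorb into the neighbouring $H_j$-components unambiguously. This is the only place where the $H$-based generalization differs from \cite{derksen2008quivers}, and once this is checked the computation closes on the nose. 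I expect this loop-bookkeeping to be the main (but purely mechanical) obstacle; no new ideas are needed beyond DWZ's original argument.
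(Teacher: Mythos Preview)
Your proposal is correct and matches the paper's approach: the paper does not give a detailed proof but simply observes that, compared with \cite{derksen2008quivers}, the operators $\partial_\xi$ and $\Delta_\xi$ are merely restricted to $A^\circ$, so the cyclic Leibniz rule (and chain rule) carry over verbatim to $\overline{T_H(A)}$. Your explicit reduction to path monomials by continuity and multilinearity, followed by the term-by-term matching with loop bookkeeping, is exactly the content of that deferred argument, written out in full.
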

\begin{lem}[Cyclic chain rule]\label{chain rule}
Suppose that $\varphi: \overline{T_H(A)} \rightarrow \overline{T_H(A^{\prime})}$ is an algebra homomorphism as in Proposition \ref{determined morphism}. Then, for every potential $S \in \overline{T_H(A)}_{\text{cyc}}$ and $\xi \in (A^{\prime\circ})^\star$, we have:
$$
\partial_{\xi}(\varphi(\mathcal{S}))=\sum_{a\in Q_{1}^\circ } \Delta_{\xi}(\varphi(a)) \square \varphi\left(\partial_{a}(\mathcal{S})\right).
$$
\end{lem}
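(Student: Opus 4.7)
The plan is to follow the classical strategy: reduce the identity to the case of a single cyclic path by continuity, and then apply the cyclic Leibniz rule (Lemma \ref{cyclic leibniz rule}) term by term. All four operations appearing in the statement---the algebra homomorphism $\varphi$, the cyclic derivative $\partial_\xi$, the map $\Delta_\xi$, and the pairing $\square$---are continuous with respect to the $\mathfrak{m}^\circ$-adic topology. Since finite $K$-linear combinations of cyclic paths are dense in $\overline{T_H(A)}_{\mathrm{cyc}}$, it suffices to verify the identity when $\mathcal{S}$ is a single cyclic path of the form $\mathcal{S} = u_0 a_1 u_1 a_2 \cdots a_d u_d$, where each $a_k \in Q_1^\circ$, each $u_j$ is a monomial living in the appropriate $H_{i_j}$, and $t(a_d) = h(a_1)$.

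Because $\varphi|_H = \mathrm{Id}$ and $\varphi$ is an algebra homomorphism, $\varphi(\mathcal{S}) = u_0 \varphi(a_1) u_1 \varphi(a_2) \cdots \varphi(a_d) u_d$ is a cyclic product of $2d+1$ factors whose head and tail vertices match cyclically. Applying Lemma \ref{cyclic leibniz rule} yields one term for each factor; the terms coming from the $u_j$ vanish because $\Delta_\xi(u) = 0$ for every $u \in H$ by the definition of $\Delta_\xi$ preceding (\ref{defn of delta}). The surviving terms take the form
\begin{equation*}
\sum_{k=1}^d \Delta_\xi(\varphi(a_k)) \square \bigl(u_k \varphi(a_{k+1}) u_{k+1} \cdots \varphi(a_d) u_d u_0 \varphi(a_1) u_1 \cdots \varphi(a_{k-1}) u_{k-1}\bigr).
\end{equation*}
Using $\varphi|_H = \mathrm{Id}$ once more, the argument of $\square$ equals $\varphi(u_k a_{k+1} u_{k+1} \cdots a_d u_d u_0 a_1 u_1 \cdots a_{k-1} u_{k-1})$, which is precisely the $k$-th summand of $\partial_{a_k}(\mathcal{S})$ from formula (\ref{derivative}). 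Regrouping by gathering together all positions $k$ sharing a common value $a_k = a$ gives
\begin{equation*}
\partial_\xi(\varphi(\mathcal{S})) = \sum_{a \in Q_1^\circ} \Delta_\xi(\varphi(a)) \square \varphi(\partial_a(\mathcal{S})),
\end{equation*}
as required.

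The only point that requires real care is checking that Lemma \ref{cyclic leibniz rule} applies cleanly to the interlaced product $u_0 \varphi(a_1) u_1 \cdots \varphi(a_d) u_d$: each $u_j$ sits in the $(i_j, i_j)$-component and each $\varphi(a_k)$ in the $(h(a_k), t(a_k))$-component of $\overline{T_H(A')}$, so consecutive factors chain correctly around the cycle. Everything else---the continuity reduction, the vanishing of $\Delta_\xi$ on $H$, and the final regrouping---follows directly from the definitions, so no substantial obstacle arises.
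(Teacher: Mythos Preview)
Your proof is correct and follows the same approach the paper implicitly takes. The paper does not give an explicit argument for this lemma; it simply observes that, since $\partial_\xi$ and $\Delta_\xi$ here are the DWZ operators restricted to $(A^\circ)^\star$, the cyclic Leibniz rule and cyclic chain rule carry over verbatim from \cite{derksen2008quivers}. Your write-up is precisely the DWZ argument adapted to the $H$-based setting: reduce by continuity and $K$-linearity to a single cyclic path, factor it as $u_0 a_1 u_1 \cdots a_d u_d$ with $u_j \in H$ and $a_k \in Q_1^\circ$, apply the cyclic Leibniz rule to $u_0\varphi(a_1)u_1\cdots\varphi(a_d)u_d$, discard the $\Delta_\xi(u_j)=0$ terms, and regroup by arrow.
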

By Lemma \ref{chain rule}, we have the following proposition whose proof is the same as \cite[Proposition 3.7]{derksen2008quivers}
\begin{pro}\label{iso of jacobian algebra}
Every algebra isomorphism
$$
\varphi: \overline{T_H(A)} \rightarrow \overline{T_H(A')} 
$$
such that $\varphi|_{H}=\mathrm{Id}$, sends $J(\mathcal{S})$ onto $J(\varphi(\mathcal{S}))$, inducing an isomorphism of Jacobian algebras $\mathcal{P}(H,A,\mathcal{S}) \rightarrow \mathcal{P}\left(H,A^{\prime}, \varphi(\mathcal{S})\right)$.
\end{pro}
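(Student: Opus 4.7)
The plan is to establish the set-theoretic equality $\varphi(J(\mathcal{S})) = J(\varphi(\mathcal{S}))$ as closed two-sided ideals of $\overline{T_H(A')}$, from which the induced isomorphism of Jacobian algebras is immediate. The heart of the argument is Lemma \ref{chain rule} (the cyclic chain rule), which expresses each generator $\partial_\xi(\varphi(\mathcal{S}))$ of $J(\varphi(\mathcal{S}))$ in terms of the elements $\varphi(\partial_a \mathcal{S})$ that obviously belong to $\varphi(J(\mathcal{S}))$.

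First I would verify that $\varphi(J(\mathcal{S}))$ is a closed two-sided ideal in $\overline{T_H(A')}$. Because $\varphi$ is an algebra isomorphism with $\varphi|_H = \mathrm{Id}$, Proposition \ref{determined morphism} gives that $\varphi$ is a homeomorphism for the $\mathfrak{m}^\circ$-adic topology (its inverse again being continuous by the same proposition), so it carries the closed ideal $J(\mathcal{S})$ to a closed two-sided ideal. Next, for any $\xi \in (A'^\circ)^\star$, Lemma \ref{chain rule} gives
\begin{equation*}
\partial_\xi(\varphi(\mathcal{S})) = \sum_{a \in Q_1^\circ} \Delta_\xi(\varphi(a)) \square \varphi(\partial_a \mathcal{S}).
\end{equation*}
By the definition (\ref{defn of  square}) of $\square$, writing $\Delta_\xi(\varphi(a)) = \sum_i u_i \otimes_K v_i$, each term $\Delta_\xi(\varphi(a)) \square \varphi(\partial_a \mathcal{S}) = \sum_i v_i \cdot \varphi(\partial_a \mathcal{S}) \cdot u_i$ lies in the two-sided ideal generated by $\varphi(\partial_a \mathcal{S})$, hence in $\varphi(J(\mathcal{S}))$. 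Because $\varphi(J(\mathcal{S}))$ is already closed, it contains the closure of the ideal generated by all $\partial_\xi(\varphi(\mathcal{S}))$, which is precisely $J(\varphi(\mathcal{S}))$. Thus $J(\varphi(\mathcal{S})) \subseteq \varphi(J(\mathcal{S}))$.

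For the reverse inclusion, I would apply the same argument to the inverse topological isomorphism $\varphi^{-1} \colon \overline{T_H(A')} \to \overline{T_H(A)}$, which also satisfies $\varphi^{-1}|_H = \mathrm{Id}$. This yields $J(\mathcal{S}) = J(\varphi^{-1}(\varphi(\mathcal{S}))) \subseteq \varphi^{-1}(J(\varphi(\mathcal{S})))$, and applying $\varphi$ gives $\varphi(J(\mathcal{S})) \subseteq J(\varphi(\mathcal{S}))$. Combining the two inclusions yields $\varphi(J(\mathcal{S})) = J(\varphi(\mathcal{S}))$, so $\varphi$ descends to an isomorphism $\mathcal{P}(H,A,\mathcal{S}) \to \mathcal{P}(H,A',\varphi(\mathcal{S}))$.

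The only point that requires care is the topological bookkeeping: the generators $\partial_a(\mathcal{S})$ typically give rise to $J(\mathcal{S})$ only after taking closure, so one must know that $\varphi(J(\mathcal{S}))$ is actually closed (to absorb the closure operation used when forming $J(\varphi(\mathcal{S}))$). This is guaranteed by the continuity of $\varphi^{-1}$, which is itself an instance of Proposition \ref{determined morphism} together with Lemma \ref{two topologies concides} ensuring the $\mathfrak{m}$-adic and $\mathfrak{m}^\circ$-adic topologies coincide. Otherwise the proof is essentially a transcription of \cite[Proposition 3.7]{derksen2008quivers} to the $H$-based setting, with the observation that restricting cyclic derivatives to $(A^\circ)^\star$ rather than the full $A^\star$ does not affect the chain-rule manipulation.
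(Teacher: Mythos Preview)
Your proposal is correct and follows precisely the approach the paper indicates: it invokes the cyclic chain rule (Lemma \ref{chain rule}) to obtain one inclusion and then applies the same argument to $\varphi^{-1}$ for the reverse, exactly as in \cite[Proposition 3.7]{derksen2008quivers}. The only addition is that you spell out the topological bookkeeping (closedness of $\varphi(J(\mathcal{S}))$ via the homeomorphism property from Proposition \ref{determined morphism}), which the paper leaves implicit.
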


\subsection{\texorpdfstring{$H$}{}-based Quiver with Potentials}
\begin{defn}
Let $Q$ be a quiver that has exactly one loop at each vertex, with the base $H$ and the arrow span $A$. Let $\mathcal{S}\in \overline{T_H(A)}_{\text{cyc}}$ be a potential. We say that a triple  $(H, A,\mathcal{S})$ ( or $(Q,\mathbf{d},\s)$) is an \textbf{$H$-based quiver with potential} ($H$-based QP for short) if it satisfies the following condition:
\begin{align}\label{potiential}
    \text{No two cyclically equivalent cyclic paths appear in the decomposition of $\mathcal{S}$}.
\end{align}
\end{defn}
Condition (\ref{potiential}) excludes, for instance, any non-zero potential $S$ cyclically equivalent to 0.
\begin{defn}\label{defn of right-equivalence of gqp}
 Let $(H,A,\mathcal{S})$ and $(H,A',\mathcal{S}')$ be $H$-based QPs on the same base $H$. By a \textbf{right-equivalence} between $(H,A,\mathcal{S})$ and $(H,A',\mathcal{S}')$ we mean an algebra isomorphism $\varphi: \overline{T_H(A)}\rightarrow \overline{T_H(A')}$ such that $\varphi|_{H}=\mathrm{Id}$, and $\varphi(\mathcal{S})$ is cyclically equivalent to $\mathcal{S}^{\prime}$.
\end{defn}

In view of Proposition \ref{trace}, any algebra homomorphism $ \overline{T_H(A)}\rightarrow \overline{T_H(A')}$ such that $\varphi|_{H}=\mathrm{Id}$, sends cyclically equivalent potentials to cyclically equivalent ones. It follows that right-equivalences of $H$-based QPs have the expected properties: the composition of two right-equivalences, as well as the inverse of a right-equivalence, is again a right-equivalence. Note also that an isomorphism $\varphi: \overline{T_H(A)}\rightarrow \overline{T_H(A')}$ induces an isomorphism of $H$-bimodules $A$ and $A^{\prime}$ (cf. Proposition \ref{determined morphism}), so in dealing with right-equivalent $H$-based QPs we can assume without loss of generality that $A=A^{\prime}$.

In view of Propositions \ref{cyceq} and \ref{iso of jacobian algebra}, any right-equivalence of $H$-based QPs $(H,A,\mathcal{S}) \cong (H,A',\mathcal{S}')$ induces an isomorphism of the Jacobian ideals $J(\mathcal{S}) \cong J(\mathcal{S}')$ and of the Jacobian algebras $\mathcal{P}(H,A,\mathcal{S}) \cong \mathcal{P}(H,A',\mathcal{S}')$.

For every two $H$-based QPs $(H,A,\mathcal{S})$ and $(H,A',\mathcal{S}')$, we can form their direct sum $(H,A,\mathcal{S})\oplus(H,A',\mathcal{S}')=\left(H,A \oplus A^{\prime}, \mathcal{S}+\mathcal{S}^{\prime}\right)$; it is well-defined since both complete path algebras $\overline{T_H(A)}$ and $\overline{T_H(A')}$ have canonical embeddings into $\overline{T_H(A\oplus A')}$ as closed $H$-subalgebras.

Now we begin our analysis of $H$-based QPs with the trivial case.  
\begin{defn}
We say that an $H$-based QP $(H,A,\mathcal{S})$ is \textbf{trivial} if $\mathcal{S} \in A^{2}$ and $\partial \s=A^\circ$, or equivalently, $\mathcal{P}(H,A,\mathcal{S})=H$.
\end{defn}

The following description of trivial $H$-based QPs is seen by standard linear algebra.

\begin{pro}\label{triv}
 An $H$-based QP $(H,A,\mathcal{S})$ with $\mathcal{S} \in A^{2}$ is trivial if and only if the set of arrows $Q^\circ_{1}$ consists of $2N$ distinct arrows $a_{1}, b_{1}, \cdots, a_{N}, b_{N}$ such that each $a_{k} b_{k}$ is a cyclic 2-path, and there is a change of arrow $\varphi$ such that $\varphi(S)$ is cyclically equivalent to $a_{1} b_{1}+\cdots+a_{N} b_{N}$.
\end{pro}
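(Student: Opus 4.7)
The plan is to prove both implications, with the ``only if'' direction being the substantive one.

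For the ``if'' direction, suppose $Q_1^\circ = \{a_1, b_1, \ldots, a_N, b_N\}$ with each $(a_k, b_k)$ a cyclic 2-path and a change of arrows $\varphi$ realizing $\varphi(\mathcal{S})$ cyclically equivalent to $\mathcal{S}_0 := a_1 b_1 + \cdots + a_N b_N$. By Propositions~\ref{cyceq} and~\ref{iso of jacobian algebra}, triviality is a right-equivalence invariant, so I may assume $\mathcal{S} = \mathcal{S}_0$. Formula~\eqref{derivative} then gives $\partial_{a_k^\star}\mathcal{S}_0 = b_k$ and $\partial_{b_k^\star}\mathcal{S}_0 = a_k$, and since these exhaust $Q_1^\circ$, the $K$-span of cyclic derivatives equals $A^\circ$, confirming triviality.

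For the ``only if'' direction I proceed in two stages. Stage~1 shows that a trivial $\mathcal{S} \in A^2$ is cyclically equivalent to an $\varepsilon$-free potential, i.e.\ one whose path-basis expansion involves only pure 2-paths $ab$ with no loop insertions. I expand $\mathcal{S} = \sum_t c_t T_t$ in the standard path basis of cyclic 2-paths (each $T_t$ of the form $\varepsilon_i^{l_1} a \varepsilon_j^{l_2} b \varepsilon_i^{l_3}$), choosing---as permitted by~\eqref{potiential}---one representative per cyclic equivalence class and normalizing via cyclic equivalence to $l_3 = 0$. For each non-loop arrow $a$, formula~\eqref{derivative} produces $\partial_{a^\star}\mathcal{S}$ as a $K$-linear combination of path-basis elements of $A$, whose loop-decorations are read off from the $(l_1, l_2)$-data of those $T_t$ in which $a$ appears. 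The key technical point is that distinct $T_t$ yield linearly independent contributions to $\partial_{a^\star}\mathcal{S}$: two basis terms producing identical contributions would necessarily be cyclically equivalent, contradicting~\eqref{potiential}. Triviality ($\partial\mathcal{S} \subseteq A^\circ$) therefore forces $c_t = 0$ for every $T_t$ carrying a nontrivial loop factor, leaving only pure 2-paths.

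In Stage~2 I decompose the now $\varepsilon$-free $\mathcal{S} = \sum_{(a,b)} c_{a,b}\, ab$ over cyclic 2-paths according to the unordered pair of vertices $\{i,j\}$ traversed, and observe that for each such pair the coefficients $c_{a,b}$ define a $K$-bilinear pairing between $\mathrm{Span}_K\{a : j \to i\}$ and $\mathrm{Span}_K\{b : i \to j\}$. The spanning condition $\partial\mathcal{S} = A^\circ$ translates precisely to nondegeneracy of this pairing, which both forces equal dimensions on the two sides (so that the arrows between $i$ and $j$ pair up) and, by standard linear algebra, allows diagonalization to identity form via a change of arrows. Assembling these changes over all unordered pairs $\{i,j\}$ produces the required $\varphi$. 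The main obstacle is Stage~1: the $H$-based setting introduces loop-decorated path-basis elements absent in~\cite{derksen2008quivers}, and one must verify carefully using~\eqref{potiential} that contributions to $\partial_{a^\star}\mathcal{S}$ from distinct cyclic equivalence classes admit no cancellation; once past this point, Stage~2 is the standard pairing-and-diagonalization argument.
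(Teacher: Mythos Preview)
Your proposal is correct and matches what the paper intends: the paper offers no proof beyond the remark ``seen by standard linear algebra,'' and your two-stage argument is precisely the natural unpacking of that phrase in the $H$-based setting. Your Stage~1 (eliminating loop decorations using $\partial\mathcal{S}\subseteq A^\circ$ and the injectivity of $(l_1,l_2,b)\mapsto \varepsilon_j^{l_2}b\,\varepsilon_i^{l_1}$ as path-basis elements) is the only step genuinely beyond the classical DWZ Proposition~4.4, and your Stage~2 is the standard nondegenerate-pairing diagonalization; both are handled correctly.
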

 Returning to general $H$-based QPs, we now see that taking direct sums with trivial $H$-based QPs preserves the Jacobian algebra, following the proof in \cite[Proposition 4.5]{derksen2008quivers}.
\begin{pro}\label{trival part of jacobian}
 If $(H,A,\mathcal{S})$ is an arbitrary $H$-based QP, and $(H,C,\mathcal{T})$ is a trivial one, then the canonical embedding   $\overline{T_H(A)}\rightarrow\overline{T_H(A\oplus C)}$ induces an isomorphism of Jacobian algebras $\mathcal{P}(H,A,\mathcal{S}) \rightarrow \mathcal{P}(H,A \oplus C, \mathcal{S}+\mathcal{T})$.
\end{pro}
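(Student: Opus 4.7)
The plan is to follow the strategy of \cite[Proposition 4.5]{derksen2008quivers} adapted to the $H$-based setting. First, by Proposition \ref{triv}, a change of arrows supported on $C$ and extended by the identity on $A$ replaces $\mathcal{T}$ with a cyclically equivalent potential $\sum_{k=1}^{N} a_k b_k$, where the $2N$ arrows $a_k, b_k$ come from the arrows of $C$ and the products $a_k b_k$ are cyclic $2$-paths. By Propositions \ref{cyceq} and \ref{iso of jacobian algebra} this reduction preserves the Jacobian algebra, so I may henceforth assume $\mathcal{T} = a_1 b_1 + \cdots + a_N b_N$.

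Alongside the canonical embedding $\iota \colon \overline{T_H(A)} \to \overline{T_H(A \oplus C)}$ I introduce its retraction $\rho \colon \overline{T_H(A \oplus C)} \to \overline{T_H(A)}$ determined by $\rho|_H = \mathrm{Id}$, $\rho|_A = \mathrm{Id}_A$, and $\rho|_C = 0$. By Proposition \ref{determined morphism} this specifies a well-defined continuous $H$-algebra homomorphism satisfying $\rho \circ \iota = \mathrm{Id}$. A direct computation of cyclic derivatives gives $\partial_a(\mathcal{S} + \mathcal{T}) = \partial_a \mathcal{S}$ for each non-loop arrow $a$ of $A$, since $\mathcal{T}$ involves no arrows from $A$; and $\partial_{a_k}(\mathcal{S} + \mathcal{T}) = b_k$, $\partial_{b_k}(\mathcal{S} + \mathcal{T}) = a_k$ for each $k$.

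From the first identity I get $\iota(J(\mathcal{S})) \subseteq J(\mathcal{S} + \mathcal{T})$, so $\iota$ descends to a homomorphism $\bar\iota \colon \mathcal{P}(H, A, \mathcal{S}) \to \mathcal{P}(H, A \oplus C, \mathcal{S} + \mathcal{T})$. The other identities show that $a_k, b_k \in J(\mathcal{S} + \mathcal{T})$ for every $k$; the $H$-bimodule structure together with closure in the $\mathfrak{m}^\circ$-adic topology then forces the closed ideal $\mathfrak{m}^\circ(C) \subseteq \overline{T_H(A \oplus C)}$ into $J(\mathcal{S} + \mathcal{T})$, so every element of $\overline{T_H(A \oplus C)}$ is congruent modulo $J(\mathcal{S} + \mathcal{T})$ to an element of $\iota(\overline{T_H(A)})$, yielding surjectivity of $\bar\iota$. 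For injectivity I apply $\rho$: on the generators of $J(\mathcal{S} + \mathcal{T})$ one has $\rho(\partial_a \mathcal{S}) = \partial_a \mathcal{S} \in J(\mathcal{S})$, while $\rho(a_k) = \rho(b_k) = 0$; by continuity of $\rho$ this extends to $\rho(J(\mathcal{S} + \mathcal{T})) \subseteq J(\mathcal{S})$, so if $\iota(f) \in J(\mathcal{S} + \mathcal{T})$ then $f = \rho(\iota(f)) \in J(\mathcal{S})$.

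The main technical care is in the closure step: the inclusion $\rho(J(\mathcal{S} + \mathcal{T})) \subseteq J(\mathcal{S})$ must survive the topological closure used to define the Jacobian ideal, which requires $\rho$ to be continuous for the $\mathfrak{m}^\circ$-adic topology. This is supplied by Proposition \ref{determined morphism} together with Lemma \ref{two topologies concides}, which identify the relevant topology and guarantee continuity of any $H$-algebra homomorphism with the prescribed behavior on $A \oplus C$.
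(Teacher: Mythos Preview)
Your proof is correct and follows essentially the same approach as the paper, which simply defers to \cite[Proposition 4.5]{derksen2008quivers}; your argument is exactly the natural adaptation of that proof to the $H$-based setting. The only minor remark is notational: what you call ``the closed ideal $\mathfrak{m}^\circ(C)$'' is the closed two-sided ideal of $\overline{T_H(A\oplus C)}$ generated by $C$ (denoted $L$ elsewhere in the paper), which coincides with $\ker\rho$; and the appeal to Lemma~\ref{two topologies concides} at the end is unnecessary since Proposition~\ref{determined morphism} already produces $\rho$ as a homomorphism of topological algebras.
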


For an arbitrary $H$-based QP $(H,A,\mathcal{S})$, let $\mathcal{S}^{\circ(2)}$ denote the degree $2$ homogeneous component of $\mathcal{S}$ in $(A^\circ)^2$.
 We call $(H,A,\mathcal{S})$ \textbf{reduced} if $\mathcal{S}^{\circ (2)}=0$. We define the \textbf{trivial} and \textbf{reduced parts} of $(H,A,\mathcal{S})$ as the finite-dimensional $H$-bimodules given by
$$
A_{\text {triv }}=A_{\text {triv }}(\mathcal{S})=H\partial \mathcal{S}^{\circ (2)} H, \quad A_{\text {red }}=A_{\text {red }}(\mathcal{S})=A / (H\partial \mathcal{S}^{\circ (2)}H).
$$
where
$$
\partial \mathcal{S}^{\circ (2)}:=\left\{\partial_{\xi}(\mathcal{S}^{\circ (2)}) \mid \xi \in (A^\circ)^{\star}\right\} \subseteq A.
$$

We now have the following fundamental result, which will play a crucial role  in later sections.
\begin{thm}[Splitting Theorem]\label{splitting thm}
 For every $H$-based QP $(H,A,\mathcal{S})$ with the trivial 
 arrow span $A_{\text{triv }}$ and the reduced  arrow span $A_{\text{red }}$, there exist a trivial $H$-based QP $\left(H,A_{\text{triv}}, \mathcal{S}_{\text{triv }}\right)$ and a reduced $H$-based QP $\left(H,A_{\text{red}}, \mathcal{S}_{\text{red}}\right)$ such that $(H,A,\mathcal{S})$ is right-equivalent to the direct sum
 \begin{align}\label{existence of split thm}
    \left(H, A_{\text {triv }}, S_{\text {triv }}\right) \oplus\left(H, A_{\text {red }}, S_{\text {red }}\right).
\end{align}
 Furthermore, the right-equivalence class of each of the $H$-based QPs $\left(H,A_{\text{triv }}, \s_{\text{triv }}\right)$ and $\left(H,A_{\text{red }}, \s_{\text{red }}\right)$ is determined by the right-equivalence class of $(H,A,\s)$.
\end{thm}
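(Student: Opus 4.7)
The plan is to adapt the Derksen--Weyman--Zelevinsky proof (\cite[Theorem 4.6]{derksen2008quivers}) to the $H$-based setting, leveraging the topological completeness of $\overline{T_H(A)}$ with respect to the $\mathfrak{m}^\circ$-adic topology, which by Lemma \ref{two topologies concides} coincides with the $\mathfrak{m}$-adic topology. The argument splits into an existence part (construct some splitting) and a uniqueness part (any two splittings give right-equivalent trivial and reduced parts).

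For existence, I first normalize the loop-free quadratic component $\mathcal{S}^{\circ(2)}$. By Proposition \ref{triv}, there is a change of arrows $\varphi_0$ fixing $H$ after which $\mathcal{S}^{\circ(2)}$ is cyclically equivalent to $\sum_{k=1}^{N} a_k b_k$ for pairs of distinct non-loop arrows $a_k,b_k \in Q_1^\circ$ forming $2$-cycles. This identifies $A_{\text{triv}} = H \partial\mathcal{S}^{\circ(2)} H$ with the $H$-bimodule generated by the $a_k, b_k$ and fixes a complementary $H$-subbimodule $A_{\text{red}}$, so that $A = A_{\text{triv}} \oplus A_{\text{red}}$. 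I then construct a unitriangular automorphism $\varphi = \lim_{n\to\infty} \psi_n \circ \cdots \circ \psi_1$ removing order by order the coupling between $A_{\text{triv}}$ and $A_{\text{red}}$: at step $n$, I substitute $a_k \mapsto a_k + u_k^{(n)}$ and $b_k \mapsto b_k + v_k^{(n)}$ with $u_k^{(n)}, v_k^{(n)} \in (\mathfrak{m}^\circ)^{n+1}$ chosen so that the mixed terms of degree $n+2$ in $\varphi(\mathcal{S})$ cancel. The cyclic Leibniz rule (Lemma \ref{cyclic leibniz rule}), together with the non-degeneracy of the pairing induced by $\mathcal{S}^{\circ(2)}$ on $A^\circ \cap A_{\text{triv}}$, turns this into a solvable linear system at each stage, and $\mathfrak{m}^\circ$-adic completeness gives convergence to the desired right-equivalence realizing (\ref{existence of split thm}).

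For uniqueness, given two splittings $\varphi_i : (H, A, \mathcal{S}) \to (H, A_{\text{triv}}, \mathcal{S}^{(i)}_{\text{triv}}) \oplus (H, A_{\text{red}}, \mathcal{S}^{(i)}_{\text{red}})$ for $i = 1, 2$, I consider $\psi = \varphi_2 \circ \varphi_1^{-1}$, a right-equivalence between the two direct sums. Proposition \ref{determined morphism} reduces $\psi$ to its linear part $\psi|_{A_{\text{triv}} \oplus A_{\text{red}}}$, which I decompose into block form. Looking at the quadratic part of $\psi(\mathcal{S}^{(1)}_{\text{triv}} + \mathcal{S}^{(1)}_{\text{red}})$ forces the diagonal blocks to be $H$-bimodule isomorphisms; the off-diagonal blocks can then be eliminated by a further iteration identical in form to the one in the existence part. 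This produces separate right-equivalences on the trivial and reduced summands, completing the uniqueness statement.

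The main obstacle is the inductive step in the existence proof: the corrections $u_k^{(n)}, v_k^{(n)}$ must respect the full $H$-bimodule structure, meaning that loop factors $\varepsilon_i^{l_1}$ and $\varepsilon_j^{l_2}$ surrounding each trivial arrow must be handled coherently on both sides, and the cyclic Leibniz rule applied to the loop-decorated paths $\varepsilon_i^{l_1} a_k \varepsilon_j^{l_2}$ must still produce a non-degenerate system. The resolution is that, because cyclic derivatives are only taken with respect to non-loop arrows in $Q_1^\circ$ and $A_{\text{triv}}$ is generated as an $H$-bimodule by the $a_k, b_k$, the relevant non-degeneracy reduces to that of $\mathcal{S}^{\circ(2)}$ viewed as an $R$-bilinear pairing on the loop-free part $A^\circ \cap A_{\text{triv}}$, recovering the classical DWZ linear-algebra argument tensored against the loop ring.
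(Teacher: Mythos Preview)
Your existence argument is correct and matches the paper's approach: the coincidence of the $\mathfrak{m}^\circ$-adic and $\mathfrak{m}$-adic topologies (Lemma~\ref{two topologies concides}) lets the DWZ limiting construction go through verbatim, and your remark that the relevant non-degeneracy reduces to the $R$-bilinear pairing on $A^\circ \cap A_{\text{triv}}$ is exactly the point.

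Your uniqueness argument, however, diverges from the paper and has a gap. You propose to take $\psi = \varphi_2 \circ \varphi_1^{-1}$ and ``eliminate the off-diagonal blocks by a further iteration identical in form to the one in the existence part.'' But eliminating off-diagonal blocks means composing $\psi$ with some auxiliary automorphism $\chi$, and any such composition changes what the map does to the potential: there is no reason $\chi\circ\psi$ (or $\psi\circ\chi$) still sends $\mathcal{S}^{(1)}_{\text{triv}} + \mathcal{S}^{(1)}_{\text{red}}$ to something cyclically equivalent to $\mathcal{S}^{(2)}_{\text{triv}} + \mathcal{S}^{(2)}_{\text{red}}$. The iteration in the existence proof simplifies a \emph{potential}; it does not diagonalize an arbitrary automorphism while preserving a right-equivalence.

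The paper takes a different route (Proposition~\ref{reduce in split thm}). Writing $C=A_{\text{triv}}$ and $L$ for the closed ideal it generates, one first uses the Jacobian ideal to show the linear block $C^\circ \to A_{\text{red}}^\circ$ of $\psi$ vanishes: since $C \subseteq J(\mathcal{S}_{\text{triv}}+\mathcal{S}_{\text{red}})$ and right-equivalences preserve Jacobian ideals, $\psi(C^\circ)$ lands in $J' \oplus L \subseteq (\mathfrak{m}^\circ)^2 + H^\circ\mathfrak{m}^\circ + \mathfrak{m}^\circ H^\circ + L$, forcing that block to be zero. This makes the projection $\psi_{\text{red}} = p\circ\psi|_{A_{\text{red}}}$ an automorphism of $\overline{T_H(A_{\text{red}})}$. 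One then checks that $\mathcal{S}^{(2)}_{\text{red}} - \psi_{\text{red}}(\mathcal{S}^{(1)}_{\text{red}})$ is cyclically equivalent to $p\psi(\mathcal{S}^{(1)}_{\text{triv}}) \in \psi_{\text{red}}\bigl(J(\mathcal{S}^{(1)}_{\text{red}})\bigr)^2$, and concludes via a separate key lemma (Proposition~\ref{lem in reduce in split thm}, the $H$-based analogue of \cite[Proposition~4.10]{derksen2008quivers}): if $\mathcal{S}' - \mathcal{S} \in J(\mathcal{S})^2$ with both reduced, then $(H,A,\mathcal{S})$ and $(H,A,\mathcal{S}')$ are right-equivalent. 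That lemma is itself proved by an iteration, but one driven by the Jacobian-ideal structure rather than the grading alone; your sketch neither states nor invokes it.
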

\begin{proof}
To prove this, we observe that the $m^\circ$-adic and $m$-adic topologies coincide on $\overline{T_H(A)}$. Since the original proof in \cite{derksen2008quivers} relies on the $m$-adic topology of the completed path algebra, the proof follows directly from \cite{derksen2008quivers}.  All technical constructions transfer with the natural adaptation to our setting. See Appendix \ref{app:thm splitting} for details.
\end{proof}
 \begin{defn}
     We call the component $\left(H,A_{\text{red }}, \s_{\text{red }}\right)$ in the decomposition (\ref{existence of split thm}) the \textbf{reduced part} of an $H$-based QP $(H,A,\s)$ (by Theorem \ref{splitting thm}, it is determined by $(H,A,\s)$ up to right-equivalence).
\end{defn}
\begin{defn}\label{2-acyclic}
    We call a quiver with $H$-base $(Q,\mathbf{d})$ (as well as $(H,A)$ ) \textbf{2-acyclic} if $Q^\circ$ has no oriented 2-cycles, i.e., satisfies the following condition:
\begin{align}\label{condition of 2-acyclic}
    \text{ For every pair of vertices $i \neq j$, either $A_{i, j}=\{0\}$ or $A_{j, i}=\{0\}$.}
\end{align}
\end{defn}

In the rest of this section we study the conditions on an $H$-based QP $(H,A,\s)$ guaranteeing that its reduced part is 2-acyclic. We need some preparation.

For a quiver $(Q,\mathbf{d})$ with the  arrow span $A$, let $\mathcal{C}=\mathcal{C}(A)$ denote the set of cyclic paths on $A$ up to cyclical equivalence. Thus, $\mathcal{C}$ is either empty (if $Q^\circ$ has no oriented cycles at all), or countable. The space of potentials up to cyclical equivalence is naturally identified with $K^{\mathcal{C}}$. We say that a $K$-valued function on $K^{\mathcal{C}}$ is \textbf{polynomial} if it depends on finitely many components of a potential $S$ and can be expressed as a polynomial in these components. For a nonzero polynomial function $F$, we denote by $U(F) \subset K^{\mathcal{C}}$ the set of all potentials $S$ such that $F(S) \neq 0$. By a \textbf{regular function} on $U(F)$ we mean a ratio of two polynomial functions on $K^{\mathcal{C}}$ such that the denominator vanishes nowhere on $U(F)$; in particular, any function of the form $G / F^{n}$, where $G$ is a polynomial, is regular on $U(F)$. If $A^{\prime}$ is the arrow span of another quiver $Q^{\prime}$, we say that a map $K^{\mathcal{C}(A)} \rightarrow K^{\mathcal{C}\left(A^{\prime}\right)}$ is \textbf{polynomial} if its every component is a polynomial function; similarly, a map $U(F) \rightarrow K^{\mathcal{C}\left(A^{\prime}\right)}$ is regular if its every component is a regular function on $U(F)$. 

Now let $\left\{a_{1}, b_{1}, \ldots, a_{N}, b_{N}\right\}$ be any maximal collection of distinct arrows in $Q^\circ$ such that $b_{k} a_{k}$ is a cyclic 2-path for $k=1, \ldots, N$. Then the quiver obtained from $Q$ by removing this collection of arrows is clearly 2-acyclic. To such a collection we associate a nonzero polynomial function on $K^{\mathcal{C}(A)}$ given by
\begin{align}\label{det}
    D_{a_{1}, \ldots, a_{N}}^{b_{1}, \ldots, b_{N}}(S)=\operatorname{det}\left(x_{b_{q} a_{p}}\right)_{p, q=1, \ldots, N}
\end{align}
where $x_{b_{q} a_{p}}$ is the sum of the coefficients of $b_{q} a_{p}$ and $a_{p} b_{q}$ in a potential $S$, with the convention that $x_{b_{q} a_{p}}=0$ unless $b_{q} a_{p}$ is a cyclic 2-path.

\begin{pro}\label{condition of 2-cyclic}
     The reduced part $\left(H,A_{\mathrm{red}}, \s_{\mathrm{red}}\right)$ of a $H$-based QP $(H,A,\s)$ is 2-acyclic if and only if the polynomial function $D_{a_{1}, \ldots, a_{N}}^{b_{1}, \ldots, b_{N}}(\s) \neq 0$ for some collection of arrows as above. Furthermore, if $A^{\prime}$ is the arrow span of the quiver obtained from $Q$ by removing all arrows $a_{1}, b_{1}, \ldots, a_{N}, b_{N}$, then there exists a regular map $G: U\left(D_{a_{1}, \ldots, a_{N}}^{b_{1}, \ldots, b_{N}}\right) \rightarrow K^{\mathcal{C}\left(A^{\prime}\right)}$ such that, for any $H$-based QP $(H,A,\s)$ with $\s \in U\left(D_{a_{1}, \ldots, a_{N}}^{b_{1}, \ldots, b_{N}}\right)$, the reduced part $\left(H,A_{\text {red }}, \s_{\text {red }}\right)$ is right-equivalent to $\left(H,A^{\prime}, G(\s)\right)$.
\end{pro}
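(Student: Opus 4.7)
The plan is to reduce the entire statement to an analysis of the quadratic term $\mathcal{S}^{\circ(2)}$ via the Splitting Theorem~\ref{splitting thm}. Since $\mathcal{S}^{\circ(2)}\in(A^\circ)^2$ involves only non-loop arrows, it may be written (up to cyclic equivalence) as $\sum_{p,q}x_{b_qa_p}[b_qa_p]$, where the sum runs over pairs $(p,q)$ for which $b_qa_p$ is a cyclic $2$-path. Because the collection $\{a_1,\ldots,a_N,b_1,\ldots,b_N\}$ is a maximal set of distinct arrows forming such $2$-cycles, the $N\times N$ matrix $X(\mathcal{S}) = (x_{b_qa_p})$, whose determinant is $D_{a_1,\ldots,a_N}^{b_1,\ldots,b_N}(\mathcal{S})$, encodes the entire trivialization data for $\mathcal{S}^{\circ(2)}$.

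For the ``if and only if'' I would argue both directions using Proposition~\ref{triv}. If the reduced part is $2$-acyclic, the Splitting Theorem produces a trivial sub-QP whose arrow span $A_{\text{triv}}$, by Proposition~\ref{triv}, consists of $2N'$ distinct arrows pairing into $2$-cycles. Since no $2$-cycle survives in $A_{\text{red}}$, these $2N'$ arrows must contain a complete pairing of all $2$-cycle arrows of $Q^\circ$; maximality of our fixed collection then forces $N'=N$, and tracing through the change of arrows of Proposition~\ref{triv} identifies $X(\mathcal{S})$ with the matrix of an $H$-bimodule isomorphism on the span of these arrows, so $D_{a_1,\ldots,a_N}^{b_1,\ldots,b_N}(\mathcal{S})\neq 0$. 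Conversely, when $D(\mathcal{S})\neq 0$ the matrix $X(\mathcal{S})$ is invertible, and $X(\mathcal{S})^{-1}$ provides a change of arrows normalizing $\mathcal{S}^{\circ(2)}$ to the standard form $\sum_k a_k b_k$; after this normalization the trivial part absorbs exactly $\{a_1,\ldots,a_N,b_1,\ldots,b_N\}$, so the reduced part is supported on the complementary arrows $A'$ and is $2$-acyclic.

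For the ``furthermore'' part I would unwind the explicit construction of the splitting from Appendix~\ref{app:thm splitting}. The required automorphism $\varphi$ is obtained as the limit (in the $\mathfrak{m}^\circ$-adic topology) of a sequence of diagonal and unitriangular automorphisms: one initial diagonalization step inverting the quadratic form (hence introducing a factor of $D_{a_1,\ldots,a_N}^{b_1,\ldots,b_N}(\mathcal{S})^{-1}$), followed by unitriangular corrections that are polynomial in the current image of $\mathcal{S}$ and in further uses of $X(\mathcal{S})^{-1}$. Since each cyclic path $c\in\mathcal{C}(A')$ has finite length, only finitely many of these successive corrections contribute to the $c$-coefficient of the reduced potential. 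Consequently, each component of the induced map $G\colon U(D_{a_1,\ldots,a_N}^{b_1,\ldots,b_N})\to K^{\mathcal{C}(A')}$ is a polynomial in the components of $\mathcal{S}$ divided by a power of $D_{a_1,\ldots,a_N}^{b_1,\ldots,b_N}(\mathcal{S})$, which is precisely what regularity on $U(D_{a_1,\ldots,a_N}^{b_1,\ldots,b_N})$ requires.

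The main obstacle will be the regularity assertion: controlling how denominators proliferate through the iterative splitting procedure and confirming that all of them remain powers of the single polynomial $D_{a_1,\ldots,a_N}^{b_1,\ldots,b_N}(\mathcal{S})$. This requires tracking at each stage of the iteration which automorphisms are genuinely diagonal (contributing factors of $X(\mathcal{S})^{-1}$) versus unitriangular (contributing only polynomial corrections), and combining this bookkeeping with a length/degree bound that limits how many iterations can influence any given coefficient of $G(\mathcal{S})$.
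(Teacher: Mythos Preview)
Your proposal is correct and follows essentially the same approach as the paper: the paper's proof consists of the single sentence ``follows by tracing the construction of $(H,A_{\text{red}},\s_{\text{red}})$ given in the proof of Lemma~\ref{lem in split thm},'' and what you have written is precisely a detailed outline of that tracing. Your identification of the main technical point---that the iterative splitting procedure only introduces denominators that are powers of $D_{a_1,\ldots,a_N}^{b_1,\ldots,b_N}(\s)$, because the sole inversion step is the initial diagonalization of $X(\s)$ while the subsequent quasi-unitriangular corrections are polynomial---is exactly right and is left implicit in the paper.
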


The proof of Proposition \ref{condition of 2-cyclic} follows by tracing the construction of $\left(H, A_{\text {red }}, S_{\text {red }}\right)$ given in the proof of Lemma \ref{lem in split thm}. Note that we use the following convention. If $A$ is 2-acyclic from the start then the only collection $\left\{a_{1}, b_{1}, \ldots, a_{N}, b_{N}\right\}$ as above is the empty set; in this case, the function $D_{a_{1}, \ldots, a_{N}}^{b_{1}, \ldots, b_{N}}$ is understood to be equal to 1 and $G$ is just the identity mapping.
 
\section{Mutation of \texorpdfstring{$H$}{}-based QPs} \label{section:mutation of GQP}

 Let $(H,A,\s)$ be an $H$-based QP. Suppose that a vertex $k \in Q_{0}$  satisfies the following condition:
\begin{align}\label{condition on mutation of GQP 1}
    \text{For every vertex $i\neq k$, either $A_{i, k}$ or $A_{k, i}$ is zero.}
\end{align}
Replacing $\s$ if necessary with a cyclically equivalent potential, we can also assume that
\begin{align}\label{condition on nutation of GQP 2}
    \text{No cyclic path occurring in the expansion of $\s$ starts (and ends) at $k$.}
\end{align}
Under these conditions, we associate to $(H,A,\s)$ an $H$-based QP 
 $\widetilde{\mu}_{k}(H,A,\s)=(H,\widetilde{A},\widetilde{\s})$ on the same set of vertices $Q_{0}$. We define the homogeneous components $\widetilde{A}_{i, j}$ as follows:
\begin{align}
    \widetilde{A}_{i, j}= \begin{cases}\left(A_{j, i}\right)^{\star} & \text { if } i=k \text { or } j=k ;\\ A_{i, j} \oplus A_{i, k} A_{k, j} & \text { otherwise };\end{cases}
\end{align}
here the product $A_{i, k} A_{k, j}$ is understood as a submodule of $A^{2}\subseteq \overline{T_H(A)}$. Thus, the $H$-bimodule $\widetilde{A}$ is given by
\begin{align}\label{mutation of A}
    \widetilde{A}=\bar{e}_{k} A \bar{e}_{k} \oplus  A e_k A  \oplus (e_{k} A )^\star\oplus\left( A e_{k}\right)^\star,
\end{align}
where we use the notation
$$
\bar{e}_{k}=1-e_{k}=\sum_{i \in Q_{0}-\{k\}} e_{i}.
$$

We associate to $Q_{1}$ the set of arrows $\widetilde{Q}_{1}$ in the following way:
 \begin{enumerate}
     \item [(1)] For each pair $i\xrightarrow{a} k\xrightarrow{b} j$, create $d_k$ new arrows $[b\varepsilon_k^la]$ corresponding to the product $b\varepsilon_k^l a\in \bar{e}_{k} A e_k A \bar{e}_{k}$ for $0\leq l\leq d_k-1$;
     
     \item[(2)] Reverse the direction of all arrows incident to $k$;
     \item[(3)] Keep all the arrows $c \in Q_{1}$ not incident to $k$.
 \end{enumerate}
 
\begin{rmk}
    Let $\{e_k^\star,\varepsilon_k^\star,\ldots,(\varepsilon_k^{d_k-1})^\star\}$ be the standard dual basis of $H_k$. Then $H_k^\star$ is a $H_k$-bimodule as $\varepsilon_k(\varepsilon_k^l)^\star=(\varepsilon_k^l)^\star\varepsilon_k:\varepsilon_k^s\mapsto (\varepsilon_k^l)^\star(\varepsilon_k^{s+1})$. So we have that $\varepsilon_k(\varepsilon_k^l)^\star=(\varepsilon_k^l)^\star\varepsilon_k=(\varepsilon_k^{l-1})^\star$. Thus $(\varepsilon_k^l)^\star\mapsto \varepsilon_k^{d_k-1-l}$ induces an isomorphism of $H_k$-bimodules between $H_k$ and $H_k^\star$. Therefor, in what follows, we write the elements $a^\star(\varepsilon_k^l)^\star$ in $(\widetilde{A})_{i,k}$ $(\text{resp. }(\varepsilon_k^l) b^\star$ in $(\widetilde{A})_{k,j})$ as $a^\star\varepsilon_k^{d_k-1-l}$ $(\text{resp. }\varepsilon_k^{d_k-1-l}b^\star)$.
\end{rmk} 

We now associate to $\s$ the potential $\widetilde{\mu}_{k}(\s)=\widetilde{\s} \in \overline{T_H(\widetilde{A})}$ given by
\begin{align}\label{mutation of s}
    \widetilde{\s}=[\s]+\Delta_k,
\end{align}
where 
\begin{align}
\Delta_{k}=\Delta_{k}(A)=\sum_{l=0}^{d_k-1}[b\varepsilon_k^la]a^\star\varepsilon_k^{d_k-1-l}b^\star
\end{align}
and $[\s]$ is obtained by substituting $\left[a_{p} \varepsilon_k^l a_{p+1}\right]$ for each factor $a_{p} \varepsilon_k^l a_{p+1}$ with $t\left(a_{p}\right)=$ $h\left(a_{p+1}\right)=k$ of any cyclic path $\varepsilon_{ha_1}^{l_1} a_{1
} \cdots \varepsilon_{ha_d}^{l_d}a_{d}\varepsilon_{ta_d}^{l_{d+1}}$ occurring in the expansion of $\s$ (recall that none of these cyclic paths starts at $k$ ). It is easy to see that both $[\s]$ and $\Delta_{k}$ do not depend on the choice of a basis $Q_{1}$ of $A$.

The following proposition is immediate from the definitions.
\begin{pro}\label{trivial part of mutation}
     Suppose an $H$-based QP $(H,A,\s)$ satisfies (\ref{condition on mutation of GQP 1}) and (\ref{condition on nutation of GQP 2}), and an $H$-based QP $\left(H,A^{\prime}, \s^{\prime}\right)$ is such that $e_{k} A^{\prime}=A^{\prime} e_{k}=\{0\}$. Then we have
$$
\widetilde{\mu}_{k}\left(H,A \oplus A^{\prime}, \s+\s^{\prime}\right)=\widetilde{\mu}_{k}(H,A,\s) \oplus\left(H,A^{\prime},\s^{\prime}\right) .
$$
\end{pro}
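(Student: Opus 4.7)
The plan is to unwind formulas \eqref{mutation of A} and \eqref{mutation of s} that define $\widetilde{\mu}_k$ on both sides of the asserted equality, and simply verify that each ingredient decomposes compatibly with the direct sum $A\oplus A'$. The key structural observation driving everything is the hypothesis $e_kA' = A'e_k = \{0\}$: this means $A'$ carries no arrows incident to $k$, and in particular no cyclic path in the expansion of $\s'$ can pass through $k$. Consequently $(H, A\oplus A', \s+\s')$ inherits conditions \eqref{condition on mutation of GQP 1} and \eqref{condition on nutation of GQP 2} from $(H,A,\s)$, so the left-hand side is a well-defined $H$-based QP.

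First I would verify that the arrow spans coincide. Applying \eqref{mutation of A} to $A\oplus A'$ gives
$$\widetilde{A\oplus A'} = \bar{e}_k(A\oplus A')\bar{e}_k \oplus (A\oplus A')e_k(A\oplus A') \oplus \bigl(e_k(A\oplus A')\bigr)^\star \oplus \bigl((A\oplus A')e_k\bigr)^\star.$$
Because $e_kA' = A'e_k = 0$, the last three summands collapse to $Ae_kA$, $(e_kA)^\star$, and $(Ae_k)^\star$, while $\bar{e}_k(A\oplus A')\bar{e}_k$ splits as $\bar{e}_kA\bar{e}_k \oplus \bar{e}_kA'\bar{e}_k = \bar{e}_kA\bar{e}_k \oplus A'$ since $A' = \bar{e}_kA'\bar{e}_k$. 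Regrouping, $\widetilde{A\oplus A'} = \widetilde{A}\oplus A'$, matching the arrow span of the right-hand side.

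Next I would verify that the potentials coincide. The substitution operator $[\,\cdot\,]$ in \eqref{mutation of s} only alters two-step factors passing through $k$; since no cyclic path in $\s'$ touches $k$, the substitution acts trivially on $\s'$, yielding $[\s+\s'] = [\s] + \s'$. The correction term $\Delta_k$ is summed over composable pairs $(a,b)$ with $a \in e_kA$ and $b \in Ae_k$, so $\Delta_k(A\oplus A') = \Delta_k(A)$ because $A'$ contributes no arrows to these slots. Adding,
$$\widetilde{\mu}_k(\s+\s') = [\s+\s'] + \Delta_k(A\oplus A') = \bigl([\s] + \Delta_k(A)\bigr) + \s' = \widetilde{\mu}_k(\s) + \s',$$
which is exactly the potential of the right-hand side.

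There is no real obstacle; the statement is essentially bookkeeping, as the paper's remark ``immediate from the definitions'' suggests. The only point requiring mild attention is ensuring that $A'$, once embedded in $A\oplus A'$, is neither disturbed by the three-step mutation recipe nor contributes spuriously to $\Delta_k$, both of which are guaranteed cleanly by $e_kA' = A'e_k = 0$. Combining the two verifications above yields the asserted identity of $H$-based QPs on the nose.
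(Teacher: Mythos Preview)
Your proposal is correct and is precisely the unwinding of definitions that the paper has in mind when it declares the proposition ``immediate from the definitions.'' The paper offers no further argument, so your verification of the arrow-span identity via \eqref{mutation of A} and of the potential identity via \eqref{mutation of s}, both driven by $e_kA'=A'e_k=0$, is exactly the intended content.
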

\begin{thm}\label{mutation uniquele right-equivalence class of the GQP}
    The right-equivalence class of the $H$-based QP $(H,\widetilde{A}, \widetilde{\s})=\widetilde{\mu}_{k}(H,A,\s)$ is determined by the right-equivalence class of $(H,A,\s)$.
\end{thm}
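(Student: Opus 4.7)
The plan is to adapt the DWZ proof of the analogous statement (Theorem 5.2 of \cite{derksen2008quivers}) to the $H$-based setting: from a right-equivalence $\varphi\colon (H, A, \mathcal{S}) \to (H, A', \mathcal{S}')$ I will construct an induced right-equivalence $\widetilde{\varphi}\colon (H, \widetilde{A}, \widetilde{\mathcal{S}}) \to (H, \widetilde{A'}, \widetilde{\mathcal{S}'})$. After replacing the potentials by cyclically equivalent ones so that condition (\ref{condition on nutation of GQP 2}) holds for both $(H,A,\mathcal{S})$ and $(H,A',\mathcal{S}')$, we note that $\varphi|_H = \mathrm{Id}$ preserves $e_k$, so the leading component $\varphi^{(1)}\colon A \to A'$ respects the decomposition into $(i,j)$-homogeneous parts and restricts to $H$-bimodule isomorphisms $\varphi^{(1)}_{k,j}\colon A_{k,j} \to A'_{k,j}$ and $\varphi^{(1)}_{j,k}\colon A_{j,k} \to A'_{j,k}$ for every $j \neq k$.

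To define $\widetilde{\varphi}$, I would specify its leading part on each summand of $\widetilde{A} = \bar{e}_k A \bar{e}_k \oplus A e_k A \oplus (e_k A)^\star \oplus (A e_k)^\star$. On $\bar{e}_k A \bar{e}_k$ take $\varphi$ itself, rewriting any portion of $\varphi(c)$ that passes through $k$ in $\overline{T_H(A')}$ as a combination of composite arrows $[b \varepsilon_k^l a]$ in $\widetilde{A'}$. On the composite summand send $b \varepsilon_k^l a \mapsto [\varphi^{(1)}(b)\, \varepsilon_k^l\, \varphi^{(1)}(a)]$ modulo higher-order corrections. On the two dual summands use the $H$-bimodule transposes of $(\varphi^{(1)}_{k,j})^{-1}$ and $(\varphi^{(1)}_{j,k})^{-1}$ as the leading term, with higher-order corrections to be fixed by the final verification. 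Proposition \ref{determined morphism} then promotes this data to a unique continuous $H$-algebra homomorphism $\widetilde{\varphi}\colon \overline{T_H(\widetilde{A})} \to \overline{T_H(\widetilde{A'})}$, which is an isomorphism because its leading part is an $H$-bimodule isomorphism $\widetilde{A} \to \widetilde{A'}$.

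The heart of the argument is the verification that $\widetilde{\varphi}(\widetilde{\mathcal{S}})$ is cyclically equivalent to $\widetilde{\mathcal{S}'}$. Splitting $\widetilde{\mathcal{S}} = [\mathcal{S}] + \Delta_k$, the cyclic chain rule (Lemma \ref{chain rule}) and cyclic Leibniz rule (Lemma \ref{cyclic leibniz rule}) let us expand both $\widetilde{\varphi}([\mathcal{S}])$ and $\widetilde{\varphi}(\Delta_k)$ in $\overline{T_H(\widetilde{A'})}$, grouped by $\mathfrak{m}^\circ$-adic degree. At leading order the sum reproduces $[\mathcal{S}'] + \Delta_k' = \widetilde{\mathcal{S}'}$ by construction; the higher-order corrections on the dual summands are then determined, order by order, so as to cancel the remaining cyclic-equivalence defects. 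Because the $\mathfrak{m}^\circ$-adic and $\mathfrak{m}$-adic topologies coincide (Lemma \ref{two topologies concides}), this inductive procedure converges in $\overline{T_H(\widetilde{A'})}$ and produces the required right-equivalence.

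The main obstacle will be precisely this inductive cancellation on the dual summands. In the loop-free DWZ setting one needs only a single fixed-point argument in the $\mathfrak{m}$-adic topology, but in the $H$-based setup the nilpotent loops $\varepsilon_k$ multiply the number of composite arrows by $d_k$ and insert factors $\varepsilon_k^l$ into both $[\mathcal{S}]$ and $\Delta_k$, so the bookkeeping is considerably heavier. I expect the stratification technique of \cite{derksen2008quivers}, adapted to the $H$-base in the appendix, to carry the argument through because $\varphi^{(1)}_{k,j}$ and $\varphi^{(1)}_{j,k}$ remain genuine $H$-bimodule isomorphisms and the two completions on $\overline{T_H(\widetilde{A'})}$ agree.
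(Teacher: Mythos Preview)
Your proposal is in the right spirit but misses the key simplifying device that both DWZ and the paper use. Rather than defining $\widetilde{\varphi}$ directly on $\widetilde{A}$ and then chasing an inductive cancellation of cyclic-equivalence defects, the paper introduces the larger $H$-bimodule $\widehat{A} = A \oplus (e_kA)^\star \oplus (Ae_k)^\star$ and embeds both $\overline{T_H(A)}$ and $\overline{T_H(\widetilde{A})}$ as closed subalgebras of $\overline{T_H(\widehat{A})}$. In this ambient algebra the potential $\widetilde{\mathcal S}$ is cyclically equivalent to $\mathcal S + \sum_l \bigl(\sum_b \varepsilon_k^{d_k-l-1}b^\star b\bigr)\bigl(\sum_a \varepsilon_k^l a a^\star\bigr)$, so the whole theorem reduces to a single lemma: every automorphism $\varphi$ of $\overline{T_H(A)}$ extends to an automorphism $\widehat\varphi$ of $\overline{T_H(\widehat{A})}$ that preserves the subalgebra $\overline{T_H(\widetilde{A})}$ and \emph{literally fixes} the two elements $\sum_a a a^\star$ and $\sum_b b^\star b$.

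The proof of that lemma is not inductive. Writing the action of $\varphi$ on the incoming arrows as $(\varphi(a_1),\ldots,\varphi(a_s)) = \sum_{l=0}^{d_k-1}\varepsilon_k^l(a_1,\ldots,a_s)C_{l+1}$ with $C_0$ invertible (and dually with matrices $F_l$ for the outgoing arrows), one defines $\widehat\varphi(a_p^\star)$ by an \emph{explicit closed-form} matrix formula: the column $(a_p^\star)$ is premultiplied by $(C_0+C_1)^{-1}$ and corrected by finitely many terms $D_l$ built from products of $C_{i+1}(C_0+C_1)^{-1}$ indexed by compositions of $l$. A finite induction on $l$ then shows the correction identities $D_l + \sum_{i=1}^l C_{i+1}(C_0+C_1)^{-1}D_{l-i}=0$, which is exactly the statement that $\sum_a a a^\star$ is fixed up to cyclic equivalence. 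There is no $\mathfrak m^\circ$-adic limiting process here; the nilpotency of $\varepsilon_k$ makes everything a finite sum.

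Your order-by-order cancellation scheme might eventually succeed, but it is fighting the problem in the wrong coordinates: working directly in $\widetilde{A}$ forces you to track how $\widetilde\varphi$ moves the composite arrows $[b\varepsilon_k^l a]$ and the potential $[\mathcal S]$ simultaneously, whereas the $\widehat{A}$ embedding decouples these by letting $\varphi$ act on $b,a$ separately and reducing the $\Delta_k$-part to the invariance of two canonical elements. Also, your invocation of the ``stratification technique'' is misplaced for this particular theorem---that machinery is used for the Splitting Theorem, not here.
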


The proof appears in Appendix \ref{app:mutation uniquele right-equivalence class of the GQP}.

Note that even if an $H$-based QP $(H,A,\s)$ is assumed to be reduced, the $H$-based QP $\widetilde{\mu}_{k}(H,A,\s)=(H,\widetilde{A}, \widetilde{\s})$ is not necessarily reduced because the component $[\s]^{\circ (2)} \in \widetilde{A}^{2}$ may be non-zero. Combining Theorem 
 \ref{splitting thm} and Theorem \ref{lem of mutation uniquele right-equivalence class of the GQP}, we obtain the following corollary.
\begin{cor}\label{reduced mutation uniquele right-equivalence class of the GQP}
Suppose  that an $H$-based QP $(H,A,\s)$ satisfies (\ref{condition on mutation of GQP 1}) and (\ref{condition on nutation of GQP 2}). We denote $\widetilde{\mu}_{k}(H,A,\s)$ by $(H,\widetilde{A}, \widetilde{\s})$. Let $(H,\bar{A}, \bar{\s})$ be a reduced $H$-based QP such that
\begin{align}\label{eq in reduced mutation uniquele right-equivalence class of the GQP}
    (H,\widetilde{A}, \widetilde{\s}) \cong\left(H,\widetilde{A}_{\text {triv }}, \widetilde{\s}^{\circ (2)}\right) \oplus(H,\overline{A}, \overline{\s})
\end{align}
 Then the right-equivalence class of $(H,\overline{A}, \overline{\s})$ is determined by the right-equivalence class of $(H, A,\s)$.
 \end{cor}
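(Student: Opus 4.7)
The plan is to chain Theorem \ref{mutation uniquele right-equivalence class of the GQP} with the Splitting Theorem (Theorem \ref{splitting thm}); both heavy lifting steps are already in place, so what remains is a short bookkeeping argument. Suppose $(H,A,\s)$ and $(H,A',\s')$ are right-equivalent $H$-based QPs, both satisfying (\ref{condition on mutation of GQP 1}) and (\ref{condition on nutation of GQP 2}) at the vertex $k$, and let $(H,\overline{A},\overline{\s})$ and $(H,\overline{A}',\overline{\s}')$ be reduced $H$-based QPs fitting into the decomposition (\ref{eq in reduced mutation uniquele right-equivalence class of the GQP}) for $\widetilde{\mu}_k(H,A,\s)$ and $\widetilde{\mu}_k(H,A',\s')$ respectively. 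The goal is to exhibit a right-equivalence between $(H,\overline{A},\overline{\s})$ and $(H,\overline{A}',\overline{\s}')$.

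First I would invoke Theorem \ref{mutation uniquele right-equivalence class of the GQP} to conclude that the pre-mutations $(H,\widetilde{A},\widetilde{\s}) := \widetilde{\mu}_k(H,A,\s)$ and $(H,\widetilde{A}',\widetilde{\s}') := \widetilde{\mu}_k(H,A',\s')$ are right-equivalent as $H$-based QPs. This reduces the problem to showing that the reduced parts of two right-equivalent $H$-based QPs are themselves right-equivalent, where the reduced parts are those produced by the given decomposition (\ref{eq in reduced mutation uniquele right-equivalence class of the GQP}) rather than by the canonical splitting of Theorem \ref{splitting thm}.

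Second, I would reinterpret the hypothesis (\ref{eq in reduced mutation uniquele right-equivalence class of the GQP}) as identifying $(H,\overline{A},\overline{\s})$ with the reduced part of $(H,\widetilde{A},\widetilde{\s})$ in the sense of the Splitting Theorem. Indeed, $(H,\overline{A},\overline{\s})$ is reduced by hypothesis, and $(H,\widetilde{A}_{\text{triv}},\widetilde{\s}^{\circ(2)})$ is trivial by construction, since $\widetilde{A}_{\text{triv}} = H\partial \widetilde{\s}^{\circ(2)} H$ and $\widetilde{\s}^{\circ(2)} \in \widetilde{A}^{2}$ satisfies $\partial\widetilde{\s}^{\circ(2)} = \widetilde{A}_{\text{triv}}^{\circ}$. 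By the uniqueness clause in Theorem \ref{splitting thm}, any decomposition of an $H$-based QP as a direct sum of a trivial and a reduced summand determines each summand up to right-equivalence; hence $(H,\overline{A},\overline{\s})$ coincides, up to right-equivalence, with the reduced part $(H,\widetilde{A}_{\text{red}},\widetilde{\s}_{\text{red}})$ furnished by the Splitting Theorem, and similarly for $(H,\overline{A}',\overline{\s}')$. Combining this identification with the right-equivalence obtained in the first step yields $(H,\overline{A},\overline{\s}) \cong (H,\overline{A}',\overline{\s}')$.

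The expected main obstacle here is essentially non-existent, as both Theorem \ref{mutation uniquele right-equivalence class of the GQP} (where the real work of constructing a right-equivalence after mutation was done) and the Splitting Theorem are already proved. The only delicate point is the matching in the second step, which is immediate from the uniqueness of the trivial/reduced splitting; no further computation on arrow spans or potentials is required.
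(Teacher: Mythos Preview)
Your proposal is correct and matches the paper's own approach: the corollary is stated immediately after the sentence ``Combining Theorem \ref{splitting thm} and Theorem \ref{mutation uniquele right-equivalence class of the GQP}, we obtain the following corollary,'' which is precisely the two-step chaining you describe. Your additional remark, that the decomposition (\ref{eq in reduced mutation uniquele right-equivalence class of the GQP}) must be identified with the canonical splitting via the uniqueness clause of Theorem \ref{splitting thm}, is the only point requiring care, and you handle it correctly.
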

 \begin{defn}\label{defn of mutation of gqp}
     In the situation of Corollary \ref{reduced mutation uniquele right-equivalence class of the GQP}, we use the notation $\mu_{k}(H,A,\s)=(H,\overline{A}, \overline{\s})$ and call the correspondence $(H,A,\s) \mapsto \mu_{k}(H,A,\s)$ the \textbf{mutation} at vertex $k$.
 \end{defn}
 \begin{thm}\label{mutation of gqp is involution }
     The correspondence $\mu_{k}:(H,A,\s) \rightarrow(H,\overline{A}, \overline{\s})$ acts as an involution on the set of right-equivalence classes of reduced $H$-based QPs satisfying (\ref{condition on mutation of GQP 1}), that is, $\mu_{k}^{2}(H,A, \s)$ is right-equivalent to $(H,A,\s)$.
 \end{thm}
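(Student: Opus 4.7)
The plan is to prove the involution by computing $\mu_k^2(H, A, \s) = \mu_k(\mu_k(H, A, \s))$ through two applications of $\widetilde{\mu}_k$ with intermediate reductions, and then constructing an explicit right-equivalence with $(H, A, \s)$ at the end. This extends the strategy of \cite[Theorem 5.7]{derksen2008quivers} to the $H$-based setting, the new feature being the $d_k$ composite arrows produced at each pair of arrows incident to $k$.

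First I would unravel $(H, \overline{A}, \overline{\s}) = \mu_k(H, A, \s)$. After applying $\widetilde{\mu}_k$ to $(H, A, \s)$, the quadratic part of $\widetilde{\s} = [\s] + \Delta_k(A)$ comes from the length-$3$ cycles $\kappa \cdot c\, b \varepsilon_k^l a$ of $\s$ (each contributing $\kappa \cdot c[b\varepsilon_k^l a]$ to $[\s]$, a $2$-cycle pairing $c$ with the new composite arrow $[b\varepsilon_k^l a]$). A change of arrows $\psi_1$ sending each such $c$ to $c - \kappa \cdot a^\star \varepsilon_k^{d_k - 1 - l} b^\star$ absorbs this $2$-cycle, after a cyclic rotation (Proposition \ref{trace}), into the corresponding summand of $\Delta_k(A)$. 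The Splitting Theorem (Theorem \ref{splitting thm}) then identifies $\overline{\s}$ as the residual potential on the sub-bimodule $\overline{A}$ obtained by removing, for each such $3$-cycle, both $c$ and the corresponding $[b\varepsilon_k^l a]$.

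Next I would apply $\widetilde{\mu}_k$ again to obtain $\widetilde{\overline{\s}} = [\overline{\s}] + \Delta_k(\overline{A})$. A cyclic rotation recasts $\Delta_k(\overline{A})$ as $\sum_{a,b,l} b \varepsilon_k^l a \cdot [a^\star \varepsilon_k^{d_k - 1 - l} b^\star]$. For each $(a,b)$, those indices $l'$ whose composite arrow $[b\varepsilon_k^{l'} a]$ survived step $1$ produce in $\widetilde{\overline{\s}}$ a paired contribution $([b\varepsilon_k^{l'} a] + b \varepsilon_k^{l'} a) \cdot [a^\star \varepsilon_k^{d_k - 1 - l'} b^\star]$; a unitriangular automorphism $\phi_2$ sending $[b\varepsilon_k^{l'} a] \mapsto [b\varepsilon_k^{l'} a] - b \varepsilon_k^{l'} a$ (for such $l'$) converts these into trivial $2$-cycles $[b\varepsilon_k^{l'} a][a^\star \varepsilon_k^{d_k - 1 - l'} b^\star]$. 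For the remaining indices (the $l$'s absorbed in step $1$), the surviving cubic term $b \varepsilon_k^l a \cdot [a^\star \varepsilon_k^{d_k - 1 - l} b^\star]$, combined with the cubic-and-higher contributions arising from $[\overline{\s}]$, assembles under the identification sending each such $[a^\star \varepsilon_k^{d_k - 1 - l} b^\star]$ to the corresponding original arrow $c$ into a potential cyclically equivalent to $\s$.

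Finally, the Splitting Theorem gives the decomposition $(H, \widetilde{\overline{A}}, \phi_2(\widetilde{\overline{\s}})) \simeq (H, A', \s) \oplus (H, C', T)$ with $A' \cong A$ via the identification above and $(H, C', T)$ a trivial $H$-based QP by Proposition \ref{triv}; hence $\mu_k^2(H, A, \s) \simeq (H, A, \s)$. The main obstacle is the consistent book-keeping across the two mutations: in the classical case $d_k = 1$ each pair $(a, b)$ yields a single composite arrow, making the identification of absorbed-and-re-emerged $3$-cycles straightforward, but in the $H$-based setting the absorption depends on which $l \in \{0, \ldots, d_k - 1\}$ actually occurs in each $3$-cycle of $\s$, and one must verify via the cyclic chain rule (Lemma \ref{chain rule}) that these $l$-indexed absorptions correctly match across both mutations, adapting the DWZ argument to the loop structure of $H$.
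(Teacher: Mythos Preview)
Your plan differs from the paper's in a structural way that creates the very book-keeping difficulty you flag at the end. The paper (following DWZ) does \emph{not} perform the intermediate reduction: instead it computes $\widetilde{\mu}_k^2(H,A,\s)=(H,\widetilde{\widetilde{A}},\widetilde{\widetilde{\s}})$ directly and shows this is right-equivalent to $(H,A,\s)\oplus(H,C,\mathcal{T})$ with $(H,C,\mathcal{T})$ trivial, where $C=Ae_kA\oplus A^\star e_kA^\star$. Since $\widetilde{\mu}_k(H,A,\s)\cong\mu_k(H,A,\s)\oplus(\text{trivial})$ and the trivial summand has no arrows incident to $k$, Proposition~\ref{trivial part of mutation} gives $\widetilde{\mu}_k^2(H,A,\s)\cong\widetilde{\mu}_k(\mu_k(H,A,\s))\oplus(\text{trivial})$; taking reduced parts (Theorem~\ref{splitting thm}) on both sides then yields $\mu_k^2(H,A,\s)\cong(H,A,\s)$. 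The entire question of which composite arrows survive the first reduction simply does not arise. The three-step argument is then: a sign change $\varphi_1$ on the $b$'s, the unitriangular $\varphi_2\colon[b\varepsilon_k^l a]\mapsto[b\varepsilon_k^l a]+b\varepsilon_k^l a$ for \emph{all} $l$, and a final $\varphi_3$ absorbing the remaining terms into the $[a^\star\varepsilon_k^{d_k-1-l}b^\star]$.

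Your first reduction step also has a concrete error. The map $\psi_1\colon c\mapsto c-\kappa\,a^\star\varepsilon_k^{d_k-1-l}b^\star$ is not a change of arrows (the correction term lies in $\widetilde{A}^2$, not $\widetilde{A}$), and applying it to $\kappa\,c[b\varepsilon_k^l a]+\Delta_k$ gives $\kappa\,c[b\varepsilon_k^l a]+(1-\kappa^2)[b\varepsilon_k^l a]a^\star\varepsilon_k^{d_k-1-l}b^\star+\cdots$: the $2$-cycle $\kappa\,c[b\varepsilon_k^l a]$ is still there, not absorbed. The Splitting Theorem does not work by substituting into one arrow of a $2$-cycle; rather, it removes the pair $(c,[b\varepsilon_k^l a])$ from the arrow span after a quasi-unitriangular automorphism kills the cross-terms $u_k,v_k$ of~(\ref{split thm 1}). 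More seriously, when several $3$-cycles through $k$ share an arrow $c$ or a pair $(a,b)$, the quadratic form $[\s]^{\circ(2)}$ need not pair arrows one-to-one, so ``the corresponding $[b\varepsilon_k^l a]$'' and ``those indices $l'$ whose composite arrow survived'' are not well-defined data one can carry to the second mutation. Your approach could in principle be pushed through, but only by encoding the first reduction via an abstract right-equivalence rather than an explicit arrow-removal, at which point you are essentially redoing the paper's argument with extra overhead.
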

The proof appears in Appendix \ref{app: mutation of gqp is involution}.
\section{Some Mutation Invariants}\label{section:Mutation Invariants in QP}
 In this section, we fix a vertex $k$ and study the effect of the mutation $\mu_{k}$ on the Jacobian algebra $\mathcal{P}(H, A,\s)$. We will use the following notation: for an $H$-bimodule $B$, denote
\begin{align}
    B_{\hat{k}, \hat{k}}=\bar{e}_{k} B \bar{e}_{k}=\bigoplus_{i, j \neq k} B_{i, j}
\end{align}
Note that if $B$ is a (topological) algebra then $B_{\hat{k}, \hat{k}}$ is a (closed) subalgebra of $B$.
\begin{pro}\label{jacobian algebra mutation invariants}
     Suppose an $H$-based QP $(H,A,\s)$ satisfies (\ref{condition on mutation of GQP 1}) and (\ref{condition on nutation of GQP 2}), and let $(H,\widetilde{A}, \widetilde{\s})=\widetilde{\mu}_{k}(H,A,\s)$ be given by (\ref{mutation of A}) and (\ref{mutation of s}). Then the algebras $\mathcal{P}(H,A,\s)_{\hat{k}, \hat{k}}$ and $\mathcal{P}(H,\widetilde{A}, \widetilde{S})_{\hat{k}, \hat{k}}$ are isomorphic to each other.
\end{pro}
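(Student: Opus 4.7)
The plan is to construct an explicit $\bar{H}$-algebra isomorphism at the level of Jacobian algebras, paralleling the proof of Proposition~6.1 in \cite{derksen2008quivers} and making the necessary adjustments for the truncated base $H$ and the loop powers $\varepsilon_{k}^{l}$, $0\le l\le d_{k}-1$.

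First I would establish the following structural observation. Since no element of $Q_{1}^{\circ}$ is a loop, every path in $\bar{e}_{k}\overline{T_{H}(A)}\bar{e}_{k}$ decomposes canonically as an alternating concatenation of non-$k$-incident arrow segments and $2$-paths through $k$ of the form $b\varepsilon_{k}^{l}a$, with $a\colon i\to k$, $b\colon k\to j$ in $Q_{1}^{\circ}$. Analogously, $\bar{e}_{k}\overline{T_{H}(\widetilde{A})}\bar{e}_{k}$ is generated over $\bar{H}:=\bigoplus_{i\ne k}H_{i}$ by the non-$k$-incident arrows of $Q$, the composite arrows $[b\varepsilon_{k}^{l}a]$, and the $\widetilde{A}$-$2$-paths $a^{\star}\varepsilon_{k}^{m}b^{\star}$ traversing $k$.

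Next I would define an $\bar{H}$-algebra homomorphism
\[
\psi\colon \bar{e}_{k}\overline{T_{H}(\widetilde{A})}\bar{e}_{k} \longrightarrow \mathcal{P}(H,A,\s)_{\hat{k},\hat{k}}
\]
on generators by $\psi(c)=[c]$ for $c$ non-$k$-incident, $\psi([b\varepsilon_{k}^{l}a])=[b\varepsilon_{k}^{l}a]$, and $\psi(a^{\star}\varepsilon_{k}^{m}b^{\star})=-[D_{b,a,d_{k}-1-m}(\s)]$, where $D_{b,a,l}(\s)\in\overline{T_{H}(A)}$ denotes the sum of all cyclic rotations of $\s$ taken around occurrences of the $2$-path $b\varepsilon_{k}^{l}a$. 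Extension of $\psi$ to a continuous $\bar{H}$-algebra homomorphism is guaranteed by the universal property (analogue of Proposition~\ref{determined morphism}). The value on $a^{\star}\varepsilon_{k}^{m}b^{\star}$ is forced by the relation $a^{\star}\varepsilon_{k}^{d_{k}-1-l}b^{\star}=-\partial_{[b\varepsilon_{k}^{l}a]}[\s]$ that holds in $\mathcal{P}(H,\widetilde{A},\widetilde{\s})$.

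I would then verify that $\psi(\bar{e}_{k}J(\widetilde{\s})\bar{e}_{k})=0$. The ideal $J(\widetilde{\s})$ is generated by the derivatives $\partial_{c}\widetilde{\s}$ for $c\in\widetilde{Q}_{1}^{\circ}$, which split into three types: (i) for $c$ non-$k$-incident, $\psi(\partial_{c}\widetilde{\s})=\psi(\partial_{c}[\s])=[\partial_{c}\s]=0$; (ii) for $c=[b\varepsilon_{k}^{l}a]$, the two summands of $\partial_{c}\widetilde{\s}=\partial_{c}[\s]+a^{\star}\varepsilon_{k}^{d_{k}-1-l}b^{\star}$ cancel under $\psi$ by construction; (iii) for $c=a^{\star}$ or $c=b^{\star}$, the derivative itself fails to lie in the $\hat{k},\hat{k}$-part, so one examines every product $u\cdot\partial_{c}\widetilde{\s}\cdot v$ that does, and uses the cyclic Leibniz rule and cyclic chain rule (Lemmas~\ref{cyclic leibniz rule} and~\ref{chain rule}) to rewrite these as products built from $\partial_{a'}\s$ and $\partial_{b'}\s$, which vanish in $\mathcal{P}(H,A,\s)$. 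A symmetric construction yields the inverse homomorphism, completing the proof.

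The main obstacle will be step (iii): one must book-keep the full $d_{k}$-parameter family of composite arrows $\{[b\varepsilon_{k}^{l}a]\}_{0\le l<d_{k}}$ together with the dual family $\{a^{\star}\varepsilon_{k}^{m}b^{\star}\}_{0\le m<d_{k}}$ and reconcile them through the Jacobian relations. This combinatorial packaging, mediated by the $H_{k}$-bimodule structure on the tensor products $A_{j,k}\otimes_{H_{k}}A_{k,i}$, has no analogue in the loop-free DWZ setting and constitutes the principal technical difficulty.
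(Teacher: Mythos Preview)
Your strategy is close in spirit to the paper's, but organised in the reverse direction and with one real gap at the end. The paper does not build a map out of the full corner $\bar e_{k}\overline{T_{H}(\widetilde{A})}\bar e_{k}$; instead it first observes (Lemma~\ref{lem in jacobian algebra mutation invariants}) that $\overline{T_{H}(\widetilde{A}_{\hat{k},\hat{k}})}\cong\overline{T_{H}(A)}_{\hat{k},\hat{k}}$ via the bracket map $u\mapsto[u]$, and then proves (Lemma~\ref{lemma 2 in jacion algebra invaraints}) that this descends to an isomorphism of Jacobian corners. Your extra generators $a^{\star}\varepsilon_{k}^{m}b^{\star}$ are absorbed there not by prescribing their image, but by showing $\overline{T_{H}(\widetilde{A})}_{\hat{k},\hat{k}}=\overline{T_{H}(\widetilde{A}_{\hat{k},\hat{k}})}+J(\widetilde{\s})_{\hat{k},\hat{k}}$, using exactly the relation $a^{\star}\varepsilon_{k}^{d_{k}-1-l}b^{\star}=\partial_{[b\varepsilon_{k}^{l}a]}\widetilde{\s}-\partial_{[b\varepsilon_{k}^{l}a]}[\s]$ that motivates your definition of $\psi$. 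Your steps (i)--(iii) correspond to the paper's verification that $[J(\s)_{\hat{k},\hat{k}}]\subseteq J(\widetilde{\s})_{\hat{k},\hat{k}}$; the computation for (iii) in the appendix is precisely the $d_{k}$-indexed bookkeeping you anticipate, expressing $[(\partial_{a}\s)\varepsilon_{k}^{l}a']$ and $[b'\varepsilon_{k}^{l}(\partial_{b}\s)]$ as $\overline{T_{H}(\widetilde{A})}$-combinations of $\partial_{[b\varepsilon_{k}^{s}a]}\widetilde{\s}$ and $\partial_{a'^{\star}}\widetilde{\s}$, $\partial_{b'^{\star}}\widetilde{\s}$.

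The gap is your last sentence. ``A symmetric construction yields the inverse homomorphism'' does not work as stated: the pair $(A,\s)$ and $(\widetilde{A},\widetilde{\s})$ are \emph{not} in symmetric positions, since $(\widetilde{A},\widetilde{\s})$ need not be reduced and $\widetilde{\mu}_{k}(\widetilde{A},\widetilde{\s})$ lands in $(\widetilde{\widetilde{A}},\widetilde{\widetilde{\s}})$, not back in $(A,\s)$. The paper closes the argument by exhibiting an explicit left inverse $\beta$ to the bracket map $\alpha$, obtained as the composite $\mathcal{P}(H,\widetilde{A},\widetilde{\s})_{\hat{k},\hat{k}}\to\mathcal{P}(H,\widetilde{\widetilde{A}},\widetilde{\widetilde{\s}})_{\hat{k},\hat{k}}\to\mathcal{P}(H,A\oplus C,\s+\mathcal{T})_{\hat{k},\hat{k}}\to\mathcal{P}(H,A,\s)_{\hat{k},\hat{k}}$, the middle isomorphism coming from the involutivity proof (Theorem~\ref{mutation of gqp is involution }). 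If you prefer to stay within your framework, the cleanest fix is: define the bracket map $\alpha$ directly (it is the obvious candidate inverse to your $\psi$), check separately that $\alpha$ descends modulo $J(\s)_{\hat{k},\hat{k}}$---this is the same (iii)-type computation---and then verify $\psi\alpha=\mathrm{id}$ and $\alpha\psi=\mathrm{id}$ on generators, the latter using $\partial_{[b\varepsilon_{k}^{l}a]}\widetilde{\s}=0$ in $\mathcal{P}(H,\widetilde{A},\widetilde{\s})$. Either way, the inverse does not come for free by symmetry.
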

\begin{proof}
    In view of (\ref{mutation of A}), we have
    \begin{align}
        \widetilde{A}_{\hat{k}, \hat{k}}=A_{\hat{k}, \hat{k}} \oplus A e_{k} A. 
    \end{align}
Thus, the algebra $\overline{T_H(\widetilde{A}_{\hat{k}, \hat{k}})}$ is generated by the arrows $c \in Q^\circ_{1} \cap A_{\hat{k}, \hat{k}}$ and $[b \varepsilon_k^l a]$ for $a \in Q_{1}^\circ \cap e_{k} A$ and $b \in Q^\circ_{1} \cap A e_{k}$, $0\leq l\leq d_k-1$. The following fact is immediate from the definitions.
\begin{lem}\label{lem in jacobian algebra mutation invariants}
    The correspondence sending each $c \in Q_{1} \cap A_{\hat{k}, \hat{k}}$ to itself, and each generator $[b\varepsilon_k^l a]$ to $b\varepsilon_k^l a$ extends to an algebra isomorphism
$$
    \overline{T_H(\widetilde{A}_{\hat{k}, \hat{k}})} \rightarrow \overline{T_H(A)}_{\hat{k}, \hat{k}}.
$$
\end{lem}

Let $u \mapsto[u]$ denote the isomorphism $\overline{T_H(A)}_{\hat{k}, \hat{k}}\rightarrow \overline{T_H(\widetilde{A}_{\hat{k}, \hat{k}})}$ inverse of that in Lemma \ref{lem in jacobian algebra mutation invariants}. It acts in the same way as the correspondence $S \mapsto[S]$ in (\ref{mutation of s}): $[u]$ is obtained by substituting $\left[a_{p} \varepsilon_k^l a_{p+1}\right]$ for each factor $a_{p} \varepsilon_k^l a_{p+1}$ with $t\left(a_{p}\right)=h\left(a_{p+1}\right)=k$ of any path $\varepsilon_{ha_1}^{l_1}a_{1} \cdots a_{d}\varepsilon_{ta_d}^{l_{d+1}}$ occurring in the path expansion of $u$. Then the proof of Proposition \ref{jacobian algebra mutation invariants} follows immediately from the following lemma.
\begin{lem}\label{lemma 2 in jacion algebra invaraints}
    The correspondence $u \mapsto[u]$ induces an algebra isomorphism $\mathcal{P}(H,A,\s)_{\hat{k}, \hat{k}} \rightarrow \mathcal{P}(H,\widetilde{A}, \widetilde{\s})_{\hat{k}, \hat{k}}$.
\end{lem}
The proof appears in Appendix \ref{app: lemma 2 in jacion algebra invaraints}. 
\end{proof}
\begin{pro}\label{finite dim is invariants}
    In the situation of Proposition \ref{jacobian algebra mutation invariants}, if the Jacobian algebra $\mathcal{P}(H,A,\s)$ is finite-dimensional then so is $\mathcal{P}(H,\widetilde{A}, \widetilde{\s})$.
\end{pro}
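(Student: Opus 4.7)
The plan is to show that each of the four corners $e_i \widetilde{\mathcal{P}} e_j$ of $\widetilde{\mathcal{P}} := \mathcal{P}(H,\widetilde{A},\widetilde{\s})$ (for $i, j \in \{k, \hat{k}\}$) is finite-dimensional. The $(\hat{k}, \hat{k})$-corner $\bar{e}_k \widetilde{\mathcal{P}} \bar{e}_k$ is handled at once by Proposition \ref{jacobian algebra mutation invariants}: it is isomorphic to $\bar{e}_k \mathcal{P}(H, A, \s) \bar{e}_k$, which is a direct summand of $\mathcal{P}(H, A, \s)$ and hence finite-dimensional by hypothesis. What remains is to bound the three $k$-involving corners $e_k \widetilde{\mathcal{P}} e_k$, $e_k \widetilde{\mathcal{P}} \bar{e}_k$, $\bar{e}_k \widetilde{\mathcal{P}} e_k$.

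First I would extract from the Jacobian relation $\partial_{[b\varepsilon_k^l a]}\widetilde{\s} = 0$ the key identity
\begin{equation*}
    a^\star \varepsilon_k^{d_k - 1 - l}\, b^\star \;\equiv\; -\partial_{[b\varepsilon_k^l a]}[\s] \pmod{J(\widetilde{\s})},
\end{equation*}
valid for every $a\colon i\to k$, $b\colon k\to j$ in $Q_1^\circ$ and every $0 \leq l \leq d_k - 1$. This follows because the new arrow $[b\varepsilon_k^l a]$ appears in $\widetilde{\s} = [\s] + \Delta_k$ only inside the $\Delta_k$-summand $[b\varepsilon_k^l a]\, a^\star \varepsilon_k^{d_k-1-l}b^\star$. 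The crucial feature is that the right-hand side lies in $\overline{T_H(\widetilde{A}_{\hat{k},\hat{k}})}$, since $[\s]$ uses only the arrows of $\widetilde{A}_{\hat{k},\hat{k}}$.

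The main step is a normal-form argument. The only arrows of $\widetilde{A}$ meeting the vertex $k$ are $\varepsilon_k$ together with the $a^\star$ (leaving $k$) and the $b^\star$ (entering $k$), so any path in $\overline{T_H(\widetilde{A})}$ decomposes uniquely according to its transitions at $k$: each internal detour has the shape $a^\star \varepsilon_k^m b^\star$ with $0 \leq m \leq d_k - 1$, while the boundary contributions at $k$ consist of at most a single $a^\star$- or $b^\star$-arrow together with a leading or trailing power of $\varepsilon_k$. Invoking the identity above replaces each internal detour by an element of $\overline{T_H(\widetilde{A}_{\hat{k},\hat{k}})}$; grouping by the finite boundary data (with $l_0, l_1 \in \{0, \ldots, d_k - 1\}$ thanks to $\varepsilon_k^{d_k} = 0$), every class in $e_k \widetilde{\mathcal{P}} e_k$, $e_k \widetilde{\mathcal{P}} \bar{e}_k$, $\bar{e}_k \widetilde{\mathcal{P}} e_k$ reduces to a finite linear combination of expressions
\begin{equation*}
    \varepsilon_k^{l_0} b^\star w a^\star \varepsilon_k^{l_1}, \qquad \varepsilon_k^{l_0} b^\star w, \qquad \text{or} \qquad w a^\star \varepsilon_k^{l_1},
\end{equation*}
respectively (augmented by pure powers $\varepsilon_k^l$ in the first case), where $w$ ranges over a fixed basis of the finite-dimensional algebra $\bar{e}_k \widetilde{\mathcal{P}} \bar{e}_k$. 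This exhibits each corner as finite-dimensional, completing the proof.

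I expect the main technical obstacle to be the bookkeeping in the $\mathfrak{m}^\circ$-adic topology: an arbitrary element of $\widetilde{\mathcal{P}}$ comes from an infinite sum of paths, and the iterated detour-elimination must be shown to commute with the closure taken when passing to $\widetilde{\mathcal{P}} = \overline{T_H(\widetilde{A})}/J(\widetilde{\s})$. The saving grace is that once the finitely many possible boundary tuples $(l_0, b^\star, a^\star, l_1)$ have been factored out, the remaining content lives in the already finite-dimensional $\bar{e}_k \widetilde{\mathcal{P}} \bar{e}_k$, so every such infinite sum automatically collapses to a finite linear combination; this is the same kind of convergence argument that underlies the proof of Proposition \ref{jacobian algebra mutation invariants}.
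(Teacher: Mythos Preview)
Your proof is correct and follows essentially the same strategy as the paper: bound the $k$-involving corners of $\widetilde{\mathcal{P}}$ by writing every path in $e_k\overline{T_H(\widetilde{A})}e_k$ (and the mixed corners) as a finite boundary datum $(\varepsilon_k^{l_0},b^\star,a^\star,\varepsilon_k^{l_1})$ sandwiching an element of the finite-dimensional $\bar{e}_k\widetilde{\mathcal{P}}\bar{e}_k$ supplied by Proposition~\ref{jacobian algebra mutation invariants}. The paper packages this as an abstract lemma (for \emph{any} closed ideal $J\subseteq\mathfrak{m}^\circ(A)$, finite-dimensionality of $(\overline{T_H(A)}/J)_{\hat{k},\hat{k}}$ forces finite-dimensionality of $\overline{T_H(A)}/J$) and then applies it with $(\widetilde{A},J(\widetilde{\s}))$; in particular, it never invokes the Jacobian identity $a^\star\varepsilon_k^{d_k-1-l}b^\star\equiv-\partial_{[b\varepsilon_k^la]}[\s]$. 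Your detour-elimination step is therefore correct but redundant: the middle $w$ may simply be left in $\overline{T_H(\widetilde{A})}_{\hat{k},\hat{k}}$ (possibly passing through $k$), since once you pass to $\widetilde{\mathcal{P}}$ it already lands in the finite-dimensional $\bar{e}_k\widetilde{\mathcal{P}}\bar{e}_k$. The paper's abstraction also sidesteps your anticipated $\mathfrak{m}^\circ$-adic bookkeeping, since the surjection from a matrix space over $\bar{e}_k\widetilde{\mathcal{P}}\bar{e}_k$ is defined at the level of quotients without any iterated rewriting.
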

The proof appears in Appendix \ref{app:finite dim is invariants}.

Remembering (\ref{eq in reduced mutation uniquele right-equivalence class of the GQP}) and using Proposition \ref{trival part of jacobian}, we see that Proposition \ref{jacobian algebra mutation invariants} and Proposition \ref{finite dim is invariants} have the following corollary.
\begin{cor}\label{sum of mutation invariants}
    Suppose $(H,A,\s)$ is a reduced $H$-based QP satisfying (\ref{condition on mutation of GQP 1}), and let $(H,\overline{A}, \overline{\s})=$ $\mu_{k}(H,A,\s)$ be a reduced $H$-based QP obtained from $(H,A,\s)$ by the mutation at $k$. Then the algebras $\mathcal{P}(H,A,\s)_{\hat{k}, \hat{k}}$ and $\mathcal{P}(H,\overline{A}, \overline{\s})_{\hat{k}, \hat{k}}$ are isomorphic to each other, and $\mathcal{P}(H,A, \s)$ is finite-dimensional if and only if so is $\mathcal{P}(H,\overline{A}, \overline{\s})$.
\end{cor}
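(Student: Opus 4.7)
The plan is to derive this corollary by stitching together Proposition \ref{jacobian algebra mutation invariants}, Proposition \ref{finite dim is invariants}, Proposition \ref{trival part of jacobian}, and the involution property Theorem \ref{mutation of gqp is involution }, using the Splitting Theorem's decomposition \eqref{eq in reduced mutation uniquele right-equivalence class of the GQP} as the bridge between the pre-reduction mutation $\widetilde{\mu}_k$ and the actual mutation $\mu_k$. No new computation should be needed; the only work is to verify compatibility of the known facts.

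First I would pass from $(H,A,\s)$ to a cyclically equivalent representative so that \eqref{condition on nutation of GQP 2} also holds, which is harmless since cyclical equivalence preserves the Jacobian ideal by Proposition \ref{cyceq}. Setting $(H,\widetilde{A},\widetilde{\s}) = \widetilde{\mu}_k(H,A,\s)$, the Splitting Theorem gives a right-equivalence
\begin{equation*}
(H,\widetilde{A},\widetilde{\s}) \;\cong\; (H,\widetilde{A}_{\text{triv}}, \widetilde{\s}^{\circ(2)}) \oplus (H,\overline{A},\overline{\s}),
\end{equation*}
with $(H,\overline{A},\overline{\s}) = \mu_k(H,A,\s)$ by Definition \ref{defn of mutation of gqp}. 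Proposition \ref{trival part of jacobian} then identifies $\mathcal{P}(H,\widetilde{A},\widetilde{\s})$ with $\mathcal{P}(H,\overline{A},\overline{\s})$, because the trivial summand contributes only a copy of $H$ to the Jacobian algebra; restricting to the $\hat{k},\hat{k}$-block preserves this identification since the isomorphism is induced by the canonical $H$-subalgebra embedding. Combined with the isomorphism $\mathcal{P}(H,A,\s)_{\hat{k},\hat{k}} \cong \mathcal{P}(H,\widetilde{A},\widetilde{\s})_{\hat{k},\hat{k}}$ from Proposition \ref{jacobian algebra mutation invariants}, this yields the first assertion.

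For the finite-dimensionality equivalence I would argue symmetrically. Proposition \ref{finite dim is invariants} says finite-dimensionality of $\mathcal{P}(H,A,\s)$ forces finite-dimensionality of $\mathcal{P}(H,\widetilde{A},\widetilde{\s})$, and the identification from Proposition \ref{trival part of jacobian} transports this to $\mathcal{P}(H,\overline{A},\overline{\s})$. For the converse direction, one must start from $(H,\overline{A},\overline{\s})$ and recover a statement about $(H,A,\s)$; here I would invoke Theorem \ref{mutation of gqp is involution }, which asserts that $\mu_k^2(H,A,\s)$ is right-equivalent to $(H,A,\s)$. Applying the already-established forward implication to the reduced $H$-based QP $(H,\overline{A},\overline{\s})$ in place of $(H,A,\s)$ shows that finite-dimensionality of $\mathcal{P}(H,\overline{A},\overline{\s})$ implies finite-dimensionality of $\mathcal{P}(\mu_k(H,\overline{A},\overline{\s})) = \mathcal{P}(\mu_k^2(H,A,\s)) \cong \mathcal{P}(H,A,\s)$, with the last isomorphism coming from right-equivalence (Proposition \ref{iso of jacobian algebra}).

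The main obstacle, minor as it is, is bookkeeping: one must check that the $\hat{k},\hat{k}$ truncation really does commute with both the Splitting decomposition and the passage through Proposition \ref{trival part of jacobian}. Concretely, I would verify that the trivial component $(H,\widetilde{A}_{\text{triv}}, \widetilde{\s}^{\circ(2)})$ is absorbed into the subalgebra $\mathcal{P}(H,\widetilde{A},\widetilde{\s})_{\hat{k},\hat{k}}$ consistently, so that the composite isomorphism $\mathcal{P}(H,A,\s)_{\hat{k},\hat{k}} \to \mathcal{P}(H,\overline{A},\overline{\s})_{\hat{k},\hat{k}}$ is well defined and independent of choices — this is immediate once one recalls that all of the identifications above are induced by $H$-algebra maps fixing the idempotents $e_i$, $i \neq k$.
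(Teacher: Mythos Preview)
Your proposal is correct and follows essentially the same route as the paper, which simply cites \eqref{eq in reduced mutation uniquele right-equivalence class of the GQP} together with Propositions \ref{trival part of jacobian}, \ref{jacobian algebra mutation invariants}, and \ref{finite dim is invariants}. The only mild difference is that for the converse finite-dimensionality implication you invoke the involution Theorem \ref{mutation of gqp is involution }, whereas the paper leaves this implicit (one could equally use Lemma \ref{lem in finite dim is invariants} together with the already-established isomorphism of the $\hat{k},\hat{k}$-blocks); both are immediate.
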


We now introduce a key definition following Geiss-Leclerc-Schr\"{o}er \cite{Geiss_2016}, which will play a central role in our subsequent analysis.
\begin{defn}\label{defn of locally free}
    For a given finite dimensional $H$-based QP $(H,A,\s)$, we say $(H,A,\s)$ is \textbf{locally free} if the $H_k$-module $e_k\mathcal{P}(H,A,\s)$ is free. 
\end{defn}
The complete proof of the following proposition will be given in Section \ref{section: mutation of presentatoins}.
 \begin{pro}\label{locally free is mutation invariants}
     Suppose that $(H, A,\s)$ is a reduced and locally free $H$-based QP  satisfying (\ref{condition on mutation of GQP 1}). Then $\mu_k(H,A,\s)$ is also locally free.
 \end{pro}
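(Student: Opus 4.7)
The plan is to verify local freeness of the mutated Jacobian algebra $\overline{\mathcal{P}} = \mathcal{P}(H,\overline{A},\overline{\s})$ vertex by vertex, splitting into the cases $j \neq k$ and $j = k$. For vertices $j \neq k$, the isomorphism $\mathcal{P}(H,A,\s)_{\hat k,\hat k} \cong \overline{\mathcal{P}}_{\hat k,\hat k}$ from Corollary \ref{sum of mutation invariants} is an $H$-algebra isomorphism, so it transports the $H_i$-free structure of every component $e_j \mathcal{P}(H,A,\s) e_i$ with $i \neq k$ to $e_j \overline{\mathcal{P}} e_i$. This handles the bulk of the graded pieces, reducing the problem to analyzing the components involving vertex $k$, namely $e_j \overline{\mathcal{P}} e_k$ for $j \neq k$, $e_k \overline{\mathcal{P}} e_i$ for $i \neq k$, and $e_k \overline{\mathcal{P}} e_k$.

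For these remaining $k$-components, the natural tool is the presentation-theoretic machinery developed in Section \ref{section: mutation of presentatoins}. The idea is to realize each indecomposable projective $\overline{P}_j = \overline{\mathcal{P}} e_j$ of the mutated algebra as the cokernel of an explicit $H$-linear projective presentation $P(\beta^-) \to P(\beta^+)$ in the \emph{original} algebra $\mathcal{P}(H,A,\s)$, with $\beta^\pm$ prescribed by the mutation formula for $g$-vectors in Theorem \ref{mutation rule of g}. Since by hypothesis each summand $P_i$ of $P(\beta^\pm)$ is locally free over $H$, and the presentation map is $H$-linear, one can study the $H_k$-module structure of the cokernel directly in the original algebra.

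The main obstacle is that cokernels of $H_k$-linear maps between free $H_k$-modules are not automatically free; the conclusion requires showing that the specific maps realizing $\overline{P}_j$ have $H_k$-free cokernel. This is where Proposition \ref{general is mutation invarints} enters: it asserts that the class of general presentations is preserved under mutation, so the presentation realizing $\overline{P}_j$ can be taken to be general. The decisive technical input is then the statement --- to be established inside Section \ref{section: mutation of presentatoins} --- that for a locally free $H$-based QP, the cokernel of a general presentation between locally free projectives is again locally free, essentially because genericity forces the kernel of multiplication by $\varepsilon_k$ on the cokernel to attain its minimal dimension. I expect this last step, namely the $H_k$-freeness of generic cokernels, to be the main obstacle; once it is in hand, combining it with the reduction above via Corollary \ref{sum of mutation invariants} yields local freeness of $\overline{\mathcal{P}}$ and completes the proof.
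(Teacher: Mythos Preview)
Your overall direction—deriving Proposition \ref{locally free is mutation invariants} from the presentation machinery of Section \ref{section: mutation of presentatoins}—matches the paper, which states that the result follows from the proof of Lemma \ref{mutation of pre and rep}. But the way you propose to execute it contains a genuine gap.

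The sentence ``realize each indecomposable projective $\overline{P}_j = \overline{\mathcal{P}} e_j$ as the cokernel of an explicit projective presentation $P(\beta^-)\to P(\beta^+)$ in the \emph{original} algebra'' is categorically off: $\overline{P}_j$ is an $\overline{\mathcal{P}}$-module, not a $\mathcal{P}$-module, so it cannot literally be such a cokernel. What Step~1 of the proof of Lemma \ref{mutation of pre and rep} actually shows is that $\overline{P}_j \cong \widetilde{\mu}_k(N_j)$ for a suitable $\mathcal{P}$-representation $N_j$ (namely $N_j = P_j$ for $j\neq k$, and $N_k = \Coker(f_{\alpha_k})$ for $j=k$). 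The link to the original algebra is through the \emph{mutation of representations}, not through taking a cokernel.

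This matters because mutation replaces the $k$-th component entirely: $(\widetilde{\mu}_k(N))_i = N_i$ for $i\neq k$, but $(\widetilde{\mu}_k(N))_k$ is given by the decomposition \eqref{mutation of Mk},
\[
(\widetilde{\mu}_k(N))_k \;=\; \frac{\ker\gamma_k}{\Img\beta_k}\oplus \Img\gamma_k \oplus \frac{\ker\alpha_k}{\Img\gamma_k}\oplus V_k.
\]
Your ``generic cokernel is locally free'' argument (Proposition \ref{general rep is locally free}) controls $N_k$, which is \emph{not} what sits at vertex $k$ after mutation. The $H_k$-freeness of $e_k\overline{\mathcal{P}}$ instead comes from the Section \ref{section: mutation of decorated Representations} analysis showing that each summand above is $H_k$-free (Corollary \ref{g-vector of M} and the splitting arguments preceding it). That is the decisive step your plan never reaches. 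By contrast, for the components $e_j\overline{\mathcal{P}}e_k$ with $j\neq k$ your idea does work, since these equal $(N_k)_j$ for $j\neq k$ and Proposition \ref{general rep is locally free} applies.

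A secondary point: invoking Proposition \ref{general is mutation invarints} here is logically awkward, since that result sits downstream of Lemma \ref{mutation of pre and rep}, which is exactly where the paper extracts Proposition \ref{locally free is mutation invariants}. It is cleaner to argue directly from Step~1 of that proof, as the paper does.
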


We see that the class of $H$-based QPs with finite-dimensional (resp. locally free) Jacobian algebras is invariant under mutations. Let us now present another such class.
\begin{defn}
    For every $H$-based QP $(H,A,\s)$, we define its \textbf{deformation space} $\operatorname{Def}(H,A,\s)$ by
\begin{align}
    \operatorname{Def}(H,A,\s)=\operatorname{Tr}(\mathcal{P}(H,A,\s)) / H
\end{align}
\end{defn}

\begin{pro}\label{derformation spaces invariants}
    In the situation of Proposition \ref{jacobian algebra mutation invariants}, deformation spaces $\operatorname{Def}(H,\widetilde{A}, \widetilde{\s})$ and $\operatorname{Def}(H,A,\s)$ are isomorphic to each other.
\end{pro}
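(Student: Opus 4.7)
The plan is to derive Proposition \ref{derformation spaces invariants} from Proposition \ref{jacobian algebra mutation invariants} together with a general identification of the deformation space with the trace of the $\hat k, \hat k$ corner. Writing $H_{\hat k} := \bar e_k H = \bigoplus_{i \neq k} H_i$, the main intermediate step is to establish, for any $H$-based QP $(H, A, \s)$ and any vertex $k$, an isomorphism
\begin{equation*}
\operatorname{Tr}(\mathcal{P}(H, A, \s)_{\hat k, \hat k}) / H_{\hat k} \xrightarrow{\sim} \operatorname{Tr}(\mathcal{P}(H, A, \s)) / H,
\end{equation*}
induced by the inclusion $\mathcal{P}(H, A, \s)_{\hat k, \hat k} \hookrightarrow \mathcal{P}(H, A, \s)$ (which factors through the trace since commutators in $\mathcal{P}_{\hat k, \hat k}$ are also commutators in $\mathcal{P}$). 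Once this general lemma is in place, the proposition follows at once: Proposition \ref{jacobian algebra mutation invariants} provides an $H$-algebra isomorphism $\mathcal{P}(H, A, \s)_{\hat k, \hat k} \cong \mathcal{P}(H, \widetilde A, \widetilde \s)_{\hat k, \hat k}$ restricting to the identity on $H_{\hat k}$; applying $\operatorname{Tr}(-)/H_{\hat k}$ and using the lemma on both sides yields the required isomorphism $\operatorname{Def}(H, A, \s) \cong \operatorname{Def}(H, \widetilde A, \widetilde \s)$.

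For surjectivity in the lemma, the key observation is that any cyclic element of $\mathcal{P}(H, A, \s)$ not lying in $H$ admits a cyclic-path representative that uses at least one arrow of $Q_1^\circ$, and therefore visits some vertex $i \neq k$. The commutator identity $uv \equiv vu$ in the trace space realizes cyclic rotation, so we may rotate the representative to start and end at $i$, placing it in $e_i \mathcal{P} e_i \subseteq \mathcal{P}_{\hat k, \hat k}$. For injectivity, a commutator $[u, v]$ in $\mathcal{P}$ that mixes the $k$ and $\hat k$ blocks decomposes as $uv - vu$ with, say, $uv \in \mathcal{P}_{\hat k, \hat k}$ and $vu \in e_k \mathcal{P} e_k$; whenever $vu \notin H_k$, a further cyclic rotation of $vu$ realizes it, modulo a commutator within $\mathcal{P}_{\hat k, \hat k}$, as a cyclic element in the $\hat k, \hat k$ corner (if $vu \in H_k$, it is killed in $\operatorname{Def}$ anyway). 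Thus any commutator relation in $\operatorname{Tr}(\mathcal{P})/H$ between elements of $\mathcal{P}_{\hat k, \hat k}$ is already visible in $\operatorname{Tr}(\mathcal{P}_{\hat k, \hat k})/H_{\hat k}$.

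The principal technical difficulty is making the injectivity argument rigorous in the topological completed setting: an element of the closure of the commutator subspace must be realized as a limit of finite sums of commutators, and one must ensure that rewriting the mixed-block commutators via rotation yields a convergent rewriting within $\mathcal{P}_{\hat k, \hat k}$. This can be handled by the $m^\circ$-adic filtration technique of Section \ref{section: Quivers and Complete Path Algebras}, analogous to what is used in Appendix \ref{app: lemma 2 in jacion algebra invaraints}. The same argument applies uniformly to $(H, \widetilde A, \widetilde \s)$: cyclic paths involving the reversed arrows $a^\star : k \to i$ and $b^\star : j \to k$ must use such arrows in pairs (one exit and one return through $k$), so each such cyclic word visits a non-$k$ vertex in between, and the rotation argument carries over verbatim.
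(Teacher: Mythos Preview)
Your approach is correct and essentially the same as the paper's. The paper's proof is a one-liner: it invokes Proposition \ref{trace} to assert that $\operatorname{Def}(H,A,\s) \cong \operatorname{Tr}(\mathcal{P}(H,A,\s)_{\hat k,\hat k})/H_{\hat k,\hat k}$ and then applies Proposition \ref{jacobian algebra mutation invariants}. Your intermediate lemma and its cyclic-rotation justification are exactly what underlies that terse citation, so you have simply unpacked the same argument in more detail (and with more care about the completed setting) than the paper does.
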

\begin{proof}
    In view of Proposition \ref{trace}, $\operatorname{Def}(H,A,\s)$ is isomorphic to $\operatorname{Tr}\left(\mathcal{P}(H,A, \s)_{\hat{k}, \hat{k}}\right) / H_{\hat{k}, \hat{k}}$. Therefore, our assertion is immediate from Proposition \ref{jacobian algebra mutation invariants}.
\end{proof}
\begin{defn}
    We call an $H$-based QP $(H,A,\s)$ \textbf{rigid} if $\operatorname{Def}(H,A,\s)=\{0\}$, or equivalently, if $\operatorname{Tr}(\mathcal{P}(H,A,\mathcal{S})) = H$.
\end{defn}

Combining Propositions \ref{trival part of jacobian} and \ref{derformation spaces invariants}, we obtain the following corollary.
\begin{cor}\label{rigid is invariants}
     If a reduced $H$-based QP $(H,A,\s)$ satisfies (\ref{condition on mutation of GQP 1}) and is rigid, then the $H$-based QP $(H,\overline{A}, \overline{\s})=\mu_{k}(H,A,\s)$ is also rigid.
\end{cor}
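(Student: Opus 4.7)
The plan is to assemble the corollary by chaining together Proposition \ref{derformation spaces invariants} (mutation invariance of deformation spaces under $\widetilde{\mu}_k$) with Proposition \ref{trival part of jacobian} (triviality of trivial summands on the Jacobian side), bridging the two via the splitting decomposition in Corollary \ref{reduced mutation uniquele right-equivalence class of the GQP}.

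First, I would observe that since $(H,A,\s)$ satisfies (\ref{condition on mutation of GQP 1}), after possibly replacing $\s$ by a cyclically equivalent potential we may assume (\ref{condition on nutation of GQP 2}) holds, so that $\widetilde{\mu}_k(H,A,\s)=(H,\widetilde{A},\widetilde{\s})$ is defined. Applying Proposition \ref{derformation spaces invariants} directly gives
\begin{equation*}
\operatorname{Def}(H,\widetilde{A},\widetilde{\s}) \;\cong\; \operatorname{Def}(H,A,\s) \;=\; \{0\},
\end{equation*}
so the (generally non-reduced) $H$-based QP $(H,\widetilde{A},\widetilde{\s})$ is itself rigid.

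Next, I would use the splitting (\ref{eq in reduced mutation uniquele right-equivalence class of the GQP}),
$(H,\widetilde{A},\widetilde{\s}) \cong (H,\widetilde{A}_{\mathrm{triv}},\widetilde{\s}^{\circ(2)}) \oplus (H,\overline{A},\overline{\s})$, together with Proposition \ref{trival part of jacobian}, to conclude that the canonical embedding $\overline{T_H(\overline{A})} \hookrightarrow \overline{T_H(\widetilde{A})}$ descends to an isomorphism of Jacobian algebras $\mathcal{P}(H,\overline{A},\overline{\s}) \xrightarrow{\sim} \mathcal{P}(H,\widetilde{A},\widetilde{\s})$. Since this isomorphism restricts to the identity on $H$, it induces an isomorphism of trace spaces fixing $H$, and hence
\begin{equation*}
\operatorname{Def}(H,\overline{A},\overline{\s}) \;\cong\; \operatorname{Def}(H,\widetilde{A},\widetilde{\s}) \;=\; \{0\}.
\end{equation*}
This shows $\mu_k(H,A,\s)=(H,\overline{A},\overline{\s})$ is rigid.

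There is no real obstacle; the one point to verify carefully is that the isomorphism $\mathcal{P}(H,\overline{A},\overline{\s}) \cong \mathcal{P}(H,\widetilde{A},\widetilde{\s})$ produced by Proposition \ref{trival part of jacobian} is compatible with the $H$-algebra structure (in particular fixes $H$), so that passing to $\operatorname{Tr}(-)/H$ yields a genuine isomorphism of deformation spaces. This is automatic from the construction, since the embedding $\overline{T_H(\overline{A})} \hookrightarrow \overline{T_H(\overline{A}\oplus\widetilde{A}_{\mathrm{triv}})}=\overline{T_H(\widetilde{A})}$ is an $H$-algebra map, so no additional work is required.
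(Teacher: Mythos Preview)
Your proof is correct and follows exactly the approach indicated in the paper, which obtains the corollary by combining Proposition \ref{trival part of jacobian} and Proposition \ref{derformation spaces invariants}. Your added remark about the $H$-compatibility of the Jacobian isomorphism is a useful clarification but, as you note, requires no extra work.
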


\section{Nondegenerate \texorpdfstring{$H$}{}-based QPs}\label{section: nondegenerate}
If we wish to be able to apply to a reduced $H$-based QP $(H, A,\s)$ the mutation at every vertex of $Q_{0}$, the $H$-bimodule $A$ must satisfy (\ref{condition on mutation of GQP 1}) at all vertices. Thus, the arrow span $A$ must be 2-acyclic (see Definition \ref{2-acyclic}). Such an arrow span $A$ can be encoded by a skew-symmetric integer matrix $B=B(A)=\left(b_{i, j}\right)$ with rows and columns labeled by $Q_{0}$, by setting
\begin{align}\label{defn of B(A)}
    b_{i,j}=\frac{1}{d_id_j}(\dim A_{i,j}- \dim A_{j,i}).
\end{align}
Indeed, the dimensions of the components $A_{i, j}$ are recovered from $B$ by
\begin{align}\label{1 in defn of B(A)}
    \dim A_{i,j}=d_id_j\left[b_{i, j}\right]_{+},
\end{align}
where we use the notation
$$
[x]_{+}=\max (x, 0) .
$$
\begin{pro}\label{mutation of Q}
 Let $(H,A,\s)$ be a 2-acyclic reduced $H$-based QP, and suppose that the reduced $H$-based QP $\mu_{k}(H,A,\s)=(H,\overline{A}, \overline{\s})$ obtained from $(H,A,\s)$ by the mutation at some vertex $k$  is also 2-acyclic. Let $B(A)=\left(b_{i, j}\right)$ and $B(\overline{A})=\left(\overline{b}_{i, j}\right)$ be the skew-symmetric integer matrices given by (\ref{defn of B(A)}). Then we have
 \begin{align}\label{muation of B(A)}
     \overline{b}_{i, j}= \begin{cases}-b_{i, j} & \text { if } i=k \text { or } j=k ; \\ b_{i, j}+d_k\left[b_{i, k}\right]_{+}\left[b_{k, j}\right]_{+}-d_k\left[-b_{i, k}\right]_{+}\left[-b_{k, j}\right]_{+} & \text {otherwise. }\end{cases}
 \end{align}
\end{pro}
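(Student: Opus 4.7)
My proof plan is to compare dimensions of the homogeneous components $A_{i,j}$ using the defining identity $\dim A_{i,j} = d_i d_j [b_{i,j}]_+$ from \eqref{1 in defn of B(A)} (valid since $(Q,\mathbf{d})$ is $2$-acyclic). I would first carry out the calculation at the intermediate level $(H,\widetilde{A},\widetilde{\mathcal{S}}) = \widetilde{\mu}_k(H,A,\mathcal{S})$ using the explicit description \eqref{mutation of A}, and then pass to $\overline{A} = \widetilde{A}_{\mathrm{red}}$ via the Splitting Theorem.

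For $i = k$ or $j = k$, formula \eqref{mutation of A} gives $\widetilde{A}_{k,j} = (A_{j,k})^{\star}$ and $\widetilde{A}_{j,k} = (A_{k,j})^{\star}$, and $K$-duality preserves dimension, yielding $(\dim \widetilde{A}_{k,j} - \dim \widetilde{A}_{j,k})/(d_k d_j) = [-b_{k,j}]_+ - [b_{k,j}]_+ = -b_{k,j}$, which is the top line of \eqref{muation of B(A)}. For $i,j \neq k$ we have $\widetilde{A}_{i,j} = A_{i,j} \oplus (A_{i,k} \otimes_{H_k} A_{k,j})$. The key linear algebra input is that $A_{i,k}$ is free as a right $H_k$-module of rank $d_i[b_{i,k}]_+$ and $A_{k,j}$ is free as a left $H_k$-module of rank $d_j[b_{k,j}]_+$ (read off from the explicit bases $\{\varepsilon_i^l a\}$ and $\{b\, \varepsilon_j^s\}$ in the construction of the arrow span), which gives
\[
\dim(A_{i,k} \otimes_{H_k} A_{k,j}) = d_i d_j d_k [b_{i,k}]_+[b_{k,j}]_+.
\]
Substituting this and using the identity $[x]_+ - [-x]_+ = x$ together with $b_{j,k} = -b_{k,j}$, $b_{k,i} = -b_{i,k}$ produces
\[
\dim \widetilde{A}_{i,j} - \dim \widetilde{A}_{j,i} = d_i d_j\bigl(b_{i,j} + d_k[b_{i,k}]_+[b_{k,j}]_+ - d_k[-b_{i,k}]_+[-b_{k,j}]_+\bigr),
\]
which is $d_i d_j$ times the second line of \eqref{muation of B(A)}.

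To finish, I would argue that passing from $\widetilde{A}$ to $\overline{A}$ preserves these dimension differences. The Splitting Theorem yields $\widetilde{A} \cong \widetilde{A}_{\mathrm{triv}} \oplus \overline{A}$ as $H$-bimodules, and by Proposition \ref{triv} the trivial summand is right-equivalent to an $H$-based QP with potential $\sum_l a_l b_l$, where each $a_l b_l$ is a cyclic $2$-path; in that presentation the arrows split into opposite-direction pairs, so $\dim(\widetilde{A}_{\mathrm{triv}})_{i,j} = \dim(\widetilde{A}_{\mathrm{triv}})_{j,i}$, and since these dimensions are $H$-bimodule invariants under change of arrows the same equality holds for the original $\widetilde{A}_{\mathrm{triv}}$. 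Consequently $\dim \overline{A}_{i,j} - \dim \overline{A}_{j,i}$ equals the difference computed above, and dividing by $d_i d_j$ (using the assumed $2$-acyclicity of $(H,\overline{A})$ together with \eqref{defn of B(A)}) yields \eqref{muation of B(A)}. The only genuinely nontrivial step is the one-sided freeness dimension count for the tensor product $A_{i,k} \otimes_{H_k} A_{k,j}$; everything else is formal substitution.
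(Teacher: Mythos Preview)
Your proposal is correct and follows essentially the same approach as the paper: compute $\dim \widetilde{A}_{i,j}$ from \eqref{mutation of A} (the paper expresses the tensor product dimension as $\frac{1}{d_k}\dim A_{i,k}\dim A_{k,j}$, which is exactly your freeness count), then use Proposition~\ref{triv} to see that the trivial summand in the splitting has $\dim C_{i,j}=\dim C_{j,i}$, so passing from $\widetilde{A}$ to $\overline{A}$ preserves the differences $\dim A_{i,j}-\dim A_{j,i}$. The only cosmetic difference is ordering: the paper first reduces $\overline{b}_{i,j}$ to $(\dim \widetilde{A}_{i,j}-\dim \widetilde{A}_{j,i})/(d_id_j)$ and then computes, while you compute first and reduce afterwards.
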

\begin{proof}
First we note that by Proposition \ref{triv}, if $(H,C,\mathcal{T})$ is a trivial $H$-based QP then $\dim C_{i, j}=$ $\dim C_{j, i}$ for all $i, j$. In view of (\ref{eq in reduced mutation uniquele right-equivalence class of the GQP}), this implies that
\begin{align}\label{1 in muation of B(A)}
    \overline{b}_{i, j}=\operatorname{dim} \overline{A}_{i, j}-\operatorname{dim} \overline{A}_{j, i}=\operatorname{dim} \widetilde{A}_{i, j}-\operatorname{dim} \widetilde{A}_{j, i}.
\end{align}
where $(H,\widetilde{A}, \widetilde{\s})=\widetilde{\mu}_{k}(H,A,\s)$. Using (\ref{mutation of A}), we obtain
$$
\operatorname{dim} \widetilde{A}_{i, j}= \begin{cases}\operatorname{dim} A_{j, i} & \text { if } i=k \text { or } j=k ;\\ \operatorname{dim} A_{i, j}+\frac{1}{d_k}\operatorname{dim} A_{i, k} \operatorname{dim} A_{k, j} & \text { otherwise. }\end{cases}
$$
To obtain (\ref{muation of B(A)}), it remains to substitute these expressions into (\ref{1 in muation of B(A)}) and use (\ref{1 in defn of B(A)}).
\end{proof}
An easy calculation using the obvious identity $x=[x]_{+}-[-x]_{+}$ shows that the second case in (\ref{muation of B(A)}) can be rewritten in several equivalent ways as follows:
$$
\begin{aligned}
\overline{b}_{i, j} & =b_{i, j}+d_k\operatorname{sgn}\left(b_{i, k}\right)\left[b_{i, k} b_{k, j}\right]_{+} \\
& =b_{i, j}+d_k(\left[-b_{i, k}\right]_{+} b_{k, j}+b_{i, k}\left[b_{k, j}\right]_{+})
\end{aligned}
$$
It follows that the transformation $B \mapsto \overline{B}$ given by (\ref{muation of B(A)}) coincides with the matrix mutation at $k$ which plays a crucial part in the theory of generalized cluster algebras (cf. \cite{Nakanishi_2015}).

We see that the mutations of 2-acyclic $H$-based QPs provide a natural framework for matrix mutations. With some abuse of notation, we denote by $\mu_{k}(A)$ the 2-acyclic $H$-bimodule such that the skew-symmetric matrix $B\left(\mu_{k}(A)\right)$ is obtained from $B(A)$ by the mutation at $k$; thus, $\mu_{k}(A)$ is determined by $A$ up to an isomorphism.

Note that the matrix mutations at arbitrary vertices can be iterated indefinitely, while the 2-acyclicity condition (\ref{condition of 2-acyclic}) can be destroyed by an $H$-based QP mutation. We will study $H$-based QPs for which this does not happen.
\begin{defn}\label{defn of nondegenerate}
Let $k_{1}, \ldots, k_{l} \in Q_{0}$ be a finite sequence of vertices such that $k_{p} \neq$ $k_{p+1}$ for $p=1, \ldots, l-1$. We say that an $H$-based QP $(H,A,\s)$ is \textbf{$\left(k_{l}, \cdots, k_{1}\right)$-nondegenerate} if all the $H$-based QPs $$(H,A,\s), \mu_{k_{1}}(H,A, \s), \mu_{k_{2}} \mu_{k_{1}}(H,A,\s), \ldots, \mu_{k_{l}} \cdots \mu_{k_{1}}(H,A,\s)$$ are 2-acyclic (hence well-defined). We say that $(H,A,\s)$ is \textbf{nondegenerate} if it is $\left(k_{l}, \ldots, k_{1}\right)$-nondegenerate for every sequence of vertices as above.
\end{defn}
To state our next result recall the terminology introduced before Proposition \ref{condition of 2-cyclic}. In particular, for a given quiver $(Q,\mathrm{d})$ with the arrow span $A$, the $H$-based QPs on $A$ are identified with the elements of $K^{\mathcal{C}(A)}$. 
\begin{pro}\label{prop of eixstence of nondenerate GQP}
    Suppose that the based field $K$ is infinite, $Q$ is a 2-acyclic quiver with the base $H$ and arrow span $A$, a sequence of vertices $k_{1}, \ldots, k_{\ell}$ is as in Definition \ref{defn of nondegenerate}, and $A^{\prime}=\mu_{k_{l}} \cdots \mu_{k_{1}}(A)$. Then there exist a non-zero polynomial function $F: K^{\mathcal{C}(A)} \rightarrow$ $K$ and a regular map $G: U(F) \rightarrow K^{\mathcal{C}\left(A^{\prime}\right)}$ such that every $H$-based QP $(H,A,\s)$ with $\s \in$ $U(F)$ is $\left(k_{l}, \ldots, k_{1}\right)$-nondegenerate, and, for any $H$-based QP $(A, S)$ with $\s \in U(F)$, the $H$-based QP $\mu_{k_{\ell}} \cdots \mu_{k_{1}}(H,A,\s)$ is right-equivalent to $\left(H,A^{\prime}, G(\s)\right)$.
\end{pro}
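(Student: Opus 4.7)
My plan is an induction on the length $\ell$ of the mutation sequence, reducing to Proposition \ref{condition of 2-cyclic} in each single-step case. For the base case $\ell = 0$, I would take $F = 1$ and $G = \mathrm{Id}$. For $\ell = 1$, the explicit formulas (\ref{mutation of A})--(\ref{mutation of s}) show that the premutation $\s \mapsto \widetilde{\s}$ is a polynomial map $\Phi_1 : K^{\mathcal{C}(A)} \to K^{\mathcal{C}(\widetilde{A})}$. Fixing a maximal collection of 2-cycles in the premutated quiver, Proposition \ref{condition of 2-cyclic} produces a nonzero polynomial $D$ on $K^{\mathcal{C}(\widetilde{A})}$ and a regular map $G_0$ on $U(D)$ realizing the reduced part of $\widetilde{\mu}_{k_1}(H,A,\s)$ as $(H, A', G_0(\widetilde{\s}))$. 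Setting $F_1 := D \circ \Phi_1$ and $G_1 := G_0 \circ \Phi_1$ gives the required data. Nonvanishing $F_1 \not\equiv 0$ is witnessed directly: every 2-cycle in $\widetilde{A}$ involves an arrow of the form $[b\varepsilon_{k_1}^l a]$ (with $a: i \to k_1$, $b: k_1 \to j$ in $Q^\circ$), and the coefficient of $[b\varepsilon_{k_1}^l a]\,c$ in $[\s]$ coincides with that of the cyclic 3-path $b\varepsilon_{k_1}^l a\,c$ in $\s$; by assigning independent nonzero values to these 3-cycle coefficients (and zero to all others) one makes the determinantal matrix defining $D$ upper triangular with nonzero diagonal.

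For the inductive step, assume the proposition for sequences of length $\ell - 1$; applied to $(k_1, \ldots, k_{\ell-1})$ this yields a nonzero polynomial $F_{\ell-1}$ on $K^{\mathcal{C}(A)}$ and a regular map $G_{\ell-1} : U(F_{\ell-1}) \to K^{\mathcal{C}(A'')}$ with $A'' = \mu_{k_{\ell-1}} \cdots \mu_{k_1}(A)$. The case $\ell = 1$ applied to $A''$ and $k_\ell$ yields a nonzero polynomial $F'$ on $K^{\mathcal{C}(A'')}$ and a regular map $G' : U(F') \to K^{\mathcal{C}(A')}$. Since $G_{\ell-1}$ is regular, $F'(G_{\ell-1}(\s)) = P(\s)/F_{\ell-1}(\s)^M$ for some polynomial $P$ on $K^{\mathcal{C}(A)}$; I set $F := F_{\ell-1} \cdot P$ and $G := G' \circ G_{\ell-1}$, so that the right-equivalence $\mu_{k_\ell} \cdots \mu_{k_1}(H, A, \s) \cong (H, A', G(\s))$ for $\s \in U(F)$ follows by chaining the inductive hypothesis with the single-step result at $k_\ell$.

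The main obstacle is verifying $F \not\equiv 0$, equivalently $P \not\equiv 0$, and I plan to handle it by an inverse-mutation argument. Iterating Theorem \ref{mutation of gqp is involution } shows that the reversed mutation sequence $\mu_{k_{\ell-1}} \cdots \mu_{k_1}$ applied to $A''$ returns $A$; the inductive hypothesis applied to this reversed sequence then produces a nonzero polynomial $\widetilde{F}$ on $K^{\mathcal{C}(A'')}$ and a regular map $\widetilde{G} : U(\widetilde{F}) \to K^{\mathcal{C}(A)}$ such that $\mu_{k_{\ell-1}} \cdots \mu_{k_1}(H, A, \widetilde{G}(\s''))$ is right-equivalent to $(H, A'', \s'')$ for $\s'' \in U(\widetilde{F})$. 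The key observation is that the nonvanishing condition ``$F'(\s'') \neq 0$'' is invariant under right-equivalence of $\s''$ on $A''$: by Proposition \ref{condition of 2-cyclic}, Theorem \ref{splitting thm}, and Theorem \ref{mutation uniquele right-equivalence class of the GQP}, it characterizes precisely the 2-acyclicity of the reduced part of $\widetilde{\mu}_{k_\ell}(H, A'', \s'')$, which depends only on the right-equivalence class of $\s''$. Choosing $\s''_0 \in U(\widetilde{F}) \cap U(F')$, which is nonempty since $K$ is infinite and $\widetilde{F} \cdot F'$ is a nonzero polynomial, the point $\s_0 := \widetilde{G}(\s''_0)$ satisfies that $G_{\ell-1}(\s_0)$ is right-equivalent to $\s''_0$, hence $F'(G_{\ell-1}(\s_0)) \neq 0$; a final genericity check (at worst requiring $F_{\ell-1}$ to be replaced by a right-equivalence-saturated multiple, chosen so that $U(F_{\ell-1})$ contains every image of $\widetilde{G}$ landing in a nondegenerate right-equivalence class) guarantees $\s_0 \in U(F_{\ell-1})$, yielding $F(\s_0) \neq 0$ and completing the induction.
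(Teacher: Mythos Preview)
Your overall strategy—induction on $\ell$, using the polynomial premutation map together with Proposition~\ref{condition of 2-cyclic} for the single step—is exactly the paper's (which in turn follows \cite[Proposition~7.3]{derksen2008quivers}). The one structural difference is that the paper splits off the \emph{first} mutation $\mu_{k_1}$ rather than the last: it applies the inductive hypothesis to $(k_2,\ldots,k_\ell)$ on $A_1=\mu_{k_1}(A)$, obtaining $F'$ and $G'$ there; it then picks a witness $\s_1^{(0)}\in U(F')\cap U(F'')$ on $A_1$ (where $F''$ comes from the single-step case $\mu_{k_1}:A_1\to A$), mutates it back to $\s_0$ on $A$, and only \emph{afterwards} chooses the single-step data $F_1,G_1:A\to A_1$. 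This ordering is what makes the non-vanishing argument go through: the only polynomial that must be adapted to the already-constructed point $\s_0$ is the single-step $F_1$, and since Proposition~\ref{condition of 2-cyclic} gives one determinant per maximal collection of $2$-cycles, one simply picks a collection with $D(\widetilde{\s_0})\neq 0$. The paper then asserts $G_1(\s_0)=\s_1^{(0)}$ and concludes $F'\circ G_1\not\equiv 0$.

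Your version, splitting off $\mu_{k_\ell}$, runs into trouble precisely at the step you flag. The claim that ``$F'(\s'')\neq 0$ characterizes precisely the $2$-acyclicity of the reduced part of $\widetilde{\mu}_{k_\ell}(H,A'',\s'')$'' is not correct: by Proposition~\ref{condition of 2-cyclic}, $2$-acyclicity is equivalent to $D\neq 0$ for \emph{some} maximal collection, whereas your $F'$ corresponds to one fixed collection, so $U(F')$ is only a subset of the $2$-acyclic locus and is not right-equivalence invariant. Consequently, knowing that $G_{\ell-1}(\s_0)$ is right-equivalent to $\s_0''\in U(F')$ does not yield $F'(G_{\ell-1}(\s_0))\neq 0$. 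The same obstruction blocks your ``final genericity check'' that $\s_0\in U(F_{\ell-1})$: here $F_{\ell-1}$ is the inductively constructed $(\ell-1)$-step polynomial, which you have already fixed before producing $\s_0$, and there is no evident freedom left to adjust it. The paper's decomposition avoids this compounding because only the single-step polynomial needs to be chosen after the witness point.
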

\begin{proof}
    The proof follows the same strategy as in \cite[Proposition 7.3]{derksen2008quivers}, 
with the obvious modifications needed for our setting. See Appendix \ref{app:prop of eixstence of nondenerate GQP} for details.
\end{proof}

The following corollary is obtained by applying Proposition \ref{prop of eixstence of nondenerate GQP} and a proof along the lines of \cite[Corollary 7.4]{derksen2008quivers}.
\begin{cor}\label{eixstence of nondenerate GQP}
For every 2-acyclic arrow span $A$, there exists a countable family $\mathcal{F}$ of nonzero polynomial functions on $K^{\mathcal{C}(A)}$ such that the $H$-based QP $(H,A, \s)$ is nondegenerate whenever $\s \in \bigcap_{F \in \mathcal{F}} U(F)$. In particular, if the based field $K$ is uncountable, then there exists a nondegenerate $H$-based QP on $A$.
\end{cor}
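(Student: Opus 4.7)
The plan is to obtain $\mathcal{F}$ simply by collecting the polynomial functions supplied by Proposition~\ref{prop of eixstence of nondenerate GQP}. For each finite sequence $(k_1,\ldots,k_\ell)$ of vertices of $Q$ with $k_p \neq k_{p+1}$, that proposition produces a nonzero polynomial function $F_{(k_1,\ldots,k_\ell)}$ on $K^{\mathcal{C}(A)}$ such that every potential $\mathcal{S}\in U(F_{(k_1,\ldots,k_\ell)})$ makes $(H,A,\mathcal{S})$ a $(k_1,\ldots,k_\ell)$-nondegenerate $H$-based QP. Since $Q_0$ is finite, the set of admissible sequences is countable, so $\mathcal{F}:=\{F_{(k_1,\ldots,k_\ell)}\}$ is countable, and the first assertion of the corollary is immediate from Definition~\ref{defn of nondegenerate}.

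For the second statement I would show that when $K$ is uncountable, the intersection $\bigcap_{F\in\mathcal{F}}U(F)$ is nonempty. The set $\mathcal{C}(A)$ of cyclic paths up to cyclical equivalence is countable, so enumerate it as $\{c_1,c_2,\ldots\}$ and identify potentials with sequences $(\lambda_1,\lambda_2,\ldots)\in K^{\mathcal{C}(A)}$; enumerate $\mathcal{F}=\{F_1,F_2,\ldots\}$. Each $F_n$ depends only on finitely many of the $\lambda_i$ and is nonzero as a polynomial, and the task reduces to producing a sequence $(\lambda_k)_{k\ge 1}$ with $F_n(\lambda_1,\lambda_2,\ldots)\neq 0$ for every $n$.

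I would build $(\lambda_k)$ one coordinate at a time, maintaining the inductive invariant that after step $k$, for every $n$, the partially evaluated polynomial $F_n(\lambda_1,\ldots,\lambda_k,x_{k+1},x_{k+2},\ldots)$ is nonzero as a polynomial in the remaining variables (which for $F_n$'s depending only on $\lambda_1,\ldots,\lambda_k$ reads $F_n(\lambda_1,\ldots,\lambda_k)\neq 0$). For the inductive step, fix $n$ and write the current residual as a polynomial in $x_{k+1}$ with coefficients that are polynomials in $x_{k+2},x_{k+3},\ldots$; extracting a single nonzero such coefficient produces a nonzero univariate polynomial in $\lambda_{k+1}$ whose zero set is the finite set of values of $\lambda_{k+1}$ violating the invariant for $F_n$. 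Taking the union over all $n$ gives a countable set of bad values; because $K$ is uncountable, a valid $\lambda_{k+1}$ exists, and the induction proceeds. Completing any remaining coordinates of $\mathcal{S}$ arbitrarily then yields the required potential.

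The main obstacle is the choice of the inductive invariant: merely demanding that $F_n(\lambda_1,\ldots,\lambda_k)\neq 0$ for those $F_n$ already ``completed'' is too weak, because a partial substitution of an $F_n$ that still depends on future variables could accidentally vanish as a polynomial, leaving no legal extension. Strengthening the invariant to nonvanishing of the full residual polynomial in the remaining variables is the standard remedy and reduces the step $k\mapsto k+1$ to countably many univariate polynomial constraints on $\lambda_{k+1}$, which is precisely where the uncountability of $K$ is used.
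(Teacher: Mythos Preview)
Your proposal is correct and follows the same route the paper indicates (it simply defers to \cite[Corollary~7.4]{derksen2008quivers}, which is exactly this coordinate-by-coordinate construction using the uncountability of $K$ to avoid countably many bad values at each step). One wording slip in your inductive step: you want to group the residual as a polynomial in $x_{k+2},x_{k+3},\ldots$ with coefficients in $x_{k+1}$, not the other way around, so that extracting a nonzero coefficient really does give the univariate polynomial in $\lambda_{k+1}$ you describe.
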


\section{Decorated Representations}\label{section: Decorated Representations}
\begin{defn}\label{defn of decorated Representations}
For a given $H$-based QP $(Q,d,\mathcal{S})(\text{or } (H,A,\s))$, a \textbf{decorated representation} of $(Q,d,\mathcal{S})$ is a pair $\mathcal{M}=(M,V)$, where $V$ is a finite-dimensional $H$-module and $V_i$ is $H_i$-free, and $M$ is a finite-dimensional $\overline{T_H(A)}$-module annihilated by $J(\mathcal{S})$. 
\end{defn}
Equivalently, $M$ is a finite-dimensional $\mathcal{P}(H,A,\s)$-module. When appropriate, we will view $M$ as the decorated representation $(M,0)$.  We will sometimes write $\mathcal{M}=(H,A,\s,M,V)$ and refer such a 5-tuple as an $H$-based QP-representation. 

We have $M=\oplus_{i\in Q_0}M_i$ and $V=\oplus_{i\in Q_0} V_i$, where $M_i=e_iM$ and $V_i=e_iV$. With some abuse of notation, for $u\in \overline{T_H(A)}$ or $u\in \mathcal{P}(H,A,\s)$, we denote the multiplication operator $m\mapsto um$ on $M$ simply as $M(u):M\to M$. In particular, for each arrow $a\in Q_1$, we have $M(a):M_{ta}\to M_{ha}$, and $M(a)|_{M_i}=0$ for $i\neq ta$. 

Note that every finite-dimensional $\overline{T_H(A)}$-module $M$ is \textbf{nilpotent}, that is, $M$ is annihilated by $\mathfrak{m}^\circ$ for $n\gg 0$.

In the rest of this section, we present a representation-theoretic interpretation of Theorem \ref{splitting thm}. To do this, we first introduce right-equivalence for $H$-based QP-representations.
\begin{defn}\label{defn of right-equivalence of rep}
    Let $(H,A,\s)$ and $(H,A^\prime,\s^\prime)$ be $H$-based QPs on the same vertices, and let $\mathcal{M}=(M,V)(\text{resp. } \mathcal{M}^\prime=(M^\prime,V^\prime))$ be a decorated representation of $(H,A,\s)(\text{resp. }(H,A^\prime,\s^\prime))$. A \textbf{right-equivalent} between $\mathcal{M}$ and $\mathcal{M^\prime}$ is a triple $(\varphi,\psi,\eta)$, where:
    \begin{itemize}
        \item $\varphi:\overline{T_{H}(A)}\to \overline{T_{H}(A)}$ is a right-equivalence between $(H,A,\s)$ and $(H,A^\prime,\s^\prime)$(see Definition \ref{defn of right-equivalence of gqp});
        \item $\psi:M\to M^\prime$ is an $H$-module isomorphism such $\psi\circ M(u)=M'(\varphi(u))\circ \psi$ for all $u\in A$;
        \item $\eta:V\to V^\prime$ is an isomorphism of $H$-modules.
    \end{itemize}
\end{defn}
Let $\mathcal{M}=(H,A,\s,M,V)$ be an $H$-based QP-representation, and let $\varphi:\overline{T_H(A_{\red}\oplus C)}\to \overline{T_H(A)}$ be a right-equivalence of $H$-based QPs $(H,A_{\red},\s_{\red})\oplus(H,C,\mathcal{T})$ and $(H,A,\s)$, where $(H,A_{\red},\s_{\red})$ is a reduced $H$-based QP and $(H,C,\mathcal{T})$ is a trivial $H$-based QP, see Theorem \ref{splitting thm}. We define a $\overline{T_H(A_{\red})}$-module $M'$ by setting $M'=M$ as a $H$-module with action of $\overline{T_H(A_{\red})}$ given by $M'(u)=M(\varphi(u))$. In view of Proposition \ref{trival part of jacobian}, this makes $\mathcal{M}_{\red}=(H,A_{\red},\s_{red},M',V)$ a well-defined $H$-based QP-representation.
\begin{defn}\label{reduce of M}
    We call the $H$-based QP-representation $\mathcal{M}_{\red}$ given by above construction the \textbf{reduced part} of $\mathcal{M}$.
\end{defn}
This terminology is justified by the following.
\begin{pro}\label{unique of he right-equivalence class of reduced rep}
    The right-equivalence class of $\mathcal{M}_{\red}$ is determined by the right-equivalence class of $\mathcal{M}$.
\end{pro}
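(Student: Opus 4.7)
The plan is to prove this as the decorated-representation analogue of the uniqueness statement in Theorem \ref{splitting thm}, following the same overall strategy but carrying along the module data. The proof proceeds in three moves: (i) pull the module structures back along the chosen splittings so that the QP right-equivalence $\varphi_0$ becomes a right-equivalence of decorated reps over direct sums of reduced and trivial QPs; (ii) use the uniqueness in Theorem \ref{splitting thm} to restrict this to a right-equivalence of QPs between the reduced parts; (iii) observe, via Proposition \ref{trival part of jacobian}, that the trivial summand acts as zero on any Jacobian module, so $\psi_0$ automatically intertwines the reduced actions.

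Fix the right-equivalence $(\varphi_0,\psi_0,\eta_0)\colon\mathcal{M}\to\mathcal{N}$ and choose splittings $\varphi_{\mathcal{M}}\colon\overline{T_H(A^{\mathcal{M}}_{\red}\oplus C^{\mathcal{M}})}\to\overline{T_H(A)}$ and $\varphi_{\mathcal{N}}\colon\overline{T_H(A^{\mathcal{N}}_{\red}\oplus C^{\mathcal{N}})}\to\overline{T_H(A')}$ used to define $\mathcal{M}_{\red}$ and $\mathcal{N}_{\red}$. I would first transport the module actions along $\varphi_{\mathcal{M}}$ and $\varphi_{\mathcal{N}}$ to obtain decorated representations $\widetilde{\mathcal{M}}$ and $\widetilde{\mathcal{N}}$ over the split QPs, noting that $\mathcal{M}_{\red}$ (resp.\ $\mathcal{N}_{\red}$) is the restriction of $\widetilde{\mathcal{M}}$ (resp.\ $\widetilde{\mathcal{N}}$) along the canonical embedding of the reduced summand into the full complete path algebra. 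A direct computation then shows that the triple $(\kappa,\psi_0,\eta_0)$, with $\kappa:=\varphi_{\mathcal{N}}^{-1}\circ\varphi_0\circ\varphi_{\mathcal{M}}$, is a right-equivalence $\widetilde{\mathcal{M}}\to\widetilde{\mathcal{N}}$. This reduces the proposition to the sublemma that any such right-equivalence between decorated representations of split QPs restricts to a right-equivalence of the reduced parts.

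To establish the sublemma, I would invoke the uniqueness part of Theorem \ref{splitting thm} to produce a right-equivalence of QPs $\rho\colon\overline{T_H(A^{\mathcal{M}}_{\red})}\to\overline{T_H(A^{\mathcal{N}}_{\red})}$ with the crucial compatibility that $\kappa(u)-\rho(u)\in J(\s^{\mathcal{N}}_{\red}+\mathcal{T}^{\mathcal{N}})$ for every $u\in A^{\mathcal{M}}_{\red}$; this refines the bare existence of $\rho$ to a statement matching $\rho$ against $\kappa$ modulo the Jacobian ideal of the target. Granting such a $\rho$, the triple $(\rho,\psi_0,\eta_0)$ is a right-equivalence $\mathcal{M}_{\red}\to\mathcal{N}_{\red}$: the intertwining identity $\psi_0\circ M(u)=N(\rho(u))\circ\psi_0$ follows from the known identity $\psi_0\circ M(u)=N(\kappa(u))\circ\psi_0$, since $N$ is annihilated by $J(\s^{\mathcal{N}}_{\red}+\mathcal{T}^{\mathcal{N}})$ (this is where Proposition \ref{trival part of jacobian} enters, identifying $\mathcal{P}(H,A^{\mathcal{N}}_{\red}\oplus C^{\mathcal{N}},\s^{\mathcal{N}}_{\red}+\mathcal{T}^{\mathcal{N}})$ with $\mathcal{P}(H,A^{\mathcal{N}}_{\red},\s^{\mathcal{N}}_{\red})$).

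The main obstacle is the refined construction of $\rho$ matching $\kappa$ modulo the Jacobian ideal. In spirit this is the same difficulty as in the proof of Theorem \ref{splitting thm}, handled by iterating unitriangular automorphisms and controlling convergence in the $\mathfrak{m}^{\circ}$-adic topology (Lemma \ref{two topologies concides}); concretely, one decomposes $\kappa$ on the reduced summand into its projection onto $A^{\mathcal{N}}_{\red}$ plus a correction taking values in $C^{\mathcal{N}}\oplus\mathfrak{m}^{\circ}(A^{\mathcal{N}}_{\red}\oplus C^{\mathcal{N}})^{2}$, and iteratively absorbs the correction into the Jacobian ideal by the stratification argument flagged for the appendix. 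Modulo this technical step, every other manipulation is either a formal transport of module structures along algebra isomorphisms or an immediate application of Propositions \ref{cyceq}, \ref{iso of jacobian algebra}, and \ref{trival part of jacobian}.
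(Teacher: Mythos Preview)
Your proposal is correct and follows the same route as the paper, which simply defers to \cite[Proposition~10.5]{derksen2008quivers}. The key technical input you isolate---a right-equivalence $\rho$ between the reduced QPs satisfying $\kappa(u)-\rho(u)\in J(\s^{\mathcal{N}}_{\red}+\mathcal{T}^{\mathcal{N}})$ for $u\in A^{\mathcal{M}}_{\red}$---is exactly what is produced in the proof of Proposition~\ref{reduce in split thm}: one sets $\psi=p\circ\kappa|_{\overline{T_H(A^{\mathcal{M}}_{\red})}}$ (projection along the ideal $L$ generated by $C^{\mathcal{N}}$), observes $\kappa(u)-\psi(u)\in L\subseteq J(\s^{\mathcal{N}}_{\red}+\mathcal{T}^{\mathcal{N}})$, and then corrects $\psi$ to a genuine right-equivalence $\rho$ via Proposition~\ref{lem in reduce in split thm}(2), which guarantees $\rho(u)-\psi(u)\in J(\psi(\s^{\mathcal{M}}_{\red}))\subseteq J(\s^{\mathcal{N}}_{\red})$. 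Your final paragraph slightly misattributes the mechanism: it is this projection-plus-correction argument, not the iterative unitriangular limit of Lemma~\ref{lem in split thm}, that supplies the refined $\rho$.
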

\begin{proof}
The same proof as \cite[Proposition 10.5]{derksen2008quivers}  also works here. 
\end{proof}

\section{General Presentations}\label{section: general pre}
The results of this section form a crucial part of our study. Specifically, it provides the foundation for defining the mutations of $H$-based 
QP-representations. Throughout, we assume the Jacobian algebra $\mathcal{P}=\mathcal{P}(Q,\mathbf{d},\s)$ is finite-dimensional, in which case completion is unnecessary and so $\mathcal{P}$ is a basic algebra. 

Let $\rep(Q,\mathbf{d},\s)$ (or $\rep(\mathcal{P})$)  denote the category of all finite-dimensional decorated representations of $(Q,\mathbf{d},\s)$. Let $\rep_\alpha(Q,\mathbf{d},s)$ be the space of $\alpha$-dimensional representation of $\rep(Q,\mathbf{d},\s)$. We note that the category $\rep(Q,\mathbf{d},\s)$ has enough projective and injective objects. Moreover, all projectives and injectives are parametrized by the vertices of $Q$, that is, all indecomposable projectives (resp. injectives)  are of the form $P_k:=\mathcal{P}e_k$(resp. $I_k:=(e_k\mathcal{P})^\star$) for some vertex $k\in Q_0$. They are characterized by the property that $\Hom_{\mathcal{P}}(P_k,\mathcal{M})=M_k$(resp. $\Hom_{\mathcal{P}}(\mathcal{M},I_k)=M_k^\star$) for any $\mathcal{M}\in \rep(Q,\mathbf{d},\s)$.

Following the work of H. Derksen and J. Fei \cite{derksen2015general}, we call a homomorphism between two projective(resp. injective) representations, a \textbf{projective} (resp. \textbf{injective}) \textbf{presentation}. It is convenient to view a presentation $P_-\to P_+$ as an element in the homotopy category $K^b(\text{proj-}\mathcal{P})$ of bounded complexes of projective representations of $\mathcal{P}$.  For any $\beta\in \mathbb{N}^{Q_0}$, we denote $P(\beta):=\oplus_{k\in Q_0}\beta(k)P_k$, where $\beta(k)P_k$ is the direct sum of $\beta(k)$ copies of $P_k$.

\begin{defn}\label{defn of g_M}
The \textbf{$g$-vector} of a projective presentation $$d:P(\beta_-)\to P(\beta_+)$$ is the difference $\beta_--\beta_+\in \mathbb{Z}^{Q_0}$. The 
 \textbf{$\check{g}$-vector} of an injective presentation $$\check{d}:I(\check{\beta}_+)\to I(\check{\beta}_-)$$ is the vector $\check{\beta}_- -  \check{\beta}_+\in \mathbb{Z}^{Q_0}$.
\end{defn}
The $g$-vector is just the corresponding element in the Grothendieck group of $K^b(\text{proj-}\mathcal{P})$. Let $\Rep(\mathcal{P})$ denote the set of isomorphism classes of decorated representations of $(Q,\mathbf{d},\s)$. For each vertex $k \in Q_0$, we define the \textbf{negative generalized simple representation} of $k$ as the decorated representation $E_k^-:=(0,H_k)$. 
 There is a bijection between  two additive categories $\Rep(\mathcal{P})$ and $K^b(\text{proj-}\mathcal{P})$, mapping any representation $M$ to its minimal presentation in $\Rep(\mathcal{P})$, and $E_k^-=(0,H_k)$ to $P_k\to 0$. Now we can naturally extend the classical AR-translation to decorated representation:
 \begin{center}
     \begin{tikzcd}
\mathcal{M} \arrow[r] \arrow[d, leftrightarrow] & \tau\mathcal{M} \arrow[d, leftrightarrow] \\
d_{\mathcal{M}} \arrow[r] & \nu (d_{\mathcal{M}})
\end{tikzcd}
 \end{center}
where $\nu$ is the Nakayama functor (see \cite{elements}).
 Suppose that $\mathcal{M}$ corresponds to a projective presentation $d_{\mathcal{M}}$, then we define the $g$-vector $g_{\mathcal{M}}$ of $\mathcal{M}$ as the $g$-vector of $d_{\mathcal{M}}$. If working with the injective presentations, we can also define the $\check{g}$-vector $\check{g}_{\mathcal{M}}$ of $\mathcal{M}$ in a similar way.

 \begin{defn}
Given any projective presentation $d:P(\beta_-)\to P(\beta_+)$ and any representation  $N\in \rep(\mathcal{P})$, we define $\Hom(d,N)$ and $\E(d,N)$ to be the kernel and cokernel of the induced map:
\begin{equation}\label{defn of E-invariants}
0\xrightarrow{}\Hom(d,N)\xrightarrow{}\Hom_{\mathcal{P}}(P(\beta_+),N)\xrightarrow{}\Hom_{\mathcal{P}}(P(\beta_-),N)\xrightarrow{}\E(d,N)\xrightarrow{}0.
\end{equation}
Similarly for an injective presentation $\check{d}:I(\check{\beta}_+)\to I(\check{\beta}_-)$, we define $\Hom(M,\check{d})$ and $\check{\E}(M,\check{d})$ to be the kernel and cokernel of the induced map $\Hom_{\mathcal{P}}(M,I(\check{\beta}_+))\to \Hom_{\mathcal{P}}(M,I(\check{\beta}_-))$. 
\end{defn}
It is clearly that $\Hom(d,N)=\Hom_{\mathcal{P}}(\Coker(d), N)$ and $\Hom(M,\check{d})=\Hom_{\mathcal{P}}(M,\ker(\check{d}))$. For any two decorated representations $\mathcal{M}=(M,V_1)$ and $\mathcal{N}=(N,V_2)$, we set $\Hom_{\mathcal{P}}(\mathcal{M},\mathcal{N}) :=\Hom(d_{\mathcal{M}},N)=\Hom(M,\check{d}_{\mathcal{N}})$, and $\E_{\mathcal{P}}(\mathcal{M},\mathcal{N}):=\E(d_\mathcal{M},N)$, and $\check{\E}_{\mathcal{P}}(\mathcal{M},\mathcal{N}):=\check{\E}_{\mathcal{P}}(M,\check{d}_\mathcal{N})$. We also set $\E(d_\mathcal{M},d_\mathcal{N}):=\E_{\mathcal{P}}(\mathcal{M},\mathcal{N})$ and $\check{\E}(\check{d}_\mathcal{M},\check{d}_\mathcal{N}):=\check{\E}_{\mathcal{P}}(\mathcal{M},\mathcal{N})$. We denote by $\hom_{\mathcal{P}}(\mathcal{M},\mathcal{N})$ the dimension of $\Hom_{\mathcal{P}}(\mathcal{M},\mathcal{N})$, and denote by $\e_{\mathcal{P}}(\mathcal{M},\mathcal{N})$ the dimension of $\E(d_\mathcal{M},d_\mathcal{N})$. We set $\e_{\mathcal{P}}(\mathcal{M}) := \e_{\mathcal{P}}(\mathcal{M},\mathcal{M})$ for the case when $\mathcal{N} = \mathcal{M}$. 

 We say that a general presentation in $\Hom_{\mathcal{P}}\left(P\left(\beta_{-}\right), P\left(\beta_+\right)\right)$ has property $\heartsuit$ if there is some dense open subset $U$ of $\Hom_{\mathcal{P}}\left(P\left(\beta_{-}\right), P\left(\beta_+\right)\right)$ such that all presentations in $U$ have property $\heartsuit$. For example,  a general presentation $d$ in $\Hom_{\mathcal{P}}\left(P\left(\beta_{-}\right), P\left(\beta_+\right)\right)$ has the following properties: $\Hom(d,N)$ has constant dimension for a fixed $N\in \rep(\mathcal{P})$. If $N=A^\star$, then $\Coker d$  has a constant dimension vector, which we denote by $\underline{\dim}(\beta_--\beta+)$.  As explained in \cite{derksen2015general}, the functions $\hom_{\mathcal{P}}(-,-)$ and $\e_{\mathcal{P}}(-,-)$ are upper semicontinuous on $\PHom_{\mathcal{P}}(g_1)\times \PHom_{\mathcal{P}}(g_2)$ for any $g_1,g_2\in \mathbb{Z}^n$. We will denote their generic (minimal) values by $\hom_{\mathcal{P}}(g_1,g_2)$ and $\e_{\mathcal{P}}(g_1,g_2)$. We shall denote by $\e_{\mathcal{P}}(g)$ the generic value of $\e_{\mathcal{P}}(d,d)$ for $d\in \PHom_{\mathcal{P}}(g)$. Note the difference between $e_{\mathcal{P}}(g)$ and $e_{\mathcal{P}}(g,g)$. The latter does not require the two arguments $d_1$ and $d_2$ to be the same in $e_{\mathcal{P}}(d_1,d_2)$.
\begin{defn}\label{reduced presentation space}
For any $g \in \mathbb{Z}^{Q_{0}}$ we associate the \textbf{reduced presentation space}
$$
\PHom_{\mathcal{P}}(g):=\Hom_{\mathcal{P}}\left(P\left([g]_{+}\right), P\left([-g]_{+}\right)\right).
$$
\end{defn}
 The following lemma is well-known for any finite-dimensional basic algebra, which motivates the Definition \ref{reduced presentation space} (see \cite{derksen2015general}). 
 \begin{lem}\label{general pre is homotopy equivalent}
 A general presentation in $\Hom_{\mathcal{P}}\left(P\left(\beta_{-}\right), P\left(\beta_+\right)\right)$ is homotopy equivalent to a general presentation in $\PHom_{\mathcal{P}}(\beta_- -\beta_+)$.
\end{lem}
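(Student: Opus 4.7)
The plan is to produce, for a general $d \in \Hom_{\mathcal{P}}(P(\beta_-), P(\beta_+))$, an explicit change of basis of source and target that exhibits $d$ as the direct sum of an element of $\PHom_{\mathcal{P}}(g)$ (where $g := \beta_- - \beta_+$) and a contractible summand. First, I would set $\gamma(k) := \min(\beta_-(k), \beta_+(k))$ so that $\beta_- = [g]_+ + \gamma$ and $\beta_+ = [-g]_+ + \gamma$, giving canonical splittings $P(\beta_\pm) = P([\pm g]_+) \oplus P(\gamma)$. With respect to these decompositions, every $d$ takes the block form $d = \begin{pmatrix} d_{11} & d_{12} \\ d_{21} & d_{22} \end{pmatrix}$ with $d_{22} \in \mathrm{End}_{\mathcal{P}}(P(\gamma)) = \prod_{k \in Q_0} M_{\gamma(k)}(e_k \mathcal{P} e_k)$.

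The hard part is the key claim: on a dense open subset $U$ of $\Hom_{\mathcal{P}}(P(\beta_-), P(\beta_+))$, the diagonal block $d_{22}$ is an automorphism of $P(\gamma)$. Since $\mathcal{P}$ is basic and finite-dimensional, each $\mathrm{End}(P_k) = e_k \mathcal{P} e_k$ is a local $K$-algebra, so an element of $M_{\gamma(k)}(e_k \mathcal{P} e_k)$ is invertible if and only if its reduction modulo $e_k\,\mathrm{rad}(\mathcal{P})\,e_k$ is invertible in the semisimple quotient. That is a nonempty Zariski-open condition in the ambient affine $K$-space, hence dense; intersecting over the finitely many vertices $k$ keeps $U$ dense open.

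For $d \in U$, the Schur-complement change of basis given by the source automorphism $\begin{pmatrix} \mathrm{Id} & 0 \\ -d_{22}^{-1} d_{21} & \mathrm{Id} \end{pmatrix}$ and the target automorphism $\begin{pmatrix} \mathrm{Id} & -d_{12} d_{22}^{-1} \\ 0 & \mathrm{Id} \end{pmatrix}$ transforms $d$ into the block-diagonal form with $d' := d_{11} - d_{12} d_{22}^{-1} d_{21} \in \PHom_{\mathcal{P}}(g)$ in the upper-left corner and $d_{22}$ in the lower-right. Because $d_{22}$ is invertible, the two-term complex $P(\gamma) \xrightarrow{d_{22}} P(\gamma)$ is contractible in $K^b(\mathrm{proj}\text{-}\mathcal{P})$, so $d$ is homotopy equivalent to $d'$.

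Finally, to see that a general $d$ corresponds to a \emph{general} element of $\PHom_{\mathcal{P}}(g)$, I would observe that the Schur-complement map $\Phi \colon U \to \PHom_{\mathcal{P}}(g)$, $d \mapsto d'$, is a morphism of varieties admitting the linear section $d_{11} \mapsto (d_{11}, 0, 0, \mathrm{Id})$. Hence $\Phi$ is surjective and dominant, so the preimage of any dense open subset of $\PHom_{\mathcal{P}}(g)$ is a dense open subset of $\Hom_{\mathcal{P}}(P(\beta_-), P(\beta_+))$. Generic properties therefore transfer along $\Phi$, which is exactly what the statement asserts.
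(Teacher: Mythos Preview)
The paper does not prove this lemma; it simply states it as well-known for finite-dimensional basic algebras and cites Derksen--Fei \cite{derksen2015general}. Your argument is the standard one and is correct in all essential points. One small inaccuracy: the equality $\mathrm{End}_{\mathcal{P}}(P(\gamma)) = \prod_{k \in Q_0} M_{\gamma(k)}(e_k \mathcal{P} e_k)$ is false as written, since there are off-diagonal blocks $\Hom_{\mathcal{P}}(P_k, P_j) \cong e_j \mathcal{P} e_k$ for $k \neq j$. However, your invertibility argument survives: because $\mathcal{P}$ is basic, every such off-diagonal block lies in the radical, so an element of $\mathrm{End}_{\mathcal{P}}(P(\gamma))$ is invertible if and only if its image in $\prod_k M_{\gamma(k)}(K)$ is, and this is indeed a nonempty Zariski-open condition on $d$. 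The Schur-complement reduction and the dominance argument for transferring generic properties are both fine.
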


A weight vector $g \in K_{0}(\operatorname{proj-}\mathcal{P})$ is called \textbf{positive-free} if a general presentation in  $\PHom_{\mathcal{P}}(g)$ contains no a direct summand of the form $P_{-} \rightarrow 0$.  Any $g \in K_{0}(\operatorname{proj-}\mathcal{P})$ can be decomposed as $g=g^{\prime}+g^{+}$with $g^{\prime}$ positive-free and $g^{+} \in \mathbb{Z}_{>0}^{q}$ such that a general presentation in $\PHom_{\mathcal{P}}(g)$ is a direct sum of a presentation in $\PHom_{\mathcal{P}}(g)$ and a presentation in $\PHom_{\mathcal{P}}(g^+)$. 

\begin{defn}\label{defn of g-coherent}
Let $\mathcal{M}$ be a decorated representation of $(Q,\mathbf{d},\s)$, and its minimal presentation is $$d_{\mathcal{M}}:P(\beta_-)\to P(\beta_+).$$ Then $\mathcal{M}$ is called \textbf{$g$-coherent} if $\min(\beta_+(k),\beta_-(k))=0$ for all vertex $k$, or equivalently, $\beta_-=[g_{\mathcal{M}}]_+$ and $\beta_+=[-g_{\mathcal{M}}]_+$.

Similarly, we give the definition of \textbf{$\check{g}$-coherent}.
\end{defn}
\begin{lem}[{\cite[Lemma 5.11]{fei2017cluster}}]\label{general is g-coherent}
A general presentation in $\PHom_{\mathcal{P}}(g)$ corresponds to a $g$-coherent decorated representation.
\end{lem}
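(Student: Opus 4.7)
My plan is to show that every presentation $d \in \PHom_{\mathcal{P}}(g) = \Hom_{\mathcal{P}}(P([g]_+), P([-g]_+))$ is already in minimal form as an object of the homotopy category $K^b(\text{proj-}\mathcal{P})$, which will immediately force the associated decorated representation $\mathcal{M}(d)$ to be $g$-coherent. In fact this conclusion will hold for every $d$ in $\PHom_{\mathcal{P}}(g)$, not merely a generic one; the genericity in the statement is inherited from the reference and is harmless here. The crucial structural feature is the disjoint-support property $\min([g]_+(k), [-g]_+(k)) = 0$ built into the definition of $\PHom_{\mathcal{P}}(g)$: at every vertex $k$, the indecomposable projective $P_k$ sits in at most one of the source $P([g]_+)$ and the target $P([-g]_+)$.

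First I would recall that, under the running assumption at the start of Section \ref{section: general pre}, $\mathcal{P}$ is a finite-dimensional basic algebra, so its indecomposable projectives $\{P_k\}_{k \in Q_0}$ are pairwise non-isomorphic and each endomorphism ring $\mathrm{End}(P_k) = e_k \mathcal{P} e_k$ is local. Consequently, for $j \neq k$, every $\mathcal{P}$-linear map $P_j \to P_k$ lies in $e_j \,\mathrm{rad}(\mathcal{P})\, e_k$. Combined with the disjoint-support property, this rules out any trivial direct summand $\mathrm{id}\colon P_k \to P_k$ of $d$, since such a summand would require $P_k$ to appear as a summand of both source and target. Equivalently, $\mathrm{Im}(d) \subseteq \mathrm{rad}(P([-g]_+))$, and hence $P([-g]_+) \twoheadrightarrow \Coker(d)$ is a projective cover.

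Finally I would invoke the bijection $\Rep(\mathcal{P}) \leftrightarrow K^b(\text{proj-}\mathcal{P})$ recalled earlier in this section. Decompose $d$ in $K^b$ as $d \simeq d_{\min}(M_0) \oplus (0 \to P(W)) \oplus (P(V) \to 0)$ with $M_0$ having no projective summand; the decorated representation associated with $d$ is then $\mathcal{M}(d) = (M_0 \oplus P(W),\, V)$, and from $\beta_-(d) = [g]_+$ and $\beta_+(d) = [-g]_+$ we read off $\beta_-^{\min}(M_0) + V = [g]_+$ and $\beta_+^{\min}(M_0) + W = [-g]_+$. Using the disjoint-support property vertex by vertex (if $V(k) > 0$, then $[g]_+(k) > 0$, hence $[-g]_+(k) = 0$, which forces $W(k) = \beta_+^{\min}(M_0)(k) = 0$, and symmetrically if $W(k)>0$), one concludes that the minimal presentation of $\mathcal{M}(d)$ is $d_{\min}(M_0) \oplus (0 \to P(W)) \oplus (P(V) \to 0) = d$ itself, with $\beta_-(\mathcal{M}(d)) = [g]_+$ and $\beta_+(\mathcal{M}(d)) = [-g]_+$. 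Therefore $g_{\mathcal{M}(d)} = g$ and $\min(\beta_-(\mathcal{M}(d))(k), \beta_+(\mathcal{M}(d))(k)) = 0$ for every $k$, which is exactly $g$-coherence in the sense of Definition \ref{defn of g-coherent}. The only genuinely delicate step is the non-existence of trivial summands, and this is forced by the disjoint supports of $[g]_+$ and $[-g]_+$ combined with the basic-algebra hypothesis; everything else is bookkeeping with the $K^b$-decomposition.
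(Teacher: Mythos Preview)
The paper does not supply its own proof of this lemma; it is imported as a citation from \cite[Lemma 5.11]{fei2017cluster}. Your argument is correct and self-contained. Two small remarks. First, your observation that the disjoint-support property $\min([g]_+(k),[-g]_+(k))=0$ forbids any contractible summand $\mathrm{id}\colon P_k\to P_k$ already shows that \emph{every} $d\in\PHom_{\mathcal{P}}(g)$ is the minimal presentation of its associated decorated representation; the word ``general'' in the statement is therefore redundant, as you note. Second, once you know $d$ has no contractible summand, the equality of $d$ with the minimal presentation of $\mathcal{M}(d)$ follows immediately from additivity of minimal presentations (the minimal presentation of $M_0\oplus P(W)$ is $d_{\min}(M_0)\oplus(0\to P(W))$), so the vertex-by-vertex check you give at the end is not strictly needed---it merely re-derives $\min([g]_+(k),[-g]_+(k))=0$, which was your starting hypothesis. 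A minor notational slip: $\Hom_{\mathcal{P}}(P_j,P_k)\cong e_k\mathcal{P}e_j$, not $e_j\,\mathrm{rad}(\mathcal{P})\,e_k$, but the substantive point (that any such map has image in $\mathrm{rad}(P_k)$ when $j\neq k$) is correct and is all you use.
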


The presentation space $\PHom_{\mathcal{P}}(g)$ comes with a natural group action by
$$\operatorname{Aut}(g):=\operatorname{Aut}_{\mathcal{P}}\left([g]_{+}\right) \times \operatorname{Aut}_{\mathcal{P}}\left([-g]_{+}\right).$$
In \cite[Section 2]{derksen2015general}, the authors considered the incidence variety
\begin{equation}\label{incidence variety}
    \begin{split}
        Z(Y, X) =& \Bigl\{(f, \pi, M) \in Y \times \operatorname{Hom}\left(P([-g]_{+}), K^{\alpha}\right) \times X \mid 
        \pi \in \Hom_{A}\left(P([-g]_{+}), M\right) \\  &\text{ and $P([g]_{+}) \rightarrow P([-g]_{+}) \rightarrow M \rightarrow 0$ is exact } \Bigr\}
    \end{split}
\end{equation}
for any $\operatorname{Aut}_{\mathcal{P}}(g)$-stable subvariety $Y$ of $\PHom_{\mathcal{P}}(g)$ and $\mathrm{GL}_{\alpha}$-stable subvariety $X$ of $\rep_{\alpha}(\mathcal{P})$. It comes with two projections $p_{1}: Z \rightarrow \PHom_{\mathcal{P}}(g)$ and $p_{2}: Z \rightarrow \Rep_{\alpha}(\mathcal{P})$. The projection $p_{1}$ is a principal $\mathrm{GL}_{\alpha}$-bundle over its image (\cite[Theorem 2.3, Lemma 2.4]{derksen2015general}). As discussed in \cite{fei2023general}, there is an open subset $U \subset \operatorname{PHom}_{\mathcal{P}}(g)$ such that $W=p_{2}\left(p_{1}^{-1}(U)\right)$ lies entirely in an irreducible component of $\Rep_{\alpha}(A)$, which is called the \textbf{principal component} of $g$, and denoted by $\operatorname{PC}(g)$ (\cite[Definition 3.5]{fei2023general}). The definition of the principal component $\operatorname{PC}(\check{g})$ of a $\check{g}$-vector $\check{g}$ is similar. When we say a general representation of weight $g$ (resp. $\check{g}$) in $\rep(\mathcal{P})$, we mean a general representation in $\operatorname{PC}(g)$ (resp. $\operatorname{PC}(\check{g})$).

 Next we give a concrete way to compute  $\beta_{+,M}$ and $\beta_{-,M}$ for a general decorated representation $M\in\rep(\mathcal{P})$. Let $S_k$ be the simple representation supported on vertex $k$, that is $\underline{\dim}(S_k)=\theta_k$ (Here $\{\theta_k\}$ is the standard basis of $\mathbb{Z}^{|Q_0|}$).  Suppose that $P(\beta_-)\xrightarrow{d}P(\beta_+)\xrightarrow{} M \to 0$ is the minimal presentation of $M$. Apply $\Hom_{\mathcal{P}}(-,S_k)$ to the presentation and we get 
$$0\xrightarrow{}\Hom_{\mathcal{P}}(M,S_k)\xrightarrow{}\Hom_{\mathcal{P}}(P_+,S_k)\xrightarrow{d}\Hom_{\mathcal{P}}(P_-,S_k)\xrightarrow{}\cdots$$
The minimality of the presentation implies that $d$ is a zero map, and so 
\begin{align}
     \beta_{+,M}(k)=\dim \Hom_{\mathcal{P}}(M,S_k),\quad \beta_{-,M}(k)=\dim \Ext^1_{\mathcal{P}}(M,S_k).
\end{align}
Similarly, we have that 
\begin{align}
    \check{\beta}_{+,M}(k)=\dim \Hom_{\mathcal{P}}(S_k,M),\quad \check{\beta}_{-,M}(k)=\dim \Ext^1_{\mathcal{P}}(S_k,M).
\end{align}
To proceed further, we need  resolutions of $S_k$.
\begin{lem}\label{resolution of s_k}
Part of the projective (resp. injective) resolution of $S_k$ is given by:
\begin{align}\label{sp}
    \cdots \xrightarrow{} P_k\oplus (\bigoplus_{a\in Q^\circ_1,ha=k}P_{ta})\xrightarrow{h_k}P_k\oplus (\bigoplus_{b\in Q^\circ_1,tb=k}P_{hb})\xrightarrow{f_k=(\varepsilon_k, b)_b} P_k\xrightarrow{}S_k\xrightarrow{}0
\end{align}
\begin{align}\label{si}
    0\xrightarrow{} S_k \xrightarrow{} I_k\xrightarrow{f_k'=(\varepsilon_k^\star, a^\star)^T} I_k\bigoplus (\bigoplus_{a\in Q^\circ_1,ha=k}I_{ta})\xrightarrow{h_k^\star}I_k\oplus (\bigoplus_{b\in Q^\circ_1,tb=k}I_{hb})\xrightarrow{}\cdots
\end{align}
where \begin{align*}
    h_k=\begin{pmatrix} \varepsilon_k^{d_k-1} & (\partial_{[\varepsilon_k a]}\s)_a\\
    0 & _{b}(\partial_{[ba]}\mathcal{S})_{a}
    \end{pmatrix},
    \quad 
    h_k^\star= \begin{pmatrix} (\varepsilon_k^{d_k-1})^\star & 0\\
    (\partial_{[b \varepsilon_k]}\s)^\star & _{b}(\partial_{[b a]}\mathcal{S})^\star_{a}.
    \end{pmatrix}
 \end{align*} 
\end{lem}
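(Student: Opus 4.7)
My plan is to establish the stated sequences as the initial terms of the minimal projective and injective resolutions of $S_k$, respectively, with the injective case following from the projective one via standard $K$-duality.

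For the projective resolution, since $\mathcal{P}$ is a basic finite-dimensional algebra with primitive idempotents $\{e_i\}$, the projective cover of $S_k$ is $P_k$, and its kernel is $\mathrm{rad}(P_k)$. The quotient $\mathrm{rad}(P_k)/\mathrm{rad}^2(P_k)$ has a basis indexed by the arrows of $Q$ starting at $k$ (in the convention where $\mathcal{P}e_k$ is spanned by paths with tail $k$), namely the loop $\varepsilon_k$ together with the non-loop arrows $b \in Q_1^\circ$ with $tb = k$. This yields the projective cover $f_k\colon P_k \oplus \bigoplus_{b:\,tb=k} P_{hb} \twoheadrightarrow \mathrm{rad}(P_k)$ of the stated form, whose entries lie in $\mathrm{rad}(\mathcal{P})$, guaranteeing minimality.

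Next, I identify $\ker f_k$: an element $(u, (u_b)_b)$ belongs to $\ker f_k$ iff $u\varepsilon_k + \sum_b u_b b = 0$ in $\mathcal{P}$. Such relations come from two sources, namely (i) the nilpotency $\varepsilon_k^{d_k} = 0$ coming from $H_k$, yielding $(\varepsilon_k^{d_k-1}, 0)$, and (ii) the Jacobian relations $\partial_a \s = 0$ for each $a \in Q_1^\circ$ with $ha = k$. For (ii), since $\partial_a \s \in e_{ta}\overline{T_H(A)}e_k$ lies in the closure of $\mathrm{rad}(P_k)$, it admits a canonical factorization $\partial_a \s = (\partial_{[\varepsilon_k a]}\s)\,\varepsilon_k + \sum_b (\partial_{[ba]}\s)\,b$ obtained by grouping the rotated cyclic terms of $\partial_a\s$ according to their rightmost letter; this simultaneously defines the symbols $\partial_{[\varepsilon_k a]}\s \in e_{ta}\overline{T_H(A)}e_k$ and $\partial_{[ba]}\s \in e_{ta}\overline{T_H(A)}e_{hb}$. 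These are exactly the columns of $h_k$, so $\mathrm{im}(h_k) \subseteq \ker f_k$. The reverse inclusion is the exactness claim and follows from the standard argument that the relations (i) and (ii) generate all relations in $\mathcal{P}e_k$ as a left $\mathcal{P}$-module, adapting the classical argument for Jacobian algebras of QPs to the $H$-based setting by treating $\varepsilon_k$ on the same footing as an arrow of $Q^\circ$.

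For the injective resolution, apply the $K$-duality $D = \mathrm{Hom}_K(-, K)$ to the projective resolution of $S_k$ viewed as a left $\mathcal{P}^{\mathrm{op}}$-module, noting that $\mathcal{P}^{\mathrm{op}}$ is the Jacobian algebra of the opposite $H$-based QP and that $D$ sends $P_i$ to $I_i$. Running the analogous analysis for $\mathcal{P}^{\mathrm{op}}$ and dualizing yields the claimed injective resolution, with the symbols $\partial_{[b\varepsilon_k]}\s$ and $\partial_{[ba]}\s$ arising from the leftmost-letter decomposition $\partial_b\s = \varepsilon_k(\partial_{[b\varepsilon_k]}\s) + \sum_a a(\partial_{[ba]}\s)$ of the Jacobian relation at $b \in Q_1^\circ$ with $tb = k$. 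The main obstacle is establishing the exactness $\mathrm{im}(h_k) = \ker f_k$ rigorously: this amounts to a Koszul-type completeness statement for the Jacobian relations together with the nilpotency relations inside $\mathcal{P}e_k$, and while conceptually clear, it requires a careful verification in the $H$-based setting parallel to the corresponding result for classical Jacobian algebras.
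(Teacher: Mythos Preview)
Your outline is structurally correct and matches the paper's approach: you identify $f_k$ and the columns of $h_k$ correctly, and you rightly isolate the inclusion $\ker f_k \subseteq \Img h_k$ as the only nontrivial step. However, you do not prove this inclusion, deferring instead to a ``standard argument'' or a ``Koszul-type completeness statement'' that you yourself concede still needs to be carried out. That is the gap.

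The paper closes it with an elementary representative-chasing argument rather than any abstract machinery. Take $(x_1,(x_{2,b}))$ with lifts in $\overline{T_H(A)}$ satisfying $x_1\varepsilon_k + \sum_b x_{2,b}\,b = r$ for some $r \in J(\s)e_k$, and write $r$ as a sum of terms $p\,(\partial_c\s)\,q$ with $c \in Q_1^\circ$ and $q$ a path from $k$ to $hc$. Whenever $q$ factors as $q'b\varepsilon_k^t$ for some non-loop arrow $b$ with $tb=k$, the prefix $p\,(\partial_c\s)\,q'$ already lies in $J$, so this term may be absorbed into the $J$-coset of $x_1$ (if $t\ge 1$) or of $x_{2,b}$ (if $t=0$) without changing the class of $(x_1,(x_{2,b}))$. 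After exhausting such absorptions, every surviving term in $r$ has $q=\varepsilon_k^l$, which forces $hc=k$; hence $r$ reduces to a combination of the relations $\partial_a\s$ with $ha=k$. Splitting each $\partial_a\s$ by its rightmost letter via your own formula $\partial_a\s=(\partial_{[\varepsilon_k a]}\s)\varepsilon_k+\sum_b(\partial_{[ba]}\s)b$ then exhibits $(x_1,(x_{2,b}))$ explicitly in $\Img h_k$, with the first column $(\varepsilon_k^{d_k-1},0)^T$ of $h_k$ absorbing the ambiguity in recovering $x_1$ from $x_1\varepsilon_k$.

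So your conceptual picture is right, but the argument is not a black box: it is precisely this concrete reduction of Jacobian-ideal elements that you need to write down. The injective case then follows by duality, as you say.
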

\begin{proof}
  Clearly, $\Img f_k$ is generated by $\varepsilon_k$ and $b$, so $\Coker f_k=S_k$. 
  The relation $f_kh_k = 0$ immediately implies $\Img h_k \subseteq \ker f_k$. To complete the proof that \eqref{sp} forms a projective resolution of $S_k$, it remains to show the reverse inclusion $\ker f_k \subseteq \Img h_k$.
  
  Let nonzero element $(x_1, x_2)^T+J\in \ker f_k$, where $x_1\in P_k$ and $x_2\in \bigoplus_{b\in Q^\circ_1,tb=k}P_{hb}$. Then $x_1\varepsilon_k+x_2b+J=0$, i.e. $\exists r\in J$ s.t. $x_1\varepsilon_k+x_2b=r$ with $tr=k$. Thus, $r$ consists of terms of the form $p
 \partial_{c}\mathcal{S}q$ where $q$ is a path from $k$ to $hc$ for some arrow $c\in Q_1^\circ$. If $q=q' b\varepsilon_k^t$ for some $0\leq t\leq d_k-1$ , $b \in Q_1^\circ $ with $tb =k$, and path $q'$ starting at $hb$, then: (1) when $t \geq 1$, $p \partial_c \mathcal{S} (q' b \varepsilon_k^{t-1}) \in J$ appears in $x_1$;  (2) when $t = 0$, $p \partial_c \mathcal{S} (q' b) \in J$ appears in $x_2$. Consequently, we may choose a representative of $(x_1, x_2)^T + J$  such that $r$ contains only terms of the form  $q_a
 (\partial_{a}\mathcal{S})\varepsilon_k^l$ and $q_0\varepsilon_k^{d_k-1}$ where $ha=k$ and $0\leq l\leq d_k-1$. That is, $$r=q_0\varepsilon_k^{d_k}+\sum_{a,b\in Q_1^\circ, ha=tb=k}q_{a}((\partial_{[b a]}\mathcal{S})b+\partial_{[\varepsilon_k a]}\s \varepsilon_k)$$ where $q_{a},q_0$ are  linear combinations of paths starting at $ta$ and $k$, respectively. Therefore $x_1=q_0\varepsilon_k^{d_k-1}+\sum_a q_a\partial_{[\varepsilon_k a]}\s$ and $x_2= (\sum_a q_{a}\partial_{[b a]}\mathcal{S})^T_b$ which means $(x_1,x_2)^T+J=h_k((q_0+J,(q_{a}+J)_a)^T)\in \Img h_k.$  This proves that $\ker f_k\subseteq \Img h_k$. 
  
  The proof of the injective resolution is similar.
\end{proof}
Let $\underline{\dim}(M)\in \mathbb{Z}^{|Q_0|}$  be the dimension vector of $M\in \rep(\mathcal{P})$, let $\rank(V_k)$ be the rank of the free $H_k$-module $V_k$. Applying $\Hom_{\mathcal{P}}(-,M)$ to \eqref{sp}, we have the following complex: 
\begin{equation}\label{spm}
    0\xrightarrow{}\Hom_{\mathcal{P}}(S_k,M)\xrightarrow{}M_k\xrightarrow{f_{k;M}}M_k\oplus(\bigoplus_{tb=k}M_{hb})\xrightarrow{h_{k;M}}M_k\oplus (\bigoplus_{ha=k}M_{ta})\xrightarrow{}\cdots.
\end{equation}
Then, by direct calculation, we have that 
\begin{equation}\label{checkgm}
    \begin{aligned}
    -\check{g}_{\mathcal{M}}(k)=&\dim\Hom_\mathcal{P}(S_k,M)-\dim\Ext_\mathcal{P}(S_k,M)-\rank (V_k)\\
    =&\dim M_k-\dim\ker h_{k;M}- \rank (V_k)\\
    =&\rank (h_{k;M})-\sum_{tb=k}\dim M_{hb}-\rank(V_k)
    \end{aligned}
\end{equation}
 The upper-semicontinuity of $\rank (h_{k;M})$ on $\operatorname{PC}(g)$ implies that $\rank(h_{k;M})$ admits a generic value on $\operatorname{PC}(g)$. So a general representation $\mathcal{M}$ in $\operatorname{PC}(g)$ has a constant $\check{g}$-vector, denoted $\check{g}$. Moreover, we will show that a general representation $\mathcal{M}$ in $\operatorname{PC}(g)$ is also a general representation in $\operatorname{PC}(\check{g})$.
 \begin{thm}\label{pc}
Suppose $\mathcal{P}(Q, \mathbf{d},\s)$ is  finite-dimensional, then $\PHom_{\mathcal{P}}(g)$ and $\IHom_{\mathcal{P}}(\check{g})$ have the same principal component.
\end{thm}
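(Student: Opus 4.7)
The plan is to show that the principal components $\mathrm{PC}(g)$ and $\mathrm{PC}(\check{g})$, both irreducible components of $\Rep_\alpha(\mathcal{P})$ (with $\alpha$ the common generic dimension vector), coincide by exhibiting a common irreducible constructible subset whose closure contains them both.

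First, I would exploit the calculation \eqref{checkgm}: by upper-semicontinuity of $\mathrm{rank}(h_{k;M})$ on $\mathrm{PC}(g)$, a general $\mathcal{M}\in\mathrm{PC}(g)$ has a constant $\check{g}$-vector $\check{g}$, and Lemma \ref{general is g-coherent} together with its injective dual ensures that such an $\mathcal{M}$ is simultaneously $g$-coherent and $\check{g}$-coherent. In particular, its minimal projective presentation lies in $\PHom_\mathcal{P}(g)$ and its minimal injective copresentation lies in $\IHom_\mathcal{P}(\check{g})$, so a general $\mathcal{M}\in\mathrm{PC}(g)$ can be identified from either side.

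Next, I would form the common incidence variety $Z$ consisting of tuples $(d,\check{d},M,\pi,\iota)$ with $d\in\PHom_\mathcal{P}(g)$, $\check{d}\in\IHom_\mathcal{P}(\check{g})$, $M\in\Rep_\alpha(\mathcal{P})$, $\pi:P([-g]_+)\twoheadrightarrow M$, and $\iota:M\hookrightarrow I([\check{g}]_+)$, such that $(d,\pi)$ is an exact projective presentation of $M$ and $(\iota,\check{d})$ is an exact injective copresentation of $M$. This $Z$ carries natural projections $p:Z\to\PHom_\mathcal{P}(g)$, $q:Z\to\IHom_\mathcal{P}(\check{g})$, and $r:Z\to\Rep_\alpha(\mathcal{P})$. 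Restricting to the open locus $Z^\circ\subset Z$ where both presentations are minimal, I would apply the principal-bundle argument of \cite[Theorem~2.3, Lemma~2.4]{derksen2015general} simultaneously to $p$ and (the injective analogue of) $q$, showing they are $\GL_\alpha$-equivariant principal bundles over dense open subsets of the (irreducible) affine spaces $\PHom_\mathcal{P}(g)$ and $\IHom_\mathcal{P}(\check{g})$. Hence $Z^\circ$ is irreducible, so $\overline{r(Z^\circ)}\subset\Rep_\alpha(\mathcal{P})$ is irreducible and by construction contains open dense subsets of both $\mathrm{PC}(g)$ and $\mathrm{PC}(\check{g})$; maximality of these irreducible components then forces $\overline{r(Z^\circ)}=\mathrm{PC}(g)=\mathrm{PC}(\check{g})$.

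The main obstacle I anticipate is the simultaneous principal-bundle structure on $Z^\circ$: one must verify that over a general $d\in\PHom_\mathcal{P}(g)$ the fibre of $p$ consists of a single $\GL_\alpha\times\operatorname{Aut}(\check{g})$-orbit, using the uniqueness (up to isomorphism) of the minimal injective copresentation of $\Coker(d)$ together with the generic coincidence of $g$- and $\check{g}$-coherence established in the first step; the symmetric statement for $q$ is proved by the obvious dualisation. Controlling this fibre structure is the technical heart of the argument, and once it is in hand the irreducibility of $Z^\circ$ and the component-matching conclusion become formal.
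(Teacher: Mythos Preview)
Your proposal has a genuine circularity at the step where you claim $q:Z^\circ\to\IHom_{\mathcal{P}}(\check{g})$ is a principal bundle over a \emph{dense} open subset. The ``obvious dualisation'' of the $p$-side argument does not give this: starting from $\IHom_{\mathcal{P}}(\check{g})$, the dual argument would produce a joint incidence variety over $\IHom_{\mathcal{P}}(\check{g})$ and $\PHom_{\mathcal{P}}(g')$, where $g'$ is the generic $g$-vector of $\mathrm{PC}(\check{g})$. Nothing you have said forces $g'=g$. Equivalently, the image $q(Z^\circ)$ consists precisely of those $\check{d}\in\IHom_{\mathcal{P}}(\check{g})$ whose kernel has minimal projective presentation in $\PHom_{\mathcal{P}}(g)$; asserting this image is dense is exactly asserting that a general point of $\mathrm{PC}(\check{g})$ has $g$-vector $g$, which is the statement to be proved. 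From the $p$-side alone you do get that $Z^\circ$ is irreducible and $\overline{r(Z^\circ)}=\mathrm{PC}(g)$, but you cannot conclude $r(Z^\circ)$ is dense in $\mathrm{PC}(\check{g})$ without the missing step.

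The paper breaks this circularity by a dimension count. Using the quotient-dimension formula (Lemma~\ref{[Lemma 3.8]fei2023general}) one has $\dim\bigl(\PHom_{\mathcal{P}}(g)/\operatorname{Aut}(g)\bigr)=\e_{\mathcal{P}}(g)$ and $\dim\bigl(\IHom_{\mathcal{P}}(\check{g})/\operatorname{Aut}(\check{g})\bigr)=\check{\e}_{\mathcal{P}}(\check{g})$. The constructible image $\check{U}'\subset\IHom_{\mathcal{P}}(\check{g})$ coming from $\mathrm{PC}(g)$ then satisfies $\dim\bigl(\check{U}'/\operatorname{Aut}(\check{g})\bigr)=\e_{\mathcal{P}}(g)$, so density of $\check{U}'$ reduces to the inequality $\e_{\mathcal{P}}(g)\geq\check{\e}_{\mathcal{P}}(\check{g})$. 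This is supplied by the key identity $\e_{\mathcal{P}}(\mathcal{M})=\check{\e}_{\mathcal{P}}(\mathcal{M})$ for any decorated representation $\mathcal{M}$ (Lemma~\ref{lem1}), proved via the Auslander--Reiten translate and the duality $\check{\E}_{\mathcal{P}(Q)}(\mathcal{M},\mathcal{N})\cong\E_{\mathcal{P}(Q^{\mathrm{op}})}(\mathcal{N}^\star,\mathcal{M}^\star)$. That identity is the substantive ingredient you are missing; once you have it, either your incidence-variety framework or the paper's direct argument goes through.
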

\begin{proof}
We follow the strategy of \cite[Theorem 3.11]{fei2023general}. 

It is enough to show that there is an open subset $U$ of $\PHom_{\mathcal{P}}(g)$ and an open subset $\check{U}$ of $\IHom_{\mathcal{P}}(\check{g})$ such that they correspond to the same open subset $W$ in the principal component $\operatorname{PC}(g)$. To achieve this, we need the following key lemma.
\begin{lem}[{\cite[Lemma 3.8]{fei2023general}}]\label{[Lemma 3.8]fei2023general}
    Suppose that $\mathcal{P}$ is a basis algebra. Then:
    \begin{itemize}
        \item[(1)] For any $\operatorname{Aut}_{\mathcal{P}}(g)$-stable subset $X$ of $\PHom_{\mathcal{P}}(g)$ which maps onto an open subset of some irreducible component $C$ of $\Rep_{\alpha}(\mathcal{P})$, there exist open subsets $U$ and $W$ of $X$ and $C$ ,respectively, such that there is an isomorphism of geometric quotients:
    $$U/\operatorname{Aut}_{\mathcal{P}}(g)\to W/\operatorname{GL}_{\underline{\dim}(g)}.$$
    \item [(2)] When $X=\PHom_{\mathcal{P}}(g)$ and $C=\operatorname{PC}(g)$, the dimension of the quotient $U/\operatorname{Aut}_{\mathcal{P}}(g)$ is equal to $\e_{\mathcal{P}}(g)$.
    \end{itemize}
    Dually, we have a similar statement for $\IHom(\check{g})$. In this case , the dimension formula is $\dim(U/\operatorname{Aut}_{\mathcal{P}}(g))=\check{\e}_{\mathcal{P}}(\check{g}
    )$.
\end{lem}
Now let $U$ and $W$ be the open subsets of $\PHom_{\mathcal{P}}(g)$ and $\operatorname{PC}(g)$, respectively, as in Lemma \ref{[Lemma 3.8]fei2023general}. Recall the incidence variety $Z(Y, X)$ in (\ref{incidence variety}) with projections $p_1$ and $p_2$. Let $\check{U}' = p_1(p_2^{-1}(W))$ be the (constructible) subset of $\IHom_{\mathcal{P}}(\check{g})$. In view of Lemma \ref{[Lemma 3.8]fei2023general} (1), by possibly shrinking $\check{U}'$ and $W$, we may assume the isomorphism:

\[\check{U} / \operatorname{Aut}_{\mathcal{P}}(\check{g}) \cong W / \mathrm{GL}_{\underline{\dim}(g)}.\]
So by Lemma \ref{[Lemma 3.8]fei2023general}(2), we have that
\[\dim(\check{U} / \operatorname{Aut}_{\mathcal{P}}) = \dim(W / \mathrm{GL}_{\underline{\dim}(g)}) = \dim(U / \mathrm{Aut}_{\mathcal{P}}(g)) = \e_{\mathcal{P}}(g).\]
To show Theorem \ref{pc}, we also need one more lemma.
\begin{lem}\label{lem1}
For any $\mathcal{P}(Q,S,\mathbf{d})$ decorated representations $\mathcal{M}$ and $\mathcal{N}$, we have the following equalities
\begin{enumerate}
    \item[(1)] $\e_{\mathcal{P}(Q)}(\mathcal{M})=\e_{\mathcal{P}(Q^{op})}(\mathcal{M}^\star)$ and $\check{\e}_{\mathcal{P}(Q)}(\mathcal{M})=\check{\e}_{\mathcal{P}(Q^{op})}(\mathcal{M}^\star)$;
    \item[(2)] $\e_{\mathcal{P}}(\mathcal{M},\mathcal{N})=\hom_{\mathcal{P}}(\mathcal{N},\tau(\mathcal{M}))$ and $\check{\e}_{\mathcal{P}}(\mathcal{M},\mathcal{N})=\hom_{\mathcal{P}}(\tau^{-1}(\mathcal{N}),\mathcal{M})$ where $\tau$ is the classical Auslander-Reiten translation;
    \item[(3)] $\e_{\mathcal{P}}(\mathcal{M})=\check{\e}_{\mathcal{P}}(\mathcal{M})$;
\end{enumerate}
\end{lem}
\begin{proof}
(1) Using (\ref{defn of E-invariants}) and (\ref{checkgm}), we can express $\check{\e}_{\mathcal{P}}(\mathcal{M})$ as 
\begin{equation}\label{em}
    \begin{aligned}
    \check{\e}_{\mathcal{P}}(\mathcal{M})=& \dim \Hom_{\mathcal{P}}(M,M)+  \check{g}_{\mathcal{M}}(\underline{\dim}M)\\
    =& \dim \Hom_{\mathcal{P}}(M,M)+ \sum_{k\in Q_0}\dim M_k(\rank (V_k)-\rank h_{k;M})+\sum_{a\in Q_1^\circ}\dim M_{ha}\dim M_{ta}
\end{aligned}
\end{equation}
We observe that passing from $\mathcal{M}$ to $\mathcal{M}^\star$ does not change any of the terms in (\ref{em}), since $\Hom_{{\mathcal{P}}(Q^{op})}(M^\star,M^\star)$ is isomorphic to $\Hom_{\mathcal{P}}(M,M)$, and $h_{k;M^\star}=(h_{k;M})^\star$. Therefore, the  second equality in the statement holds.

The proof of the first equality is similar.

(2) To emphasize the dependence of indecomposable projective modules $P_k$ (for $k\in Q_0$) on the underlying $H$-based QP $(Q,S,\mathbf{d})$, we will denote them $P_k(Q,S,\mathbf{d})$. The indecomposable injective $\mathcal{P}(Q,S,\mathbf{d})$-modules $I_k(Q,S,\mathbf{d})$ satisfy $$I_k(Q,S,\mathbf{d})=P_k(Q^{op},S^{op},\mathbf{d})^\star.$$
Hence, every projective presentation $d_\mathcal{M}$ gives rise to the injective presentation $d_{\mathcal{M^\star}}$. Then, by definition, we have $\Check{\E}_{\mathcal{P}(Q)}(\mathcal{M},\mathcal{N})=\E_{{\mathcal{P}}(Q^{op})}(\mathcal{N}^\star,\mathcal{M}^\star)$. Recall the Nakayama functor $\nu$  has the property $\nu(P_k(Q,S))=I_k(Q,S)$ for every vertex $k$. Consider the minimal presentation of $\mathcal{M}$
$$P_-\xrightarrow{}P_+\xrightarrow{}\mathcal{M}\xrightarrow{}0.$$
It follows form \cite[Section IV, Proposition 2.4]{elements} that the sequence $$0\xrightarrow{}\tau(\mathcal{M})\xrightarrow{}\nu(P_-)\xrightarrow{}\nu(P_+)$$ is exact. Apply $\Hom_{\mathcal{P}}(N,-)^\star$ to it, we have the following exact sequence $$\Hom_{\mathcal{P}}(P_-,N)\xrightarrow{}\Hom_{\mathcal{P}}(P_+,N)\xrightarrow{}\Hom_{\mathcal{P}}(\mathcal{N},\tau(\mathcal{M}))^\star\xrightarrow{}0.$$
It follows from (\ref{defn of E-invariants}) that $\E_{\mathcal{P}}(\mathcal{M},\mathcal{N})=\Hom_\mathcal{P}(\mathcal{N},\tau(\mathcal{M}))^\star$. We have that $\tau(\mathcal{N}^\star)=\tau^{-1}(\mathcal{N})^\star$, so it follows that
\begin{align*}
    \Check{\E}_{\mathcal{P}(Q)}(\mathcal{M},\mathcal{N})&=\E_{{\mathcal{P}}(Q^{op})}(\mathcal{N}^\star,\mathcal{M}^\star)=\Hom_{\mathcal{P}(Q^{op})}(\mathcal{M}^\star,\tau(\mathcal{N}^\star))^\star\\
    &=\Hom_{\mathcal{P}(Q^{op})}(\mathcal{M}^\star,\tau^{-1}(\mathcal{N})^\star)^\star=\Hom_{\mathcal{P}(Q)}(\tau^{-1}(\mathcal{N}),\mathcal{M}).
\end{align*}

(3) By part (2) we obtain that $\Check{\E}_{\mathcal{P}(Q)}(\mathcal{M})=\E_{\mathcal{P}(Q^{op})}(\mathcal{M}^\star)$. It follows from (1) that $\e_{\mathcal{P}(Q)}(\mathcal{M})=\e_{\mathcal{P}(Q^{op})}(\mathcal{M}^\star)=\check{\e}_{\mathcal{P}(Q)}(\mathcal{M})$.
\end{proof}

Let $d_{\mathcal{M}}$ be a general presentation of weight $g$  such that $\e_{\mathcal{P}}(d_{\mathcal{M}})=\e_{\mathcal{P}}(g)$ and $\check{g}_{\mathcal{M}}=\check{g}$. Let $\check{d}_{\mathcal{M}}$ be the corresponding injective presentation of $\mathcal{M}$. Then we have that $\check{\e}_{\mathcal{P}}(\check{g})\leq \check{\e}_{\mathcal{P}}(\check{d}_{\mathcal{M}})=\e_{\mathcal{P}}(d_{\mathcal{M}})=\e_{\mathcal{P}}(g)$ by part (3) of Lemma \ref{lem1}. 

Hence we have $\dim(\check{U} / \operatorname{Aut}_{\mathcal{P}})\geq \check{\e}_{\mathcal{P}}(\check{g})$. But $\dim \check{U} / \mathrm{Aut}_{\mathcal{P}}(g) = \check{\e}_{\mathcal{P}}(\check{g})$ as well, where $\check{U}$ is an open subset of $\IHom(\check{g})$ as claimed in Lemma \ref{[Lemma 3.8]fei2023general}. So $\dim \check{U}' = \dim \check{U}$. Hence, $\check{U}'$ is in fact open in $\IHom(\check{g})$.
\end{proof}

It follows from Lemma \ref{general is g-coherent} and Theorem \ref{pc} that 
\begin{cor}
    A general representation is both $g$-coherent and $\Check{g}$-coherent.
\end{cor}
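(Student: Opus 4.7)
The plan is to combine Lemma \ref{general is g-coherent} with Theorem \ref{pc} and its dual. Fix a weight vector $g \in \mathbb{Z}^{Q_0}$, and let $\check{g}$ be the $\check{g}$-vector that is generic on $\operatorname{PC}(g)$ (as established in the discussion preceding Theorem \ref{pc}, via the upper-semicontinuity of $\operatorname{rank}(h_{k;M})$). The corollary should be interpreted as the assertion that a general representation $\mathcal{M}$ of weight $g$ (i.e.\ in $\operatorname{PC}(g)$), equivalently of weight $\check{g}$ in $\operatorname{PC}(\check{g})$, is simultaneously $g$-coherent and $\check{g}$-coherent in the sense of Definition \ref{defn of g-coherent}.

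First I would apply Lemma \ref{general is g-coherent} directly: there is a dense open subset $U \subseteq \PHom_{\mathcal{P}}(g)$ whose elements are $g$-coherent presentations. Via the incidence variety construction \eqref{incidence variety} and the surjection $p_2 \circ p_1^{-1}$ onto an open subset of $\operatorname{PC}(g)$, the image $W := p_2(p_1^{-1}(U))$ is a dense open subset of $\operatorname{PC}(g)$, and every representation in $W$ admits a minimal projective presentation of the form $P([g]_+) \to P([-g]_+)$; that is, every $\mathcal{M} \in W$ is $g$-coherent.

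Next, the dual version of Lemma \ref{general is g-coherent} applied to $\IHom_{\mathcal{P}}(\check{g})$ produces a dense open subset $\check{U} \subseteq \IHom_{\mathcal{P}}(\check{g})$ consisting of $\check{g}$-coherent injective presentations, whose image $\check{W}$ is dense open in $\operatorname{PC}(\check{g})$ and consists of $\check{g}$-coherent representations. Now Theorem \ref{pc} furnishes the identity $\operatorname{PC}(g) = \operatorname{PC}(\check{g})$. Therefore $W \cap \check{W}$ is still dense open in the principal component, and every representation in this intersection is both $g$-coherent and $\check{g}$-coherent, which is precisely the claim.

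There is no serious obstacle here: the work has been done in Theorem \ref{pc}, which identifies the two principal components and thereby allows the two one-sided coherence statements (projective side from Lemma \ref{general is g-coherent}, injective side from its dual) to be intersected. The only minor point worth flagging is that one must be careful that the intersection of two dense open subsets of an irreducible variety is again dense open, which uses irreducibility of $\operatorname{PC}(g)$; this is guaranteed since $\operatorname{PC}(g)$ is by definition an irreducible component of $\Rep_{\alpha}(\mathcal{P})$.
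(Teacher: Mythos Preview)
Your proposal is correct and takes essentially the same approach as the paper. The paper's own proof is the single sentence ``It follows from Lemma \ref{general is g-coherent} and Theorem \ref{pc},'' and your write-up is precisely how one would unpack that citation: apply Lemma \ref{general is g-coherent} (and its dual) on each side, then invoke Theorem \ref{pc} to identify the two principal components and intersect the resulting dense opens.
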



 
Next, we introduce the representation-theoretic definition of local freeness for $H$-based QP-representations, which was proposed by Geiss-Leclerc-Schr\"{o}er \cite{Geiss_2016}. 
\begin{defn}\label{defn of locally free of rep}
    A decorated representation $\mathcal{M}=(M,V)$ of $(Q,\mathbf{d},\s)$ is called \textbf{locally free} if $M_i$ is a free $H_i$-module for all $i\in Q_0$.
\end{defn}
\begin{pro}\label{general rep is locally free}
    Let $(Q,\mathbf{d},\s)$ be a locally free $H$-based QP (see Definition \ref{defn of locally free}). Then a general decorated representation of $(Q,\mathbf{d},\s)$ is locally free.
\end{pro}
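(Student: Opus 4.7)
My plan is to show that the locally free representations form a nonempty Zariski-open subset of the irreducible component $\mathrm{PC}(g)$, from which density---and hence the conclusion that a general representation is locally free---follows at once. The argument splits into an openness step (which is straightforward) and a nonemptiness step (which is the main obstacle).

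For the openness step, I start from the characterization that a finite-dimensional $H_i$-module $M_i$ is free if and only if the $K$-linear endomorphism $M(\varepsilon_i)^{d_i-1}$ of $M_i$ attains its maximum possible rank $\dim_K M_i/d_i$. This is immediate from the Jordan-block decomposition of $M_i$ into summands $H_i/\langle\varepsilon_i^{l}\rangle$, $1\le l\le d_i$, since only blocks of full length $d_i$ contribute to the rank of $\varepsilon_i^{d_i-1}$. Because the maximal-rank locus is Zariski open, the locally free representations (at every vertex simultaneously) cut out an open subset of $\rep_{\alpha}(\mathcal{P})$, whose intersection with $\mathrm{PC}(g)$ is open.

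For the nonemptiness step, I exploit that $(Q,\mathbf{d},\s)$ being locally free forces every indecomposable projective $P_k=\mathcal{P}e_k$ to be locally free: its $i$-component $(P_k)_i=e_i\mathcal{P}e_k$ is a direct $H_i$-summand of the $H_i$-free module $e_i\mathcal{P}$ (since right multiplication by $e_k$ commutes with left multiplication by $H_i$), hence $H_i$-free. Therefore both source $P([g]_+)$ and target $P([-g]_+)$ of every presentation in $\PHom_{\mathcal{P}}(g)$ are locally free, and for each vertex $k$ the restriction of a general $d$ is an $H_k$-linear map between free $H_k$-modules. A Nakayama-type argument shows that a generic $H_k$-linear map between free $H_k$-modules has free cokernel: reducing modulo $\varepsilon_k$ produces a $K$-linear matrix of maximal rank $\min(a,b)$, which lifts to either surjectivity of the original map (cokernel $0$) or to an image that is an $H_k$-direct summand (cokernel free).

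The main obstacle is then to verify that the restriction map $\PHom_{\mathcal{P}}(g)\to\Hom_{H_k}((P([g]_+))_k,(P([-g]_+))_k)$ actually meets this generic open locus at every vertex $k$; this is nontrivial because its image consists of the ``right multiplication by $\alpha$'' operators for $\alpha\in\Hom_{\mathcal{P}}(P([g]_+),P([-g]_+))$, a proper linear subspace. To get around this, I plan to combine two ingredients: first, the self-duality $\mathrm{PC}(g)=\mathrm{PC}(\check{g})$ from Theorem \ref{pc}, which realizes a general representation simultaneously as the cokernel of a general projective presentation and the kernel of a general injective presentation---and the injectives $I_k=(e_k\mathcal{P})^\star$ are $H_k$-free by the right-module half of the locally free hypothesis, giving two-sided control on $M(\varepsilon_k)^{d_k-1}$; second, a dimension/semicontinuity comparison transported via the incidence variety projections $p_1,p_2$ in \eqref{incidence variety}, showing that the maximal-$\varepsilon_k$-rank locus on $\mathrm{PC}(g)$ is nonempty because $p_2$ becomes a principal $\mathrm{GL}_\alpha$-bundle over an open subset of $\mathrm{PC}(g)$ where both ranks attain their generic values. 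Assembling these gives nonemptiness of the locally free locus, which together with the openness of the first step delivers the required density in $\mathrm{PC}(g)$.
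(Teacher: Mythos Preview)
Your setup mirrors the paper's proof exactly: apply $\Hom_{\mathcal{P}}(P_k,-)$ to a general presentation to exhibit $M_k$ as the cokernel of the $H_k$-linear map $d|_k$ between the free $H_k$-modules $(P(\beta_\pm))_k$, and then invoke the fact that a generic $H_k$-map between free modules has free cokernel. The paper stops there, asserting in one line that ``$M_k$ must be free when $d$ is in general position.'' You are right to flag the missing link: the restriction $\PHom_{\mathcal{P}}(g)\to\Hom_{H_k}((P_-)_k,(P_+)_k)$ is a $K$-linear map whose image is in general a proper subspace, so genericity in the source does not a priori transfer to genericity in the target. The paper simply does not address this point.

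However, your proposed repair does not close the gap either. Having $M_k$ simultaneously as a cokernel (projective side) and a kernel (injective side via $\mathrm{PC}(g)=\mathrm{PC}(\check g)$) of maps between free $H_k$-modules does not force freeness: over $H_k=K[\varepsilon]/\langle\varepsilon^{d_k}\rangle$ both submodules and quotients of free modules can be non-free (for $d_k=2$ take $\varepsilon H_k\hookrightarrow H_k$ and $H_k\twoheadrightarrow H_k/\varepsilon H_k$), so ``two-sided control'' is vacuous. Nor does the incidence-variety argument as you sketch it deliver nonemptiness: writing $f_1(d)=\dim_K(\Coker d)_k$ and $f_2(d)=\dim_K((\Coker d)_k/\varepsilon_k(\Coker d)_k)$, one always has $f_1\le d_k f_2$ with equality precisely when the cokernel is free; both are upper semicontinuous on $\PHom_{\mathcal{P}}(g)$, but semicontinuity alone cannot rule out $f_1^{\mathrm{gen}}<d_k f_2^{\mathrm{gen}}$ when the image of the restriction is a proper subspace (e.g.\ a subspace of the form $K\varepsilon_k\subset\Hom_{H_k}(H_k,H_k)$ would give exactly this). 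So the obstacle you isolated is genuine, but it is not resolved by your plan; an actual argument explaining why the image of the restriction meets the free-cokernel locus (or an independent construction of one locally free point in $\mathrm{PC}(g)$) is still needed.
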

\begin{proof}
     We consider a projective presentation of a representation $M: P({\beta_-})\xrightarrow{d} P({\beta_+})\twoheadrightarrow M$. We can view $d$ as a matrix whose entries are linear combinations of paths. For  any $k\in Q_0$,  we apply $\Hom_{\mathcal{P}}(P_k,-)$ to this projective presentation and get a exact sequence:
     $$\Hom_{\mathcal{P}}(P_k,P(\beta_-))\xrightarrow{\Hom_{\mathcal{P}}(P_k,d)}\Hom_{\mathcal{P}}(P_k,P(\beta_+))\xrightarrow{}\Hom_{\mathcal{P}}(P_k,M)\xrightarrow{}0.$$ 
     Using the fact that $\Hom_{\mathcal{P}}(P_k,M')=M'_k$ for any representations $M'$ , we can rewrite the exact sequence as 
 $$P(\beta_-)_{k}\xrightarrow{d}P(\beta_+)_{k}\xrightarrow{}M_{k}\xrightarrow{}0.$$
     Since $(Q,\mathbf{d},\s)$ is locally free, $P(\beta_\pm)$ are free $H_k$-modules. Because the cokernel of a general homomorphism between two free $H_k$-module is free, one can see that $M_k$ must be free when $d$ is in general position.
\end{proof}

\section{Mutation of Decorated Representations}\label{section: mutation of decorated Representations}
The aim of this section is to extend the definition of $H$-based QP-mutation in Corollary \ref{reduced mutation uniquele right-equivalence class of the GQP} and Definition \ref{defn of mutation of gqp} to the level of $H$-based QP-representations, and to prove a representation-theoretic extension of Theorem \ref{mutation of gqp is involution }. Before proceeding, we first need to establish some basic setup.

Throughout, we assume that $(Q,\mathbf{d},\s)$ is a locally free $H$-based QP with arrow span $A$. Recall that for any $i,j$ in $Q_0$, we denote $_i A_j=e_i Ae_j$ which is free as a left $H_i$-module and free as a right $H_j$-module. Let $_{j}A^\star_{i}$ be the corresponding $H_j$-$H_i$ bimodule coming from the dual bimodule $A^\star$. Then \cite[Section 5]{Geiss_2016}  shows that there is an $H_i$-$H_j$-bimodule isomorphism between $_{i}A_j$ and $\Hom_{H_j}({_{j}A^\star_i}, H_j)$( or $\Hom_{H_i}({_{j}A_i}, H_i)$). Now for any decorated representation $\mathcal{M}=(M,V)$ representation of $(Q,\mathbf{d},\s)$, inspired by \cite{Geiss_2016}, we have the following definitions. 

Note that $M_i$ is an $H_i$-module via the map $M(\varepsilon_i)$. For $i,j\in  Q_0$, if $j\to i\in Q_1^\circ$, we define a left $H_i$-module morphism  $$M_{i, j}:{_{i}A_{j}} \otimes M_j\to M_i$$ by 
\begin{align}
    M_{i,j}(\varepsilon_i^l a\varepsilon_j^f\otimes m)=M(\varepsilon_i)^l\circ M(a)\circ M(\varepsilon_j)^f(m).
\end{align}
If $i\to j\in  Q_1^{\circ}$, we define a left $H_i$-module morphism  $$\check{M}_{j, i}:M_i\to {_{i}A^\star_j} \otimes M_j$$ by
\begin{align}
    \check{M}_{j, i}(m)=\sum_{\substack{0\leq f\leq d_k-1\\a\in e_jAe_i\cap Q^{\circ}_1}}\varepsilon_i^f a^\star \otimes (M(a)\circ M(\varepsilon_i)^{d_i-f-1}(m)).
\end{align}

 For $k\in Q_0$, we  define the following vertex sets :
\begin{align*}
   \In(k):=\{i\in Q_0\mid \exists a\in Q_1^\circ, ta=i, ha=k \}, \quad \out(k):=\{j\in Q_0 \mid \exists b\in Q_1^\circ, ta=k, ha=j\}.
\end{align*}
Then we construct the following $H_k$-modules: 
\begin{align}\label{defn of M_in and M_out}
    M_{k,\In}:=\bigoplus_{i\in \In(k)}{_{k}A_{i}}\otimes M_i,\quad M_{k,\out}:=\bigoplus_{j\in \out(k)} {_{k}A^\star_{j}}\otimes M_j.
\end{align}
Next, we define  two $H_k$-module homomorphisms:
\begin{align}
    \alpha_k&:=(M_{k,i})_{i\in \In(k)}:M_{k,\In}\longrightarrow M_k,\label{defn of alpha}\\ 
    \beta_k&:=(\check{M}_{j,k})_{j\in \out(k)}:M_{k}\longrightarrow M_{k,\out}.\label{defn of beta}
\end{align}
\begin{rmk}
    This implies that  any  representation $M$ of $(H,A,\s)$ gives a tuple $(M_k,M_{k,i},\check{M_{j,k}})$ where $M_k$ is an $H_k$-module for all $k\in Q_0$, $M_{k,i}: {_kA_i}\otimes M_i\to M_k$ and  $\check{M}_{j,k}: M_k\to {_k A^\star_j}\otimes M_j$ are $H_k$-linear maps defined above for all $i,j\in Q_0$ with  $i\to k,k\to j\in Q_1^\circ$.
    
     Conversely,  such a tuple $(M_i,M_{k,i},\check{M}_{j,k})$ define a representation $M$ as follows: 
     \begin{itemize}
         \item $M=\bigoplus_{k}M_k$ as $H$-module;
         \item The action of an arrow $a \in Q_1^\circ$ on $ M$ is given by: $M(a)(m)=M_{ha,ta}(m)$,
         or equivalently,  $M(a)(m)=(\check{M}_{ha,ta}(m))_a$ where the element $(\check{M}_{ha,ta}(m))_a$ is uniquely determined by $\check{M}_{ha,ta}(m)=\sum\limits_{\substack{0\leq f\leq d_k-1\\a \in e_{ha}Ae_{ta}\cap Q^\circ_1}} \varepsilon_{ta}^f a^\star\otimes (\check{M}_{ha,ta}(m))_{a\varepsilon_{ta}^{d_k-1-f}}$.
     \end{itemize} 
\end{rmk}
We also introduce an $H_k$-module morphism $\gamma_k=(\gamma_{i,j}):M_{k,\out}\longrightarrow M_{k,\In}$ as follows. Replacing $\s$ if necessary by a cyclically equivalent potential, we may assume that $\s\in \overline{T_{H}(A)}_{\hat{k},\hat{k}}$. This allow us define the component $\gamma_{i,j}$ by  setting
\begin{equation}\label{defn of gamma}
\begin{aligned}
    \gamma_{i,j} \colon {_{k}A^\star_{j}} \otimes M_j &\longrightarrow {_{k}A_{i}} \otimes M_i \\
    \varepsilon_k^f b^\star \otimes m &\longmapsto \sum_{\substack{0\leq l\leq d_k-f-1\\a\in e_kAe_i\cap Q^\circ_1}} \varepsilon_k^{l+f} a \otimes M(\partial_{[b\varepsilon_k^{l}a]}\mathcal{S})(m)
\end{aligned}
\end{equation}

for $b:k\to j\in Q_1^\circ$.
\begin{lem}
We have the following triangle of $H_k$-linear maps with $\alpha_k\gamma_k=0$ and $\gamma_k\beta_k=0$.
\begin{equation}\label{triangle}
    \begin{tikzcd}
  & M_k \arrow[rd, "\beta_k"] &                                     \\
{M_{k,in}} \arrow[ru, "\alpha_k"] &  & {M_{k,out}} \arrow[ll, "\gamma_k"']
\end{tikzcd}
\end{equation}
\end{lem}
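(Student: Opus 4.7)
The plan is to verify both vanishing identities by direct computation on generators, reducing each to the Jacobian relation $M(\partial_c\mathcal{S}) = 0$ for a suitable arrow $c\in Q_1^\circ$ adjacent to $k$. The $H_k$-linearity of $\alpha_k$ and $\beta_k$ is immediate from the definitions (\ref{defn of alpha})--(\ref{defn of beta}); for $\gamma_k$ it reduces to the observation that the truncation $0\le l\le d_k-f-1$ in (\ref{defn of gamma}) shrinks by one when $f$ is replaced by $f+1$, while the discarded term carries the factor $\varepsilon_k^{d_k}$ and hence already acts as zero on $M$.

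The key preparatory step is to expand the ordinary cyclic derivatives in terms of the bracket derivatives $\partial_{[b\varepsilon_k^l a]}\mathcal{S}$. Because $\mathcal{S}\in\overline{T_H(A)}_{\hat{k},\hat{k}}$ contains no cyclic path starting at $k$, every occurrence of an arrow $a\colon i\to k$ in a cyclic summand of $\mathcal{S}$ is immediately preceded (in path order) by a factor $b\,\varepsilon_k^l$ for some arrow $b\colon k\to j$ and some $0\le l\le d_k-1$. Rotating via (\ref{derivative}) at such an $a$, and at the corresponding $b$, and collecting contributions by the extracted data $(b,l,a)$, produces the two identities
\begin{equation*}
\partial_a\mathcal{S} \;=\; \sum_{l,\,b}\partial_{[b\varepsilon_k^l a]}\mathcal{S}\cdot b\,\varepsilon_k^l,\qquad
\partial_b\mathcal{S} \;=\; \sum_{l,\,a}\varepsilon_k^l\,a\cdot\partial_{[b\varepsilon_k^l a]}\mathcal{S},
\end{equation*}
where $\partial_{[b\varepsilon_k^l a]}\mathcal{S}\in e_i\overline{T_H(A)}e_j$ is the common ``middle piece'' remaining after removal of the substring $b\,\varepsilon_k^l\,a$.

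For $\alpha_k\gamma_k=0$, substituting the definitions gives
\begin{equation*}
\alpha_k\gamma_k(\varepsilon_k^f b^\star\otimes m) \;=\; M(\varepsilon_k)^f\sum_{l=0}^{d_k-f-1}\sum_a M\bigl(\varepsilon_k^l a\cdot\partial_{[b\varepsilon_k^l a]}\mathcal{S}\bigr)(m).
\end{equation*}
The upper bound may be extended to $d_k-1$ without changing the value, since the omitted terms carry $\varepsilon_k^{l+f}$ with $l+f\ge d_k$; the second identity above then rewrites the inner sum as $M(\partial_b\mathcal{S})(m)=0$. For $\gamma_k\beta_k=0$, expanding the composite produces a double sum in $(i,a,j,b,f,l)$ whose coefficient along $\varepsilon_k^r a\otimes-$ in the $i$-component of $M_{k,\In}$ equals, after setting $f=r-l$,
\begin{equation*}
\sum_{l=0}^{r}\sum_{j,\,b} M(\partial_{[b\varepsilon_k^l a]}\mathcal{S})\,M(b)\,M(\varepsilon_k)^{d_k-1-r+l}(m).
\end{equation*}
This is precisely what one obtains by right-multiplying the first identity (applied to $M$, for the fixed arrow $a$) by $M(\varepsilon_k)^{d_k-1-r}$: nilpotency $\varepsilon_k^{d_k}=0$ automatically kills the terms with $l>r$, restricting the sum to $0\le l\le r$. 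Since $M(\partial_a\mathcal{S})=0$, this coefficient vanishes for every $r$, hence $\gamma_k\beta_k(m)=0$.

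The only delicate point is the bookkeeping with powers of $\varepsilon_k$: one must verify that the truncated summation ranges built into (\ref{defn of gamma}) and (\ref{defn of beta}) line up, after applying $M$, with the un-truncated sums furnished by the two Jacobian identities. Both matches are supplied by the single fact that $M(\varepsilon_k)^{d_k}=0$, which renders the truncations vacuous; beyond this the argument is a routine computation.
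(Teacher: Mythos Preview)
Your proof is correct and follows essentially the same approach as the paper's own proof: both reduce the vanishing of $\alpha_k\gamma_k$ and $\gamma_k\beta_k$ to the Jacobian relations $M(\partial_b\mathcal{S})=0$ and $M(\partial_a\mathcal{S})=0$ via the bracket-derivative expansions $\partial_b\mathcal{S}=\sum_{l,a}\varepsilon_k^l a\,\partial_{[b\varepsilon_k^l a]}\mathcal{S}$ and $\partial_a\mathcal{S}=\sum_{l,b}\partial_{[b\varepsilon_k^l a]}\mathcal{S}\,b\varepsilon_k^l$. Your treatment is somewhat more explicit than the paper's about the truncation of the $l$-sum and why nilpotency of $\varepsilon_k$ makes the omitted terms vanish, but the underlying argument is identical.
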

\begin{proof}
For any $\varepsilon_k^f b^\star\otimes m\in {_{k}A^\star_{j}}\otimes M_j$,
\begin{align*}
    \alpha_k\gamma_k(\varepsilon_k^f b^\star\otimes m)&=\alpha_k\Big(\sum_{\substack{0\leq l\leq d_k-f-1\\a\in e_kA\cap Q^\circ_1}}\varepsilon_k^{l+f} a\otimes M(\partial_{[b\varepsilon_k^{l}a]}S)(m)\Big)\\
    &=\sum_{\substack{0\leq l\leq d_k-1\\a\in e_kA\cap Q^\circ_1}}M(\varepsilon_k^{l+f})M(a)M(\partial_{[b\varepsilon_k^la]}\mathcal{S})(m)\\
    &=M(\varepsilon_k^{f})M(\partial_{b}\mathcal{S})(m).
\end{align*}
For any $m\in M_k$,
\begin{align*}
    \gamma_k\beta_k(m)&=\gamma_k\Big(\sum_{\substack{0\leq f\leq d_k-1\\b\in Ae_k\cap Q^\circ_1}}\varepsilon_k^f b^\star \otimes (M(b)\circ M(\varepsilon_k)^{d_k-f-1}(m))\Big)\\
    &=\sum_{\substack{0\leq f\leq d_k-1\\a\in e_kA\cap Q^\circ_1,b\in Ae_k\cap Q^\circ_1}}\sum_{0\leq l\leq d_k-1}\varepsilon_k^{l+f}a\otimes M(\partial_{[b\varepsilon_k^l a]}\mathcal{S}\cdot b\cdot  \varepsilon_k^{d_k-f-1})(m)\\
    &=\sum_{\substack{0\leq f\leq d_k-1\\a\in e_kA\cap Q^\circ_1}}\varepsilon_k^f a\otimes M(\partial_{a}\mathcal{S}\cdot\varepsilon_k^{d_k-f-1})(m).
\end{align*}
Therefor, $\alpha_k\gamma_k=0$ and $\gamma_k\beta_k=0$.
\end{proof}

Let $E_k$ be the indecomposable representation with $E_k(i)=0$ for $i\neq k$ and $E(k)=H_k$. We call it the \textbf{generalized simple representations} corresponding to vertex $k$. 
\begin{lem}\label{resolution of e_k}
Part of the projective (resp. injective) resolution of $E_k$ is given by:
\begin{align}
\label{pro}\cdots\xrightarrow{}\bigoplus_{ha=k}d_k P_{ta}\xrightarrow{ \sideset{_{b\varepsilon_k^m}}{_{\varepsilon^n_ka}}{\mathop{\left([n-m]_+\partial_{[b\varepsilon_k^{(n-m)}a]}\mathcal{S}\right)}}} \bigoplus_{tb=k}d_kP_{hb}\xrightarrow{ \left(b\varepsilon_k^{d_k-1},\cdots,b\varepsilon_k,b\right)_b}P_k\xrightarrow{}E_k\xrightarrow{} 0\\
     \label{inj}0\xrightarrow{}E_k\xrightarrow{}I_k\xrightarrow{\left(a,\varepsilon_k a,\cdots,\varepsilon_k^{d_k-1}a\right)_a^T}\bigoplus_{ha=k}d_k I_{ta}\xrightarrow{\sideset{_{b\varepsilon_k^m}}{_{\varepsilon^n_k a}}{\mathop{\left([n-m]_+\partial_{[b\varepsilon_k^{(n-m)}a]}\mathcal{S}\right)}} }\bigoplus_{tb=k}d_k I_{hb}\xrightarrow{}\cdots
\end{align}
\end{lem}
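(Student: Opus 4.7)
The strategy is to follow the proof of Lemma \ref{resolution of s_k}, accounting for the fact that on $E_k$ the loop $\varepsilon_k$ acts as multiplication on $H_k$ rather than as zero. First I would identify $\ker(P_k\to E_k)$, where the surjection sends $e_k\mapsto 1\in H_k=E_k(k)$. Since $E_k$ vanishes outside vertex $k$, any path in $e_iP_k$ for $i\ne k$ lies in the kernel, and a path in $e_kP_k$ lies in the kernel precisely when it uses at least one non-loop arrow. Reading each such path right-to-left in $\overline{T_H(A)}$, the rightmost non-loop segment must be of the form $b\varepsilon_k^l$ with $b\in Q_1^\circ$, $tb=k$, and $0\le l\le d_k-1$. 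This identifies $\ker(P_k\to E_k)$ as the left $\mathcal{P}$-submodule generated by $\{b\varepsilon_k^l\}$, so the map $f_k=(b\varepsilon_k^{d_k-1},\ldots,b\varepsilon_k,b)_b$ is surjective onto it and the cokernel of $f_k$ is $E_k$.

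For the middle step I would establish the identity
\begin{equation*}
\partial_a\mathcal{S}=\sum_{b,\,l}\partial_{[b\varepsilon_k^l a]}\mathcal{S}\cdot b\varepsilon_k^l \qquad\text{in }\overline{T_H(A)},
\end{equation*}
valid for every non-loop arrow $a\colon i\to k$, with the sum running over non-loop $b\colon k\to hb$ and $0\le l\le d_k-1$. This is immediate from \eqref{derivative}: in each cyclic word of $\mathcal{S}$ containing $a$, the expression immediately to the left of $a$ is of the form $b\varepsilon_k^l$ for some non-loop $b$ with $tb=k$, and the corresponding summand of $\partial_a\mathcal{S}$ is the remainder of the cyclic word (namely $\partial_{[b\varepsilon_k^l a]}\mathcal{S}$) post-multiplied by $b\varepsilon_k^l$. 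Right-multiplying $\partial_a\mathcal{S}=0$ by $\varepsilon_k^n$ for $0\le n\le d_k-1$ and reindexing by $m=l+n$ yields $\sum_{b,\,m\ge n}\partial_{[b\varepsilon_k^{m-n}a]}\mathcal{S}\cdot b\varepsilon_k^m=0$ in $\mathcal{P}$; these are precisely the $d_k\cdot|\{a:ha=k\}|$ column relations recorded by the matrix $h_k$ (after the reversal $m\mapsto d_k-1-m$ implicit in the ordering of summands in $f_k$). Consequently $f_k\circ h_k=0$ and $\Img h_k\subseteq\ker f_k$.

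The main obstacle is the reverse inclusion $\ker f_k\subseteq\Img h_k$. Given $(x_{b,m})\in\ker f_k$, the vanishing $\sum x_{b,m}\cdot b\varepsilon_k^m=0$ in $\mathcal{P}$ lifts to an equation in $\overline{T_H(A)}$ equating $\sum x_{b,m}\cdot b\varepsilon_k^m$ to an element $r\in J(\mathcal{S})$. I would follow the final step of the proof of Lemma \ref{resolution of s_k}: expand $r$ as a combination of terms $p\cdot\partial_c\mathcal{S}\cdot q$ with $c\in Q_1^\circ$, then use $\varepsilon_k^{d_k}=0$ to eliminate any excess loop power together with the identity above to reorganize subwords passing through $k$ into products of composite arrows $[b\varepsilon_k^l a]$ followed by a rightmost generator $b\varepsilon_k^m$. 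This brings $r$ to a canonical form in which every term has its rightmost factor among the $b\varepsilon_k^m$ and its coefficient among the images of $h_k\bigl(\bigoplus_{ha=k}d_kP_{ta}\bigr)$; comparing coefficients exhibits $(x_{b,m})$ as an element of $\Img h_k$ and completes the verification of exactness of \eqref{pro}.

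Finally, the injective resolution \eqref{inj} is obtained by duality. Applying the $K$-linear duality $D=(-)^\star\colon\rep(\mathcal{P}^{op})\to\rep(\mathcal{P})$ to the projective resolution constructed for the generalized simple $E_k^{op}$ over the opposite Jacobian algebra $\mathcal{P}(Q^{op},\mathbf{d},\mathcal{S}^{op})$, and using $D(\mathcal{P}^{op}e_k)=I_k$ together with the isomorphism $D(E_k^{op})\cong E_k$ coming from the $H_k$-bimodule self-duality $(\varepsilon_k^l)^\star\mapsto\varepsilon_k^{d_k-1-l}$ already noted in the paper, yields the exact sequence \eqref{inj} with matrices $f_k^\star$ and $h_k^\star$ equal to the transposes of $f_k$ and $h_k$.
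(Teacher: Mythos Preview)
Your proposal is correct and follows essentially the same approach as the paper: identify $\Coker f_k=E_k$, use the identity $\partial_a\mathcal{S}=\sum_{b,l}\partial_{[b\varepsilon_k^l a]}\mathcal{S}\cdot b\varepsilon_k^l$ to verify $f_kh_k=0$, and for $\ker f_k\subseteq\Img h_k$ lift to $\overline{T_H(A)}$ and reduce the Jacobian element $r$ to terms of the form $q_{a,t}(\partial_a\mathcal{S})\varepsilon_k^{d_k-t}$ with $ha=k$, then expand via the same identity.

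One point worth making explicit in your write-up: the reduction of $r$ is not a rewriting of $r$ alone but a \emph{change of representatives} of the $x_{b,m}$ modulo $J$. When a term $p\cdot\partial_c\mathcal{S}\cdot q$ in $r$ has $q=q'b\varepsilon_k^m$ with $q'$ containing a non-loop arrow, the whole term $p\cdot\partial_c\mathcal{S}\cdot q'\in J$, so it may be subtracted from $x_{b,m}$ without changing the class $(x_{b,m}+J)$; only after this absorption are you left with $r=\sum q_{a,t}(\partial_a\mathcal{S})\varepsilon_k^{d_k-t}$. Your reference to the proof of Lemma~\ref{resolution of s_k} covers this, but the sentence ``reorganize subwords passing through $k$'' somewhat obscures that representatives are being adjusted rather than $r$ itself. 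Your duality argument for the injective resolution is a clean way to handle what the paper leaves unstated.
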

\begin{proof}
Let $f= (b,b\varepsilon_k,\cdots,b\varepsilon_k^{d_k-1})_b$ and $h= \sideset{ _{b\varepsilon_k^m}}{ _{\varepsilon^n_ka}}{\mathop{\left([n-m]_+\partial_{[b\varepsilon_k^{(n-m)}a]}\mathcal{S}\right)}}$.
 It is clear that $\Img f$ is generated by $b\varepsilon_k^l$ where $0\leq l\leq d_k-1$ and $tb=k$, so $\Coker f= E_k$. Also it follows directly from the definition of Jacobin algebra that $fh=0$ implying that $\Img h\subseteq \ker f$. To complete the proof that (\ref{pro}) is a projective resolution of $E_k$, it remains to show that $\ker f\subseteq \Img h$.
 
 Suppose that $ (p_{b,l}+J)^{T}_{b,l} \in\ker f$, where $p_{b,l}+J\in P_{hb}$, then $\sum_{1\leq l\leq d_k} p_{b,l} b\varepsilon_k^{d_k-l}=r$ for some element $r$ in the Jacobin ideal and $tr=k$. Clearly, we can assume that any component of $r$ does not contain $\varepsilon_k^{d_k}$. Suppose that $r$ contains a term of the form $p
 \partial_{c}\mathcal{S}q$ where $q$ is a path from $k$ to $hc$ for some arrow $c\in Q_1^\circ$.  If $q=q' b\varepsilon_k^{d_k-l}$ for some $1\leq l\leq d_k$ , $b \in Q_1^\circ $ with $tb =k$, and path $q'$ starting at $hb$, then $p \partial_c \mathcal{S} (q' b \varepsilon_k^{d_k-l}) \in J$ appears in $p_{b,l}$. Consequently, we may choose a representative of $(p_{b,l}+J)_{b,l}$  such that $r$ just contains terms of the form  $p
 (\partial_{a}\mathcal{S})\varepsilon_k^l$ where $ha=k$ and $0\leq l\leq d_k-1$. 
 
 Now we can write $r$ as the form 
 \begin{align*}
    r&=\sum_{\substack{a\in Q_1^\circ, ha=k,\\1\leq t\leq d_k}} q_{a,t}(\partial_{a}\mathcal{S})\varepsilon_k^{d_k-t}\\
    &=\sum_{l,t,a,b} q_{a,t}(\partial_{a}\mathcal{S})(b\varepsilon_k^{l})^{-1}(b\varepsilon_k^{l})\varepsilon_k^{d_k-t}\\
    &=\sum_{l,t,a,b} q_{a,t}(\partial_{[b\varepsilon_k^l a]}\mathcal{S})b\varepsilon_k^{d_k+l-t}\\
    &=\sum_{l,t,a,b} q_{a,t}([t-l]_+\partial_{[b\varepsilon_k^{t-l} a]}\mathcal{S})b\varepsilon_k^{d_k-l} 
 \end{align*}
 where $q_{a,t}$ is a $K$-linear combination of paths start at $ta$ and $(b\varepsilon_k^{l})^{-1}$ is the formal inverse. We conclude that $p_{b,l}=\sum q_{a,t} ([t-l]_+\partial_{[b\varepsilon_k^{t-l} a]}S)$ and so $(p_{b,l}+J)^T_{b,l}=h((q_{a,t}+J)^{T}_{a,t}) \in \Img h$.
\end{proof}

Suppose $M$ is a  representation of $(Q,\mathbf{d},\mathcal{S})$. Let
\begin{align}
    \cdots \xrightarrow{} P(\beta_2)\xrightarrow{d_1} P(\beta_1)\xrightarrow{d_0} P(\beta_0)\xrightarrow{}M\xrightarrow{} 0,\label{p}\\
    0\xrightarrow{} M\xrightarrow{} I(\check{\beta}_0)\xrightarrow{\Check{d_0}}I(\check{\beta}_1)\xrightarrow{\check{d_1}}I(\check{\beta}_2)\xrightarrow{}\cdots. \label{i}
\end{align}
be the minimal resolutions of $M$. Now we apply $\Hom_{\mathcal{P}}(-,M)$ and $\Hom_{\mathcal{P}}(M,-)$ to (\ref{pro}) and (\ref{inj}) respectively, then we get the following exact sequences:
\begin{align}
    0\xrightarrow{}\Hom_{\mathcal{P}}(E_k,M)\xrightarrow{}M_k\xrightarrow{\beta_k}M_{k,out}\xrightarrow{\gamma_k}M_{k,in}\xrightarrow{}\cdots,\label{apply hom(-,M)}\\
     0\xrightarrow{}\Hom_{\mathcal{P}}(M,E_k)\xrightarrow{}M_k^\star\xrightarrow{\alpha_k^\star}M_{k,in}^\star\xrightarrow{\gamma_k^\star}M_{k,out}^\star\xrightarrow{}\cdots.\label{apply hom(M,-)}
\end{align}
 We thus have that 
 \begin{align}
    \Hom_{\mathcal{P}}(E_k,M)\cong \ker \beta_k,\quad  \Hom_{\mathcal{P}}(M,E_k)\cong (\Coker \alpha_k)^\star,\label{(hom 1)}\\
    \Ext^1_{\mathcal{P}}(E_k,M)\cong\frac{\ker \gamma_k}{\Img \beta_k},\quad \Ext^1_{\mathcal{P}}(M,E_k)\cong(\frac{\ker \alpha_k}{\Img \gamma_k})^\star.\label{(ext 1)}
 \end{align}
Apply $\Hom_{\mathcal{P}}(-,E_k)$ and $\Hom_\mathcal{P} (E_k,-)$ to (\ref{p}) and (\ref{i}) respectively, we obtain
 the following sequences: 
 \begin{align}
     0\xrightarrow{}\Hom_{\mathcal{P}}(M,E_k)\xrightarrow{}\Hom_{\mathcal{P}}(P_0,E_k)\xrightarrow{d_0}\Hom_{\mathcal{P}}(P_1,E_k)\xrightarrow{d_1}\Hom_{\mathcal{P}}(P_2,E_k)\xrightarrow{}\cdots\\
     0\xrightarrow{}\Hom_{\mathcal{P}}(E_k,M)\xrightarrow{}\Hom_{\mathcal{P}}(E_k,I_0)\xrightarrow{\check{d}_0}\Hom_{\mathcal{P}}(E_k,I_1)\xrightarrow{\check{d}_1}\Hom_{\mathcal{P}}(E_k,I_2)\xrightarrow{}\cdots
 \end{align}
 From these sequences, we derive that
 \begin{align}
     \Hom_{\mathcal{P}}(M,E_k)\cong \ker d_0,\quad \Hom_{\mathcal{P}}(E_k,M)\cong \ker \check{d}_0,\label{(hom 2)}\\
     \Ext^1_{\mathcal{P}}(M,E_k)\cong \frac{\ker d_1}{\Img d_0},\quad \Ext^1_{\mathcal{P}}(E_k,M)\cong \frac{\ker \check{d_1}}{\Img \check{d_0}}.\label{(ext 2)}
 \end{align}
 Suppose that the representation $M$ is in general position, it is both $g$-coherent and $\check{g}$-coherent. This implies that $d_0$ and $\check{d}_0$ are zero maps. Combining \eqref{(hom 1)} and \eqref{(hom 2)}, we conclude that
 \begin{align*}
     \Coker \alpha_k\cong \Hom_{\mathcal{P}}(P_0,E_k)^\star\cong \beta_0(k)H_k,\quad 
     \ker \beta_k\cong \Hom_{\mathcal{P}}(E_k,I_0)\cong \check{\beta}_0(k) H_k.
 \end{align*}
 Hence $\ker \beta_k$ and $\Coker \alpha_k$ are free $H_k$-modules splitting in $M_k$ since $\ker \beta_k$  is an injective submodule and $\Coker \alpha_k$ is a quotient submodule of $M_k$.
\begin{rmk}
 Note that $H_k$ is a symmetric algebra, that is $H_k=H_k^\star$, and so self-injective. Then for any $H_k$-module N, we have that $N^\star\cong \Hom_{H_k}(N, H_k)$. In particular, $N\cong N^\star$ when $N$ is $H_k$-free.
\end{rmk}

Denote $\alpha_{k,0}=P(\beta_0)(\alpha_k)$ and $\alpha_{k,1}=P(\beta_1)(\alpha_k)$. Then we have the following commutative diagram of $H_k$-modules with exact rows:
$$\begin{tikzcd}
 P(\beta_1)_{k,\In}\arrow[r, "d_0"] \arrow[d,"\alpha_{k,1}"] &  P(\beta_0)_{k,\In} \arrow[r,two heads, "\pi"] \arrow[d, "\alpha_{k,0}"] &  M_{k,\In}\arrow[d, "\alpha_k"]  \\
P(\beta_1)_{k} \arrow[r, "d_0"]       & P(\beta_0)_{k}\arrow[r,two heads,"\pi"]       & M_{k}            
\end{tikzcd}$$

Suppose that $x\in \ker\alpha_k\subseteq M_{k,\In}$, then there exists $y\in P(\beta_0)_{k,\In}$ such that $x=\pi (y)$.  Since $\alpha_k\pi(y)=\pi\alpha_{k,0}(y)=0$, we have that $\alpha_{k,0}(y)\in \Img d_0$. Define an $H_k$-module morphism $\delta:\ker\alpha_{k}\xrightarrow{} d_0\left(\Coker \alpha_{k,1}\right)$ by $ x\mapsto \alpha_{k,0}(y)+\Img d_0 \alpha_{k,1}$. The map $\delta$ is well-defined since $d_0$ induces an $H_k$-module morphism between $\Coker \alpha_{k,1}$ and $\Coker\alpha_{k,0}$.
It is easy to check that the following sequence is exact:
\begin{align*}  \ker\alpha_{k,0}\xrightarrow{\pi}\ker\alpha_{k}\xrightarrow{\delta}d_0\left(\Coker \alpha_{k,1}\right)\xrightarrow{}0.
\end{align*}
For the projective representation $P(\beta_1)$, we have that $\Coker \alpha_{k,1}=\beta_1(k)H_k$. Thus, $d_0\left(\Coker \alpha_{k,1}\right)$ is free since $d_0$ is in general position and $(Q,\mathbf{d},\s)$ is locally free. Denote $\gamma_{k,0}=P(\beta_0)(\gamma_k)$ and $\gamma_{k,1}=P(\beta_0)(\gamma_k)$. Then we have the exact sequence
\begin{align*}
    \Img \gamma_{k,0}\xrightarrow{\pi}\Img\gamma_k\xrightarrow{}0.
\end{align*}
By \eqref{apply hom(M,-)}, $\Img \gamma_{k,0}=\ker\alpha_{k,0}$. Therefore, $\frac{\ker\alpha_k}{\Img \pi}=\frac{\ker\alpha_k}{\Img \gamma_k}\cong d_0\Coker \alpha_{k,1}$ is free. Hence, the exact sequences 
$$\begin{aligned}
    \frac{\ker \gamma_k}{\Img \beta_k}\hookrightarrow \frac{M_{k,\out}}{\Img\beta_k}\twoheadrightarrow \Img\gamma_{k},\quad
    \Img \gamma_k \hookrightarrow \ker\alpha_k \twoheadrightarrow \frac{\ker \alpha_k}{\Img\gamma_k}
\end{aligned}$$ split as $H
_k$-modules. Moreover, $\Ext^1_{\mathcal{P}}(M,E_k)$, $\Ext^1_{\mathcal{P}}(E_k,M)$ are free according to \eqref{(ext 1)}.

\begin{lem}
    Let $M\in \rep(\mathcal{P})$ be of projective (resp. injective) weight $\beta_--\beta_+$  (resp. $\check{\beta}_--\check{\beta}_+$). Suppose that $\Hom_{\mathcal{P}}(M,E_k), \Hom_{\mathcal{P}}(E_k,M)$, and $\Ext^1_{\mathcal{P}}(M,E_k), \Ext^1_{\mathcal{P}}(E_k,M)$ are free. Then: 
    \begin{align*}
   \beta_+(k)=\rank (\Hom_{\mathcal{P}}(M,E_k)),\quad \beta_-(k)=\rank (\Ext_{\mathcal{P}}(M,E_k)),\\
   \check{\beta}_+(k)=\rank (\Hom_{\mathcal{P}}(E_k,M)),\quad \check{\beta}_-(k)=\rank (\Ext_{\mathcal{P}}(E_k,M)).
\end{align*}
\end{lem}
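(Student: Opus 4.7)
The strategy is to unpack the homological identifications set up in the paragraph immediately preceding the lemma, leveraging that a general $M$ is both $g$-coherent and $\check{g}$-coherent by Lemma \ref{general is g-coherent} together with Theorem \ref{pc}.

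For the two $\Hom$-identities, I apply $\Hom_{\mathcal{P}}(-, E_k)$ to the minimal projective presentation $P(\beta_-) \xrightarrow{d_0} P(\beta_+) \to M \to 0$. Since $\Hom_{\mathcal{P}}(P_l, E_k) = e_l E_k = \delta_{l,k} H_k$, the resulting two-term complex is $\beta_+(k) H_k \xrightarrow{d_0^\star} \beta_-(k) H_k$, and the $g$-coherence of $M$, which forces $\min(\beta_+(k), \beta_-(k)) = 0$, makes $d_0^\star$ trivially zero. Hence $\Hom_{\mathcal{P}}(M, E_k) = \beta_+(k) H_k$, and so $\rank \Hom_{\mathcal{P}}(M, E_k) = \beta_+(k)$. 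The formula $\rank \Hom_{\mathcal{P}}(E_k, M) = \check{\beta}_+(k)$ follows dually by applying $\Hom_{\mathcal{P}}(E_k, -)$ to the minimal injective presentation and invoking $\check{g}$-coherence.

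For the two $\Ext^1$-identities, I use the identifications (\ref{(ext 1)}), which give $\Ext^1_{\mathcal{P}}(M, E_k) \cong (\ker \alpha_k / \Img \gamma_k)^\star$ and $\Ext^1_{\mathcal{P}}(E_k, M) \cong \ker \gamma_k / \Img \beta_k$. The preceding paragraph has already produced a canonical $H_k$-isomorphism $\ker \alpha_k / \Img \gamma_k \cong d_0(\Coker \alpha_{k,1})$, where $\Coker \alpha_{k,1} \cong \beta_-(k) H_k$. It thus suffices to verify that the induced map $\bar{d}_0 \colon \Coker \alpha_{k,1} \to P(\beta_+)_k / \Img(d_0 \alpha_{k,1})$ is injective: once this is known, $d_0(\Coker \alpha_{k,1})$ is a free $H_k$-module of rank $\beta_-(k)$, and since the $H_k$-dual preserves rank, we conclude $\rank \Ext^1_{\mathcal{P}}(M, E_k) = \beta_-(k)$. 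The injectivity of $\bar{d}_0$ will be controlled by two ingredients: first, $g$-coherence eliminates the $k$-to-$k$ block of $d_0$, so the generators of $\Coker \alpha_{k,1}$ are sent into off-diagonal components of $P(\beta_+)_k$, which are free $H_k$-modules by local freeness of $\mathcal{P}$; second, the generic position of $d_0$ in $\PHom_{\mathcal{P}}(g)$ makes the resulting $H_k$-linear map between free $H_k$-modules injective via the standard genericity argument. The identity $\rank \Ext^1_{\mathcal{P}}(E_k, M) = \check{\beta}_-(k)$ is then obtained by the dual argument using the minimal injective presentation and $\check{g}$-coherence.

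The main technical obstacle will be the rigorous control of $\bar{d}_0$, since its target $P(\beta_+)_k / \Img(d_0 \alpha_{k,1})$ genuinely depends on $d_0$ itself, so one cannot directly invoke a generic-matrix statement. This will be handled by combining upper-semicontinuity of rank functions on $\PHom_{\mathcal{P}}(g)$ with Proposition \ref{general rep is locally free}, ensuring that the freeness conditions assumed in the lemma and the generic rank of $\bar{d}_0$ are attained simultaneously on a common dense open subset of the presentation space.
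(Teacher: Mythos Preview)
Your strategy imports an assumption the lemma does not make: you invoke $g$-coherence and $\check g$-coherence, which require $M$ to be \emph{general}, whereas the lemma only assumes freeness of the four $H_k$-modules $\Hom(M,E_k)$, $\Hom(E_k,M)$, $\Ext^1(M,E_k)$, $\Ext^1(E_k,M)$. The paper's proof works under the freeness hypothesis alone.

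The key idea you are missing is to go through the \emph{simple} module $S_k$ rather than $E_k$. From the explicit resolution of $S_k$ one has $\Hom_{\mathcal P}(S_k,M)=\ker\beta_k\cap\ker M(\varepsilon_k)$, which is precisely the socle of $\ker\beta_k\cong\Hom_{\mathcal P}(E_k,M)$. Freeness of $\ker\beta_k$ then gives $\hom_{\mathcal P}(S_k,M)=\rank\Hom_{\mathcal P}(E_k,M)$. Since minimality of the injective presentation is characterised by $\hom_{\mathcal P}(S_k,M)=\check\beta_+(k)$, the $\Hom$-identity drops out immediately---no $\check g$-coherence needed. The projective side is dual.

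For $\Ext^1$ the paper's route is again simpler: apply $\Hom_{\mathcal P}(E_k,-)$ to $0\to M\to I(\check\beta_+)\to C\to 0$. The map $\Hom_{\mathcal P}(E_k,M)\to\Hom_{\mathcal P}(E_k,I(\check\beta_+))$ is an $H_k$-linear injection between free modules of the same rank $\check\beta_+(k)$ which is an isomorphism on socles (this is the injective-envelope property for $S_k$), hence an isomorphism; so the induced map to $\Hom_{\mathcal P}(E_k,C)$ is zero and $\Hom_{\mathcal P}(E_k,C)\cong\Ext^1_{\mathcal P}(E_k,M)$ is free. Applying the $\Hom$-identity to $C$ (whose injective envelope is $I(\check\beta_-)$) yields $\check\beta_-(k)=\rank\Ext^1_{\mathcal P}(E_k,M)$. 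This completely bypasses your attempt to control the rank of $d_0(\Coker\alpha_{k,1})$ by genericity---an argument you yourself flag as the main obstacle and which, as written, is not complete: the injectivity of $\bar d_0$ amounts to $\ker(d_0|_{P(\beta_-)_k})\subseteq\Img\alpha_{k,1}$, and it is not clear how your upper-semicontinuity sketch establishes this.
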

\begin{proof}
 From \eqref{spm}, we have that 
$$\Hom_{\mathcal{P}}(S_k,M)=\ker f_{k;M}=\ker (M(b))_b\cap \ker M(\varepsilon_k)=\ker \beta_k\cap \ker M(\varepsilon_k).$$
Thus, in view of \eqref{(hom 1)}, $\hom_{\mathcal{P}}(S_k,M)=\rank (\Hom_{\mathcal{P}}(E_k,M))$.
Using the fact that $M\to I$ is the injective cover if and only if $\Hom_{\mathcal{P}}(S_k,M)\cong \Hom_{\mathcal{P}}(S_k,I)$, we obtain 
$\check{\beta}_+(k)=\rank (\Hom_{\mathcal{P}}(E_k,M))$. Moreover, we get that $M\to I$ is the injective cover if and only if $\Hom_{\mathcal{P}}(E_k,M)\cong \Hom_{\mathcal{P}}(E_k,I)$.

  Applying the functor $\Hom(E_k,-)$ to the exact sequence $0\to M\to I(\check{\beta}_+)\xrightarrow{\pi} C\to 0$, we get the following exact sequence:
  \begin{align*}
      0\xrightarrow{} \Hom_{\mathcal{P}}(E_k,M)\xrightarrow{}\Hom_{\mathcal{P}}(E_k, I(\check{\beta}_+))\xrightarrow{\pi}  \Hom_{\mathcal{P}}(E_k,C)\xrightarrow{} \Ext_{\mathcal{P}}^1(E_k,M) \xrightarrow{} \Ext_{\mathcal{P}}^1(E_k, I(\check{\beta}_+))=0
.  \end{align*}
By the above discussion, $\pi$ is the zero map, and so $\Hom_{\mathcal{P}}(E_k,C)\cong \Ext_{\mathcal{P}}^1(E_k,M)$ is free. Hence, $C\to I(\check{\beta}_-(k))$ is the injective cover if and only if $\rank(\Hom_{\mathcal{P}}(E_k,C))=\check{\beta}_-(k)$, as desired.

The proof of the first two equalities is similar.
\end{proof}

\begin{cor}\label{g-vector of M}
Suppose that $M$ is a general
representation of $(Q,\mathbf{d},\mathcal{S})$ of weights $\beta_--\beta_+$ and $\check{\beta}_--\check{\beta}_+$. For any $k\in Q_0$, we have 
\begin{equation}
    \begin{aligned}
      \Coker\alpha_k\cong \beta_+(k)H_k,\quad \frac{\ker\alpha_k}{\Img \gamma_k}\cong \beta_-(k)H_k,\\
      \ker \beta_k\cong \check{\beta}_+(k)H_k,\quad \frac{\ker \gamma}{\Img \beta_k}\cong \check{\beta}_-(k)H_k.
    \end{aligned}
\end{equation} 
\end{cor}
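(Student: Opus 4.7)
The plan is to combine three ingredients that have already been assembled in the paragraphs immediately preceding the statement. First, the isomorphisms \eqref{(hom 1)} and \eqref{(ext 1)} identify the four subquotient $H_k$-modules appearing in the corollary, namely $\Coker\alpha_k$, $\ker\beta_k$, $\ker\alpha_k/\Img\gamma_k$, and $\ker\gamma_k/\Img\beta_k$, with $\Hom_{\mathcal{P}}(M,E_k)^\star$, $\Hom_{\mathcal{P}}(E_k,M)$, $\Ext^1_{\mathcal{P}}(M,E_k)^\star$, and $\Ext^1_{\mathcal{P}}(E_k,M)$, respectively. Second, the discussion right before the statement shows, using the minimal projective/injective resolutions of $M$, the $g$- and $\check{g}$-coherence of a general representation (Theorem \ref{pc}), the consequent vanishing of $d_0$ and $\check{d}_0$, and the locally free hypothesis on $(Q,\mathbf{d},\mathcal{S})$ via Proposition \ref{general rep is locally free}, that each of these four $H_k$-modules is free. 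Third, the lemma stated just before the corollary computes their $H_k$-ranks to be $\beta_+(k)$, $\check{\beta}_+(k)$, $\beta_-(k)$, and $\check{\beta}_-(k)$.

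With these in hand the corollary reduces to a short bookkeeping step. I would first invoke Proposition \ref{general rep is locally free} so that $M_k$, and hence $M_{k,\In}$ and $M_{k,\out}$, are free $H_k$-modules; this sets the stage for the splitting arguments recalled in the preceding paragraphs. Next I would quote the freeness conclusions of those arguments, combined with the fact that $H_k$ is symmetric (so $H_k\cong H_k^\star$ and dualization preserves rank on free $H_k$-modules), to conclude that each of $\Hom_{\mathcal{P}}(M,E_k)$, $\Hom_{\mathcal{P}}(E_k,M)$, $\Ext^1_{\mathcal{P}}(M,E_k)$, $\Ext^1_{\mathcal{P}}(E_k,M)$ is free over $H_k$. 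Finally I would apply the rank formulas of the preceding lemma; since a free $H_k$-module is determined up to isomorphism by its rank, the four isomorphisms claimed in the statement follow at once.

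The only genuine obstacle is the verification of the genericity claims: one must check both that $d_0$ and $\check{d}_0$ vanish generically (which is where $g$- and $\check{g}$-coherence from Theorem \ref{pc} enters) and that the intermediate subquotients $\Img\gamma_k\hookrightarrow\ker\alpha_k$ and $\Img\beta_k\hookrightarrow\ker\gamma_k$ split off as $H_k$-modules, so that $\ker\alpha_k/\Img\gamma_k$ and $\ker\gamma_k/\Img\beta_k$ are actually $H_k$-free. Both points have essentially been secured by the diagram chase and the rank argument with $d_0(\Coker\alpha_{k,1})$ carried out above; once they are in place, the corollary is a direct consequence of matching $H_k$-ranks across \eqref{(hom 1)}--\eqref{(ext 1)}.
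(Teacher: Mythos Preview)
Your proposal is correct and follows essentially the same route as the paper: the corollary is not given an independent proof but is an immediate consequence of the identifications \eqref{(hom 1)}--\eqref{(ext 1)}, the freeness of the four $H_k$-modules established in the paragraphs just above (via $g$- and $\check g$-coherence and the diagram chase producing $d_0(\Coker\alpha_{k,1})$), and the rank lemma that directly precedes the statement. Your summary of the genericity inputs (Theorem~\ref{pc} for simultaneous coherence, Proposition~\ref{general rep is locally free} for local freeness of $M_k$) matches exactly what the paper uses, so nothing further is needed.
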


Next, considering the injective presentation of $M$ \eqref{i}, we denote $\beta_{k,i}=I(\check{\beta}_i)(\beta_k)$ and $\alpha_{k,i}=I(\check{\beta}_i)(\alpha_k)$ for $i=0,1$. Then we have the following commutative diagram of $H_k$-modules with exact rows:
$$\begin{tikzcd}
0\arrow[r]&\ker\alpha_k\arrow[d,hookrightarrow]\arrow[r,hookrightarrow] & \ker\alpha_{k,0}\arrow[r, "\check{d}_0"] \arrow[d,hookrightarrow] &  \ker\alpha_{k,1}  \arrow[d, hookrightarrow]   \\
0\arrow[r]&\ker \beta_k\alpha_k\arrow[r,hookrightarrow]\arrow[d,twoheadrightarrow,"\alpha_k"]       & \ker\beta_{k,0}\alpha_{k,0}\arrow[r,"\check{d}_0"]     \arrow[d,twoheadrightarrow,"\alpha_{k,0}"]  &     \ker\beta_{k,1}\alpha_{k,1} \\
0\arrow[r] & \ker\beta_k\cap\Img\alpha_k\arrow[r,hookrightarrow] & \ker\beta_{k,0}\cap\Img\alpha_{k,0}  & \ 
\end{tikzcd}$$
For the injective representation $I(\check{\beta}_0)$, we have that $\ker\beta_{k,0}\cap\Img\alpha_{k,0}$ is either  0 or $\check{\beta}_0(k)H_k^\star$. Then it is easy to check that ${\ker\beta_k\cap\Img\alpha_k}$ equals  
$$\alpha_k\left(\left(\bigoplus_{i\in \In(k)} {_{k}A_{i}}\otimes \check{\beta}_0(k)L_i\right)\cap \ker \check{d}_0 \right),$$
where $L_i=\operatorname{Span}_k\{(\varepsilon_k^la)^\star\mid a\in Q_0^\circ, ha=k,ta=i,0\leq l\leq d_k-1\}.$
Thus, ${\ker\beta_k\cap\Img\alpha_k}$ is free when $\check{d}_0$ is in general position, which implies that $\ker \beta_k/(\ker\beta_k\cap \Img\alpha_k)$ is free.

We are now ready to construct the \textbf{mutation} of a general decorated representation  $\mathcal{M}=(M,V)$ at $k$.

Let $(H,\widetilde{A},\widetilde{\s})=\widetilde{\mu}(H,A,\s)$ be given by (\ref{mutation of A}) and (\ref{mutation of s}). We associate to an $H$-based QP-representation $\mathcal{M}=(M,V)$  the $H$-based QP-representation $\widetilde{\mu}_k(\mathcal{M})=(H,\widetilde{A},\widetilde{\s},\overline{M},\overline{V})$ as follows. We set 
\begin{align}\label{mutation of Mk 1}
    \overline{M}_i=M_i,\quad \overline{V}_i=V_i
\end{align}
for $i\neq k$.
We define $\overline{M}_k$ and $\overline{V}_k$ by 
\begin{align}\label{mutation of Mk}
   \overline{M}_k= \frac{\ker \gamma_k}{\Img \beta_k}\oplus \Img\gamma_k\oplus \frac{\ker \alpha_k}{\Img\gamma_k}\oplus V_k,\quad \overline{V}_k=\frac{\ker\beta}{\ker\beta_k \cap \Img \alpha_k}.
\end{align}
Note that $\overline{M}_k$ is defined as the direct sum of $H_k$-modules. The construction of the action on $\overline{M}$ of all arrows in $\widetilde{A}$ follows the same approach as in \cite[Section 10]{derksen2010quivers}.  

First, for every arrow $c$ not incident to $k$, $\overline{M}(c)=M(c)$, and $\overline{M}([b\varepsilon_k^la])=M([b\varepsilon_k^l a])$ for all $a\in e_kA\cap Q^\circ_1$, $b\in Ae_k\cap Q^\circ_1$ and $0\leq l\leq d_k-1$.

To define the action of remaining arrow $a^\star$ and $b^\star$, we assemble them into the $H_k$-linear maps
$$\overline{\alpha}_k:M_{k,\out}=\overline{M}_{k,\In}\to \overline{M}_k,\quad \overline{\beta}_k:\overline{M}_{k}\to \overline{M}_{k,\out}=M_{k,\In}$$
in the same way as in (\ref{defn of alpha}) and (\ref{defn of beta}).
We will use the following notational convention: whenever we have a pair $U_{1} \subseteq U_{2}$ of $H_k$-modules, denote by $\iota: U_{1} \rightarrow U_{2}$ the inclusion map, and by $\pi: U_{2} \rightarrow U_{2} / U_{1}$ the natural projection. We now introduce the following \textbf{splitting data}:
\begin{align}\label{splitting data rho}
    \text{Choose an $H_k$-linear map $\rho: M_{k,\out} \rightarrow \ker \gamma_k$ such that $\rho \iota=\mathrm{id}_{\ker \gamma_k}$.}
\end{align}
\begin{align}\label{soplitting data sigma}
    \text{Choose an $H_k$-linear map $\sigma:\ker\alpha_k / \Img \gamma_k \rightarrow \ker\alpha_k$ such that $\pi \sigma=\mathrm{id}_{\ker \alpha_k / \Img\gamma_k}$.}
\end{align}
Then we define:
\setlength{\arraycolsep}{1.5pt}
\begin{align}\label{defn of mutation of alpha and beta}
    \overline{\alpha}_k=\left(\begin{array}{c}
-\pi \rho \\
-\gamma_k \\
0 \\
0
\end{array}\right), \quad \overline{\beta}_k=\left(\begin{array}{llll}
0, & \iota, & \iota \sigma, & 0
\end{array}\right).
\end{align}

\begin{pro}
     The above definitions make $\widetilde{\mu}_k(\mathcal{M})=(\overline{M},\overline{V})$ a decorated representation of $(\widetilde{Q},\mathbf{d},\widetilde{\mathcal{S}})$.
\end{pro}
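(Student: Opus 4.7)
The proposition amounts to checking two things: that $\overline{V}$ is $H$-free, and that $\overline{M}$ is annihilated by $J(\widetilde{\mathcal{S}})$. The first is immediate for $i \neq k$ (since $\overline{V}_i = V_i$), and for $i = k$ follows from the $H_k$-freeness analysis carried out just before the construction, which showed that both $\ker\beta_k$ and $\ker\beta_k \cap \Img\alpha_k$ split as free $H_k$-submodules, hence so does $\overline{V}_k$. The bulk of the work is to verify $\overline{M}(\partial_c \widetilde{\mathcal{S}}) = 0$ for every non-loop arrow $c \in \widetilde{Q}_1^{\circ}$. I would partition these arrows into four families and handle each in turn: (a) arrows of $Q$ not incident to $k$; (b) composites $[b\varepsilon_k^l a]$; (c) reversed arrows $a^\star$ with $ha = k$; (d) reversed arrows $b^\star$ with $tb = k$.

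For family (a), $\partial_c\widetilde{\mathcal{S}} = \partial_c[\mathcal{S}]$ lies in $\overline{T_H(\widetilde{A})}_{\hat{k},\hat{k}}$, and under the $H$-algebra isomorphism of Lemma \ref{lem in jacobian algebra mutation invariants} sending $[b'\varepsilon_k^{l'} a'] \mapsto b'\varepsilon_k^{l'} a'$ it corresponds to $\partial_c\mathcal{S}$; since $\overline{M}$ agrees with $M$ on non-$k$ arrows and on composites, the conclusion reduces to $M(\partial_c\mathcal{S}) = 0$. For family (b), $\partial_{[b\varepsilon_k^l a]}\widetilde{\mathcal{S}} = \partial_{[b\varepsilon_k^l a]}[\mathcal{S}] + a^\star \varepsilon_k^{d_k-1-l} b^\star$: the first summand evaluates to $M(\partial_{b\varepsilon_k^l a}\mathcal{S})$, which by \eqref{defn of gamma} is exactly the $\varepsilon_k^l a$-component of $\gamma_k(b^\star \otimes m)$, while the explicit formulas \eqref{defn of mutation of alpha and beta} yield by direct matrix multiplication $\overline{\beta}_k \circ \overline{\alpha}_k = -\gamma_k$ as maps $M_{k,\out} \to M_{k,\In}$; unpacking $H_k$-linearly the $\varepsilon_k^l a$-component of $\overline{M}(a^\star \varepsilon_k^{d_k-1-l} b^\star)(m)$ via $\overline{\beta}_k$ produces the negative of the first term, and the two cancel.

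For family (c), only $\Delta_k$ contributes, giving $\partial_{a^\star}\widetilde{\mathcal{S}} = \sum_{l,b}\varepsilon_k^{d_k-1-l} b^\star [b\varepsilon_k^l a]$, whose action on $m \in M_{ta}$ equals $\overline{\alpha}_k(u)$ where $u = \sum_{l,b}\varepsilon_k^{d_k-1-l} b^\star \otimes M(b\varepsilon_k^l a)(m) \in M_{k,\out}$. Substituting $f = d_k-1-l$ in the formula \eqref{defn of beta} for $\beta_k$ identifies $u = \beta_k(M(a)(m))$, so $u \in \Img\beta_k \subseteq \ker\gamma_k$ by the triangle \eqref{triangle}. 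Since $\rho\iota = \mathrm{id}_{\ker\gamma_k}$ one has $\rho(u) = u$, and since $u \in \Img\beta_k = \ker\pi$, both of the first two components of $\overline{\alpha}_k(u)$ vanish. For family (d), $\partial_{b^\star}\widetilde{\mathcal{S}} = \sum_{l,a}[b\varepsilon_k^l a] a^\star \varepsilon_k^{d_k-1-l}$, and the action on $z \in \overline{M}_k$ equals $M(b) \circ \alpha_k(w)$ with $w = \overline{\beta}_k(z)$. From $\overline{\beta}_k = (0,\iota,\iota\sigma,0)$ one sees $w \in \iota(\Img\gamma_k) + \iota\sigma(\ker\alpha_k/\Img\gamma_k) \subseteq \ker\alpha_k$ (using $\Img\gamma_k \subseteq \ker\alpha_k$ from \eqref{triangle}), so $\alpha_k(w) = 0$.

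The main obstacle is the bookkeeping in family (b): extracting the action of an individual reversed arrow $a^\star$ from the packaged map $\overline{\beta}_k$ (and symmetrically $b^\star$ from $\overline{\alpha}_k$) requires tracking the $\varepsilon_k$-weight under the duality ${}_k\widetilde{A}_i \cong ({}_i A_k)^\star$, so that $\overline{M}(a^\star)(z)$ is read off as the $\varepsilon_k^{d_k-1} a$-component of $\overline{\beta}_k(z)$. Once this identification and the sign conventions built into \eqref{defn of mutation of alpha and beta} are in place, the calculations in each family are direct, and the proposition reduces to the Jacobian relations for $\mathcal{S}$ on $M$ together with the triangle \eqref{triangle}.
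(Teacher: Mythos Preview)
Your proposal is correct and follows essentially the same approach as the paper: partition the arrows of $\widetilde{Q}^\circ$ into the four families (non-incident, composites, $a^\star$, $b^\star$) and reduce the Jacobian relations respectively to $M(\partial_c\mathcal{S})=0$, to $\overline{\beta}_k\overline{\alpha}_k=-\gamma_k$, to $\overline{\alpha}_k\beta_k=0$, and to $\alpha_k\overline{\beta}_k=0$, the last three being direct consequences of the block formulas \eqref{defn of mutation of alpha and beta}. Your treatment is slightly more explicit in places (you address the $H_k$-freeness of $\overline{V}_k$ and spell out family (a) via Lemma~\ref{lem in jacobian algebra mutation invariants}), but the substance is identical to the paper's argument.
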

\begin{proof}
    We only need to show that $M(\partial_{c}\widetilde{\mathcal{S}})=0$  for all $c\in \widetilde{Q}^\circ$. The case $c$ is not incident to $k$ is clear.
    
    If $c$ is one of the arrows $[b\varepsilon^l_k a]$, then 
    \begin{align}\label{parital [ba]}
        \partial_c\widetilde{\mathcal{S}} = \partial_{[b\varepsilon^l_k a]}\mathcal{S}+ a^\star(\varepsilon_k)^{d_k-l-1} b^\star=0.
    \end{align}
    In view of  the definitions of $\alpha_k$,$\beta_k$, and $\gamma_k$ in \eqref{defn of alpha}, \eqref{defn of beta}, and \eqref{defn of gamma}, respectively, it is enough to show $\overline{\beta}_k\overline{\alpha}_k=-\gamma_k$. But this follows at once by multiplying the row and the column given by (\ref{defn of mutation of alpha and beta}).
    
    If $c$ is one of the arrows $a^\star$, then $$\partial_c\widetilde{\mathcal{S}}=\sum_{\substack{0\leq l\leq d_k-1\\b\in Q^\circ \cap Ae_k}}(\varepsilon_k)^{d_k-l-1} b^\star b\varepsilon^l_k a .$$
    It suffices to show that $$M(\sum_{\substack{0\leq l\leq d_k-1\\b\in Q^\circ \cap Ae_k}}(\varepsilon^\star_k)^{d_k-l-1} b^\star b\varepsilon^l_k)=0$$, or equivalent, $\overline{\alpha}_k\beta_k=0.$
    
    If $c$ is one of the arrows $b^\star$, then $$\partial_c\widetilde{\mathcal{S}}=\sum_{0\leq l\leq d_k-1}b\varepsilon^l_k a a^\star(\varepsilon_k)^{d_k-l-1} .$$
    It suffices to show that $ \alpha_k\overline{\beta}_k=0$.
    In view of (\ref{defn of mutation of alpha and beta}), we have 
    \begin{align*}
        \setlength{\arraycolsep}{1.5pt}
        \overline{\alpha}_k\beta_k=\left(\begin{array}{c}
            -\pi \rho \beta_k  \\
             -\gamma_k\beta_k\\
             0\\
             0
        \end{array}\right)=0, \quad
        \alpha_k\overline{\beta}_k=\left(\begin{array}{llll}
            0, & \alpha_k\iota,&\alpha_k\iota\sigma,&0  
        \end{array}\right)=0,
    \end{align*}
    as desired.
\end{proof}
\begin{pro}\label{iso of splitting data}
    The isomorphism class of the decorated representation $\widetilde{\mu}_{k}(\mathcal{M})$ does not depend on the choice of the splitting data $\left(\ref{splitting data rho}\right)$-$\left(\ref{soplitting data sigma}\right)$.
\end{pro}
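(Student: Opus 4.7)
The plan is to follow the strategy of \cite[Proposition 10.7]{derksen2010quivers}, adapted to the $H$-based setting. Given two sets of splitting data $(\rho,\sigma)$ and $(\rho',\sigma')$, we will exhibit an explicit $H$-linear isomorphism $\psi \colon \widetilde{\mu}_k(\mathcal{M}) \to \widetilde{\mu}'_k(\mathcal{M})$ that restricts to the identity on $\overline{M}_i=M_i$ for $i\neq k$ and preserves the decoration $\overline{V}$. Since the decoration $\overline{V}_k=\ker\beta_k/(\ker\beta_k\cap\Img\alpha_k)$ and the $H$-module $\overline{M}_k$ in \eqref{mutation of Mk} depend only on the intrinsic data $(M,\alpha_k,\beta_k,\gamma_k)$, not on $(\rho,\sigma)$, the only thing to check is that $\psi$ intertwines the two pairs $(\overline{\alpha}_k,\overline{\beta}_k)$ and $(\overline{\alpha}'_k,\overline{\beta}'_k)$; the actions of every arrow not incident to $k$, as well as of the composite arrows $[b\varepsilon_k^l a]$ (which land in components with $i,j\neq k$), are automatically preserved by the identity.

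Writing $\overline{M}_k=N_1\oplus N_2\oplus N_3\oplus V_k$ with $N_1=\ker\gamma_k/\Img\beta_k$, $N_2=\Img\gamma_k$, $N_3=\ker\alpha_k/\Img\gamma_k$, we look for $\psi$ of upper-triangular form
\[
\psi=\begin{pmatrix} I & A & 0 & 0 \\ 0 & I & C & 0 \\ 0 & 0 & I & 0 \\ 0 & 0 & 0 & I \end{pmatrix},
\]
with $A \colon N_2\to N_1$ and $C \colon N_3\to N_2$ to be determined. Such a $\psi$ is automatically an $H_k$-linear automorphism, hence an $H$-module isomorphism of $\overline{M}$. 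The conditions $\psi\overline{\alpha}_k=\overline{\alpha}'_k$ and $\overline{\beta}'_k\psi=\overline{\beta}_k$ expand, via \eqref{defn of mutation of alpha and beta}, to the two linear equations
\[
A\gamma_k=\pi(\rho'-\rho),\qquad \iota C=\iota(\sigma-\sigma').
\]

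The key observation, which makes both equations solvable, is that the differences of splittings behave nicely. Since $\rho\iota=\rho'\iota=\mathrm{id}_{\ker\gamma_k}$, the difference $\rho-\rho' \colon M_{k,\out}\to\ker\gamma_k$ vanishes on $\ker\gamma_k$, so it factors uniquely as $\rho-\rho'=\tau\,\overline{\gamma}_k$ for some $H_k$-linear $\tau \colon \Img\gamma_k=N_2\to\ker\gamma_k$, where $\overline{\gamma}_k \colon M_{k,\out}/\ker\gamma_k\xrightarrow{\sim}\Img\gamma_k$ is induced by $\gamma_k$; we then set $A=-\pi\tau$. Similarly, $\pi\sigma=\pi\sigma'=\mathrm{id}$ forces the image of $\sigma-\sigma'$ to lie in $\ker\pi\cap\ker\alpha_k=\Img\gamma_k=N_2$, so $C:=\sigma-\sigma'$ is a well-defined $H_k$-linear map $N_3\to N_2$. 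Plugging these back yields $\psi\overline{\alpha}_k=\overline{\alpha}'_k$ and $\overline{\beta}'_k\psi=\overline{\beta}_k$ directly, completing the verification.

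The only mild obstacle is bookkeeping: one must check $H_k$-linearity of $\tau$ and $C$, which is immediate since all maps in sight ($\alpha_k,\beta_k,\gamma_k,\iota,\pi,\rho,\sigma$) are $H_k$-linear by construction, and must confirm that the action of arrows $[b\varepsilon_k^l a]$ on $\overline{M}$ needs no adjustment---this is clear because these arrows define maps $\overline{M}_{ta}\to\overline{M}_{hb}$ with $ta,hb\neq k$, on which $\psi$ is the identity. This produces the desired right-equivalence between the two decorated representations built from the two choices of splitting data.
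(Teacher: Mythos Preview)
Your proof is correct and follows precisely the approach the paper indicates: the paper's proof simply says ``the proof follows the same argument as in \cite[Proposition 10.9]{derksen2008quivers}, but with the linear maps replaced by $H_k$-linear maps,'' and you have carried out exactly that argument with the $H_k$-linearity checked explicitly. The only discrepancy is the citation---the paper points to Proposition 10.9 of the 2008 DWZ paper rather than Proposition 10.7 of the 2010 sequel---but the mathematical content is the same upper-triangular automorphism of $\overline{M}_k$ built from the differences $\rho-\rho'$ and $\sigma-\sigma'$.
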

\begin{proof}
    The proof follows the same argument as in\cite[Proposition 10.9]{derksen2008quivers}, but with the linear maps replaced by $H_k$-linear maps.
\end{proof}
\begin{pro}\label{mutation of right-equivalence class of the representation}
    The right-equivalence class of the representation $\widetilde{\mu}_{k}(\mathcal{M})$ is determined by the right-equivalence class of $\mathcal{M}$.
\end{pro}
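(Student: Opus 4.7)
The plan is to construct, from a given right-equivalence $(\varphi,\psi,\eta)\colon\mathcal{M}\to\mathcal{M}'$, an explicit right-equivalence $(\widetilde{\varphi},\widetilde{\psi},\widetilde{\eta})\colon\widetilde{\mu}_k(\mathcal{M})\to\widetilde{\mu}_k(\mathcal{M}')$ of the mutated decorated representations. Theorem \ref{mutation uniquele right-equivalence class of the GQP} already provides the $H$-based QP right-equivalence $\widetilde{\varphi}\colon\overline{T_H(\widetilde{A})}\to\overline{T_H(\widetilde{A}')}$ on the level of QPs. For every vertex $i\neq k$, formula (\ref{mutation of Mk 1}) gives $\overline{M}_i=M_i$ and $\overline{V}_i=V_i$, so setting $\widetilde{\psi}_i=\psi_i$ and $\widetilde{\eta}_i=\eta_i$ automatically intertwines the actions of all arrows between vertices different from $k$ and of the composite arrows $[b\varepsilon_k^l a]$, since $\widetilde{\varphi}([b\varepsilon_k^l a])$ is defined precisely to encode $\varphi(b\varepsilon_k^l a)$. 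The substantive content is therefore the construction of $\widetilde{\psi}_k\colon\overline{M}_k\to\overline{M}'_k$ and $\widetilde{\eta}_k\colon\overline{V}_k\to\overline{V}'_k$ together with the verification of compatibility with the newly introduced arrows $a^\star,b^\star$.

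Following the strategy of \cite[Proposition 10.10]{derksen2008quivers}, I would factor $\varphi$ as a composition of two types of isomorphisms: a change-of-arrows isomorphism induced by an $H$-bimodule isomorphism $\varphi^{(1)}\colon A\to A'$, and a unitriangular automorphism. In the change-of-arrows case, $\varphi^{(1)}$ paired with $\psi$ strictly intertwines the triangles (\ref{triangle}) attached to $\mathcal{M}$ and $\mathcal{M}'$, so it induces canonical $H_k$-linear isomorphisms on each of the summands $\ker\gamma_k/\Img\beta_k$, $\Img\gamma_k$, $\ker\alpha_k/\Img\gamma_k$ and on $\ker\beta_k/(\ker\beta_k\cap\Img\alpha_k)$ appearing in (\ref{mutation of Mk}). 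Assembling these with $\eta_k$ yields $\widetilde{\psi}_k$ and $\widetilde{\eta}_k$. Transporting the splitting data (\ref{splitting data rho})--(\ref{soplitting data sigma}) chosen for $\mathcal{M}$ along $\psi$, the formulas (\ref{defn of mutation of alpha and beta}) show that $\widetilde{\psi}$ intertwines $\overline{\alpha}_k$ with $\overline{\alpha}'_k$ and $\overline{\beta}_k$ with $\overline{\beta}'_k$, while Proposition \ref{iso of splitting data} removes any dependence on the splitting choice.

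The main obstacle is the unitriangular case, where $\varphi(a)=a+\varphi^{(2)}(a)$ with $\varphi^{(2)}(a)\in(\mathfrak{m}^\circ)^2$. Here $\psi$ intertwines the structure maps $\alpha_k,\beta_k,\gamma_k$ with $\alpha'_k,\beta'_k,\gamma'_k$ only up to correction terms coming from the action of $\varphi^{(2)}(a)$, and one must show that these corrections do not destroy the induced isomorphisms on subquotients. Since $M$ and $M'$ are both nilpotent as $\overline{T_H(A)}$-modules, each correction term acts trivially beyond a bounded $\mathfrak{m}^\circ$-adic depth, so I would argue by descending induction along the $\mathfrak{m}^\circ$-adic filtration, successively absorbing the corrections into $H_k$-linear modifications of the splittings, exactly as in the unitriangular step of \cite[Proposition 10.10]{derksen2008quivers}. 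With a compatible choice of splittings in hand, verification that $(\widetilde{\varphi},\widetilde{\psi},\widetilde{\eta})$ intertwines the actions of $a^\star$ and $b^\star$ reduces to the identity $\overline{\beta}_k\overline{\alpha}_k=-\gamma_k$ used in the well-definedness proof, combined with the compatibility already established at the level of the triangle (\ref{triangle}). The entire argument is an $H_k$-linear adaptation of the DWZ proof and fits naturally into the pattern of the other appendix proofs.
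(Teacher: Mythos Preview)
Your proposal takes a genuinely different route from the paper's proof, and the detour through a unitriangular filtration argument is both unnecessary and rests on a misreading of \cite[Proposition 10.10]{derksen2008quivers}.

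The paper does \emph{not} factor $\varphi$ into a change-of-arrows part and a unitriangular part. Instead it treats an arbitrary automorphism $\varphi$ at once, writing the action of $\varphi$ on the arrows $a_p\in e_kA$ and $b_q\in Ae_k$ in matrix form as in (\ref{varphi on a})--(\ref{varphi on b}). This produces invertible block-lower-triangular matrices $C$ and $F$ (with entries in $\overline{T_H(A)}$), and one obtains the \emph{exact} relations
\[
\alpha_k=\alpha'_k\,M'(C),\qquad \beta_k=M'(F)\,\beta'_k,\qquad \gamma'_k=M'(C)\,\gamma_k\,M'(F).
\]
The first two are immediate from the definitions; the third is the substantive step and is proved using the cyclic chain rule (Lemma \ref{chain rule}). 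The point you miss is that the ``correction terms'' you worry about in the unitriangular case---the extra contributions to $\partial_{[b_q\varepsilon_k^l a_p]}\varphi(\mathcal{S})$ coming from $\Delta_{[b_q\varepsilon_k^l a_p]}(\varphi(c))$ for arrows $c$ not of the form $[b\varepsilon_k^f a]$---are annihilated \emph{exactly} because $M(\partial_c\mathcal{S})=0$ (the Jacobian relations). So the triangles (\ref{triangle}) for $\mathcal{M}$ and $\mathcal{M}'$ are intertwined on the nose by the invertible operators $M'(C)$ and $M'(F)$, not merely modulo higher filtration.

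Once these exact relations are in hand, the isomorphism $\widetilde{\psi}_k$ is simply the block-diagonal map $\mathrm{diag}(M'(F^{-1}),M'(C),M'(C),\mathrm{Id})$ on the four summands of $\overline{M}_k$, and compatibility with $\overline{\alpha}_k,\overline{\beta}_k$ follows from (\ref{defn of mutation of alpha and beta}) together with the transported splitting data $\rho'=M'(F^{-1})\rho M'(F)$, $\sigma'=M'(C)\sigma M'(C^{-1})$. Your filtration-induction sketch is therefore not needed, and as written it is unclear how ``absorbing the corrections into $H_k$-linear modifications of the splittings'' would establish that the subquotients in (\ref{mutation of Mk}) are isomorphic---that is precisely what the exact intertwining gives for free. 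Incidentally, this direct chain-rule approach \emph{is} the strategy of \cite[Proposition 10.10]{derksen2008quivers}; DWZ do not factor $\varphi$ either.
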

The proof appears in Appendix \ref{app: mutation of right-equivalence class of the representation}.

Note that in the above treatment of the operation $\mathcal{M} \mapsto \widetilde{\mu}_{k}(\mathcal{M})$ for an $H$-based QP-representation $\mathcal{M}=(H,A,\s, M, V)$, the $H$-based QP $(H,A,\s)$ was not assumed to be reduced. Recall from Proposition \ref{unique of he right-equivalence class of reduced rep} that we have a well-defined operation $\mathcal{M} \mapsto \mathcal{M}_{\red}$ on (right-equivalence classes of) $H$-based QP-representations. The following property is immediate from definitions.

\begin{pro}
     Let $(H,A,\s)$ be an locally free $H$-based QP satisfying $(\ref{condition on mutation of GQP 1})$. Then, for every 
 general decorated representation $\mathcal{M}$ of $(H,A,\s)$, the representation $\widetilde{\mu}_{k}(\mathcal{M})_{\red}$ is right-equivalent to $\widetilde{\mu}_{k}\left(\mathcal{M}_{\red}\right)_{\red}$.
\end{pro}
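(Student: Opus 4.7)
The plan is to exploit the splitting theorem and reduce the claim to a representation-level analog of Proposition \ref{trivial part of mutation}. First observe that under hypothesis \eqref{condition on mutation of GQP 1}, the trivial arrow span $A_{\mathrm{triv}}$ contains no arrow incident to $k$: every element of $A_{\mathrm{triv}} = H\partial\s^{\circ(2)} H$ participates in a cyclic $2$-path from $\s^{\circ(2)} \in (A^\circ)^2$, and such a cycle $i\to j\to i$ would force $A_{i,k}$ and $A_{k,i}$ to both be nonzero if one of $i,j$ equaled $k$, contradicting \eqref{condition on mutation of GQP 1}. Thus $e_k A_{\mathrm{triv}} = A_{\mathrm{triv}} e_k = 0$.

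Next, by Theorem \ref{splitting thm}, fix a right-equivalence $\varphi$ between $(H, A,\s)$ and $(H, A_{\mathrm{red}} \oplus A_{\mathrm{triv}}, \s_{\mathrm{red}} + \s_{\mathrm{triv}})$. Pulling back $\mathcal{M}$ along $\varphi$ produces a decorated representation $\mathcal{M}'$ of the split $H$-based QP. Because $\s_{\mathrm{triv}}$ has the form $\sum a_p b_p$ with $\partial_{a_p}\s_{\mathrm{triv}} = b_p$ and $\partial_{b_p}\s_{\mathrm{triv}} = a_p$, every arrow of $A_{\mathrm{triv}}$ lies in $J(\s_{\mathrm{red}}+\s_{\mathrm{triv}})$, hence acts as zero on the underlying module. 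In view of Proposition \ref{trival part of jacobian} and Definition \ref{reduce of M}, $\mathcal{M}'$ is therefore just $\mathcal{M}_{\mathrm{red}}$ together with the zero action of the $A_{\mathrm{triv}}$-arrows, i.e. $\mathcal{M}' = \mathcal{M}_{\mathrm{red}} \oplus \mathcal{N}$, where $\mathcal{N}$ denotes the decorated representation of $(H, A_{\mathrm{triv}}, \s_{\mathrm{triv}})$ with zero arrow action.

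Now apply $\widetilde{\mu}_k$. Since $A_{\mathrm{triv}}$ contains no arrow at $k$, the $H_k$-modules $M_{k,\In}, M_{k,\out}$ and the maps $\alpha_k,\beta_k,\gamma_k$ used in constructing $\widetilde{\mu}_k(\mathcal{M}')$ depend only on the $A_{\mathrm{red}}$-action on $M$, so they coincide with those computed from $\mathcal{M}_{\mathrm{red}}$. Consequently the construction \eqref{mutation of Mk 1}--\eqref{defn of mutation of alpha and beta} for $\mathcal{M}'$ gives precisely the direct sum
\[
\widetilde{\mu}_k(\mathcal{M}') \;=\; \widetilde{\mu}_k(\mathcal{M}_{\mathrm{red}}) \oplus \mathcal{N},
\]
which is the representation-level analog of Proposition \ref{trivial part of mutation}. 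Combining with Proposition \ref{mutation of right-equivalence class of the representation}, which guarantees that $\widetilde{\mu}_k(\mathcal{M})$ and $\widetilde{\mu}_k(\mathcal{M}')$ are right-equivalent, and using that taking the reduced part of a representation absorbs the purely trivial summand $\mathcal{N}$ (Proposition \ref{unique of he right-equivalence class of reduced rep}), we conclude
\[
\widetilde{\mu}_k(\mathcal{M})_{\mathrm{red}} \;\cong\; \widetilde{\mu}_k(\mathcal{M}')_{\mathrm{red}} \;=\; \bigl(\widetilde{\mu}_k(\mathcal{M}_{\mathrm{red}}) \oplus \mathcal{N}\bigr)_{\mathrm{red}} \;\cong\; \widetilde{\mu}_k(\mathcal{M}_{\mathrm{red}})_{\mathrm{red}}.
\]

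The only point that requires any work is the representation-level direct sum decomposition $\widetilde{\mu}_k(\mathcal{M}') = \widetilde{\mu}_k(\mathcal{M}_{\mathrm{red}}) \oplus \mathcal{N}$; this is not deep and follows by reading off the explicit formulas \eqref{defn of alpha}--\eqref{defn of gamma} and \eqref{mutation of Mk 1}--\eqref{defn of mutation of alpha and beta}, noting that all data used to build the new arrows at $k$ involve only arrows of $A_{\mathrm{red}}$ once $A_{\mathrm{triv}}$ is disjoint from $k$, and that the splitting data \eqref{splitting data rho}--\eqref{soplitting data sigma} may be chosen compatibly with the direct-sum decomposition.
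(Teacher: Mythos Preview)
Your argument is correct and amounts to exactly the unwinding of definitions the paper has in mind; the paper itself simply asserts the result is ``immediate from definitions'' without spelling out a proof. The one notational quibble is that your use of $\oplus\,\mathcal{N}$ for ``extend by the zero action on $A_{\mathrm{triv}}$'' is slightly nonstandard (there is no direct sum of underlying modules, only an enlargement of the arrow span), but the logic is sound.
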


Recall that, according to Corollary \ref{reduced mutation uniquele right-equivalence class of the GQP} and Definition \ref{defn of mutation of gqp}, the correspondence $\widetilde{\mu}_{k}:(H, A,\s) \mapsto (H,\widetilde{A}, \widetilde{\s})$ gives rise to the mutation $(H, A,\s) \mapsto \mu_{k}(H, A,\s)=(H,\overline{A}, \overline{S})$, which is a well-defined bijective transformation on the set of right-equivalence classes of reduced QPs satisfying (\ref{condition on mutation of GQP 1}). Here $(H,\overline{A},\overline{S})$ is the reduced part of $(H,\widetilde{A}, \widetilde{\s})$. Now for every general  $H$-based QP-representation $\mathcal{M}=(H, A, \s, M, V)$ of a reduced $H$-based QP $(H,A,\s)$ we define
\begin{align}\label{defn of mutation of M}
    \mu_{k}(\mathcal{M})=\widetilde{\mu}_{k}(\mathcal{M})_{\red};
\end{align}
thus, $\mu_{k}(\mathcal{M})$ is a decorated representation $(H,\overline{A}, \overline{\s}, \overline{M}, \overline{V})$ of a reduced $H$-based QP $(H,\overline{A}, \overline{\s})$. Combining Propositions \ref{unique of he right-equivalence class of reduced rep} and \ref{mutation of right-equivalence class of the representation}, we obtain the following important corollary.
\begin{cor}\label{defn on mutation of M}
    Suppose that $\mathcal{M}$ is a general decorated representation, then the correspondence $\mathcal{M} \mapsto \mu_{k}(\mathcal{M})$ is a well-defined transformation on the set of right-equivalence classes of decorated representations of reduced and locally free $H$-based QPs satisfying $(\ref{condition on mutation of GQP 1})$.
\end{cor}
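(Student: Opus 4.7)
The proof assembles previously established propositions into a two-step well-definedness argument, so the plan is essentially bookkeeping. First I would note that by formula \eqref{defn of mutation of M}, the map $\mu_k(\mathcal{M}) = \widetilde{\mu}_k(\mathcal{M})_{\red}$ is the composition of two separate operations on right-equivalence classes: the intermediate mutation $\widetilde{\mu}_k$ that sends a decorated representation of $(H,A,\s)$ to a decorated representation of $\widetilde{\mu}_k(H,A,\s) = (H, \widetilde{A}, \widetilde{\s})$, followed by the reduction operation $\mathcal{N} \mapsto \mathcal{N}_{\red}$ of Definition \ref{reduce of M} that passes to the reduced part. Accordingly, the verification splits into three claims: (i) each of the two steps is functorial with respect to right-equivalence; (ii) the output QP is again reduced and locally free; and (iii) condition \eqref{condition on mutation of GQP 1} continues to hold for the output QP, so that $\mu_k$ can be iterated.

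For (i), the functoriality of $\widetilde{\mu}_k$ on right-equivalence classes is exactly Proposition \ref{mutation of right-equivalence class of the representation}, while the functoriality of $(-)_{\red}$ on right-equivalence classes of representations is Proposition \ref{unique of he right-equivalence class of reduced rep}. Composing these two statements shows that if $\mathcal{M}$ and $\mathcal{M}'$ are right-equivalent decorated representations of $(H,A,\s)$, then $\mu_k(\mathcal{M})$ and $\mu_k(\mathcal{M}')$ are right-equivalent decorated representations of $\mu_k(H,A,\s) = (H,\overline{A},\overline{\s})$. A minor point worth recording here is that the choice of splitting data \eqref{splitting data rho}--\eqref{soplitting data sigma} in the definition of $\widetilde{\mu}_k(\mathcal{M})$ does not affect the isomorphism class, which is Proposition \ref{iso of splitting data}; this is what allows us to view $\widetilde{\mu}_k$ as an operation on isomorphism classes in the first place.

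For (ii), reducedness of $(H,\overline{A},\overline{\s})$ is immediate from Definition \ref{defn of mutation of gqp}, which builds $\mu_k(H,A,\s)$ as the reduced part supplied by the Splitting Theorem \ref{splitting thm}. Local freeness of the target is precisely the content of Proposition \ref{locally free is mutation invariants}. For (iii), 2-acyclicity at any pair of vertices $\{i,j\}$ with $k \notin \{i,j\}$ is inherited from that of $(H,A,\s)$ through the formula $\widetilde{A}_{i,j} = A_{i,j} \oplus A_{i,k}A_{k,j}$ together with Step (3) of the mutation, while 2-acyclicity at pairs involving $k$ holds in $(H,\overline{A},\overline{\s})$ by construction after the removal of a maximal collection of oriented 2-cycles.

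The main obstacle in this corollary is not contained in the present argument itself, which is purely compositional, but rather in the two enabling propositions on which it relies: Proposition \ref{mutation of right-equivalence class of the representation}, whose proof is postponed to Appendix \ref{app: mutation of right-equivalence class of the representation} and requires a careful transport of splitting data along the right-equivalence; and Proposition \ref{locally free is mutation invariants}, whose proof is deferred to Section \ref{section: mutation of presentatoins} and uses the general-presentation machinery of Section \ref{section: general pre}. Once those two results are in hand, the corollary is the natural payoff that allows us to iterate $\mu_k$ (and compose with mutations at other vertices) within the category of reduced locally free $H$-based QPs and their general decorated representations.
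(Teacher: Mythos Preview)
Your approach matches the paper's: the paper simply says the corollary follows by ``combining Propositions \ref{unique of he right-equivalence class of reduced rep} and \ref{mutation of right-equivalence class of the representation}'', which is exactly your step (i). Your additional verifications in (ii) are correct and helpful elaborations the paper leaves implicit.

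There is, however, an error in your step (iii). You claim that ``2-acyclicity at any pair of vertices $\{i,j\}$ with $k \notin \{i,j\}$ is inherited from that of $(H,A,\s)$'' via the formula $\widetilde{A}_{i,j} = A_{i,j} \oplus A_{i,k}A_{k,j}$. This is false: the composite arrows $[b\varepsilon_k^l a]$ can create new oriented 2-cycles between vertices $i,j\neq k$, and these need not be removable by reduction (this is precisely why nondegeneracy, Definition \ref{defn of nondegenerate}, is a nontrivial hypothesis). Fortunately this error is harmless for the corollary: condition \eqref{condition on mutation of GQP 1} concerns only the vertex $k$, and since the arrows incident to $k$ are merely reversed in $\widetilde{A}$, the condition at $k$ is automatically preserved. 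You should replace your (iii) with this simpler observation (or just cite Theorem \ref{mutation of gqp is involution }, which already records that $\mu_k$ acts on reduced QPs satisfying \eqref{condition on mutation of GQP 1}).
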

We refer to the transformation $\mathcal{M} \mapsto \mu_{k}(\mathcal{M})$ in Corollary \ref{defn on mutation of M} as the \textbf{mutation at vertex $k$}. With some abuse of terminology, we will talk about mutations of general decorated representations (rather than their right-equivalence classes).

The following result naturally extends Theorem \ref{mutation of gqp is involution }.
\begin{thm}\label{mutation of rep is involution}
      The mutation $\widetilde{\mu}_k$ of general decorated representations is an involution.
\end{thm}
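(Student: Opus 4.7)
The plan is to adapt the classical Derksen--Weyman--Zelevinsky argument (their Proposition 10.13 in \cite{derksen2008quivers}) to the $H$-based setting, proceeding by direct computation and tracking the behavior of the triangle maps $\alpha_k, \beta_k, \gamma_k$ under iterated mutation. Starting with a general decorated representation $\mathcal{M} = (M, V)$, I would first unpack the $H_k$-module $\overline{M}_k$ of $\widetilde{\mu}_k(\mathcal{M})$ given by (\ref{mutation of Mk}) together with the maps $\overline{\alpha}_k$ and $\overline{\beta}_k$ from (\ref{defn of mutation of alpha and beta}), and then compute the new triangle map $\overline{\gamma}_k$ arising from the potential $\widetilde{\s}$. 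Since $\widetilde{\s} = [\s] + \Delta_k$ and the cyclic derivatives of $\Delta_k$ in the directions of the new arrows $a^\star, b^\star$ reconstruct $\beta_k$ and $\alpha_k$ respectively (see (\ref{parital [ba]})), the new map $\overline{\gamma}_k$ on $\overline{M}_{k,\out} = M_{k,\In}$ should reduce to essentially $-\beta_k \alpha_k$ (shifted by the action of $[\s]$), so that the roles of $\alpha_k, \beta_k, \gamma_k$ are permuted cyclically up to sign.

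The next step is to apply $\widetilde{\mu}_k$ a second time. Using the explicit shape of $\overline{\alpha}_k = (-\pi\rho, -\gamma_k, 0, 0)^T$ and $\overline{\beta}_k = (0, \iota, \iota\sigma, 0)$, one can compute $\ker \overline{\alpha}_k$, $\Img \overline{\alpha}_k$, $\ker \overline{\beta}_k$, $\Img \overline{\beta}_k$, $\ker \overline{\gamma}_k$, $\Img \overline{\gamma}_k$ in terms of the original triangle. A direct calculation analogous to \cite{derksen2008quivers} should yield $H_k$-isomorphisms
\begin{align*}
\ker \overline{\gamma}_k / \Img \overline{\beta}_k &\cong \Img \beta_k, \qquad
\Img \overline{\gamma}_k \cong \Img \gamma_k, \\
\ker \overline{\alpha}_k / \Img \overline{\gamma}_k &\cong M_k / (\ker \beta_k + \Img \alpha_k), \qquad
\overline{\overline{V}}_k \cong V_k,
\end{align*}
which reassemble into an $H_k$-module isomorphic to the original $M_k$ once the trivial $2$-cycle components created by $\widetilde{\mu}_k^2$ are split off according to Theorem~\ref{splitting thm}. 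Combined with Theorem~\ref{mutation of gqp is involution}, which already gives the required right-equivalence of $\mu_k^2(H,A,\s)$ with $(H,A,\s)$, this should produce a right-equivalence $(\varphi, \psi, \eta)$ between $\mu_k^2(\mathcal{M})$ and $\mathcal{M}$ in the sense of Definition \ref{defn of right-equivalence of rep}.

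The main obstacle is twofold. First, one must verify that all the splitting and gluing operations can be carried out in the category of $H_k$-modules rather than $K$-vector spaces: this is where the hypothesis that $\mathcal{M}$ is a general representation (hence locally free by Proposition \ref{general rep is locally free}) becomes essential, because it guarantees that the short exact sequences built from $\ker \alpha_k$, $\Img \gamma_k$, $\ker \beta_k \cap \Img \alpha_k$, etc., all split in the $H_k$-module category, and that the splitting data $\rho, \sigma$ from (\ref{splitting data rho})--(\ref{soplitting data sigma}) can be chosen $H_k$-linearly. Second, one must track the reduction step $\widetilde{\mu}_k(\mathcal{M}) \rightsquigarrow \mu_k(\mathcal{M}) = \widetilde{\mu}_k(\mathcal{M})_{\red}$ carefully: after the second mutation, the trivial components produced at each stage must be consistently identified and discarded so that the two applications of reduction commute with the double mutation. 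Proposition \ref{iso of splitting data} gives us the freedom to rechoose splitting data at the second stage to match those chosen at the first, which I expect will be the mechanism by which the iterated reductions align, but the bookkeeping required to verify this carefully is the most delicate part of the argument.
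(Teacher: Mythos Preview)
Your overall strategy is exactly the paper's: it simply says the proof parallels \cite[Theorem 10.13]{derksen2010quivers}, replacing $K$-linear maps by $H_k$-linear ones, and your identification of the key point --- that the general/locally free hypothesis is what makes all the relevant short exact sequences split as $H_k$-modules so that the splitting data $(\rho,\sigma)$ can be chosen $H_k$-linearly --- is precisely the adaptation needed.

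However, two of the four intermediate isomorphisms you wrote down are incorrect. From the identities in (\ref{aftermutation}) one has $\overline{\gamma}_k=\beta_k\alpha_k$, $\Img\overline{\beta}_k=\ker\alpha_k$, and $\ker\overline{\alpha}_k=\Img\beta_k$, so
\[
\frac{\ker\overline{\gamma}_k}{\Img\overline{\beta}_k}=\frac{\ker(\beta_k\alpha_k)}{\ker\alpha_k}\cong \ker\beta_k\cap\Img\alpha_k,
\qquad
\Img\overline{\gamma}_k=\Img(\beta_k\alpha_k),
\]
not $\Img\beta_k$ and $\Img\gamma_k$ as you wrote. Your third piece $\ker\overline{\alpha}_k/\Img\overline{\gamma}_k\cong M_k/(\ker\beta_k+\Img\alpha_k)$ is correct, and together with $\overline{V}_k=\ker\beta_k/(\ker\beta_k\cap\Img\alpha_k)$ the four summands of $\overline{\overline{M}}_k$ do reassemble into $M_k$ via the filtration $0\subset\ker\beta_k\cap\Img\alpha_k\subset\ker\beta_k\subset\ker\beta_k+\Img\alpha_k\subset M_k$ (using that $\beta_k$ induces $\Img\alpha_k/(\ker\beta_k\cap\Img\alpha_k)\cong\Img(\beta_k\alpha_k)$). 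With these corrections your plan goes through unchanged.
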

\begin{proof}
The proof parallels the argument in  \cite[Theorem 10.13]{derksen2010quivers}, using $H_k$-linear maps instead of linear maps.
\end{proof}

Note that there is an obvious way to define direct sums of decorated representations of a given $H$-based QP. Hence, we can talk about \textbf{indecomposable}  $H$-based QP-representations.  Clearly, the right-equivalence relation respects direct sums and indecomposability. It is also immediate from the definitions that any mutation $\mu_{k}$ of $H$-based QP-representations sends direct sums to direct sums. Combining this with Theorem \ref{mutation of rep is involution}, we obtain the following corollary.

\begin{cor}
Any mutation $\mu_{k}$ is an involution on the set of right-equivalence classes of indecomposable decorated representations of reduced and locally free $H$-based QPs satisfying $(\ref{condition on mutation of GQP 1})$.
\end{cor}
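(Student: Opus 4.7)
The plan is to deduce this corollary from Theorem \ref{mutation of rep is involution}, which already states that $\mu_k^2$ is right-equivalent to the identity on general decorated representations. What remains is to show that mutation preserves indecomposability, so that the involution on all general decorated representations restricts to an involution on the indecomposables.

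First I would establish that $\mu_k$ commutes with direct sums: for general decorated representations $\mathcal{M},\mathcal{M}'$ of the same reduced and locally free $H$-based QP, one has $\mu_k(\mathcal{M}\oplus\mathcal{M}')$ right-equivalent to $\mu_k(\mathcal{M})\oplus \mu_k(\mathcal{M}')$. This is visible from the construction \eqref{mutation of Mk 1}--\eqref{defn of mutation of alpha and beta}: the triangle of $H_k$-linear maps $\alpha_k,\beta_k,\gamma_k$ attached to $\mathcal{M}\oplus\mathcal{M}'$ is the direct sum of the corresponding triangles for $\mathcal{M}$ and $\mathcal{M}'$, so that the subquotients $\ker\beta_k/\operatorname{Im}\alpha_k$, $\operatorname{Im}\gamma_k$, $\ker\alpha_k/\operatorname{Im}\gamma_k$ and $\ker\beta_k/(\ker\beta_k\cap\operatorname{Im}\alpha_k)$ defining $\overline{M}_k$ and $\overline{V}_k$ decompose compatibly. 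Splitting data $\rho,\sigma$ may be chosen block-diagonally, and Proposition \ref{iso of splitting data} guarantees independence of that choice. Passage to the reduced part commutes with direct sums by applying Theorem \ref{splitting thm} to each summand and invoking the uniqueness clause.

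With compatibility with $\oplus$ in hand, the indecomposability assertion follows formally. Suppose $\mathcal{M}$ is a general indecomposable decorated representation and $\mu_k(\mathcal{M})\cong \mathcal{N}_1\oplus\mathcal{N}_2$. Applying $\mu_k$ once more and using both Theorem \ref{mutation of rep is involution} and compatibility with $\oplus$,
\[
\mathcal{M}\ \cong\ \mu_k^2(\mathcal{M})\ \cong\ \mu_k(\mathcal{N}_1)\oplus\mu_k(\mathcal{N}_2).
\]
If both $\mathcal{N}_i$ were nonzero then each $\mu_k(\mathcal{N}_i)$ would be nonzero as well (since $\mu_k^2(\mathcal{N}_i)\cong\mathcal{N}_i$ by Theorem \ref{mutation of rep is involution}), contradicting the indecomposability of $\mathcal{M}$. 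Hence $\mu_k(\mathcal{M})$ is itself indecomposable, and combining this with Theorem \ref{mutation of rep is involution} yields the claim.

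The main obstacle is the first step: verifying that the passage $\widetilde{\mu}_k(\mathcal{M})\mapsto \widetilde{\mu}_k(\mathcal{M})_{\mathrm{red}}$ does not mix the two summands. This relies on the uniqueness-up-to-right-equivalence part of the Splitting Theorem \ref{splitting thm} to identify the trivial-plus-reduced decomposition of a direct sum with the direct sum of the individual decompositions. Once that compatibility is in place, the fact that $\widetilde{\mu}_k$ itself distributes over $\oplus$ is immediate from the explicit formulas for $\overline{M}_k$, $\overline{V}_k$, $\overline{\alpha}_k$, and $\overline{\beta}_k$, and the rest of the proof is the short cancellation argument above.
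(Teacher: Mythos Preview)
Your proposal is correct and follows essentially the same approach as the paper: the paper simply notes that mutation sends direct sums to direct sums (``immediate from the definitions'') and combines this with Theorem \ref{mutation of rep is involution}, which is exactly your argument with the cancellation step spelled out more explicitly. The extra care you take about the reduction step respecting direct sums is a reasonable elaboration of what the paper leaves implicit.
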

\begin{rmk}
     Recall that $\overline{\alpha}_k$ and $\overline{\beta}_k$ are given by $(\ref{defn of mutation of alpha and beta})$ for a general decorated representation $\mathcal{M}=(M,V)$. As for $\overline{\gamma}_k$, by applying the definition $(\ref{defn of gamma})$ to the potential $\widetilde{\s}$ given by $(\ref{potiential})$, we see that 
\begin{align}\label{bar gamma=beta alpha}
    \overline{\gamma}_k=\beta_k\alpha_k.
\end{align}
As a direct consequence of the definition, we conclude that 
\begin{equation}\label{aftermutation}
    \begin{aligned}
    \setlength{\arraycolsep}{8pt}
    \begin{array}{ll}
        \ker \overline{\alpha}_k=\Img \beta_k,& \Img \overline{\alpha}_k= \frac{\ker \gamma_k}{\Img \beta_k}\oplus \Img\gamma_k\oplus \{0\}\oplus \{0\},\\
    \ker\overline{\beta}_k=\frac{\ker \gamma_k}{\Img \beta_k}\oplus \{0\}\oplus \{0\} \oplus V_k,& \Img\overline{\beta}_k=\ker\alpha,\\
\ker\overline{\gamma}_k=\ker(\beta_k\alpha_k), & \Img\overline{\gamma}_k=\Img(\beta_k\alpha_k).
    \end{array} 
\end{aligned}
\end{equation}
\end{rmk}

\section{Mutation of Presentations}\label{section: mutation of presentatoins}
In \cite{fei2024crystalstructureuppercluster}, the mutation of presentations for classical quivers with potentials was introduced, where it was shown to coincide with the mutation of decorated representations. In this section, we generalize this construction to the setting of $H$-based quivers with potentials.

Let $(Q,\mathbf{d},\s)$ be a locally free $H$-based quiver with potential, and $\mathcal{V}$ be a decorated representation of $(Q,\mathbf{d},\s)$. By the \textbf{extension} of $(Q,\mathbf{d},\s)$ by $\mathcal{V}$, we mean the following construction. 

We start with $(Q,\mathbf{d},\mathcal{S})$ and a new vertex $v$. Take the projective presentation $d_{\mathcal{V}}: P\left(\beta_{-}\right) \rightarrow$ $P\left(\beta_{+}\right)$ corresponding to $\mathcal{V}$. We assume that $\mathcal{V}$ is in general position and so $P(\beta_-)$ and $P(\beta_+)$ share no common summands. Then we draw $\beta_{+}(j)$ arrows from $v$ to $j$ and $\beta_{-}(i)$ arrows from $i$ to $v$ and set $d_v=1$. We view the map $d_{\mathcal{V}}$ as a matrix with entries a linear combination of paths. For each entry of $c:=d_{\mathcal{V}}(i, j): P_{i} \rightarrow P_{j}$, we add the potential $acb$ to the original potential $\mathcal{S}$ where $a$ is the added arrow corresponding to $P_{i}$ and $b$ is the added arrow corresponding to $P_{j}$. We denote the resulting  $H$-based quiver with potential by $(Q[\mathcal{V}], \mathbf{d}[\mathcal{V}],\mathcal{S}[\mathcal{V}])$ or in short $(Q,\mathbf{d},\mathcal{S})[\mathcal{V}]$ or $(Q,\mathbf{d},\mathcal{S})\left[d_{\mathcal{V}}\right]$, and abbreviate its Jacobian algebra to $\mathcal{P}[\mathcal{V}]$. If we restrict $(Q,\mathbf{d}, \mathcal{S})[\mathcal{V}]$ on $Q$ in the sense of \cite{derksen2008quivers}, then we get the original $H$-based QP $(Q,\mathbf{d}, \mathcal{S})$ back.

There is an obvious dual construction $(Q,\mathbf{d}, \mathcal{S})\left[\mathcal{V}^{*}\right]$ from the injective presentation $\check{d}_{\mathcal{V}}: I_{+} \rightarrow I_{-}$ corresponding to $\mathcal{V}$. It is easy to see that $(Q,\mathbf{d}, \mathcal{S})[\mathcal{V}]=(Q,\mathbf{d}, \mathcal{S})\left[\tau \mathcal{V}^{*}\right]$.

 Conversely, given a vertex $v \in Q_{0}$ and an $H$-based quiver with potentials $(Q,\mathbf{d}, \mathcal{S})$ satisfying \eqref{condition on mutation of GQP 1} at $v$, let $(Q,\mathbf{d}, \mathcal{S})_{\hat{v}}$ be the restriction of $(Q,\mathbf{d}, \mathcal{S})$ to the full subquiver of $Q_{0} \backslash\{v\}$.
\begin{defn}
    We call a vertex $v$ \textbf{simple} in $(Q,\mathbf{d}, \mathcal{S})$ if $d_v=1$ and for each pair of arrows $a:u\to v$ and 
$b:v\to w$, the partial derivative $\partial_{[ba]}[\s]$ contains no arrows incident to $v$. 
\end{defn} 
 Note that the newly constructed vertex $v$ in the extended quiver $(Q,\mathbf{d}, \mathcal{S})[\mathcal{V}]$ is simple. Conversely, if $v$ is simple in $(Q,\mathbf{d}, \s)$, then we can obtain a presentation $d_{v}$ of $(Q,\mathbf{d}, \mathcal{S})_{\hat{v}}$ as follows:
 \begin{align}
     d_v:\bigoplus_{a:i\to v\in Q_1^\circ}P_i\xrightarrow{\partial_{[ba]}[\s]} \bigoplus_{b:v\to j\in Q_1^\circ}P_j.
 \end{align}
Clearly we have that $(Q,\mathbf{d}, \mathcal{S})_{\hat{v}}[d_v]=(Q,\mathbf{d}, \mathcal{S})$. We also define $\check{d}_v=\nu(d_v)$.

To make sense of  Definition \ref{defn of mutation of pre}, we observe that the restriction of $\mu_{k}((Q,\mathbf{d}, \mathcal{S})[\mathcal{V}])$ to $\mu_{k}(Q)$ is $\mu_{k}(Q,\mathbf{d}, \mathcal{S})$, and the following lemma holds.
\begin{lem}[{\cite[Lemma 2.12]{fei2024crystalstructureuppercluster}}]
    For any mutation $\mu_k$ away from $v$, if $v$ is simple in $(Q,\mathbf{d},\s)$ (satisfying \eqref{condition on mutation of GQP 1}), then $v$ is simple in $\mu_k(Q,\mathbf{d},\s)$ as well.
\end{lem}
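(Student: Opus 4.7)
The plan is to exploit the correspondence between simple vertices and extensions set up in the opening paragraphs of this section. Since $v$ is simple in $(Q,\mathbf{d},\mathcal{S})$, we may identify the triple with an extension $(Q,\mathbf{d},\mathcal{S}) = (Q,\mathbf{d},\mathcal{S})_{\hat v}[d_v]$, where $d_v$ is the presentation of $(Q,\mathbf{d},\mathcal{S})_{\hat v}$ whose $(a,b)$-entry is $\partial_{[ba]}[\mathcal{S}]$ and therefore lies in the subalgebra attached to the vertex set $Q_{0}\setminus\{v\}$. I will then argue that after mutation at $k\neq v$, the vertex $v$ remains the extending vertex of an analogous extension, which in turn forces $v$ to be simple.

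First, the condition $d_v=1$ is preserved under $\mu_k$, since the tuple $\mathbf{d}$ is unchanged at vertices other than $k$. For the derivative condition, I apply the observation cited just above, namely that the restriction of $\mu_k((Q,\mathbf{d},\mathcal{S})[\mathcal{V}])$ to $\mu_k(Q)$ equals $\mu_k(Q,\mathbf{d},\mathcal{S})$. Taking the base QP to be $(Q,\mathbf{d},\mathcal{S})_{\hat v}$ and $\mathcal{V}$ to correspond to $d_v$, this observation shows that $\mu_k(Q,\mathbf{d},\mathcal{S})$, restricted to the subquiver on $Q_0\setminus\{v\}$ after mutation, is $\mu_k((Q,\mathbf{d},\mathcal{S})_{\hat v})$. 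Equivalently, $\mu_k(Q,\mathbf{d},\mathcal{S})$ is itself an extension $\mu_k((Q,\mathbf{d},\mathcal{S})_{\hat v})[d_v']$ for some presentation $d_v'$ (to be identified with the forthcoming $\mu_k(d_v)$).

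Next, I would invoke the general principle that in any extension $(\text{base})[d']$ the extending vertex $v$ is automatically simple. By construction, the arrows at $v$ are exactly those added in the extension (one family indexed by the domain summands of $d'$, another by the codomain summands), and the only new cyclic paths through $v$ have the shape $a\cdot c\cdot b$ where $c$ is an entry of $d'$ living in the base algebra and hence carrying no arrows incident to $v$. Applying $\partial_{[ba]}$ to such a contribution returns precisely $c$, so the required derivative condition holds. Therefore $v$ is simple in $\mu_k(Q,\mathbf{d},\mathcal{S})$.

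The main obstacle is the cited observation itself, i.e.\ the commutation of mutation at $k$ with extension at $v\neq k$. Verifying it means tracing the three-step mutation procedure against the extension structure at $v$: the crucial point is that the entries of $d_v$ live entirely in $(Q,\mathbf{d},\mathcal{S})_{\hat v}$, so composite-arrow creation and arrow reversal at $k$ act on the $v$-portion of the potential only through these entries and through the fresh arrows $a,b$ attached at $v$. A separate check is needed to ensure that the final reduction step does not spuriously introduce or cancel $2$-cycles through $v$; this is the technically delicate part, and it rests on the general-position assumption on $d_v$ (in particular, the absence of common summands between its domain and codomain) together with the explicit form $\widetilde{\mathcal{S}}=[\mathcal{S}]+\Delta_k$ of the unreduced mutated potential.
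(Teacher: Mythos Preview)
The paper does not prove this lemma; it is quoted directly from \cite[Lemma~2.12]{fei2024crystalstructureuppercluster} without argument, so there is no ``paper's own proof'' to compare against beyond an implicit ``the same argument as in the cited reference works in the $H$-based setting.''

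Regarding your proposal itself: the high-level strategy of passing through the extension/simple-vertex dictionary is natural, but note that your step~3 --- asserting that $\mu_k$ applied to an extension is again an extension --- is essentially equivalent to the lemma you are trying to prove. The ``observation'' you invoke only tells you that the \emph{restriction} of $\mu_k\big((Q,\mathbf{d},\mathcal{S})_{\hat v}[d_v]\big)$ to $Q_0\setminus\{v\}$ agrees with $\mu_k\big((Q,\mathbf{d},\mathcal{S})_{\hat v}\big)$; it says nothing about the shape of the potential near $v$ after mutation, which is exactly the content of simplicity. So the first three paragraphs do not actually reduce the problem; they repackage it.

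The real argument is the direct verification you sketch in your final paragraph: trace the arrows incident to $v$ and the terms of $\widetilde{\mathcal{S}}=[\mathcal{S}]+\Delta_k$ through $\widetilde{\mu}_k$, check that every $\partial_{[\beta\alpha]}\widetilde{\mathcal{S}}$ for $\alpha,\beta$ incident to $v$ lands in $\overline{T_H(\widetilde{A}_{\hat v,\hat v})}$ (here the hypothesis that the entries of $d_v$ lie in the $\hat v$-subalgebra is used), and then argue that the reduction step does not disturb this --- the latter because any $2$-cycle through $v$ created in Step~1 is paired in $\widetilde{\mathcal{S}}^{\circ(2)}$ with a coefficient coming from the degree-one part of some $c_{ab}$, and the Splitting Theorem's change of arrows keeps the resulting reduced potential terms at $v$ of the form $a'c'b'$ with $c'$ in the base. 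This is a routine but genuine case analysis; your sketch identifies the right ingredients, but you should be aware that the earlier ``observation'' does not do this work for you.
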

\begin{defn}\label{defn of mutation of pre}
 Given a presentation $d_{\mathcal{V}}$ of a locally free $H$-based QP $(Q,\mathbf{d}, \mathcal{S})$ that satisfies \eqref{condition on mutation of GQP 1}, we define $\mu_{k}(d_{\mathcal{V}})$ at vertex $k$ as the presentation $d_{v}$ of $\mu_{k}(Q,\mathbf{d}, \mathcal{S})$ obtained from $\mu_{k}((Q,\mathbf{d}, \mathcal{S})[\mathcal{V}])$ via the above construction.
\end{defn}
\begin{lem}\label{mutation of pre and rep}
    The mutation of general presentations is compatible with the mutation of representation, that is,
    \begin{align*}
    \mu_k(d_{\mathcal{V}})=d_{\mu_k(\mathcal{V})},\quad \mu_k(\check{d}_{\mathcal{V}})=\check{d}_{\mu(\mathcal{V})}.
    \end{align*} 
\end{lem}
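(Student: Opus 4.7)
My plan is to prove $\mu_k(d_{\mathcal{V}}) = d_{\mu_k(\mathcal{V})}$ by directly analyzing the premutation $\widetilde{\mu}_k$ of the extended QP $(Q,\mathbf{d},\mathcal{S})[\mathcal{V}]$ at the simple vertex $v$; the injective identity $\mu_k(\check{d}_{\mathcal{V}}) = \check{d}_{\mu_k(\mathcal{V})}$ follows by a dual argument applied to $(Q,\mathbf{d},\mathcal{S})[\mathcal{V}^{*}]$. Recall that $d_{\mathcal{V}}\colon P(\beta_-)\to P(\beta_+)$ is encoded in $(Q,\mathbf{d},\mathcal{S})[\mathcal{V}]$ by $\beta_-(i)$ arrows $i\to v$, $\beta_+(j)$ arrows $v\to j$, and potential terms $a\cdot c\cdot b$ recovering the matrix entries of $d_{\mathcal{V}}$. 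Since $\mathcal{V}$ is in general position, $d_{\mathcal{V}}$ is $g$-coherent by Lemma \ref{general is g-coherent}, so $\operatorname{supp}(\beta_-)\cap \operatorname{supp}(\beta_+)=\emptyset$ and the extended quiver is 2-acyclic at $v$, making mutation at $k$ well-defined on $(Q,\mathbf{d},\mathcal{S})[\mathcal{V}]$.

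The plan proceeds in two main steps. First I count arrows incident to $v$ in $\widetilde{\mu}_k((Q,\mathbf{d},\mathcal{S})[\mathcal{V}])$ using formula (\ref{mutation of A}): the $\beta_-(k)$ arrows $k\to v$ and $\beta_+(k)$ arrows $v\to k$ get reversed, and $d_k[-b_{i,k}]_+\beta_-(k)$ composite arrows $i\to v$ together with $d_k[b_{i,k}]_+\beta_+(k)$ composite arrows $v\to i$ are created for $i\neq k,v$. After 2-cycle cancellation at $v$, the resulting arrow count is precisely $([g'(i)]_+, [-g'(i)]_+)$ where $g'$ is the mutated weight predicted by Theorem \ref{mutation rule of g}, which equals $g_{\mu_k(\mathcal{V})}$. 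Second, using the Cyclic Chain Rule (Lemma \ref{chain rule}) and the formula (\ref{mutation of s}) for $\widetilde{\mathcal{S}}[\mathcal{V}]=[\mathcal{S}[\mathcal{V}]]+\Delta_k$, I compute the partial derivatives $\partial_{[ba]}\widetilde{\mathcal{S}}[\mathcal{V}]$ at the new 2-paths through $v$ and identify them with the $H_k$-linear operators $\alpha_k, \beta_k, \gamma_k$ of (\ref{defn of alpha})--(\ref{defn of gamma}) attached to $\mathcal{V}$. Applying the Splitting Theorem (Theorem \ref{splitting thm}) to pass from $\widetilde{\mu}_k((Q,\mathbf{d},\mathcal{S})[\mathcal{V}])$ to its reduced part, the surviving entries at $v$ match the blocks of $(\overline{\alpha}_k, \overline{\beta}_k)$ in (\ref{defn of mutation of alpha and beta}), which present the four summands of $\overline{M}_k$ in (\ref{mutation of Mk}).

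The main obstacle is the second step: matching the trivial part absorbed by the Splitting Theorem with the summand $\Img\gamma_k$ sitting inside $\overline{M}_k$. Concretely, one must choose a unitriangular change of variables in the proof of Theorem \ref{splitting thm} whose restriction to arrows of $Q\setminus\{v\}$ is the identity (so that the reduction affects only the arrows at $v$ and leaves $\mu_k(Q,\mathbf{d},\mathcal{S})$ unaltered), and whose action on the new arrows at $v$ implements precisely the splittings $\rho$ and $\sigma$ of (\ref{splitting data rho})--(\ref{soplitting data sigma}) used to build $\mu_k(\mathcal{V})$. By Proposition \ref{iso of splitting data} the right-equivalence class of $\mu_k(\mathcal{V})$ is independent of the choice of $\rho,\sigma$, so any compatible choice will suffice; once this is done, the surviving derivative entries at $v$ in the reduced mutated extended QP are literally the matrix blocks of (\ref{defn of mutation of alpha and beta}), yielding the equality $\mu_k(d_{\mathcal{V}})=d_{\mu_k(\mathcal{V})}$ as presentations of $\mu_k(Q,\mathbf{d},\mathcal{S})$.
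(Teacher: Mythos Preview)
Your Step 2 contains a category error that leaves a genuine gap. The partial derivatives $\partial_{[ba]}\widetilde{\mathcal{S}}[\mathcal{V}]$ are elements of the mutated Jacobian algebra $\widetilde{\mathcal{P}}$---they form the entries of the matrix $\mu_k(d_{\mathcal{V}})$ between projectives over $\widetilde{\mathcal{P}}$. By contrast, $\alpha_k,\beta_k,\gamma_k$ and the blocks of $(\overline{\alpha}_k,\overline{\beta}_k)$ in (\ref{defn of mutation of alpha and beta}) are $H_k$-linear maps on the vector spaces $M_{k,\In},M_k,M_{k,\out}$ of the fixed representation $\mathcal{V}$, and they involve the splitting data $\rho,\sigma$ which have no path-algebra analogue. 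These objects cannot be ``literally'' identified, and even if the matrix entries formally resemble the representation-level formulas, this does not show that the \emph{cokernel} of $\mu_k(d_{\mathcal{V}})$ over $\overline{\mathcal{P}}$ is the module $\mu_k(\mathcal{V})$. (Also, your appeal to Theorem \ref{mutation rule of g} in Step 1 is circular: that theorem is deduced in the paper as a consequence of the present lemma.)

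The paper closes this gap by a different route. After writing down $\widetilde{\mu}_k(d_{\mathcal{V}})$ explicitly, it reduces to $\widetilde{\mu}_k(d_{\mathcal{V}})=d_{\widetilde{\mu}_k(\mathcal{V})}$ and then first establishes, as $\widetilde{\mathcal{P}}$-modules, the isomorphisms $\widetilde{\mu}_k(P_i)\cong\widetilde{P}_i$ for $i\neq k$ (via the algebra isomorphism $\mathcal{P}_{\hat{k},\hat{k}}\cong\widetilde{\mathcal{P}}_{\hat{k},\hat{k}}$ of Proposition \ref{jacobian algebra mutation invariants}) together with an explicit presentation $\widetilde{P}_k\to\bigoplus_{b:k\to j}d_k\widetilde{P}_j$ of $\widetilde{\mu}_k(P_k)$. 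Transporting $\widetilde{\mu}_k(d_{\mathcal{V}})$ along these isomorphisms rewrites it as a map $\widetilde{\mu}_k(P(\beta_-))\to\widetilde{\mu}_k(P(\beta_+))$, whose cokernel is then computed component-wise from the triangle (\ref{triangle}) and identified with $\widetilde{\mu}_k(\mathcal{V})$ case by case according to whether $\beta_\pm(k)$ vanish. This identification of mutated projectives with the projectives of the mutated algebra is the missing ingredient in your outline.
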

\begin{proof}
    Let $\widetilde{\mu}_k(d_{\mathcal{V}})$ be the presentation of $(\widetilde{Q},\mathbf{d},\widetilde{\s})=\widetilde{\mu}_k(Q,\mathbf{d}, \mathcal{S})$ obtained from $(\widetilde{Q[\mathcal{V}]},\mathbf{d}, \widetilde{\s[\mathcal{V}]})=\widetilde{\mu}_k((Q,\mathbf{d}, \mathcal{S})[\mathcal{V}])$ via the above construction, that is, 
    \begin{align*}
\widetilde{\mu}_k(d_{\mathcal{V}}):\bigoplus_{\substack{a:i\to v\in \widetilde{Q[\mathcal{V}]}^\circ_1}}\widetilde{P}_i\xrightarrow{\partial_{[ba]}(\widetilde{\mu}_k(\s[\mathcal{V}]))} \bigoplus_{\substack{b:v\to j\in \widetilde{Q[\mathcal{V}]}^\circ_1}}\widetilde{P}_j,
    \end{align*}
    where $\widetilde{P}_i$ is the projective representation in $\widetilde{\mu}_k(Q,\mathbf{d}, \mathcal{S})$ corresponding to the vertex $i$. Let $\phi:\overline{T_H(\widetilde{A})}\to \overline{T_H(\overline{A}\oplus \widetilde{A}_{\mathrm{triv}}})$ be the right-equivalence of $H$-based QPs $(H,\widetilde{A},\widetilde{\s})$ and $(H,\overline{A},\overline{\s})\oplus (H,\widetilde{A}_{\mathrm{triv}},\widetilde{\s}^{\circ(2)})$. Then, the following commutative diagram follows from  Proposition \ref{trival part of jacobian}:
$$\begin{tikzcd}[column sep=huge]
 \bigoplus_{a:i\to v\in \widetilde{Q[\mathcal{V}]}^\circ_1}\widetilde{P}_i\arrow[r, "{\partial_{[ba]}(\widetilde{\mu}_k(\s[\mathcal{V}]))}"] \arrow[d,"\simeq" left,"\phi"] &  \bigoplus_{\substack{b:v\to j\in \widetilde{Q[\mathcal{V}]}^\circ_1}} \widetilde{P}_j  \arrow[d,"\simeq" left, "\phi"]   \\
\bigoplus_{\substack{a:i\to v\in \widetilde{Q[\mathcal{V}]}^\circ_1}}\overline{P}_i\arrow[r, "{\overline{d}}"]  &  \bigoplus_{\substack{b:v\to j\in \widetilde{Q[\mathcal{V}]}^\circ_1}}\overline{P}_j
\end{tikzcd}
$$
 where $\overline{d}=(\overline{d}_{b,a})=\Big(\phi\big(\partial_{[ba]}(\widetilde{\mu}_k(\s[\mathcal{V}]))\big)\Big)$. By definition, we have that $$\widetilde{\mu}_k(\s[\mathcal{V}])=[\s]+\sum_{\substack{a':i\to v\in Q[\mathcal{V}]^\circ_1\\b':v\to j\in Q[\mathcal{V}]_1^\circ}}[c_{b',a'}b'a']+\sum_{\substack{h:i\to k\in Q[\mathcal{V}]_1^\circ\\f:k\to j\in Q[\mathcal{V}]_1^\circ}}h^\star\varepsilon_k^{d_k-l-1}f^\star[f\varepsilon_k^lh].$$ Without loss of generality, we may assume that $\beta_-(k)=0$. Then,
 $$\overline{d}_{b,a}=\begin{cases}
     \phi(c_{b,a}) \quad &\text{if $a,b\in Q[\mathcal{V}]^\circ_1$ are not incident to $k$,}\\
     \phi\big(c_{b',a}(f\varepsilon_k^l)^{-1}\big) \quad& \text{if $b=[f\varepsilon_k^lb']$ for some $f:k\to j\in Q_1^\circ$ and $a\in Q[\mathcal{V}]_1^\circ$},\\
     \varepsilon_k^{d_k-1-l}f^\star \quad& \text{if $b=[f\varepsilon_k^lb']$ for some $f:k\to j\in Q_1^\circ$ and $a=(b')^\star$},\\
     0\quad & \text{otherwise}.
 \end{cases}$$
Recalling the reduction process, we observe that $\overline{d}$ is homotopy equivalent to $\mu_k(d_{\mathcal{V}})$. In view of Definition \ref{reduce of M} and \eqref{defn of mutation of M},it suffices to show that $\widetilde{\mu}_k(d_{\mathcal{V}})=d_{\widetilde{\mu}_k(\mathcal{V})}$.  We proceed in several steps.

    \textbf{Step 1} We start with the mutation of projective representations. 
    
    (1) We first consider the direct sum of all indecomposable projective modules except $P_k$. Let $P_{\hat{k}}=\oplus_{i\neq k}P_i$, then $\widetilde{\mu}_k(d_{P_{\hat{k}}})$ is $0\xrightarrow{}\widetilde{P}_{\hat{k}}$. Thus, we need to show that $\widetilde{\mu}_k(P_{\hat{k}})$ is right-equivalent to $\widetilde{P}_{\hat{k}}$. It suffices to construct an $H$-module isomorphism $\psi=(\psi_i):\widetilde{\mu}_k(P_{\hat{k}})\to \widetilde{P}_{\hat{k}}$ such that 
     \begin{align}\label{construct of iso}
        \text{$\psi \circ \widetilde{\mu}_k(P_{\hat{k}})(u)=\widetilde{P}_{\hat{k}}(u)\circ \psi$ for all $u\in \widetilde{Q}^\circ_1$}.
    \end{align}
   since they have no negative parts. In view of Proposition \ref{jacobian algebra mutation invariants}, there is an algebra isomorphism $\varphi:u\to [u]$ between $\mathcal{P}_{\hat{k},\hat{k}}$ and $\widetilde{\mathcal{P}}_{\hat{k},\hat{k}}$. So $\varphi$ induces an isomorphism of $H$-module $\widetilde{\mu}_k(P_{\hat{k}})_{\hat{k}}\to \widetilde{P}_{\hat{k},\hat{k}}$. As for the $H_k$-module $\widetilde{\mu}_k(P_{\hat{k}})_{k}$, it is given as in (\ref{mutation of Mk}), with the maps $\alpha_k,\beta_k$ and $\gamma_k$ replaced by $\overline{\alpha}_k,\overline{\beta}_k$ and $\overline{\gamma}_k$, respectively. Recalling the identifications of $\Coker \alpha_k$ and $\frac{\ker \alpha_k}{\Img\gamma_k}$ in Corollary \ref{g-vector of M}, we have that $\Coker \alpha_k=\frac{\ker \alpha_k}{\Img\gamma_k}=0$. It follows from \eqref{aftermutation} that
    \begin{align}\label{mutation of P_hat k}
    \widetilde{\mu}_k(P_{\hat{k}})_{k}=\frac{\ker \gamma_k}{\Img \beta_k\alpha_k}\oplus\Img \gamma_k \oplus\{0\} \oplus \{0\}=\Img \overline{\alpha}_k.
    \end{align} 
     For the maps $\widetilde{\alpha}_k,\widetilde{\beta}_k$ and $\widetilde{\gamma}_k$ of  $\widetilde{P}_{\hat{k}}$, we have $\Coker \widetilde{\alpha}_k=0$ and $\ker\widetilde{\alpha}_k=\Img \widetilde{\gamma}_k$. This yields the following diagram with exact rows, where the leftmost block commutes and all vertical maps except the dashed one are isomorphisms. 
     $$\begin{tikzcd}
 \widetilde{\mu}_k(P_{\hat{k}})_{k,\In}\arrow[r, "\overline{\gamma}_k"] \arrow[d,"\varphi"] &  \widetilde{\mu}_k(P_{\hat{k}})_{k,out} \arrow[r,two heads, "\overline{\alpha}_k"] \arrow[d, "\varphi"] &  \widetilde{\mu}_k(P_{\hat{k}})_k\arrow[d, dashed]  \\
(\widetilde{P}_{\hat{k}})_{k,\In} \arrow[r, "\widetilde{\gamma}_k"]                        & (\widetilde{P}_{\hat{k}})_{k,out}\arrow[r,two heads,"\widetilde{\alpha}_k"]                            & (\widetilde{P}_{\hat{k}})_{k}                              
\end{tikzcd}$$
Hence $\varphi$ induces an $H_k$-isomorphism $\psi_k:\widetilde{\mu}_k(P_{\hat{k}})_k\to (\widetilde{P}_{\hat{k}})_{k}$ such that $\widetilde{\alpha}_k\circ\varphi=\psi_k\circ \overline{\alpha}_k$.

 Now we get an isomorphism of $H$-modules $\psi=(\psi_i): \widetilde{\mu}_k(P_{\hat{k}})\to \widetilde{P}_{\hat{k}}$,  where $\psi_i=\varphi$ for $i\neq k$ and $\psi_k$ is constructed as above. It remains to check that $\varphi\circ\widetilde{\beta}_k=\overline{\alpha}_k\circ \psi_k$. 

 According to \eqref{parital [ba]} and \eqref{bar gamma=beta alpha}, we see that
 \begin{align*}
     \widetilde{\gamma}_k=-\varphi(\beta_k\alpha_k)=\varphi(\overline{\gamma}_k),\quad \widetilde{\beta}_k\widetilde{\alpha}_k=-\varphi(\gamma_k)=\varphi(\overline{\beta}_k\overline{\alpha}_k). 
 \end{align*}
Thus, $\varphi\circ\overline{\beta}_k\overline{\alpha}_k=\widetilde{\beta}_k\widetilde{\alpha}_k\circ\varphi=\widetilde{\beta}_k\circ\psi_k\circ\overline{\alpha}_k,$ which implies $\varphi\circ\widetilde{\beta}_k=\overline{\alpha}_k\circ \psi_k$ since $\overline{\alpha}_k$ is surjective.


  (2) For the projective representation $P_k$, we see that $\widetilde{\mu}_k(d_{P_k})$ equals to 
   \begin{align*}
 \widetilde{P}_k\xrightarrow{f_{\widetilde{\alpha}_k}}\bigoplus_{b:k\to j\in Q^\circ_1}d_k\widetilde{P}_j,
   \end{align*}
   where $f_{\widetilde{\alpha}_k}=(b^*,\cdots,\varepsilon_k^{d_k-1}b^*)_b^{T}$.
Consider the representation $M$ of $(Q,\mathbf{d},\s)$ whose presentation is \begin{align*}
    P_k\xrightarrow{f_{\alpha_k}}\bigoplus_{a:i\to k\in Q^\circ_1}d_kP_i
\end{align*}
where $f_{\alpha_k}=(a,\cdots,\varepsilon_k^{d_k-1}a)_a^{T}$. Note that $M$ is in general position and so the mutation of $M$ is well-defined.
To show $d_{\widetilde{\mu}_k(P_k)}=\widetilde{\mu}_k(d_{P_k})$, it is equivalent to show that $\widetilde{M}:=\widetilde{\mu}_k(M)$ is right-equivalent to $\widetilde{P}_k$ by Proposition \ref{mutation of rep is involution}. We proceed by constructing an explicit $H$-module isomorphism $\psi=(\psi_i)$ between  $\widetilde{M}$ and $\widetilde{P}_k$ that satisfies the condition (\ref{construct of iso}). For a representation $N$, we use the notation  $N_{\hat{k}}=\oplus_{i\neq k} N_i$. 
 We first consider the map $$f_{\beta_k}=(b\varepsilon_k^{d_k-1},\cdots,b)_b^{T}:\bigoplus_{b:k\to j\in Q^\circ_1}d_k(P_j)_{\hat{k}}\to (P_k)_{\hat{k}}.$$  Taking the dual of $f_{\beta_k}$, we get an $H_k$-linear map $\beta_k: (I_{\hat{k}})_{k}\to (I_{\hat{k}})_{k,\out}$ of the injective representation $I_{\hat{k}}$.  Recalling the identification of $\ker{\beta_k}$, we have $\ker{\beta_k}=0$, and so $f_{\beta_k}$ is surjective. Consequently, we obtain 
 \begin{align}\label{eq in mutation of pre}
     M_{\hat{k}}=\frac{\bigoplus_{a:i\to k\in Q^\circ_1}d_k(P_i)_{\hat{k}}}{\Img f_{\alpha_k}|_{(P_k)_{\hat{k}}}}=\frac{\bigoplus_{a:i\to k\in Q^\circ_1}d_k(P_i)_{\hat{k}}}{\Img f_{\alpha_k} f_{\beta_k}|_{\oplus d_k(P_j)_{\hat{k}}}}=\frac{\bigoplus_{a:i\to k\in Q^\circ_1}d_k(P_i)_{\hat{k}}}{\Img f_{\beta_k\alpha_k}|_{\oplus d_k(P_j)_{\hat{k}}}}.
 \end{align}

Next we consider the following exact sequence
\begin{align}\label{consturct of iso 2.0}
    \bigoplus_{b:k\to j\in Q^\circ_1}d_k(\widetilde{P}_j)_{\hat{k}}\xrightarrow{f_{\widetilde{\gamma}_k}}\bigoplus_{a:i\to k\in Q^\circ_1}d_k(\widetilde{P}_i)_{\hat{k}}\xrightarrow{f_{\widetilde{\beta}_k}}(\widetilde{P}_k)_{\hat{k}}\xrightarrow{}0,
\end{align}
where $f_{\widetilde{\beta}_k}=(a^*\varepsilon_k^{d_k-1},\cdots,a^*)$ and $f_{\widetilde{\gamma}_k}=\sideset{_{a^*\varepsilon_k^m}}{_{\varepsilon^n_k b^*}}{\mathop{\left([n-m]_+\partial_{[a^*\varepsilon_k^{(n-m)}b^*]}\widetilde{\s}\right)}}$.
(Take the dual of (\ref{consturct of iso 2.0}), use the facts that $(P_i)_{j}^*=(I_j)_{ i}$, $\ker\beta_k=0$,  and $\ker\gamma_k=\Img \beta_k$ for the injective representation $I_{\hat{k}}$, we can see that the sequence  (\ref{consturct of iso 2.0}) is exactly  exact). The map $f_{\widetilde{\beta}_k}$ induces an isomorophism of $H_{\hat{k}}$-module between $\frac{\bigoplus_{a:i\to k\in Q^\circ_1}d_k(\widetilde{P}_i)_{\hat{k}}}{\Img f_{\widetilde{\gamma}_k}|_{\oplus d_k(P_j)_{\hat{k}}}}$ and  $(\widetilde{P}_k)_{\hat{k}}$.
As discussed before, the map $\varphi:u \mapsto [u]$ induces an isomorphism between $(P_{\hat{k}})_{\hat{k}}$ and $(\widetilde{P}_{\hat{k}})_{\hat{k}}$.  Combining this with the equality in (\ref{eq in mutation of pre}), we have the following isomorphism:
$$\frac{\bigoplus_{a:i\to k\in Q^\circ_1}d_k(\widetilde{P}_i)_{\hat{k}}}{\Img f_{\widetilde{\gamma}_k}|_{\oplus d_k(\widetilde{P}_j)_{\hat{k}}}}\stackrel{\varphi}{\cong} \frac{\bigoplus_{a:i\to k\in Q^\circ_1}d_k(P_i)_{\hat
k}}{\Img f_{\beta_k\alpha_k}|_{\oplus d_k(P_j)_{\hat{k}}}}=\frac{\bigoplus_{a:i\to k\in Q^\circ_1}d_k(P_i)_{\hat{k}}}{\Img f_{\alpha_k}|_{(P_k)_{\hat{k}}}}=M_{\hat{k}}.$$
Therefore, we conclude that the map 
$$\psi_{\hat{k}}=f_{\widetilde{\beta}_k}\varphi: \overline{M}_{\hat{k}}\to (\widetilde{P}_k)_{\hat{k}}$$
is an $H_{\hat{k}}$-module isomorphism. Moreover, the equation (\ref{construct of iso}) holds for $c\in \widetilde{Q}_{1}^\circ$ not incident to $k$. 

As for the $H_k$-module $\widetilde{M}_k$, it is given in \eqref{mutation of Mk}, with the maps $\alpha_k,\beta_k$ and $\gamma_k$ replaced by $\overline{\alpha}_k,\overline{\beta}_k$ and $\overline{\gamma}_k$ in \eqref{defn of mutation of alpha and beta}, respectively. According to (\ref{aftermutation}) and the identifications of $\Coker \alpha_k$,$\frac{\ker \alpha_k}{\Img\gamma_k}$ in Corollary \ref{g-vector of M}, we have the following exact sequence: 
\begin{align*}
\overline{M}_{k,\In}\xrightarrow{\overline{\gamma}_k}\overline{M}_{k,out}\xrightarrow{\overline{\alpha}_k}\overline{M}_k\xrightarrow{}\frac{\ker \alpha_k}{\Img \gamma_k}\xrightarrow{}0.
\end{align*}
 Here $\frac{\ker \alpha_k}{\Img \gamma_k}$ is a  free $H_k$-module of rank 1. There exists an $H_k$-isomorphism $\psi_k^1:\frac{\ker \alpha_k}{\Img \gamma_k}\to \widetilde{E}_k$ such that $\psi_{\hat{k}}\overline{\beta}_k|_{\frac{\ker \alpha_k}{\Img \gamma_k}}=\widetilde{\beta}_k\psi_k^1.$

Recall the injective resolution of $\widetilde{E}_k$ (see (\ref{inj})). Applying the functor $\Hom_{\widetilde{\mathcal{P}}}(\widetilde{P}_k,-)^*$  to this resolution yields a diagram with exact rows, where the leftmost block commutes and all vertical maps except the dashed one are isomorphisms. 
$$\begin{tikzcd}
\widetilde{M}_{k,\In}\arrow[r, "\overline{\gamma}_k"] \arrow[d,"\psi_{\hat{k}}"] & \widetilde{M}_{k,\out} \arrow[r, "\overline{\alpha}_k"] \arrow[d, "\psi_{\hat{k}}"] & \overline{M}_k \arrow[r,two heads,"\pi"] \arrow[d, dashed] &  \frac{\ker \alpha_k}{\Img \gamma_k}\arrow[d, "\psi_k^1"] \\
(\widetilde{P}_{k})_{k,\In} \arrow[r, "\widetilde{\gamma}_k"]                        & (\widetilde{P}_k)_{k,\out}\arrow[r,"\widetilde{\alpha}_k"]                            & (\widetilde{P}_k)_{k}\arrow[r,two heads, "\pi"]                            & \widetilde{E}_k          
\end{tikzcd}$$
Therefore, there exists an $H_k$-module isomorphism $\psi_k$ such that $\psi_k$ is of the form $\textrm{diag} (\psi_k^2,\psi_k^1)$ and $ \psi_k^2\overline{\alpha}_k= \widetilde{\alpha}_k\psi_{\hat{k}}$. and  As in Step 1 (1), one can check that $\psi_{\hat{k}}\overline{\beta}_k=\widetilde{\beta}_k\psi_k$. Let $\psi_i=\psi_{\hat{k}}$ for $i\neq k$. It is the desired $H$-module isomorphism. Here with some abuse of notation, we may write $\psi_k$ in the form  
\begin{align}\label{step 1 2 notation}
\psi_k=\textrm{diag}\left(\widetilde{\alpha}_kf_{\widetilde{\beta}_k}\varphi\overline{\alpha}_k^{-1}, \widetilde{\beta}_kf_{\widetilde{\beta}_k}\varphi\overline{\beta}_k^{-1}\right).
\end{align} 

\textbf{Step 2} Let $\mathcal{M}=(M,V)$ be a general representation of $\mathcal{P}(Q,\mathbf{d},\s)$ and its minimal presentation is 
\begin{align*}
P(\beta_-)\xrightarrow{d_{\mathcal{M}}}P(\beta_+).
\end{align*}

 (1) We first consider the case that $\beta_{-}(k)=\beta_+(k)=0$. Then  $\widetilde{\mu}_k(d_{\mathcal{M}})$ is 
 \begin{align*}
     \widetilde{P}(\beta_-)\xrightarrow{}\widetilde{P}(\beta_+).
 \end{align*}
  Let $\widetilde{M}=\widetilde{\mu}_k(\mathcal{M})=(\overline{M},\overline{V})$. We see that $V_k=\overline{V}_k=0$ and $V_{\hat{k}}=\overline{V}_{\hat{k}}$. Recalling that the presentation of the negative part is of the form $P\to 0$,  we can assume the negative  part of $\mathcal{M}$ is zero.
  
 As discussed before, there is an isomorphism  between $\widetilde{\mu}_k(P_{\hat{k}})$ and $\widetilde{P}_{\hat{k}}$. Hence, $\widetilde{\mu}_k(d_{\mathcal{M}})$ is equivalent to the following exact sequence:       
 \begin{align}\label{step 1 2}
\widetilde{\mu}_k(P(\beta_-))\xrightarrow{d_{\mathcal{M}}}\widetilde{\mu}_k(P(\beta_+))\xrightarrow{}M'\xrightarrow{}0
 \end{align}
 To see that  $\overline{\mathcal{M}}$ is right-equivalent to $\Coker(\widetilde{\mu}_k(d_{\mathcal{M}}))$, it suffices to show that $M'=\overline{M}$.
Clearly,  $M'_i=\overline{M}_i$ for $i\neq k$, and $M'(c)=\overline{M}(c)$ for $c\in \widetilde{Q}_1$ not incident to $k$. As for the $H_k$-module $M'_k$, according to (\ref{mutation of P_hat k}), it is given by 
\begin{align*}
    \frac{\ker \gamma_{k,-}}{\Img \beta_{k,-}}\oplus\Img \gamma_{k,-} \oplus\{0\} \oplus \{0\}\xrightarrow{d_{\mathcal{M}}}\frac{\ker \gamma_{k,+}}{\Img \beta_{k,+}}\oplus\Img \gamma_{k,+} \oplus\{0\} \oplus \{0\}\xrightarrow{}M'_k\xrightarrow{}0
\end{align*}
(by applying the functor $\Hom_{\mathcal{P}}(\widetilde{P}_{\hat{k}},-)$ on \ref{step 1 2}).
 Here we use the notation $\gamma_{k,\pm}=P(\beta_\pm)(\gamma_k)$ and $\beta_{k,\pm}=P(\beta_\pm)(\beta_k)$. 
From the presentation of $M$, via homological algebra, we have the exact sequence:
\begin{align*}
\Coker\beta_{k,-}\xrightarrow{d_{\mathcal{M}}}\Coker\beta_{k,+}\xrightarrow{}\Coker\beta_{k,M}\xrightarrow{}0.
\end{align*}
Hence we have $M'_k=\Coker\beta_{k,M}=\frac{\ker \gamma_{k,M}}{\Img \beta_{k,M}}\oplus \Img \gamma_{k,M}\oplus\{0\} \oplus \{0\}$. Moreover, $d_\mathcal{M}$ induces the following complexes of $H_k$-modules via homological algebra:
\begin{align*}
    \frac{\ker \gamma_{k,-}}{\Img \beta_{k,-}}\xrightarrow{d_{\mathcal{M}}}\frac{\ker \gamma_{k,+}}{\Img \beta_{k,+}}\twoheadrightarrow{}\frac{\ker \gamma_{k,M}}{\Img \beta_{k,M}},\\
    \Img \gamma_{k,-} \xrightarrow{d_{\mathcal{M}}} \Img \gamma_{k,+} \twoheadrightarrow{}\Img \gamma_{k,M}.
\end{align*}
Moreover, the maps $\overline{\alpha}_{k,+}$ and $\overline{\beta}_{k,+}$ of $\widetilde{\mu}_k(P(\beta_+)$ with the splitting data $\rho$  induce the maps $\overline{\alpha}_{k,M'}$ and $\overline{\beta}_{k,M'}$ of the form 
\begin{align}
     \overline{\alpha}_{k,M'}=\left(\begin{array}{c}
-\pi \rho \\
-\gamma_{k,M} \\
0 \\
0
\end{array}\right), \quad \overline{\beta}_{k,M'}=\left(\begin{array}{llll}
0, & \iota, & 0, & 0
\end{array}\right).
\end{align}
This proves that $M'=\overline{M}$.

(2) Next we consider the case $\beta_-(k)=0$ and $\beta_{+}(k)\neq 0$. We can write $d_{\mathcal{M}}$ in the block form
\begin{align*}
    P(\beta_{-})\xrightarrow{\left(\begin{array}{c}
d_1 \\
d_2
\end{array}\right)} {\beta_+(k)}P_k\oplus P(\beta_{+}(\hat{k}))
\end{align*}
where $P(\beta_{+}(\hat{k}))=\oplus_{i\neq k} \beta_{+}(i)P_i$. Then $\widetilde{\mu}_k(d_{\mathcal{M}})$ equals to 
\begin{align*}
    \beta_+(k)\widetilde{P}_k\oplus \widetilde{P}(\beta_{-})\xrightarrow{
     \left(\begin{array}{cc}
d_0 & d_1'\\
0& d_2
\end{array}\right)
     } \bigoplus_{b:k\to j\in Q^\circ_1}d_k\beta_+(k)\widetilde{P}_j\oplus\widetilde{P}(\beta_{+}(\hat{k})),
\end{align*}
where
\begin{align*}
d_0 = \operatorname{diag}(\underbrace{f_{\widetilde{\alpha}_k}, \dots, f_{\widetilde{\alpha}_k}}_{\beta_+(k)}),\quad
 f_{\widetilde{\alpha}_k}=\left(\begin{array}{c}
b^{*}\\
\vdots\\b^*\varepsilon_k^{d_k-1}\end{array}\right)_b,\quad
     d_1'=\left(\begin{array}{c}
d_1b^{-1}\\
\vdots\\
d_1(b\varepsilon_k^{d_k-1})^{-1}
\end{array}\right)_b.
\end{align*}
Here $(b\varepsilon_k^{l})^{-1}$ is the formal inverse of $ \widetilde{P}(\beta_{-})$.
 From the discussion in step 1 (2), there is an isomorphism $\psi=(\psi_i)$ between  $\Coker f_{\widetilde{\alpha}_k}$ and $\widetilde{\mu}_k(P_k)$. Then we have the following exact sequence:
 \begin{align*}
     \widetilde{P}_k\xrightarrow{
      f_{\widetilde{\alpha}_k}
     } \bigoplus_{b:k\to j\in Q^\circ_1}d_k\widetilde{P}_j\xrightarrow{\psi\pi}\widetilde{\mu}_k(P_k).
 \end{align*}
Moreover, $\psi$ $\psi_i=f_{\beta_k}\varphi^{-1}$ for $i\neq k$ and $\psi_k=\operatorname{diag}\left(\overline{\alpha}_kf_{\beta_k}\varphi^{-1}\widetilde{\alpha}_k^{-1}, \overline{\beta}_kf_{\beta_k}\varphi^{-1}\widetilde{\beta}_k^{-1}\right)$.
Then we consider the composition of $\widetilde{\mu}_k(d_{\mathcal{M}})$ and the map $$f=\left(\begin{array}{cc}
   \Psi & 0 \\
    0 & \text{Id}
\end{array}\right):\bigoplus_{b:k\to j\in Q^\circ_1}d_k\beta_+(k)\widetilde{P}_j\oplus\widetilde{P}(\beta_{+}(\hat{k}))\to \beta_+(k)\widetilde{\mu}_k(P_k)\oplus\widetilde{P}(\beta_{+}(\hat{k})),$$
where $\Psi= \operatorname{diag}(\psi\pi,\cdots,\psi\pi)$.  Since $\Psi\circ d_0=0$, we get the following presentation of $M''$:  
\begin{align}\label{constrcut iso 2}
     \beta_+(k)\widetilde{P}_k\oplus \widetilde{P}(\beta_{-})\xrightarrow{
     \left(\begin{array}{cc}
0 & \Psi\circ d_1'\\
0& d_2
\end{array}\right)
     } \beta_+(k)\widetilde{\mu}_k(P_k)\oplus\widetilde{P}(\beta_{+}(\hat{k}))\twoheadrightarrow M''.
\end{align}
Since $f$ is surjective and $\ker f\subseteq \Img \widetilde{\mu}_k(d_{\mathcal{M}})$, we see that $M''=\Coker f\circ\widetilde{\mu}_k(d_{\mathcal{M}})\cong \Coker\widetilde{\mu}_k(d_{\mathcal{M}})$.
Recall the constructed isomorphism between $\widetilde{\mu}_k(P_{\hat{k}})$ and $\widetilde{P}_{\hat{k}}$ in step 1 (1), we  see that   (\ref{constrcut iso 2}) is equivalent to the following exact sequence: 
$$\begin{tikzcd}
 \widetilde{\mu}_k(P(\beta_{-})) \arrow[r, " \left(\begin{array}{c}
 d_1\\
d_2
\end{array}\right)"]                        & \beta_+(k)\widetilde{\mu}_k(P_k)\oplus\widetilde{\mu}_k(P(\beta_{+}(\hat{k})))\arrow[r]  \arrow[r, two heads]                           & M'' .                          
\end{tikzcd}$$
By a similar analysis in step 2 (1), we have that $ M''=\widetilde{\mu}_k(M)$, as desired.
The proof for the case  that $\beta_-(k)\neq 0$ and $\beta_{+}(k)=0$ follows by an analogous argument.

The proof of the injective case is similar.
\end{proof}
Following the proof of Lemma \ref{mutation of pre and rep}, we immediately obtain the validity of Proposition \ref{locally free is mutation invariants}. The following theorem is a direct consequence of the argument used in the proof of Proposition \ref{mutation of pre and rep}.
\begin{thm}\label{mutation rule of g}
Let $(Q(B),\mathbf{d},\s)$ be a reduced and locally free $H$-based QP satisfying \eqref{condition on mutation of GQP 1}. Let $\mathcal{M}=(M,V)$ be a general representation of $(Q,\mathbf{d},\mathcal{S})$, let $\overline{\mathcal{M}}=(\overline{M},\overline{V})=\mu_k(\mathcal{M})$ for some $k\in Q_0$. Let $g=g_{\mathcal{M}}$ and $g'=g_{\mu_k(\mathcal{M})}$. We use the similar notation for the dimension vector $d$. Then:
\begin{align}\label{gg'}
    g'(i)=\begin{cases}
    -g(k) &\text{if $i=k$}\\
    g(i)+d_k[-b_{i,k}]_+\beta_-(k)-d_k[b_{i,k}]\beta_+(k)& \text{if $i\neq k$}.
    \end{cases}
\end{align}
\begin{align}\label{chechgg'}
    \Check{g}'(i)=\begin{cases}
    -\Check{g}(k) &\text{if $i=k$}\\
    \Check{g}(i)+d_k[-b_{k,i}]_+\check{\beta}_-(k)-d_k[b_{k,i}]\check{\beta}_+(k)& \text{if $i\neq k$}.
    \end{cases}
\end{align}
\begin{align}\label{gbeta}
    g(k)=\beta'_+(k)-\beta_{+}(k), \quad\Check{g}(k)=\check{\beta}'_+(k)-\check{\beta}_+(k).
\end{align}
\begin{align}
    d'(k)=d_k\sum_{i\to k\in Q_1^\circ}d(i)-d(k)+d_k\beta_+(k)+d_k\Check{\beta}_-(k)=d_k\sum_{k\to j\in Q_1^\circ}d(j)-d(k)+d_k\beta_-(k)+d_k\Check{\beta}_+(k).
\end{align}
\end{thm}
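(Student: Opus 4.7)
The plan is to carry out the calculation directly on minimal projective presentations and transfer the result via Lemma \ref{mutation of pre and rep}, which supplies $d_{\mu_k(\mathcal{M})} = \mu_k(d_\mathcal{M})$ up to homotopy. Writing the minimal presentation as $d_\mathcal{M}\colon P(\beta_-) \to P(\beta_+)$ and decomposing $P(\beta_\pm) = \beta_\pm(k)P_k \oplus P(\beta_\pm(\hat{k}))$, it suffices to determine how $\mu_k$ acts on each indecomposable projective summand and then reassemble.

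By Step 1(1) of the proof of Lemma \ref{mutation of pre and rep}, $\mu_k(P_i)$ is right-equivalent to $\widetilde{P}_i$ for every $i \neq k$, so those summands simply relabel. The key input is Step 1(2), which shows that $\mu_k$ applied to the presentation $0 \to P_k$ is equivalent to
\begin{equation*}
\widetilde{P}_k \xrightarrow{f_{\widetilde{\alpha}_k}} \bigoplus_{b\colon k \to j \in Q_1^\circ} d_k\,\widetilde{P}_j,
\end{equation*}
contributing one copy of $\widetilde{P}_k$ to the new domain and $d_k[b_{j,k}]_+$ copies of $\widetilde{P}_j$ to the new codomain for each $j \neq k$. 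The dual calculation (via the injective analog of Step 1(2), or equivalently via the involution in Theorem \ref{mutation of rep is involution}) shows that $\mu_k$ applied to $P_k \to 0$ is equivalent to
\begin{equation*}
\bigoplus_{a\colon i \to k \in Q_1^\circ} d_k\,\widetilde{P}_i \longrightarrow \widetilde{P}_k,
\end{equation*}
contributing $d_k[-b_{i,k}]_+$ copies of $\widetilde{P}_i$ to the new domain for each $i \neq k$ and one copy of $\widetilde{P}_k$ to the new codomain. Summing the contributions over all summands yields $\beta'_+(k) = \beta_-(k)$, $\beta'_-(k) = \beta_+(k)$, and
\begin{equation*}
\beta'_\pm(i) = \beta_\pm(i) + d_k[\pm b_{i,k}]_+\beta_\pm(k) \quad (i \neq k).
\end{equation*}
Forming $g' = \beta'_- - \beta'_+$ produces \eqref{gg'}, and reading off the $k$-th components gives \eqref{gbeta}.

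The $\check{g}$-formula \eqref{chechgg'} is the dual statement, obtained by running the same argument on minimal injective presentations. Theorem \ref{pc} guarantees the principal components $\mathrm{PC}(g)$ and $\mathrm{PC}(\check{g})$ coincide, so $\mathcal{M}$ is equally a general point of $\IHom_\mathcal{P}(\check{g})$, and the injective analog of Step 1 of Lemma \ref{mutation of pre and rep} applies verbatim. For the dimension formula, I would expand $\dim \overline{M}_k$ directly from the decomposition \eqref{mutation of Mk}: by Corollary \ref{g-vector of M} the relevant summands identify with free $H_k$-modules of ranks $\check{\beta}_-(k)$, $\beta_-(k)$, $\rank V_k$, and local freeness of the bimodules ${_kA^\star_j}$ gives $\dim M_{k,\out} = d_k\sum_{k\to j\in Q_1^\circ}d(j)$. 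Combining $\dim\Img\gamma_k = \dim M_{k,\out} - \dim\ker\gamma_k$ with $\dim\ker\gamma_k = (d(k) - d_k\check{\beta}_+(k)) + d_k\check{\beta}_-(k)$ (from rank--nullity along the triangle \eqref{triangle} and the identification $\ker\beta_k \cong \check{\beta}_+(k)H_k$) produces the second equality for $d'(k)$; the symmetric computation through $M_{k,\In}$ yields the first.

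The main obstacle is controlling the reduction step implicit in $\mu_k = \widetilde{\mu}_k(-)_{\mathrm{red}}$: the output $\widetilde{\mu}_k(d_\mathcal{M})$ is generally not a minimal presentation, and the contributions listed above only produce the minimal $\beta'_\pm$ after canceling trivial two-term summands of the form $\widetilde{P}_i \xrightarrow{\mathrm{id}} \widetilde{P}_i$. The generality of $\mathcal{M}$ together with Proposition \ref{locally free is mutation invariants} are both essential here: they guarantee that the $H_k$-module structures produced by Corollary \ref{g-vector of M} are free of the claimed ranks, so that the cancellation bookkeeping above correctly recovers the minimal presentation of $\mu_k(\mathcal{M})$.
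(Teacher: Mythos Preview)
Your proposal is correct and follows essentially the same route as the paper, which states only that the theorem is ``a direct consequence of the argument used in the proof'' of Lemma~\ref{mutation of pre and rep}; you have simply written out that consequence explicitly by reading off the summands from Steps~1 and~2 of that proof. One small comment: for the $g$-vector formulas \eqref{gg'}--\eqref{gbeta} you do not actually need to worry about cancellation in the reduction step, since the $g$-vector is a homotopy invariant and, by $g$-coherence of $\mathcal{M}$, all copies of $\widetilde{P}_k$ land on a single side of the mutated presentation---so your final paragraph is more cautious than necessary, though not wrong.
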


\section{Some Mutation invariants in Decorated representations}\label{section: some Mutation invariants in Decorated representations}
Let $\mathcal{M}=(M,V)$ and $\mathcal{N}=(N,W)$ be general $H$-based QP-representations of a reduced and locally free $H$-based QP $(Q,\mathbf{d},\s)$. We fix a vertex $k\in Q_0$ and assume that $Q$ satisfies (\ref{condition on mutation of GQP 1}) and (\ref{condition on nutation of GQP 2}). Thus, the mutated $H$-based QP $(\overline{Q},\mathbf{d},\overline{\s})=\mu_k(Q,\mathbf{d},\s)$ is well-defined, as well as its $H$-based QP representations  $\overline{\mathcal{M}}=(\overline{M},\overline{V})=\mu_k(\mathcal{M})$ and $\overline{\mathcal{N}}=(\overline{N},\overline{W})=\mu_k(\mathcal{N})$.

Let $\mathcal{Q}$ be a subcategory of $\mathcal{P}(Q,\mathbf{d},\s)$-mod. We denote by $[\mathcal{Q}](M, N)$ the subgroup of $\Hom_{\mathcal{P}}(M, N)$ consisting of morphisms factoring through objects in $\mathcal{Q}$.  In this section, we will consider the case when $\mathcal{Q}=\add E_k$.

\begin{lem}\label{real1}
The following $H_k$-modules are isomorphic:
\begin{equation*}
    [\add E_k](M,N)\cong \Hom_{H_k}(\Coker \alpha_{k,M},\ker \beta_{k,N}).
\end{equation*}
\end{lem}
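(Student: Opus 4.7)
The plan is to construct an explicit isomorphism between $[\add E_k](M,N)$ and $\Hom_{H_k}(\Coker\alpha_{k,M},\ker\beta_{k,N})$ by first computing the relevant Hom sets with $E_k^{\oplus n}$, and then invoking the freeness of $\Coker\alpha_{k,M}$ (which holds because $M$ is general) to lift arbitrary $H_k$-maps to genuine factorizations through $\add E_k$.

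First, for any integer $n\geq 0$, I would establish the two adjunction-like identifications
\[
\Hom_{\mathcal{P}}(M,E_k^{\oplus n}) \cong \Hom_{H_k}(\Coker\alpha_{k,M},H_k^{\oplus n}),\qquad \Hom_{\mathcal{P}}(E_k^{\oplus n},N) \cong \Hom_{H_k}(H_k^{\oplus n},\ker\beta_{k,N}).
\]
The first is obtained by unpacking what it means for a morphism $\phi\colon M\to E_k^{\oplus n}$ to commute with the arrows: its components away from $k$ must vanish (since $E_k^{\oplus n}$ is concentrated at $k$); the $k$-component $\phi_k\colon M_k\to H_k^{\oplus n}$ must be $H_k$-linear (from compatibility with $\varepsilon_k$) and annihilate $\Img\alpha_{k,M}$ (from compatibility with incoming arrows); compatibility with outgoing arrows is automatic. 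A dual unpacking gives the second identification, where outgoing-arrow compatibility translates into $\Img\psi_k\subseteq\bigcap_{b:k\to j}\ker N(b)$, which by a direct computation using the definition of $\beta_{k,N}$ and $H_k$-linearity of $\psi_k$ coincides with $\Img\psi_k\subseteq\ker\beta_{k,N}$.

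Next, I would use these identifications to define a map
\[
\Theta\colon [\add E_k](M,N)\longrightarrow \Hom_{H_k}(\Coker\alpha_{k,M},\ker\beta_{k,N}),
\]
sending $f=\psi\circ\phi$ (for any factorization through some $E_k^{\oplus n}$) to the composition $\bar{\psi}\circ\bar{\phi}$ of the corresponding $H_k$-maps $\Coker\alpha_{k,M}\to H_k^{\oplus n}\to\ker\beta_{k,N}$. Well-definedness is intrinsic: $\Theta(f)$ is characterized as the unique $H_k$-linear map fitting into the factorization $M_k\twoheadrightarrow\Coker\alpha_{k,M}\xrightarrow{\Theta(f)}\ker\beta_{k,N}\hookrightarrow N_k$ of $f_k$, which depends only on $f$. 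Injectivity of $\Theta$ is then immediate: if $\Theta(f)=0$, then $f_k=0$ by construction, and $f_i=0$ for $i\neq k$ since $f$ factors through a representation supported only at $k$.

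Surjectivity of $\Theta$ is the step where generality of $M$ enters and is the only non-routine ingredient. By Corollary \ref{g-vector of M}, $\Coker\alpha_{k,M}\cong\beta_+(k)H_k$ is a free $H_k$-module, say of rank $n=\beta_+(k)$. Given any $\rho\in\Hom_{H_k}(\Coker\alpha_{k,M},\ker\beta_{k,N})$, set $L=E_k^{\oplus n}$; let $\phi\colon M\to L$ correspond to the quotient $M_k\twoheadrightarrow\Coker\alpha_{k,M}\cong H_k^{\oplus n}$, and let $\psi\colon L\to N$ correspond to $\rho$ followed by the inclusion $\ker\beta_{k,N}\hookrightarrow N_k$; then $\psi\circ\phi\in[\add E_k](M,N)$ and $\Theta(\psi\circ\phi)=\rho$. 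The mild obstacle here is precisely that, because $H_k$ is local (so projective $=$ free), without freeness of $\Coker\alpha_{k,M}$ there would be no canonical way to factor an arbitrary $\rho$ through a free $H_k^{\oplus n}$ placed at vertex $k$; generality of $M$, via Corollary \ref{g-vector of M}, supplies exactly this freeness.
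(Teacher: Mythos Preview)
Your proof is correct and follows essentially the same approach as the paper: both arguments rest on the identifications $\Hom_{\mathcal{P}}(M,E_k)\cong\Hom_{H_k}(\Coker\alpha_{k,M},H_k)$ and $\Hom_{\mathcal{P}}(E_k,N)\cong\Hom_{H_k}(H_k,\ker\beta_{k,N})$, and both need the freeness of $\Coker\alpha_{k,M}$ supplied by Corollary~\ref{g-vector of M}. The only difference is packaging: the paper combines these via the tensor-product isomorphisms $\Hom_{H_k}(\Coker\alpha_{k,M},\ker\beta_{k,N})\cong\Hom_{H_k}(\Coker\alpha_{k,M},H_k)\otimes_{H_k}\Hom_{H_k}(H_k,\ker\beta_{k,N})$ and $[\add E_k](M,N)\cong\Hom_{\mathcal{P}}(M,E_k)\otimes_{H_k}\Hom_{\mathcal{P}}(E_k,N)$ (which themselves tacitly use that freeness), whereas you unwind the same content by building the map $\Theta$ explicitly and checking injectivity and surjectivity by hand.
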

\begin{proof}
 By the discussion below (\ref{p}), we have that $\Hom_{\mathcal{P}}(M,E_k)\cong \Hom_{H_k}(\Coker \alpha_{k,M},H_k)$ and $\Hom_{\mathcal{P}}(E_k,N)\cong \Hom_{H_k}(H_k,\ker\beta_{k,N})$. Combining these with the natural isomorphisms
 \begin{align*}
     \Hom_{H_k}(\Coker \alpha_{k,M},\ker \beta_{k,N})&\cong  \Hom_{H_k}(\Coker \alpha_{k,M},H_k)\otimes_{H_k}\Hom_{H_k}(H_k,\ker\beta_{k,N}),\\
     [\add E_k](M,N) &\cong \operatorname{Hom}_{\mathcal{P}}(M, E_k) \otimes_{H_k} \operatorname{Hom}_{\mathcal{P}}(E_k, N),
 \end{align*}
 we conclude that
$[\add E_k](M,N) \cong \Hom_{H_k}(\Coker \alpha_{k,M}, \ker \beta_{k,N}).$ \qedhere
\end{proof}
\begin{pro}\label{real2}
The mutation $\mu_k$ induces an isomorphism 
$$\Hom_{\mathcal{P}} (M,N)/[\add E_k](M,N)\cong \Hom_{\overline{\mathcal{P}}}(\overline{M},\overline{N})/[\add\overline{E}_k](\overline{M},\overline{N}).$$
\end{pro}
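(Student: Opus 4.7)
The plan is to construct an explicit $H$-linear map
$$\Phi\colon \Hom_{\mathcal{P}}(M,N)\longrightarrow \Hom_{\overline{\mathcal{P}}}(\overline{M},\overline{N}),$$
show that $\Phi$ carries $[\add E_k](M,N)$ into $[\add\overline{E}_k](\overline{M},\overline{N})$, and identify the induced map on quotients as an isomorphism by invoking the involutivity of $\mu_k$ (Theorem \ref{mutation of rep is involution}).

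First, I would construct $\Phi$. A morphism $f\in\Hom_{\mathcal{P}}(M,N)$ amounts to a family of $H$-linear maps $f_i\colon M_i\to N_i$ intertwining the action of every arrow. At the vertex $k$, writing $f_{k,\In}\colon M_{k,\In}\to N_{k,\In}$ and $f_{k,\out}\colon M_{k,\out}\to N_{k,\out}$ for the induced maps, this yields
$$f_k\alpha_{k,M}=\alpha_{k,N}f_{k,\In},\quad f_{k,\out}\beta_{k,M}=\beta_{k,N}f_k,\quad f_{k,\In}\gamma_{k,M}=\gamma_{k,N}f_{k,\out}.$$
Consequently, $f_k$ restricts to $\ker\gamma_{k,M}\to\ker\gamma_{k,N}$ sending $\Img\beta_{k,M}$ into $\Img\beta_{k,N}$; $f_{k,\In}$ restricts to $\Img\gamma_{k,M}\to\Img\gamma_{k,N}$; and $f_{k,\In}$ descends to $\ker\alpha_{k,M}/\Img\gamma_{k,M}\to\ker\alpha_{k,N}/\Img\gamma_{k,N}$. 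Together with the zero map on the decoration summand $V_k$, these assemble into an $H_k$-linear map $\bar f_k\colon\overline{M}_k\to\overline{N}_k$, and we set $\bar f_i=f_i$ for $i\neq k$. Using the explicit formulas \eqref{defn of mutation of alpha and beta} and the identity \eqref{bar gamma=beta alpha}, one checks that $\bar f$ intertwines $\overline{\alpha}_k,\overline{\beta}_k,\overline{\gamma}_k$, hence defines a morphism in $\Hom_{\overline{\mathcal{P}}}(\overline{M},\overline{N})$.

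Next, for compatibility with $[\add E_k]$, I would apply Lemma \ref{real1}: a morphism $f\in[\add E_k](M,N)$ factors through a direct sum of $E_k$'s, is determined by an element of $\Hom_{H_k}(\Coker\alpha_{k,M},\ker\beta_{k,N})$, and hence satisfies $f_k(\Img\alpha_{k,M})=0$ with $\Img f_k\subseteq\ker\beta_{k,N}$. Under the construction above, the resulting $\bar f_k$ is zero on the first three direct summands of $\overline{M}_k$. Using \eqref{aftermutation} to identify
$$\Coker\overline{\alpha}_k\cong\tfrac{\ker\alpha_{k,M}}{\Img\gamma_{k,M}}\oplus V_k,\quad \ker\overline{\beta}_k\cong\tfrac{\ker\gamma_{k,N}}{\Img\beta_{k,N}}\oplus W_k,$$
and applying Lemma \ref{real1} to the mutated $H$-based QP, it follows that $\bar f\in[\add\overline{E}_k](\overline{M},\overline{N})$. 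Thus $\Phi$ descends to $\bar\Phi$ between the two quotient spaces. To show $\bar\Phi$ is an isomorphism, apply the same construction to $\bar f$, producing a map $\Hom_{\overline{\mathcal{P}}}(\overline{M},\overline{N})\to\Hom_{\mathcal{P}}(\mu_k(\overline{M}),\mu_k(\overline{N}))$; Theorem \ref{mutation of rep is involution} identifies $\mu_k(\overline{M})\cong M$ and $\mu_k(\overline{N})\cong N$, and a routine verification on each summand shows the double application returns the identity modulo $[\add E_k]$.

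The main obstacle is Step 1: verifying that the naive summand-wise assembly is genuinely a $\overline{\mathcal{P}}$-module morphism, since $\overline{\alpha}_k$ and $\overline{\beta}_k$ depend on the non-canonical splittings $\rho$ and $\sigma$ from \eqref{splitting data rho}--\eqref{soplitting data sigma}, which are chosen independently for $M$ and $N$. Two independent choices generally produce different candidates for $\bar f$, so one must show that the ambiguity is absorbed in $[\add\overline{E}_k]$. The key tool here is the freeness of $\Coker\alpha_k$ and $\ker\beta_k$ over $H_k$ (Corollary \ref{g-vector of M}), which allows any discrepancy between two splittings to be realized by an $H_k$-morphism $\Coker\overline{\alpha}_k\to\ker\overline{\beta}_k$; combined with Lemma \ref{real1} (applied to $\overline{\mathcal{P}}$) and Proposition \ref{iso of splitting data}, this places the discrepancy exactly in $[\add\overline{E}_k](\overline{M},\overline{N})$, making $\bar\Phi$ well-defined at the level of quotients.
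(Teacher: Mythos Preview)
Your proposal is correct and follows essentially the same approach as the paper, which defers to \cite[Proposition 6.1]{derksen2010quivers}: one constructs $\bar f$ summand-wise on $\overline{M}_k$ from the induced maps $f_{k,\In}$ and $f_{k,\out}$, checks that the ambiguity arising from the independent choices of splitting data $(\rho,\sigma)$ for $M$ and $N$ lies in $[\add\overline{E}_k]$, and concludes by involutivity. Your diagnosis of the main obstacle and its resolution via the freeness in Corollary \ref{g-vector of M} together with Lemma \ref{real1} is exactly the point where the $H$-based setting requires care beyond the classical argument.
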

\begin{proof}
The proof follows the same argument as in \cite[Proposition 6.1]{derksen2010quivers}. 
\end{proof}
\begin{cor}\label{cor1}
Let $\overline{\mathcal{P}}=\mathcal{P}(\overline{Q},\mathbf{d},\overline{\s})$. We have that 
\begin{enumerate}
    \item[(1)]  $\hom_{\overline{\mathcal{P}}}(\overline{\mathcal{M}},\overline{\mathcal{N}})-\hom_{\mathcal{P}}(\mathcal{M},\mathcal{N})=d_k(\beta_{-,\mathcal{M}}(k)\check{\beta}_{-,\mathcal{N}}(k)-\beta_{+,\mathcal{M}}(k)\check{\beta}_{+,\mathcal{N}}(k))$;
    \item[(2)] $\e_{\overline{{\mathcal{P}}}}(\overline{\mathcal{M}},\overline{\mathcal{N}})-\e_{\mathcal{P}}(\mathcal{M},\mathcal{N})=d_k(\beta_{+,\mathcal{M}}(k)\beta_{-,\mathcal{N}}(k)-\beta_{-,\mathcal{M}}(k)\beta_{+,\mathcal{N}}(k))$;
    \item[(2$^\star$)]$\check{\e}_{\overline{{\mathcal{P}}}}(\overline{\mathcal{M}},\overline{\mathcal{N}})-\check{\e}_{\mathcal{P}}(\mathcal{M},\mathcal{N})=d_k(\check{\beta}_{-,\mathcal{M}}(k)\beta_{+,\mathcal{N}}(k)-\check{\beta}_{+,\mathcal{M}}(k)\beta_{-,\mathcal{N}}(k))$.
\end{enumerate}
\end{cor}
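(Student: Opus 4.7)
The plan is to derive each of the three identities in turn: part~(1) directly from Proposition~\ref{real2} and Lemma~\ref{real1}, while parts~(2) and~(2$^\star$) follow from part~(1) combined with the Euler-characteristic identity read off from the defining exact sequence~\eqref{defn of E-invariants} (respectively its injective dual) and the mutation rules in Theorem~\ref{mutation rule of g}.

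For part~(1), Proposition~\ref{real2} reduces the difference of $\hom$-spaces to the difference of the factorization dimensions through $\add E_k$ and $\add \overline{E}_k$. By Lemma~\ref{real1} together with Corollary~\ref{g-vector of M}, we have $\Coker \alpha_{k,M} \cong \beta_{+,\mathcal{M}}(k)\, H_k$ and $\ker \beta_{k,N} \cong \check{\beta}_{+,\mathcal{N}}(k)\, H_k$ for general $\mathcal{M}$ and $\mathcal{N}$, whence
\[
\dim[\add E_k](M, N) = \dim \Hom_{H_k}\bigl(\beta_{+,\mathcal{M}}(k) H_k,\; \check{\beta}_{+,\mathcal{N}}(k) H_k\bigr) = d_k\, \beta_{+,\mathcal{M}}(k)\, \check{\beta}_{+,\mathcal{N}}(k).
\]
Since general representations remain general under $\mu_k$ (Proposition~\ref{general is mutation invarints}) and hence remain $g$-coherent, the identity $g'(k) = -g(k)$ from Theorem~\ref{mutation rule of g} forces $\beta'_{+,\overline{\mathcal{M}}}(k) = \beta_{-,\mathcal{M}}(k)$; the analogous statement on the injective side gives $\check{\beta}'_{+,\overline{\mathcal{N}}}(k) = \check{\beta}_{-,\mathcal{N}}(k)$. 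Substituting the mutated dimension into the displayed formula yields~(1).

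For part~(2), the exact sequence~\eqref{defn of E-invariants} gives the Euler-characteristic identity
\[
\e_{\mathcal{P}}(\mathcal{M}, \mathcal{N}) - \hom_{\mathcal{P}}(\mathcal{M}, \mathcal{N}) = \langle g_{\mathcal{M}}, \underline{\dim} N \rangle,
\]
and writing the same identity on the mutated side and subtracting reduces~(2) to computing
\[
\Delta := \langle g_{\overline{\mathcal{M}}}, \underline{\dim} \overline{N} \rangle - \langle g_{\mathcal{M}}, \underline{\dim} N \rangle
\]
and combining with part~(1). I would split $\Delta$ into its vertex-$k$ term and its $i \neq k$ contributions: for $i \neq k$ the mutation rule $g'(i) - g(i) = d_k[-b_{i,k}]_+ \beta_{-,\mathcal{M}}(k) - d_k[b_{i,k}]_+ \beta_{+,\mathcal{M}}(k)$ applies, while for the vertex-$k$ term we use $g'(k) = -g(k) = \beta_{+,\mathcal{M}}(k) - \beta_{-,\mathcal{M}}(k)$ together with the two equivalent expressions for $\dim \overline{N}_k + \dim N_k$ recorded in Theorem~\ref{mutation rule of g}. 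The crucial pairing is to match the expression involving $\sum_i[-b_{i,k}]_+ \dim N_i$, $\beta_{+,\mathcal{N}}(k)$, $\check{\beta}_{-,\mathcal{N}}(k)$ with the $\beta_{-,\mathcal{M}}(k)$ summand of $-g(k)$, and the other expression (involving $\sum_j[b_{j,k}]_+ \dim N_j$, $\beta_{-,\mathcal{N}}(k)$, $\check{\beta}_{+,\mathcal{N}}(k)$) with the $\beta_{+,\mathcal{M}}(k)$ summand. The $\sum_i[\pm b_{i,k}]_+ \dim N_i$ contributions then telescope exactly against the $i \neq k$ terms, and combining the residue with the $\hom$-difference from part~(1) leaves $d_k(\beta_{+,\mathcal{M}}(k)\beta_{-,\mathcal{N}}(k) - \beta_{-,\mathcal{M}}(k)\beta_{+,\mathcal{N}}(k))$, as claimed.

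Part~(2$^\star$) is proved dually. Applying $\Hom_{\mathcal{P}}(M, -)$ to the minimal injective copresentation of $\mathcal{N}$ yields $\check{\e}_{\mathcal{P}}(\mathcal{M}, \mathcal{N}) - \hom_{\mathcal{P}}(\mathcal{M}, \mathcal{N}) = \langle \underline{\dim} M, \check{g}_{\mathcal{N}} \rangle$, after which the telescoping argument of part~(2) runs verbatim with the $\check{g}$-mutation rule in place of the $g$-mutation rule; alternatively, one may reduce to~(2) over the opposite $H$-based QP via Lemma~\ref{lem1}. The main obstacle is the combinatorial bookkeeping in part~(2): both expressions for $\dim \overline{N}_k$ in Theorem~\ref{mutation rule of g} are genuinely needed, and only the specific cross-pairing described above produces the cancellation, so care must be taken not to conflate them.
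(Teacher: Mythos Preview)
Your overall strategy coincides with the paper's: part~(1) comes from Proposition~\ref{real2} and Lemma~\ref{real1} together with Corollary~\ref{g-vector of M}, and parts~(2) and~(2$^\star$) are obtained by combining~(1) with the Euler-form identities
\[
\hom_{\mathcal{P}}(\mathcal{M},\mathcal{N})-\e_{\mathcal{P}}(\mathcal{M},\mathcal{N})=-g_{\mathcal{M}}(\underline{\dim}N),\qquad
\hom_{\mathcal{P}}(\mathcal{M},\mathcal{N})-\check{\e}_{\mathcal{P}}(\mathcal{M},\mathcal{N})=-\check{g}_{\mathcal{N}}(\underline{\dim}M)
\]
and Theorem~\ref{mutation rule of g}. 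Your detailed bookkeeping for~(2) is a faithful expansion of the paper's ``straightforward calculation''.

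There is, however, one genuine logical issue in your argument for~(1). You invoke Proposition~\ref{general is mutation invarints} to conclude that $\overline{\mathcal{M}}$ and $\overline{\mathcal{N}}$ are again general (so that Corollary~\ref{g-vector of M} applies to them). But Proposition~\ref{general is mutation invarints} is proved \emph{after} Corollary~\ref{cor1} and in fact uses it, so this is circular. The fix is simple and is what the paper does implicitly: do not apply Corollary~\ref{g-vector of M} to $\overline{\mathcal{M}}$ at all. Instead, compute $\Coker\overline{\alpha}_k$ and $\ker\overline{\beta}_k$ directly from the explicit description~\eqref{aftermutation} of the mutated triangle. From the definition~\eqref{mutation of Mk} of $\overline{M}_k$ and~\eqref{aftermutation} one reads off
\[
\Coker\overline{\alpha}_{k,\overline{M}}\;\cong\;\frac{\ker\alpha_{k,M}}{\Img\gamma_{k,M}}\oplus V_k,
\qquad
\ker\overline{\beta}_{k,\overline{N}}\;\cong\;\frac{\ker\gamma_{k,N}}{\Img\beta_{k,N}}\oplus W_k,
\]
and now Corollary~\ref{g-vector of M} applied to the \emph{original} general representations $\mathcal{M},\mathcal{N}$ identifies these as $\beta_{-,\mathcal{M}}(k)H_k$ and $\check{\beta}_{-,\mathcal{N}}(k)H_k$ respectively (the $V_k$ and $W_k$ summands are absorbed into $\beta_-$ and $\check{\beta}_-$ for the decorated representations). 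Lemma~\ref{real1} then gives $\dim[\add\overline{E}_k](\overline{M},\overline{N})=d_k\,\beta_{-,\mathcal{M}}(k)\,\check{\beta}_{-,\mathcal{N}}(k)$ without any appeal to generality of the mutated objects, and~(1) follows from Proposition~\ref{real2} exactly as you wrote.
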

\begin{proof}
(1) follows directly from Lemma \ref{g-vector of M}, Lemma \ref{real1} and Proposition \ref{real2}. (2) and ($2^\star$) are proved by straightforward calculation using (1), Theorem \ref{mutation rule of g} and the following equations
\begin{align*}
    \hom_{\mathcal{P}}(\mathcal{M},\mathcal{N})-\e_{\mathcal{P}}(\mathcal{M},\mathcal{N})=-g_{\mathcal{M}}(\underline{\dim}N);\\
    \hom_{\mathcal{P}}(\mathcal{M},\mathcal{N})-\check{\e}_{\mathcal{P}}(\mathcal{M},\mathcal{N})=-\check{g}_{\mathcal{N}}(\underline{\dim}M). \tag*{\qedhere}
\end{align*}
\end{proof}
\begin{lem}[\cite{derksen2015general}]
    The normal space of $d$ in $\Hom_\mathcal{P}\left(P(\beta_-),P(\beta_+)\right)$ can be identified with $E(d,d)$.
\end{lem}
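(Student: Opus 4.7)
The plan is to identify the normal space as the cokernel of the infinitesimal orbit map and then exhibit that same cokernel as $E(d,d)$ via a short diagram chase built from the projective presentation. There are no really hard steps; the only thing to watch is using projectivity of $P(\beta_\pm)$ at two different places.

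First I would pin down the tangent space to the $\operatorname{Aut}(g)$-orbit at $d$. The group $\operatorname{Aut}(g)=\operatorname{Aut}_{\mathcal{P}}([g]_+)\times \operatorname{Aut}_{\mathcal{P}}([-g]_+)$ acts on $\PHom_{\mathcal{P}}(g)=\operatorname{Hom}_{\mathcal{P}}(P(\beta_-),P(\beta_+))$ by $(g_-,g_+)\cdot d=g_+\circ d\circ g_-^{-1}$. Differentiating at the identity gives the infinitesimal action
\begin{equation*}
\Phi:\operatorname{End}_{\mathcal{P}}(P(\beta_-))\oplus \operatorname{End}_{\mathcal{P}}(P(\beta_+))\longrightarrow \operatorname{Hom}_{\mathcal{P}}(P(\beta_-),P(\beta_+)),\qquad (f_-,f_+)\longmapsto f_+\circ d-d\circ f_-,
\end{equation*}
whose image is $T_d(\operatorname{Aut}(g)\cdot d)$. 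Hence the normal space equals $\operatorname{coker}(\Phi)$, and the goal becomes showing $\operatorname{coker}(\Phi)\cong E(d,d)$.

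Next I would unwind the definition. Set $\mathcal{M}=\Coker d$, so $E(d,d)=\E_{\mathcal{P}}(\mathcal{M},\mathcal{M})$. Applying \eqref{defn of E-invariants} to $d$ with $N=M$ gives
\begin{equation*}
E(d,d)=\operatorname{coker}\bigl(d^{*}:\operatorname{Hom}_{\mathcal{P}}(P(\beta_+),M)\to \operatorname{Hom}_{\mathcal{P}}(P(\beta_-),M)\bigr),\qquad d^{*}(h)=h\circ d.
\end{equation*}
Applying $\operatorname{Hom}_{\mathcal{P}}(P(\beta_-),-)$ to $P(\beta_-)\xrightarrow{d}P(\beta_+)\twoheadrightarrow M\to 0$ and using projectivity of $P(\beta_-)$ identifies
\begin{equation*}
\operatorname{Hom}_{\mathcal{P}}(P(\beta_-),M)\cong \operatorname{Hom}_{\mathcal{P}}(P(\beta_-),P(\beta_+))\big/\{d\circ f_-\mid f_-\in \operatorname{End}_{\mathcal{P}}(P(\beta_-))\}.
\end{equation*}

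Finally I would match the two cokernels. Any $h:P(\beta_+)\to M$ lifts through the surjection $P(\beta_+)\twoheadrightarrow M$ to some $\tilde h\in \operatorname{End}_{\mathcal{P}}(P(\beta_+))$ by projectivity of $P(\beta_+)$; under the identification above, the class of $d^{*}(h)=h\circ d$ becomes the class of $\tilde h\circ d$. Therefore
\begin{equation*}
E(d,d)\cong \operatorname{Hom}_{\mathcal{P}}(P(\beta_-),P(\beta_+))\big/\bigl(\{d\circ f_-\}+\{\tilde h\circ d\}\bigr),
\end{equation*}
and the denominator is precisely $\operatorname{image}(\Phi)$. Hence $E(d,d)\cong \operatorname{coker}(\Phi)=N_d$, proving the claim. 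The main (minor) obstacle is simply being careful with the two lifts through $P(\beta_-)\twoheadrightarrow \operatorname{image}(d)$ and $P(\beta_+)\twoheadrightarrow M$; both are routine consequences of projectivity, so the proof is essentially a direct diagram chase.
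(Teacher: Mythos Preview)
Your argument is correct and is precisely the standard proof: differentiate the $\operatorname{Aut}_{\mathcal{P}}(P(\beta_-))\times\operatorname{Aut}_{\mathcal{P}}(P(\beta_+))$-action to get the tangent space of the orbit, then identify the resulting cokernel with $E(d,d)$ by applying $\Hom_{\mathcal{P}}(P(\beta_-),-)$ and $\Hom_{\mathcal{P}}(P(\beta_+),-)$ to the presentation and using projectivity. The paper does not give its own proof of this lemma; it simply cites \cite{derksen2015general}, where the same computation appears. One cosmetic point: the lemma is stated for arbitrary $\beta_-,\beta_+$, not only for $\beta_\pm=[g]_\pm$, so you should phrase the acting group as $\operatorname{Aut}_{\mathcal{P}}(P(\beta_-))\times\operatorname{Aut}_{\mathcal{P}}(P(\beta_+))$ rather than $\operatorname{Aut}(g)$; the argument is unchanged.
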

\begin{pro}\label{general is mutation invarints}
If $d_{\mathcal{M}}$ is a general presentation, then so is  $d_{\mu_k(\mathcal{M})}$.
\end{pro}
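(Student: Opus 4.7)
The plan is to characterize generality of a presentation in $\PHom_{\mathcal{P}}(g)$ in terms of the minimality of the $\E$-invariant and then transport this condition across the mutation using Corollary \ref{cor1}(2).

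First, I would record the reformulation: because $\e_{\mathcal{P}}(d,d)$ is upper-semicontinuous on $\PHom_{\mathcal{P}}(g)$ with generic value $\e_{\mathcal{P}}(g)$, the subset
\begin{equation*}
U_g = \{ d \in \PHom_{\mathcal{P}}(g) : \e_{\mathcal{P}}(d,d) = \e_{\mathcal{P}}(g) \}
\end{equation*}
is open and dense. By the lemma immediately preceding Proposition \ref{general is mutation invarints}, $E(d,d)$ is identified with the normal space to the $\operatorname{Aut}(g)$-orbit of $d$, so $d \in U_g$ is equivalent to the orbit having maximal dimension, which is a defining feature of a general presentation. Hence it suffices to prove $d_{\mu_k(\mathcal{M})} \in U_{g'}$, where $g' = g_{\mu_k(\mathcal{M})}$ is the mutated weight determined by Theorem \ref{mutation rule of g}.

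Next I would apply Corollary \ref{cor1}(2) with $\mathcal{N} = \mathcal{M}$. The correction term $d_k(\beta_{+,\mathcal{M}}(k)\beta_{-,\mathcal{M}}(k) - \beta_{-,\mathcal{M}}(k)\beta_{+,\mathcal{M}}(k))$ vanishes identically, which yields
\begin{equation*}
\e_{\overline{\mathcal{P}}}(d_{\mu_k(\mathcal{M})},d_{\mu_k(\mathcal{M})}) = \e_{\mathcal{P}}(d_{\mathcal{M}},d_{\mathcal{M}}) = \e_{\mathcal{P}}(g),
\end{equation*}
the last equality holding because $d_{\mathcal{M}}$ is general. It therefore remains to prove the coincidence of the two generic minima, $\e_{\mathcal{P}}(g) = \e_{\overline{\mathcal{P}}}(g')$; then the displayed identity above forces $d_{\mu_k(\mathcal{M})} \in U_{g'}$ and the proof is complete.

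For this last equality I would use involutivity symmetrically. Pick any general $\mathcal{N}$ with $d_{\mathcal{N}} \in \PHom_{\overline{\mathcal{P}}}(g')$, so $\e_{\overline{\mathcal{P}}}(d_{\mathcal{N}},d_{\mathcal{N}}) = \e_{\overline{\mathcal{P}}}(g')$. By Theorem \ref{mutation of rep is involution}, $\mu_k(\mathcal{N})$ is a well-defined general decorated representation of $(H,A,\s)$, and by the $g$-vector mutation rule (together with the involutivity of the transformation $g \mapsto g'$) its minimal presentation lies in $\PHom_{\mathcal{P}}(g)$. Corollary \ref{cor1}(2) applied symmetrically gives $\e_{\mathcal{P}}(\mu_k(\mathcal{N}),\mu_k(\mathcal{N})) = \e_{\overline{\mathcal{P}}}(g')$, while the definition of the generic minimum gives $\e_{\mathcal{P}}(\mu_k(\mathcal{N}),\mu_k(\mathcal{N})) \geq \e_{\mathcal{P}}(g)$; hence $\e_{\overline{\mathcal{P}}}(g') \geq \e_{\mathcal{P}}(g)$. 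The reverse inequality comes from starting with a general $\mathcal{M}$ and mutating. I expect the main obstacle to be this final step: verifying that one is allowed to apply Corollary \ref{cor1}(2) on the $\overline{\mathcal{P}}$-side to a general $\mathcal{N}$ and get back a representation of $\mathcal{P}$ (rather than running in a circle), which is precisely what the symmetric involutivity provided by Theorem \ref{mutation of rep is involution} and Lemma \ref{mutation of pre and rep} supplies.
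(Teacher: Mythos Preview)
Your approach differs from the paper's, and the core step---using Corollary \ref{cor1}(2) with $\mathcal{N}=\mathcal{M}$ to see that $\e$ is preserved under mutation---is correct and is in fact also the starting point of the paper's argument. The comparison $\e_{\mathcal{P}}(g)=\e_{\overline{\mathcal{P}}}(g')$ via involutivity is a clean idea and works as you describe (with the standard caveat that for the inequality $\e_{\overline{\mathcal{P}}}(d_{\mu_k(\mathcal{M})})\ge \e_{\overline{\mathcal{P}}}(g')$ one should invoke Lemma~\ref{general pre is homotopy equivalent}, since $d_{\mu_k(\mathcal{M})}$ lands a priori in a full space $\Hom(\widetilde P(\beta_-'),\widetilde P(\beta_+'))$ rather than in the reduced space).

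There is, however, a genuine gap in your reformulation step. You assert that $d\in U_g$ is ``a defining feature of a general presentation'', and conclude that it suffices to show $d_{\mu_k(\mathcal{M})}\in U_{g'}$. But ``general'' here means lying in an \emph{arbitrary} prescribed dense open, not merely in the particular dense open $U_{g'}$. What your argument actually proves is that $\mu_k$ carries $U_g$ into $U_{g'}$; it does not show that the image of a dense open in $\PHom_{\mathcal P}(g)$ is dense in $\PHom_{\overline{\mathcal P}}(g')$, which is what is needed to iterate mutations and to apply all subsequent genericity hypotheses (locally-free, $g$-coherence, freeness of $\Ext$ groups, etc.). The paper closes exactly this gap with an incidence-variety dimension count: it builds $W\subset U\times\Hom(\widetilde P(\beta_-'),\widetilde P(\beta_+'))$, computes the fibre dimensions of the two projections using the normal-space identification $E(d,d)$, and deduces that the image $Y=p_2(W)$ has full dimension and is therefore dense. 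Your invariance-of-$\e$ argument is an ingredient of that computation, but by itself it stops one step short.
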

\begin{proof}
    Using Lemma \ref{mutation of pre and rep}, we can take a dense open subset $U$ of $\PHom_{\mathcal{P}}(g)$ such that $e:=e_{\mathcal{P}}(d_{\mathcal{M}})$ is constant and $\widetilde{\mu}_k(d_{\mathcal{M}})=d_{\widetilde{\mu}_k(\mathcal{M})}$ when we let $d_{\mathcal{M}}$ vary inside $U$. Furthermore, Lemma \ref{mutation of pre and rep} and Corollary \ref{cor1} imply that $e_{\mathcal{P}}(d_{\mathcal{M}})=e_{\widetilde{\mathcal{P}}}(\widetilde{\mu}_k(d_{\mathcal{M}}))$ and $\widetilde{\mu}_k(d_{\mathcal{M}})\in \Hom(\widetilde{P}(\beta_-'),\widetilde{P}(\beta_+'))$ for some vectors $\beta_\pm'\in \mathbb{N}^{|Q_0|}$.   Set 
    \begin{align*}
        W:=\{(d_{\mathcal{M}},d_{\mathcal{M}'})\in U\times \Hom(\widetilde{P}(\beta_-'),\widetilde{P}(\beta_+'))\mid \text{$\widetilde{\mu}_k({d_{\mathcal{M}}})$ and $d_{\mathcal{M}'}$ are isomorphic}\}.
    \end{align*}
    Being the image of the morphism $U\times \operatorname{Aut}_{\mathcal{P}}\left(P(\beta'_{-})\right) \times \operatorname{Aut}_{\mathcal{P}}\left(P(\beta'_{+})\right)\to U\times \Hom(\widetilde{P}(\beta_-'),\widetilde{P}(\beta_+'))$ given by the rule  $(d_{\mathcal{M}},f'_-,f'_+)\mapsto (d_{\mathcal{M}}, f'_-\widetilde{\mu}_k(d_{\mathcal{M}})f'_+)$, the set $W$ is constructible in $U\times \Hom(\widetilde{P}(\beta_-'),\widetilde{P}(\beta_+'))$; and irreducible  since $\widetilde{\mu}_k:U\to \Hom(\widetilde{P}(\beta_-'),\widetilde{P}(\beta_+'))$ is a regular map. Let 
    $$\begin{tikzcd}
  & \PHom_{\mathcal{P}}(g)\times \Hom(\widetilde{P}(\beta_-'),\widetilde{P}(\beta_+')) \arrow[ld, "\pi_1"'] \arrow[rd, "\pi_2"] &           \\
\PHom_{\mathcal{P}}(g) &                                                           & \Hom(\widetilde{P}(\beta_-'),\widetilde{P}(\beta_+'))
\end{tikzcd}$$
be the projections, and set 
\begin{align*}
    p_1:=\pi_1\mid_W: W\to U\quad \text{and} \quad p_2:=\pi_2\mid_W:W\to \Hom(\widetilde{P}(\beta_-'),\widetilde{P}(\beta_+')).
\end{align*}
 Set $Y:=P_2(W)$. Being the image of an irreducible set under a continuous function, $Y$ is irreducible. For each $d_{\mathcal{M}}\in U$, the fiber $p_1^{-1}(d_{\mathcal{M}})\in W$ is canonically isomorphic to the $\operatorname{Aut}_{\mathcal{P}}\left(P(\beta'_{-})\right) \times \operatorname{Aut}_{\mathcal{P}}\left(P(\beta'_{+})\right)$-orbit of $\widetilde{\mu}_k(d_\mathcal{M})$. Hence, $p_1^{-1}(d_{\mathcal{M}})$ is irreducible and 
 \begin{align*}
     \dim (p_1^{-1}(d_{\mathcal{M}}))=\dim \Hom(\widetilde{P}(\beta_-'),\widetilde{P}(\beta_+'))-e.
 \end{align*}
 Thus, all fibers of $p_1$ are irreducible of the same dimension $\dim \Hom(\widetilde{P}(\beta_-'),\widetilde{P}(\beta_+'))-e$ and so $\dim W=\dim U+ \dim \Hom(\widetilde{P}(\beta_-'),\widetilde{P}(\beta_+'))-e= \dim\PHom_{\mathcal{P}}(g)+\dim\Hom(\widetilde{P}(\beta_-'),\widetilde{P}(\beta_+'))-e$. On the other hand, for $(d_{\mathcal{M},}d_{\mathcal{M'}})\in W$, we have 
 \begin{align*}
     p_2^{-1}(d_{\mathcal{M}'})=(\operatorname{Aut}_{\mathcal{P}}\left(P([g]_{-})\right) \times \operatorname{Aut}_{\mathcal{P}}\left(P([g]_{+})\right)\cdot d_{\mathcal{M}},d_{\mathcal{M'}}),
 \end{align*} 
 it is irreducible of dimension $\dim\PHom_{\mathcal{P}}(g)-e$. Hence $\dim W=\dim Y+\dim \PHom_{\mathcal{P}}(g)-e$. So  $\dim Y=\dim\Hom(\widetilde{P}(\beta_-'),\widetilde{P}(\beta_+'))$ implying that $Y$ is dense in $\Hom(\widetilde{P}(\beta_-'),\widetilde{P}(\beta_+'))$. Hence, $d_{\widetilde{\mu}(\mathcal{M})}$ is a general presentation in  $\Hom(\widetilde{P}(\beta_-'),\widetilde{P}(\beta_+'))$. 
 It follows from  Lemma \ref{general pre is homotopy equivalent} and the proof of Lemma \ref{mutation of pre and rep} that $d_{\mu_k(\mathcal{M})}$ is a general presentation in $\PHom_{\mathcal{P}}(g')$, where $g'$ is explained in \eqref{gg'}.
\end{proof}

\section{\texorpdfstring{$F$}{}-polynomials of \texorpdfstring{$H$}{}-based QP representations}\label{section: F-polynomial}
In this section we work with an $H$-based QP  $(Q,\mathbf{d},\s)$ and its $B$ matrix $B(Q)$. We denote by $H_{k,l}$ the indecomposable submodule of $H_k$ which is generated by $ \varepsilon_k^{d_k-l} $ for each vertex $k\in Q_0$ and $1 \leq l \leq d_k$. Now, we focus on the case where $d_k\leq 2$. Suppose that $M_k$ is an $H_k$-module, then $M_k$ can be decomposed uniquely to $\bigoplus\limits_{l=1}^{d_k}r_l H_{k,l}$ for some $r_l\in \mathbb{N}$. Let $r=(r_1,\cdots, r_{d_k})\in \mathbb{N}^{d_k}$, we call $r$ the type of $M_k$.  

Recall that for any $\xi \in Q^\circ_1$, the cyclic derivative $\partial_\xi$ is defined as in Equation~\eqref{derivative}. Although $\varepsilon_k$ is not an element of $Q^\circ_1$ for each vertex $k \in Q_0$, we may formally extend the definition of the cyclic derivative to allow $\xi = \varepsilon_k$. For any $H$-based QP-module $\mathcal{M} = (M,V)$, this induces an endomorphism $M(\partial_{\varepsilon_k}\mathcal{S}): M_k \to M_k$. Moreover, it is an $H_k$-linear map since:
\begin{align*}
\varepsilon_k\partial_{\varepsilon_k}\mathcal{S} &= \varepsilon_k\sum_{a,b}a\partial_{[b\varepsilon_ka]}\mathcal{S}b \\
    &= \sum_b \partial_b\mathcal{S}b - \sum_{b,a}a\partial_{[ba]}\mathcal{S}b\\
    &= \sum_a a\partial_a\mathcal{S} - \sum_{b,a}a\partial_{[ba]}\mathcal{S}b \\
    &= \sum_{a,b}a\partial_{[b\varepsilon_ka]}\mathcal{S}b\varepsilon_k.
\end{align*}
This induces an $H_k$-module endomorphism of $M_{k,1} := \ker M(\varepsilon_k)/\operatorname{Im} M(\varepsilon_k)$. We regard $M(\partial_{\varepsilon_k}\mathcal{S})$ as a $K$-linear map on the vector space $M_{k,1}$. It is nilpotent since the Jacobian algebra $\mathcal{P}(Q,\mathbf{d},\mathcal{S})$ is finite-dimensional. We denote by $t_{k,l}$ the number of Jordan blocks of size $l$ associated with $M(\partial_{\varepsilon_k}\mathcal{S})$, and call $\mathbf{t} = (t_{k,l})_{k,l}$ the \textbf{Jordan type} of $M$. Note that $\dim M_{k,1} = r_{k,1}$, where $r_k = (r_{k,1}, r_{k,d_2})$ is the type of $M_k$ when $d_k=2$. In the case $d_k=1$, the map $M(\partial_{\varepsilon_k}\mathcal{S})$ is trivial with $ M_{k,1} = M_k$.

Recall that the quiver Grassmannian $\mathrm{Gr}_{\mathbf{e}}(M)$ is the variety parametrizing $\mathbf{e}$-dimensional subrepresentations of $M$. In simple terms, an element of $\mathrm{Gr}_{\mathbf{e}}(M)$ is an $n$-tuple $(N_1,\cdots,N_n)$, where $N_i$ is a subspace of dimension $e_i$ in $M_i$, and $a_M(N_i)\subseteq N_j$ for any arrow $a:i \to j$. Thus, $\mathrm{Gr}_{\mathbf{e}}(M)$ is a closed subvariety of the product of ordinary Grassmannians $\prod\limits_{i=1}
^n\mathrm{Gr}_{e_i}(M_i)$, and is therefore a projective variety. Now for a given tuple of nonnegative integers $\mathbf{t}=(t_{i,l})_{i=1,\ldots,n;l=1,\ldots,\dim M_i}$, we define the subset $\mathrm{Gr}_{\mathbf{e},\mathbf{t}}(M)$ of $\mathrm{Gr}_{\mathbf{e}}(M)$ by
\begin{equation}
    \mathrm{Gr}_{\mathbf{e},\mathbf{t}}(M)=\{N\in \mathrm{Gr}_{\mathbf{e}}(M)\mid \text{the Jordan type of $N$ is $\mathbf{t}$}\}.
\end{equation}
Then $\mathrm{Gr}_{\mathbf{e},\mathbf{t}}(M)$ is a locally closed subset of $\mathrm{Gr}_{\mathbf{e}}(M)$, and so a quasi-projective variety.
\begin{defn}\label{defn of F-pplynomial} 
For each vertex $k \in Q_0$  and integer $l \geq 0$, we define polynomials $f_{k,l}(z_k) \in \mathbb{Z}[z_k]$ recursively by:
\begin{equation}\label{eq:recursion}
\begin{cases}
f_{k,0}(z_k) = 1, \\
f_{k,1}(z_k) = z_k, \\
f_{k,l}(z_k) = z_k f_{k,l-1}(z_k) - f_{k,l-2}(z_k), & \text{for } l \geq 2.
\end{cases}
\end{equation}
Given a decorated representation $\mathcal{M}=(M,V)$ of $(Q,\mathbf{d},\s)$, its \textbf{$F$-polynomial} is defined as 
\begin{equation}
F_\mathcal{M}(\mathbf{y},\mathbf{z})=\sum_{\mathbf{e},\mathbf{t}} \chi(\mathrm{Gr}_{\mathbf{e},\mathbf{t}}(M)) \mathbf{f}^\mathbf{t}(\mathbf{z})\mathbf{y}^\mathbf{e},
\end{equation}
where $\mathbf{f}^\mathbf{t}(\mathbf{z})=\prod_{i,l}f_{i,l}^{t_{i,l}}(z_{i})$, $\mathbf{y}^\mathbf{e}=\prod_i y_i^{e_i}$, and $\chi(-)$ denotes the topological Euler characteristic.
\end{defn} 
 
\begin{lem}
For all representations $M_1$ and $M_2$, we have $F_{M_1\oplus M_2}=F_{M_1}F_{M_2}$.
\end{lem}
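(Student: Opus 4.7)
The plan is to exhibit a $K^{*}$-action on the stratified quiver Grassmannian $\mathrm{Gr}_{\mathbf{e},\mathbf{t}}(M_{1}\oplus M_{2})$ whose fixed locus reduces the computation to the claimed product and to verify that both the dimension vector and the Jordan type behave additively on direct sums.

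First I would expand both sides as generating series in $\mathbf{y},\mathbf{z}$. Because $\mathbf{f}^{\mathbf{t}}(\mathbf{z})=\prod_{i,l}f_{i,l}^{t_{i,l}}(z_{i})$ is a monomial in the polynomials $f_{i,l}$, one has $\mathbf{f}^{\mathbf{t}_{1}+\mathbf{t}_{2}}(\mathbf{z})=\mathbf{f}^{\mathbf{t}_{1}}(\mathbf{z})\mathbf{f}^{\mathbf{t}_{2}}(\mathbf{z})$, and similarly $\mathbf{y}^{\mathbf{e}_{1}+\mathbf{e}_{2}}=\mathbf{y}^{\mathbf{e}_{1}}\mathbf{y}^{\mathbf{e}_{2}}$. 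Hence it suffices to prove the Euler characteristic identity
\begin{equation*}
\chi\bigl(\mathrm{Gr}_{\mathbf{e},\mathbf{t}}(M_{1}\oplus M_{2})\bigr)=\sum_{\substack{\mathbf{e}_{1}+\mathbf{e}_{2}=\mathbf{e}\\ \mathbf{t}_{1}+\mathbf{t}_{2}=\mathbf{t}}}\chi\bigl(\mathrm{Gr}_{\mathbf{e}_{1},\mathbf{t}_{1}}(M_{1})\bigr)\,\chi\bigl(\mathrm{Gr}_{\mathbf{e}_{2},\mathbf{t}_{2}}(M_{2})\bigr).
\end{equation*}

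To prove this, I would introduce the $K^{*}$-action on $M_{1}\oplus M_{2}$ defined by $\lambda\cdot(m_{1},m_{2})=(m_{1},\lambda m_{2})$. Since the $\mathcal{P}(Q,\mathbf{d},\mathcal{S})$-module structure on the direct sum is diagonal, this action commutes with the action of every arrow of $Q$ (including the loops $\varepsilon_{k}$), hence preserves the subvariety of subrepresentations and the stratification by Jordan type. The fixed points are precisely the subrepresentations $N\subseteq M_{1}\oplus M_{2}$ which are $K^{*}$-invariant; a standard argument shows these are exactly the $N$ of the form $N_{1}\oplus N_{2}$ with $N_{i}\subseteq M_{i}$ (write $N=\{(x,y):x\in \pi_{1}(N),\,y=\varphi(x)+n_{2}\}$, then $K^{*}$-invariance forces $\varphi=0$). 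Thus the $K^{*}$-fixed locus decomposes as
\begin{equation*}
\mathrm{Gr}_{\mathbf{e},\mathbf{t}}(M_{1}\oplus M_{2})^{K^{*}}=\coprod_{\substack{\mathbf{e}_{1}+\mathbf{e}_{2}=\mathbf{e}\\ \mathbf{t}_{1}+\mathbf{t}_{2}=\mathbf{t}}}\mathrm{Gr}_{\mathbf{e}_{1},\mathbf{t}_{1}}(M_{1})\times\mathrm{Gr}_{\mathbf{e}_{2},\mathbf{t}_{2}}(M_{2}),
\end{equation*}
and the Euler characteristic identity follows from the well-known fact that $\chi(X)=\chi(X^{K^{*}})$ for any $K^{*}$-action on a complex (or suitably tame) quasi-projective variety.

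The key compatibility to justify is that the Jordan type is additive: for $N=N_{1}\oplus N_{2}$, the operator $N(\partial_{\varepsilon_{k}}\mathcal{S})$ on $\ker N(\varepsilon_{k})/\mathrm{Im}\, N(\varepsilon_{k})$ is the direct sum of the corresponding operators on $N_{1}$ and $N_{2}$, because both $N(\varepsilon_{k})$ and $N(\partial_{\varepsilon_{k}}\mathcal{S})$ act block-diagonally on $N_{1}\oplus N_{2}$; consequently the nilpotent Jordan block multiplicities add, giving $\mathbf{t}=\mathbf{t}_{1}+\mathbf{t}_{2}$. Substituting this decomposition back into the definition of $F_{M_{1}\oplus M_{2}}$ and collecting the resulting sum as a product over $(\mathbf{e}_{1},\mathbf{t}_{1})$ and $(\mathbf{e}_{2},\mathbf{t}_{2})$ yields the desired equality $F_{M_{1}\oplus M_{2}}=F_{M_{1}}F_{M_{2}}$.

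The main obstacle is verifying the fixed-locus description together with the Jordan-type additivity; once these are in hand, the rest is a formal rearrangement of the generating series. A minor technical point is to justify $\chi(X)=\chi(X^{K^{*}})$ for the locally closed, possibly non-projective stratum $\mathrm{Gr}_{\mathbf{e},\mathbf{t}}(M_{1}\oplus M_{2})$; this can be done either by invoking the additivity of $\chi$ over the locally closed Bia{\l}ynicki-Birula stratification induced by the $K^{*}$-action on the ambient projective variety $\mathrm{Gr}_{\mathbf{e}}(M_{1}\oplus M_{2})$, or by restricting the action first to the open stratum and noting that $K^{*}$-orbits on the non-fixed part are all isomorphic to $\mathbb{A}^{1}\setminus\{0\}$, hence have trivial Euler characteristic.
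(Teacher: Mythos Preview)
Your proposal is correct and follows essentially the same approach as the paper, which simply cites \cite[Proposition 3.2]{derksen2010quivers}: the $K^{*}$-action on $M_{1}\oplus M_{2}$ reducing the Euler characteristic computation to products over split subrepresentations is precisely the DWZ argument, and your explicit verification that the Jordan type is additive under direct sums is exactly the extra ingredient needed to adapt that argument to the refined stratification $\mathrm{Gr}_{\mathbf{e},\mathbf{t}}$ used here.
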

\begin{proof}
    The proof follows similarly to \cite[Proposition 3.2]{derksen2010quivers}.
\end{proof}
\begin{thm}\label{mutation of F-polynomial}
Let $(Q(B),\mathbf{d},\mathcal{S})$ be a locally free and nondegenerate $H$-based QP with $d_k\leq 2$ for some vertex $k\in Q_0$.  Let $\mathcal{M}=(M,V)$ be a general representation of $(Q(B),\mathbf{d},\mathcal{S})$, and let $\overline{\mathcal{M}}=(\overline{M},\overline{V})=\mu_k(\mathcal{M})$. Suppose that the $\mathbf{y}$-seed $(\mathbf{y}',B_1)$ in $\mathbb{Q}_{\mathrm{sf}}(y_1,\cdots,y_n)$ is obtained from $(\mathbf{y},B)$ by mutation at $k$, where the mutation rule of $\mathbf{y}$ is given by 
\begin{align}\label{mutation for y}
y_{i}^{\prime}= \begin{cases} y_{k}^{-1} & i=k \\
y_{i}\left(y_{k}^{\left[b_{ki}\right]_{+}}\right)^{d_{k}}\left(\sum_{l=0}^{d_{k}} z_{k,l} y_{k}^{ l}\right)^{-b_{ki}} & i \neq k.\end{cases}
\end{align}
Then the $F$-polynomial $F_{\mathcal{M}}$ and $F_{\overline{\mathcal{M}}}$ are related by 
\begin{equation}\label{F-mutation}
(\sum_{l=0}^{d_k}z_{k,l}y_k^l)^{- \check{\beta}_+(k)}F_{\mathcal{M}}(y_1,\cdots,y_n)= (\sum_{l=0}^{d_k}z_{k,l}(y'_k)^l)^{-\check{\beta}'_+(k)}F_{\overline{\mathcal{M}}}(y'_1,\cdots,y'_n)
\end{equation}
where $z_{k,l}=1$ for $l\leq d_k$ when $d_k=1$, and $z_{k,0}=z_{k,2}=1$ with $z_{k,1}=z_k$ when $d_k=2$.
\end{thm}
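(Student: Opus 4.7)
The strategy is to adapt the Derksen-Weyman-Zelevinsky proof of the classical $F$-polynomial mutation formula to our $H$-based setting, with the new ingredient being a Jordan-type stratification that refines the quiver Grassmannian when $d_k=2$. As a first step, using Theorem \ref{splitting thm} and the constructions in Section \ref{section: mutation of decorated Representations}, I would reduce to the case where $\mathcal{S} \in \overline{T_H(A)}_{\hat{k},\hat{k}}$ and $\overline{\mathcal{M}}=\mu_k(\mathcal{M})$ is constructed explicitly via the splitting data $(\rho,\sigma)$ of \eqref{splitting data rho}-\eqref{soplitting data sigma}; the results of Section \ref{section: some Mutation invariants in Decorated representations} guarantee this is harmless for generic $\mathcal{M}$.

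The core idea is to stratify $\mathrm{Gr}_{\mathbf{e},\mathbf{t}}(M)$ according to auxiliary invariants attached to vertex $k$ — specifically, I would fix integers recording $\dim_K(N_k+\operatorname{Im}\alpha_k)$, $\dim_K(N_k\cap \ker\beta_k)$, and (when $d_k=2$) the way $N_k$ interacts with the nilpotent endomorphism $M(\partial_{\varepsilon_k}\mathcal{S})$ on $\ker M(\varepsilon_k)/\operatorname{Im} M(\varepsilon_k)$. For each resulting locally closed stratum, I would exhibit a morphism to a stratum $\mathrm{Gr}_{\overline{\mathbf{e}},\overline{\mathbf{t}}}(\overline{M})$ whose dimension vector $\overline{\mathbf{e}}$ transforms via the mutation rule at $k$ and whose fibers are products of (ordinary Grassmannian) bundles with Grassmannians of $H_k$-submodules of free $H_k$-modules. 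The $H_k$-structure control comes from Proposition \ref{general rep is locally free}, which ensures the relevant spaces are $H_k$-free in general position, and from the triangle \eqref{triangle} together with \eqref{aftermutation}, which relate $N_k\subseteq M_k$ to the corresponding $\overline{N}_k\subseteq \overline{M}_k$.

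Taking Euler characteristics of this stratification, the sum $\chi(\mathrm{Gr}_{\mathbf{e},\mathbf{t}}(M))\mathbf{f}^{\mathbf{t}}(\mathbf{z})\mathbf{y}^{\mathbf{e}}$ transforms into $\chi(\mathrm{Gr}_{\overline{\mathbf{e}},\overline{\mathbf{t}}}(\overline{M}))\mathbf{f}^{\overline{\mathbf{t}}}(\mathbf{z})(\mathbf{y}')^{\overline{\mathbf{e}}}$ multiplied by a fiber contribution. Summing the fiber contributions over all admissible strata with fixed $\overline{\mathbf{e}}$ and $\overline{\mathbf{t}}$ should produce precisely the factor $\left(\sum_{l=0}^{d_k}z_{k,l}y_k^l\right)^{\check{\beta}_+(k)}\left(\sum_{l=0}^{d_k}z_{k,l}(y_k')^l\right)^{-\check{\beta}'_+(k)}$ that appears in \eqref{F-mutation}; the $d_k=1$ case then collapses to the classical DWZ identity with factor $(1+y_k)$. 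The mutation rule \eqref{mutation for y} for $y_i'$ with $i\neq k$ accounts for the difference between $\mathbf{y}^{\mathbf{e}}$ and $(\mathbf{y}')^{\overline{\mathbf{e}}}$ once $\overline{\mathbf{e}}$ is substituted, and the transformation rule \eqref{gbeta} for $\check{\beta}_+,\check{\beta}'_+$ aligns the exponents.

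The principal obstacle is the combinatorial identity for the $f_{k,l}(z_k)$ polynomials when $d_k=2$. Concretely, I would need to show that the Euler characteristic of the space of $H_k$-submodules of a given $H_k$-module of fixed type, intersected with a Jordan-type condition coming from $M(\partial_{\varepsilon_k}\mathcal{S})$, is captured exactly by the Chebyshev recursion \eqref{eq:recursion}. This is essentially a Jordan-type enumeration problem on $\ker M(\varepsilon_k)/\operatorname{Im}M(\varepsilon_k)$ and its counterpart on $\overline{M}$; the miracle to verify is that, under the mutation correspondence, a Jordan block of size $\ell$ at $k$ either persists, gets truncated, or splits in a way that matches $z_k f_{k,\ell-1}=f_{k,\ell}+f_{k,\ell-2}$. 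Once this calculation is in hand, together with the compatibility of \eqref{mutation for y} with the $\overline{\mathbf{e}}$-substitution verified by a direct exponent count as in \cite[§5]{derksen2010quivers}, the identity \eqref{F-mutation} follows.
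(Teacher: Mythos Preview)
Your overall strategy is correct and follows the paper's, but your choice of stratification invariants and the mechanism you sketch for the Jordan-block combinatorics are not quite what carries the argument.

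The paper does \emph{not} stratify by $\dim(N_k+\operatorname{Im}\alpha_k)$ and $\dim(N_k\cap\ker\beta_k)$. Instead it forgets $N_k$ entirely and stratifies the tuples $(N_i)_{i\neq k}$ by: the $H_k$-module types $r,s$ of $\alpha_k(N_{k,\In})$ and $\beta_k^{-1}(N_{k,\out})$, together with a \emph{generalized Jordan type} $\phi=(\phi_l^{(u,v)})_{u,v\in\{1,2\}}$ of the composite $\Phi=\iota\circ(\partial_{\varepsilon_k}\s)$ acting on $\beta_k^{-1}(N_{k,\out})_1$, where $\iota:\alpha_k(N_{k,\In})_1\to\beta_k^{-1}(N_{k,\out})_1$ is the map induced by inclusion. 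The four indices $(u,v)$ record how each Jordan string of $\Phi$ sits relative to $\operatorname{Im}\iota$ and to $\ker\iota\cap\operatorname{Im}(\partial_{\varepsilon_k}\s)$; this refinement is precisely what is invisible from your proposed invariants. The base varieties $Z_{\hat{\mathbf{e}},\hat{\mathbf{t}};\phi,r,s}(M)$ are then shown to be \emph{literally equal} to $Z_{\hat{\mathbf{e}},\hat{\mathbf{t}};\overline{\phi},\overline{r},\overline{s}}(\overline{M})$ (not merely related by a fibration), via an explicit commutative square that \emph{swaps} the roles of $\iota$ and $\partial_{\varepsilon_k}\s$ under mutation. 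This swap, with the resulting permutation rule $\overline{\phi}_l^{(1,1)}=\phi_{l-1}^{(2,2)}$, $\overline{\phi}_l^{(1,2)}=\phi_l^{(2,1)}$, $\overline{\phi}_l^{(2,1)}=\phi_l^{(1,2)}$, $\overline{\phi}_l^{(2,2)}=\phi_{l+1}^{(1,1)}$, is the technical heart and is not captured by your ``persist/truncate/split'' heuristic.

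The $N_k$-direction is then a genuine fiber over this common base, and its Euler characteristic is computed by a torus fixed-point argument (torus acting via the direct-sum decomposition of $\beta_k^{-1}(N_{k,\out})/\alpha_k(N_{k,\In})$ indexed by the spaces $W_l^{(u,v)}$), yielding an explicit sum of products of binomial coefficients. Packaging these fiber contributions into auxiliary polynomials $g_l^{(u,v)}(y_k,z_k)$ built from the $f_{k,l}$, the identity \eqref{F-mutation} then falls out of a direct (long) monomial computation using the explicit $\phi\mapsto\overline{\phi}$ rule and \eqref{mutation for y}. So your outline is structurally right, but you should replace your stratification variables by $(r,s,\phi)$ and look for the $\iota\leftrightarrow\partial_{\varepsilon_k}\s$ swap rather than a block-by-block recursion matching \eqref{eq:recursion}.
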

\begin{proof}
 Suppose that $N=(N_1,\ldots,N_n)\in \prod_{i=1}^n\mathrm{Gr}_{e_i}(M_i)$, and let $N_{k,\In}$ and $N_{k,\out}$ be the corresponding subspace of $M_{k,\In}$ and $M_{k,\out}$, respectively (see \eqref{defn of M_in and M_out}). The condition that $N\in \mathrm{Gr}_{\mathbf{e},\mathbf{t}}(M)$ can be stated as the combination of the following three conditions:
\begin{equation}\label{subcondition 1}
    \text{$c_M(N_i)\subseteq N_j$ for any arrow $c:i\to j$ not incident to $k$ in $Q(B)$}.
\end{equation}
\begin{equation}\label{subcondition 2}
    \alpha_k(N_{k,\In})\subseteq N_k\subseteq \beta_k^{-1}(N_{k,\out})
\end{equation}
\begin{equation}\label{subcondition 3}
    \text{$N_i$ is an $H_i$-module with Jordan type $\mathbf{t}_i$ for $i\in Q_0$.}
\end{equation} 
When $d_k=1$, the setup reduces to the classical QP case, and mutation coincides with the classical construction. The proof for this case is given in \cite[Lemma 5.2]{derksen2010quivers}. We now treat the case 
$d_k=2$.
  
Recalling the definition of $\alpha_k$ and $\beta_k$ (\eqref{defn of alpha}, \eqref{defn of beta}), we can express $\alpha_k$ and $\beta_k$ as the following matrices 
\begin{align*}
  \alpha_k= \begin{pmatrix}
    a, \varepsilon_k a
\end{pmatrix}_a,\quad
\beta_k=\prescript{}{b}{\begin{pmatrix}
    b\varepsilon_k\\
    b
\end{pmatrix}}.
\end{align*}
Then 
\begin{align}\label{martix of partial_{varepsilon_k}s}
   \partial_{\varepsilon_k}\s=\alpha_k {\begin{pmatrix}
    0&\partial_{[b\varepsilon_ka]}\s \\
    0& 0\\
\end{pmatrix}}_{a,b}\beta_k, 
\end{align}
  so the image of $\partial_{\varepsilon_k}\s$ on $N_k$ lies in $\alpha_{k}\left(N_{k,\In}\right)$. Now we consider the  following two $H_k$-module morphisms:
\begin{align*}
\beta^{-1}_{k}\left(N_{k,\out}\right)\xrightarrow{\partial_{\varepsilon_k}\s}\alpha_{k}(N_{k,\In})\xrightarrow{\iota}\beta_k^{-1}(N_{k,\out}),
\end{align*}
where $\iota$ is the natural embedding. We denote by
\begin{equation}\label{construction of beta-1_1}
    \begin{aligned}
    \beta^{-1}_{k}(N_{k,\out})_1&:=\left(\beta^{-1}_{k}(N_{k,\out})\cap\ker \varepsilon_k\right)/\varepsilon_k(\beta^{-1}_{k}(N_{k,\out})),\\
    \alpha_{k}(N_{k,\In})_1&:=\left(\alpha_{k}(N_{k,\In})\cap\ker \varepsilon_k\right)/\varepsilon_k(\alpha_k(N_{k,\In})).
\end{aligned}
\end{equation}  
Then $\partial_{\varepsilon_k}\s$ and $\iota$ induce the two $K$-linear maps:
\begin{align}\label{defn of iota}
\partial_{\varepsilon_k}\s:\beta^{-1}_{k}(N_{k,\out})_1\to\alpha_{k}(N_{k,\In})_1,\quad \iota:\alpha_{k}(N_{k,\In})_1\xrightarrow{}\beta^{-1}_{k}(N_{k,\out})_1.
\end{align} 
Then $\partial_{\varepsilon_k}\s$ on $\beta^{-1}_{k}(N_{k,\out})_1$ equals the composition of the above two maps. Here to  distinguish it from $ \partial_{\varepsilon_k}\s:\beta^{-1}_{k}(N_{k,\out})_1\to\alpha_{k}(N_{k,\In})_1$, we write $\Phi$ for the former. We denote $(\phi_l)_{1\leq l\leq m}$ the Jordan type of $\Phi$. Then there is a Jordan basis $\left\{\Phi^r(\eta_{l,s})\mid1\leq l\leq m,1\leq r\leq l-1, 1\leq s\leq \phi_l\right\}$ and the Jordan blocks are determined
by the strings $S_{l,s}=\left\{\eta_{l,s},\Phi(\eta_{l,s}),\ldots,\Phi^{l-1}(\eta_{l,s})\right\}$.
We observe that
\begin{itemize}
    \item $\Coker\Phi\cong \Coker \iota\oplus \frac{\Img \iota}{\Img \Phi}$ and the set $\left\{\eta_{l,s}\mid1\leq l\leq m, 1\leq s\leq \phi_l \right\}$ forms a basis of  $\Coker\Phi$;
    \item $\ker\Phi\cong \ker\partial_{\varepsilon_k}\s\oplus \frac{\ker\Phi}{\ker\partial_{\varepsilon_k}\s}\cong \ker\partial_{\varepsilon_k}\s\oplus \ker\iota\cap \Img \partial_{\varepsilon_k}\s$ and the set $\left\{\Phi^{l-1}(\eta_{l,s})\mid1\leq l\leq m, 1\leq s\leq \phi_l\right\}$ forms a basis of $\ker \Phi$;
   \item $\Coker\Phi$ can be decomposed as follows:
\begin{center}
$\begin{aligned}
  \Coker\Phi \cong 
 \bigoplus_{1 \leq l \leq m}  \bigg( &
    \frac{\ker(\partial_{\varepsilon_k}\s \circ \Phi^{l-1}) + \Img\iota}{\ker\Phi^{l-1} + \Img\iota}
    \oplus 
    \frac{\ker\Phi^{l} + \Img\iota}{\ker(\partial_{\varepsilon_k}\s \circ \Phi^{l-1}) + \Img\iota} \oplus
   \\
  & \frac{\Img\iota \cap \ker(\partial_{\varepsilon_k}\s \circ \Phi^{l-1}) + \Img\Phi}{\Img\iota \cap \ker\Phi^{l-1} + \Img\Phi} 
  \oplus  
    \frac{\Img\iota \cap \ker\Phi^{l} + \Img\Phi}{\Img\iota \cap \ker(\partial_{\varepsilon_k}\s \circ \Phi^{l-1}) + \Img\Phi} 
  \bigg)
\end{aligned}$
\end{center}
\end{itemize}
We denote these constructed spaces by, for $1 \leq l \leq m$,
\begin{equation}\label{construction of space W}
    \begin{aligned}
         W_l^{(1,1)}  &:= \frac{\ker(\partial_{\varepsilon_k}\s \circ \Phi^{l-1}) + \Img\iota}{\ker\Phi^{l-1} + \Img\iota}, 
  &\quad 
  W_{l}^{(1,2)} &:= \frac{\ker\Phi^{l} + \Img\iota}{\ker(\partial_{\varepsilon_k}\s \circ \Phi^{l-1}) + \Img\iota}, \\
  W_l^{(2,1)}  &:= \frac{\Img\iota \cap \ker(\partial_{\varepsilon_k}\s \circ \Phi^{l-1}) + \Img\Phi}{\Img\iota \cap \ker\Phi^{l-1} + \Img\Phi}, 
  &\quad 
  W_l^{(2,2)} &:= \frac{\Img\iota \cap \ker\Phi^{l} + \Img\Phi}{\Img\iota \cap \ker(\partial_{\varepsilon_k}\s \circ \Phi^{l-1}) + \Img\Phi}.
    \end{aligned}
\end{equation}
We then immediately have the isomorphisms:
\begin{equation}\label{decomposed of iota}
   \begin{aligned}
       \Coker\iota \cong \bigoplus_{l,v} W_l^{(1,v)},\quad \ker\iota\cap \Img \partial_{\varepsilon_k}\s\stackrel{(\partial_{\varepsilon_k}\s\circ \Phi^{l-1})_l}{\cong} \bigoplus_{l,u}W_l^{(u,2)},\\
    \Img \iota\cong  \bigoplus_{1\leq f\leq l-1,u,v}\Phi^{f}(W_l^{(u,v)})\oplus \bigoplus_{v}(W_l^{(2,v)}).
   \end{aligned} 
\end{equation}
For $1 \leq u,v \leq 2$, we denote by $\phi_l^{(u,v)}$ the dimension of the $K$-linear space $W_l^{(u,v)}$, which corresponds to the number of Jordan blocks of size $l$ associated to $W_{l}^{(u,v)}$. Furthermore, let $\phi_0^{(2,2)}$ denote the dimension of $W_0^{(2,2)} := \ker\iota/(\Img\partial_{\varepsilon_k}\s \cap \ker\iota)$. We call $\phi = (\phi_l^{(u,v)})$ the \textbf{generalized Jordan type} of $\Phi$.

Let $\hat{\mathbf{e}}=(e_i)_{i\neq k}$  and $\hat{\mathbf{t}}=(\mathbf{t}_i)_{i\neq k}$ denote the integer vectors obtained from $\mathbf{e}$ and $\mathbf{t}$ by forgetting the $k$-th component of each, where $\mathbf{t}_i$ is the $i$-th row of $\mathbf{t}$. Now for such vectors $\hat{\mathbf{e}}$, $\hat{\mathbf{t}}$, $\phi=(\phi_l^{(i,j)})$, and every pair of nonnegative integer vectors $s,t\in \mathbb{N}^{d_k}$, we denote by $Z_{\hat{\mathbf{e}},\hat{\mathbf{t}};\phi,r,s}(M)$ the variety of tuples $(N_i)_{i\neq k}$ satisfying the inclusion \eqref{subcondition 1} and $\alpha_k(N_{k,\In})\subseteq \beta^{-1}_k(N_{k,\out})$, and such that $\dim N_i=e_i$ for $i\neq k$ and  the Jordan type of $N_i$ is $\mathbf{t}_i$, and the types of $\alpha_k(N_{k,\In})$ and $\beta_k^{-1}(N_{k,\out})$ are $r$ and $s$ respectively, and the generalized Jordan type of $\Phi$ is $\phi$. Let $Z'_{\mathbf{e},\mathbf{t};\phi,r,s}(M)$ denote the subset of $\mathrm{Gr}_{\mathbf{e},\mathbf{t}}(M)$ consisting of all $N=(N_1,\cdots,N_n)$ such that the tuple obtained from $N$ by forgetting $N_k$ belongs to $Z_{\hat{\mathbf{e}},\hat{\mathbf{t}};\phi,r,s}(M)$. Then $\mathrm{Gr}_{\mathbf{e},\mathbf{t}}(M)=\sqcup_{\phi,r,s} Z'_{\mathbf{e},\mathbf{t};\phi,r,s}(M)$.

In view of \eqref{subcondition 2} and \eqref{subcondition 3}, each $Z'_{\mathbf{e},\mathbf{t};\phi,r,s}(M)$ is the fiber bundle over $Z_{\hat{\mathbf{e}},\hat{\mathbf{t}};\phi,r,s}(M)$ with the fiber
\begin{align*}
   X_{e_k,\mathbf{t}_k}=\Big\{U\subseteq \beta^{-1}_{k}(N_{k,\out})\big/\alpha_{k}(N_{k,\In}) \big|& \text{ $U+\alpha_{k}(N_{k,\In})$ is an $H_k$-module}\\
   &\text{with Jordan type $t_{k,l}$ and dimension $e_k$}\Big\}. 
\end{align*}
Thus,
\begin{align}\label{expression of F over fiber}
F_{\mathcal{M}}(y_1,\ldots,y_n) = \sum_{\substack{\hat{\mathbf{e}},\hat{\mathbf{t}}\\r,s,\phi}} \sum_{e_k,\mathbf{t}_k} \chi\bigl(Z_{\hat{\mathbf{e}},\hat{\mathbf{t}};\phi,r,s}(M)\bigr) \chi(X_{e_k,\mathbf{t}_k}) \mathbf{f}^{\mathbf{t}}(\mathbf{z}) \mathbf{y}^{\mathbf{e}}.
\end{align}
Clearly,
\begin{align*}
    \beta^{-1}_{k}(N_{k,\out})&=\beta^{-1}_{k}(N_{k,\out})_1\oplus \text{some free $H_k$-modules}\\
    &=\Img \iota \oplus\Coker\iota \oplus \text{some free $H_k$-module},\\
    \alpha_{k}(N_{k,\In})&=\alpha_{k}(N_{k,\In})_1\oplus \text{some free $H_k$-modules}\\
    &=\Img \iota\oplus\ker\iota\oplus \text{some free $H_k$-modules}.
\end{align*}
Recalling the construction of $\iota$ in \eqref{defn of iota}, we see that $\ker\iota$  embeds into some free $H_k$-submodule of $\beta^{-1}_{k}(N_{k,\out})$ with dimension $\dim \ker\iota=r_1-\dim\Img\iota=r_1 - \big(\sum_{l,v} \left(l-1\right)\phi_l^{(1,v)} + l\phi_l^{(2,v)}\big).$ In view of \eqref{decomposed of iota}, we have 
\begin{align*}
     \beta^{-1}_{k}(N_{k,\out})\big/\alpha_{k}(N_{k,\In})&\cong \Coker\iota\oplus\ker\iota\oplus L\\
     &\cong \bigoplus_{l,v} W_l^{(1,v)}\oplus\bigoplus_{l,u}(\partial_{\varepsilon_k}\s\circ \Phi^{l-1})(W_l^{(u,2)})\oplus L.
\end{align*}
where $L$  is a free $H_k$-module of rank $s_2 - r_2 - r_1 + \sum_{l,v} \big((l-1)\phi_l^{(1,v)} + l\phi_l^{(2,v)}\big).$ 

We then use the following well-known property of the Euler-Poincaré characteristic: if a complex torus $T$ acts algebraically on a variety $X$, then $\chi(X) = \chi(X^T)$, where $X^T$ is the set of $T$-fixed points (see \cite{BIALYNICKIBIRULA197399}). Take $X = X_{e_k,\mathbf{t}_k}$, and consider the action of $T = (\mathbb{C}^{4m+2})^\star$ on $X$ induced by the $T$-action on $\bigoplus_{l,v} W_l^{(1,v)}\oplus\bigoplus_{l,u}(\partial_{\varepsilon_k}\s\circ \Phi^{l-1})(W_l^{(u,2)})\oplus L$ given by
\begin{align*}
    \mathbf{t} \cdot (w_{1},\ldots,w_{4m+2}) = (t_1 w_1,\ldots,t_{4m+2} w_{4m+2}).
\end{align*}
Then a point $U \in X$ is $T$-fixed if and only if $U$ admits an $H_k$-module decomposition 
\[
U = U_l^{(1,1)} \oplus U_l^{(2,2)} \oplus U_{l,1}^{(1,2)} \oplus U_{l,2}^{(1,2)} \oplus U_0,
\]
where the components satisfy the following inclusions:
\begin{align*}
U_l^{(1,1)} &\subseteq W_l^{(1,1)}, \\
U_l^{(2,2)} &\subseteq (\partial_{\varepsilon_k}\s \circ \Phi^{l-1})(W_l^{(2,2)}), \\
U_{l,1}^{(1,2)} &\subseteq W_l^{(1,2)}, \\
U_{l,2}^{(1,2)} &\subseteq (\partial_{\varepsilon_k}\s \circ \Phi^{l-1})(W_l^{(1,2)}), \\
U_0 &\subseteq L.
\end{align*} Thus, we have 
\begin{align*}
    X^{T}=&\bigsqcup_{(\theta_{l}^{(1,1)}, \theta_{l}^{(2,2)}, \theta_{1,1}^{(1,2)}, \theta_{l,2}^{(1,2)}, \Delta_l, \theta_0)\in \Omega}\prod_{1\leq l\leq m}\mathrm{Gr}_{\theta_{l}^{(1,1)}}(W_{l}^{(1,1)}) \times \prod_{0\leq l\leq m}\mathrm{Gr}_{\theta_{l}^{(2,2)}}\big(\partial_{\varepsilon_k}\s\circ\Phi^{l-1}(W_l^{(2,2)})\big)\times\\
    &\prod_{1\leq l\leq m}X_{\theta_{l,1}^{(1,2)},\theta_{l,2}^{(1,2)},\Delta_l}\times \mathrm{Gr}_{\theta_0}(L)
\end{align*}
where
\begin{align*}
X_{\theta_{l,1}^{(1,2)},\theta_{l,2}^{(1,2)},\Delta_l}= \bigg\{ &
(U_{l,1}^{(1,2)},U_{l,2}^{(1,2)}) \in \mathrm{Gr}_{\theta_{l,1}^{(1,2)}}(W_l^{(1,2)}) 
\times \mathrm{Gr}_{\theta_{l,2}^{(1,2)}}\big(\partial_{\varepsilon_k}\s \circ \Phi (W_l^{(1,2)})\big) 
 \\
&\quad\quad\Bigg|\Delta_l=\dim\Big(\big(\partial_{\varepsilon_k}\s \circ \Phi^{l-1} (U_{l,1}^{(1,2)})\big) \cap U_{l,2}^{(1,2)}\Big)\bigg\},
\end{align*}
  and the index set $\Omega$  consists of all variables $(\theta_{l}^{(1,1)}, \theta_{l}^{(2,2)}, \theta_{1,1}^{(1,2)}, \theta_{l,2}^{(1,2)}, \Delta_l, \theta_0)$ that satisfy the following conditions:
\begin{align*}
t_{k,l} =& \phi_{l}^{(2,1)} + \theta_{l}^{(1,1)} + \big(\phi_{l+1}^{(1,1)} - \theta_{l+1}^{(1,1)}\big) + \theta_{l}^{(2,2)}+ \big(\phi_{l-1}^{(2,2)} - \theta_{l-1}^{(2,2)}\big) 
  + \Delta_l + \big(\theta_{l-1,1}^{(1,2)} - \Delta_{l-1} \big)\\
  &+\big(\theta_{l+1,2}^{(1,2)} - \Delta_{l+1}\big)
 +\big(\phi_{l}^{(1,2)} - \theta_{l,1}^{(1,2)} - \theta_{1,2}^{(1,2)} + \Delta_l \big), \\
e_k =& \theta_0 + \sum_l \bigg( l\phi_{l}^{(2,1)} + l\Big(\theta_{l}^{(1,1)} + \big(\phi_{l+1}^{(1,1)} - \theta_{l+1}^{(1,1)}\big)\Big) + (l+2)\theta_{l}^{(2,2)}
  + l\big(\phi_{l+1}^{(2,2)} - \theta_{l+1}^{(2,2)}\big) + (l+2)\Delta_l\\
  &+ l\big(\theta_{l-1,1}^{(1,2)} - \Delta_{l-1}\big) + (l+2)\big(\theta_{l+1,2}^{(1,2)} - \Delta_{l+1}\big)+l\Big(\phi_{l}^{(1,2)} - \theta_{l,1}^{(1,2)} - \theta_{1,2}^{(1,2)} + \Delta_l\Big) \bigg).
\end{align*}
Hence, 
\begin{align}\label{chi of fiber}
\chi(X)=\chi(X^T)=\sum_{\Omega}\binom{\phi_l^{(1,1)}}{\theta_l^{(1,1)}}\binom{\phi_l^{(2,2)}}{\theta_l^{(2,2)}}\binom{\phi_l^{(1,2)}}{\Delta_l}\binom{\phi_l^{(1,2)}-\Delta_l}{\theta_{l,1}^{(1,2)}-\Delta_l}\binom{\phi_l^{(1,2)}-\theta_{l,1}^{(1,2)}}{\theta_{l,2}^{(1,2)}-\Delta_l}\chi\big(\mathrm{Gr}_{\theta_0}(L)\big)
\end{align}
For $l\geq 1$, we have the following notation: 
\begin{equation}
    \begin{aligned}
    g_l^{(1,1)}(y_k,z_k):=&f_{k,l-1}(z_k)y_k^{l-1}+f_{k,l}(z_k)y_k^{l};\\
    g_l^{(1,2)}(y_k,z_k):=&f_{k,l}(z_k)y_k^{l+2}+f_{k,l+1}(z_k)y_k^{l+1}+f_{k,l-1}(z_k)y_k^{l+1}+f_{k,l}(z_k)y_k^l\\
    =&f_{k,l}(z_k)y_k^{l}(1+z_ky_k+y_k^2);\\
    g_l^{(2,1)}(y_k,z_k):=&f_{k,l}(z_k)y_k^{l};\\
    g_l^{(2,2)}(y_k,z_k):=&f_{k,l}(z_k)y_k^{l+2}+f_{k,l+1}(z_k)y_k^{l+1}.
\end{aligned}
\end{equation}
Substituting \eqref{chi of fiber} into \eqref{expression of F over fiber} and performing the summation with respect to $e_k,\mathbf{t}_k$, we obtain
\begin{equation}\label{rewrite of F}
 \begin{aligned}
     F_{\mathcal{M}}(y_1,\ldots,y_n)=\sum_{\hat{\mathbf{e}},\hat{\mathbf{t}},r,s,\phi}\chi\big(Z_{\hat{\mathbf{e}},\hat{\mathbf{t}};\phi,r,s}(M)\big)y_k^{2r_2} (z_ky_k+y_k^2)^{\phi^{(2,2)}_{0}}\prod_{\substack{1\leq l,\\1\leq u,v\leq 2}}\big(g_{l}^{(u,v)}(y_k,z_k)\big)^{\phi_l^{(u,v)}}\\(1+z_ky_k+y_k^2)^{s_2-r_2-r_1+\sum_{l,v}(l-1)\phi_l^{(1,v)}+l\phi_l^{(2,v)}}\prod_{i\neq k}f_{i,l}^{t_{i,l}}(z_i)y_i^{e_i}
 \end{aligned}   
\end{equation}
The remainder of the proof of Theorem \ref{mutation of F-polynomial} is based on the following observation:
\begin{align}\label{observation}
Z_{\hat{\mathbf{e}},\hat{\mathbf{t}};\phi,r,s}(M)=Z_{\hat{\mathbf{e}},\hat{\mathbf{t}};\overline{\phi},\overline{r},\overline{s}}(\overline{M}),
\end{align}
where $\overline{r},\overline{s},\overline{\phi}$ are given by
\begin{gather}
   \overline{r}_1=s_{1},\quad \overline{r}_{2}=\sum_{i}[b_{i,k}]_+e_i+\Check{\beta}_+(k)- s_1-s_2; \label{mutation r} \\ 
     \overline{s}_1=r_{1}, \quad \overline{s}_{2}=\sum_{i}[-b_{i,k}]_+e_i+\Check{\beta}'_+(k)-r_1-r_2; \label{mutation type s} \\ 
     \overline{\phi}_l^{(1,1)}=\phi_{l-1}^{(2,2)},\quad \overline{\phi}_{l}^{(1,2)}=\phi_l^{(2,1)},\quad\overline{\phi}_l^{(2,1)}=\phi_l^{(1,2)},\quad \overline{\phi}_l^{(2,2)}=\phi_{l+1}^{(1,1)}. \label{mutation phi}
\end{gather}

In view of the symmetry between $\mathcal{M}$ and $\overline{\mathcal{M}}$, to prove \eqref{observation}, it suffices to show that every $N=(N(i))_{i\neq k}\in Z_{\hat{\mathbf{e}},\hat{\mathbf{t}};\phi,r,s}(M)$ belongs to $Z_{\hat{\mathbf{e}},\hat{\mathbf{t}};\phi,r,s}(\overline{M})$.

Suppose $N\in Z_{\hat{\mathbf{e}},\hat{\mathbf{t}};\phi,r,s}(M)$. Recall from \eqref{aftermutation} that $\Img\beta_k=\ker \overline{\alpha}_k$ and $\Img\overline{\beta}_k=\ker \alpha_k$, which implies that: 
\begin{equation}\label{iso eq in mutation of F}
    \begin{aligned}
    \text{the map $\overline{\alpha}_k$ induces an isomorphism $N_{k,\out}/(\Img\beta_k\cap N_{k,\out})\to\overline{\alpha}_k(N_k,\out)$,} \\
    \overline{\beta}_k^{-1}(N_{k,\In})=\ker \overline{\beta}_k\oplus (\Img\overline{\beta}_k\cap N_{k,\In})=\frac{\ker \gamma_k}{\Img \beta_k}\oplus V_k\oplus (\ker\alpha_k\cap N_{k,\In}).
\end{aligned}
\end{equation}
Using the exact sequences
$$0\xrightarrow{}\ker \beta_k\xrightarrow{}\beta_k^{-1}(N_{k,\out})\xrightarrow{}\Img\beta_k\cap N_{k,\out}\xrightarrow{}0,$$
 $$0\xrightarrow{}\ker\alpha_k\cap N_{k,\In}\xrightarrow{}N_{k,\In}\xrightarrow{}\alpha_k(N_{k,\In})\xrightarrow{}0,$$
we deduce that the types of  $\overline{\alpha}_k(N_k,\out)$ and $\overline{\beta}_k^{-1}(N_{k,\In})$ are exactly $\overline{r}$ and $\overline{s}$ in (\ref{mutation r}) and (\ref{mutation type s}), respectively.
 
Recalling the construction of $\overline{\beta}_k^{-1}(N_{k,\In})_1$, $\overline{\alpha}_k(N_{k,\out})_1$  (see \eqref{construction of beta-1_1}), and the definition of $N_{k,\In}$, $N_{k,\out}$ in \eqref{defn of M_in and M_out}, according to \eqref{iso eq in mutation of F}, we obtain:
\begin{itemize}
    \item the map $\overline{\alpha}_k$ induces an isomorphism  $\overline{\alpha}_k(N_k,\out)_1'\to \overline{\alpha}_k(N_k,\out)_1$ where 
    \begin{gather*}
   \overline{\alpha}_k(N_k,\out)^\prime_1:=\left\{\sum_{tb=k} b^\star\otimes y_{b}+\big(\varepsilon_k(N_{k,\out})+\Img \beta_k\cap N_{k,\out}\big)\bigg|\sum_{tb=k} \varepsilon_kb^\star\otimes y_{b}\in \Img\beta_k\cap N_{k,\out
 } \right\};
\end{gather*}
\item $\overline{\beta}_k^{-1}(N_{k,\In})_1=\left\{\sum_{ha=k}\varepsilon_k a\otimes x_{a}+\varepsilon_k(\ker\alpha_k\cap N_{k,\In})\bigg| \sum_{ha=k}\varepsilon_k a\otimes x_{a}\in \ker\alpha_k\cap N_{k,\In}\right\}.$
\end{itemize}
Then we define two $K$-linear maps $ \varphi:\overline{\beta}_k^{-1}(N_{k,\In})_1\to\alpha_k(N_{k,\In})$, $\varphi':\overline{\alpha}_k(N_k,\out)_1'\to \beta_k^{-1}(N_{k,\out})_1$ as follows:
\begin{align*}
   \varphi:\varepsilon_k a\otimes x+\varepsilon_k(\ker\alpha_k\cap N_{k,\In})_1&\longmapsto N(a)x+\varepsilon_k(\alpha_k(N_{k,\In})), \\
  \varphi': b^\star\otimes y+(\varepsilon_k(N_{k,\out})+\Img \beta_k\cap N_{k,\out})&\longmapsto \beta_k^{-1}(\varepsilon_kb^\star\otimes y +\varepsilon_k(\Img \beta_k\cap N_{k,\out})),
\end{align*}
here with some abuse of notation, we denote by  $\beta_k^{-1}$ the isomorphism from $(\Img(\beta_k)\cap N_{k,\out})_1$ to $ \beta_k^{-1}(N_{k,\out})_1$ induced by $\beta_k$. Note that $\varphi$ and $\varphi'$ are well-defined. Moreover, one can check that they are bijective.  Now we claim that the following diagram commutes:
\begin{equation}\label{iso commutive diagram}
     \begin{tikzcd}
 \overline{\beta}_k^{-1}(N_{k,\In})_1\arrow[r, "\partial_{\varepsilon_k}\widetilde{\s}"] \arrow[d,"\varphi"] & \overline{\alpha}_k(N_{k,\out})_1 \arrow[d,"\varphi^\prime\overline{\alpha}_k^{-1}"] \arrow[r,"\widetilde{\iota}"] &  \overline{\beta}_k^{-1}(N_{k,\In})_1\arrow[d,"\varphi"] \\
\alpha_k(N_{k,\In})_1 \arrow[r,"\iota"]                                     & \beta_k^{-1}(N_{k,\out})_1 \arrow[r,"\partial_{\varepsilon_k}\s"]           &   \alpha_k(N_{k,\In})_1.         
\end{tikzcd} 
\end{equation}
Remembering \eqref{mutation of s} and \eqref{martix of partial_{varepsilon_k}s}, we can express $\partial_{\varepsilon_k}\widetilde{\s}$ as the matrix $$\overline{\alpha}_k {\begin{pmatrix}
    0&ba \\
    0& 0\\
\end{pmatrix}}_{a,b}\overline{\beta}_k.$$
Then the induced $K$-linear map  $\partial_{\varepsilon_k}\widetilde{\s}:\overline{\beta}_k^{-1}(N_{k,\In})_1\to \overline{\alpha}_k(N_{k,\out})_1$ acts on elements as follows:
$$\varepsilon_k a\otimes x+\varepsilon_k(\ker\alpha_k\cap N_{k,\In}) \mapsto\overline{\alpha}_k\big(\sum_{b}b^\star \otimes N(ba)x+(\varepsilon_k(N_{k,\out})+\Img \beta_k\cap N_{k,\out})\big),$$ 
which equals  $\overline{\alpha}_k\varphi^{\prime-1}\iota \varphi$.
From the definition of $\overline{\alpha}_k$ in (\ref{defn of mutation of alpha and beta}):  
\begin{align*}
   \overline{\alpha}_k=(-\pi \rho,-\gamma,0,0)^T:M_{k,\out}\to \frac{\ker \gamma_k}{\Img \beta_k}\oplus \Img\gamma_k\oplus \frac{\ker \alpha_k}{\Img\gamma_k}\oplus V_k, 
\end{align*}
 we observe that the composition $\widetilde{\iota}\circ\overline{\alpha}_k:\overline{\alpha}_k(N_{k,\out})'_1\to\overline{\beta}_k^{-1}(N_{k,\In})_1 $ has the action: $$b^\star \otimes y+\big(\varepsilon_k(N_{k,\out})+\Img \beta_k\cap N_{k,\out}\big)\mapsto\gamma_{k}(b^\star \otimes y)+\varepsilon_k(\ker\alpha_k\cap N_{k,\In})=\sum_a \varepsilon_k a\otimes N(\partial_{[b\varepsilon_ka]}\s b )y+\varepsilon_k(\ker\alpha_k\cap N_{k,\In}).$$ Then a direct verification shows that: $\varphi \circ \widetilde{\iota}\circ\overline{\alpha}_k=\partial_{\varepsilon_k}\s\circ \varphi'$. This completes the proof of our claim.

Having established this claim, we conclude from the construction of  the $K$-linear spaces $W_l^{(u,v)}$ and $\overline{W}l^{(u,v)}$ in \eqref{construction of space W} that:
\begin{align*}
     \overline{W}_l^{(1,1)}\cong W_{l-1}^{(2,2)},\quad \overline{W}_{l}^{(1,2)}\cong W_l^{(2,1)},\quad\overline{W}_l^{(2,1)}\cong W_l^{(1,2)},\quad \overline{W}_l^{(2,2)}\cong W_{l+1}^{(1,1)},
\end{align*}
which imply the equalities in \eqref{mutation phi}.

To complete the proof of \eqref{observation}, it remains to show that $\overline{\beta}_k\overline{\alpha}_k(N_{k,\out})\subseteq N_{k,\In}$ and the Jordan type of $N_i$ is $\mathbf{t}_i$ for $i\neq k$. As an immediate consequence of  \eqref{defn of mutation of alpha and beta}, we get $\overline{\beta}_k\overline{\alpha}_k=\gamma_k$. In view of \eqref{defn of gamma}, each of the components of the map $\gamma_k$ is a linear combination of the compositions of maps of the kind $M(c)$ or $M(b\varepsilon_k^la)$ (where $a,b\in Q^\circ_1$, $c\in A_1$ are such that $ha=tb=k$, and $c$ is not incident to $k$, and $0\leq l\leq d_k$); thus, the defining conditions \eqref{subcondition 1} and \eqref{subcondition 2} imply the desired inclusion $\gamma_k(N_{k,\out})\subseteq N_{k,\In}$. In view of \eqref{mutation of s}, $\partial_{\varepsilon_i}\widetilde{\s}=\partial_{\varepsilon_i}\s$ for all $i\neq k$. Thus the Jordan type of $N_i$ is still $\mathbf{t}_i$.

The rest of the proof of (\ref{F-mutation}) is straightforward: use (\ref{rewrite of F}) and (\ref{observation}) for rewriting the right-hand side in (\ref{F-mutation}),  then substitute for \(y^{\prime}_{1},\ldots,y^{\prime}_{n}\) (resp. for \({\overline{r}}\), \({\overline{s}}\), and $\overline{\phi}$) the expressions given by (\ref{mutation for y}) (resp. by (\ref{mutation r}), (\ref{mutation type s}), (\ref{mutation phi})), simplify the resulting expression, and use (\ref{rewrite of F}) again to see that it is equal to the left-hand-side of (\ref{F-mutation}). The detailed computation proceeds as follows:
\begin{align*}
    &\left(1+z_ky_k'+(y_k')^2\right)^{\check{\beta}'_{+}(k)} F_{\overline{\mathcal{M}}}\left(y'_1,\cdots,y'_n,\mathbf{z}\right)\\
     =&\left(1+z_ky_k'+(y_k')^2\right)^{-\check{\beta}_{-}(k)} \sum_{\mathbf{e}',\mathbf{t}',\overline{r},\overline{s},\overline{\phi}}\chi\left(Z_{\hat{\mathbf{e}},\hat{\mathbf{t}};\overline{\phi},\overline{r},\overline{s}}(M)\right)\left(y'_k\right)^{2\overline{r}_2} \left(z_ky'_k+(y'_k)^2\right)^{\overline{\phi}^{(2,2)}_{0}}\prod_{\substack{1\leq l,\\1\leq u,v\leq 2}}\left(g_{l}^{(u,v)}\left(y'_k,z_k\right)\right)^{\overline{\phi}_l^{(u,v)}}\\
     &\left(1+z_ky_k'+(y'_k)^2\right)^{\overline{s}_2-\overline{r}_2-\overline{r}_1+\sum_{l,v}(l-1)\overline{\phi}_l^{(1,v)}+l\overline{\phi}_l^{(2,v)}}\prod_{i\neq k}f_{i,l}^{t_{i,l}}\left(z_i\right)\left(y'_i\right)^{e_i}\\
     =&\left(1+z_ky_k+y_k^2\right)^{-\check{\beta}_{-}(k)} y_k^{2\check{\beta}_{-}(k)}\sum_{\mathbf{e}',\mathbf{t}',r,s,\phi}\chi\left(Z_{\hat{\mathbf{e}},\hat{\mathbf{t}};\phi,r,s}(M)\right)\left(y_k\right)^{-2\left(\sum_{i}[b_{i,k}]_+e_i+\Check{\beta}_+(k)- s_1-s_2\right)} \left(z_ky_k+1\right)^{\phi^{(1,1)}_{1}}y_k^{-2\phi^{(1,1)}_{1}}\\
     &\prod_{0\leq l}\left(g_l^{(2,2)}\left(y_k,z_k\right)\right)^{\phi_l^{(2,2)}}y_k^{-2(l+1)\phi_l^{(2,2)}}
     \prod_{1\leq l}\left(g_{l}^{(2,1)}\left(y_k,z_k\right)\right)^{\phi_l^{(2,1)}}\left(1+z_ky_k+y_k^2\right)^{\phi_l^{(2,1)}}y_k^{-2(l+1)\phi_l^{(2,1)}}\\
     &\prod_{1\leq l}\left(g_{l}^{(1,2)}\left(y_k,z_k\right)\right)^{\phi_l^{(1,2)}}\left(1+z_ky_k+y_k^2\right)^{-\phi_l^{(1,2)}}y_k^{-2l\phi_l^{(1,2)}}
     \prod_{2\leq l}\left(g_l^{(1,1)}\left(y_k,z_k\right)\right)^{\phi_l^{(1,1)}}y_k^{-2l\phi_l^{(1,1)}}\\
     &\left(1+z_ky_k+y_k^2\right)^{\check{\beta}_-(k)-\check{\beta}_+(k)+\sum_{i\neq k}b_{k,i}e_i+s_2-r_1-r_2+\sum_{l}(l-1)\left(\phi_{l-1}^{(2,2)}+\phi_l^{(2,1)}\right)+l\left(\phi_l^{(1,2)}+\phi_{l+1}^{(1,1)}\right)}\\ 
     &y_k^{-2\left(\check{\beta}_-(k)-\check{\beta}_+(k)+\sum_{i\neq k}b_{k,i}e_i+s_2-r_1-r_2+\sum_{l}(l-1)\left(\phi_{l-1}^{(2,2)}+\phi_l^{(2,1)}\right)+l\left(\phi_l^{(1,2)}+\phi_{l+1}^{(1,1)}\right)\right)}\\
     &\prod_{i\neq k} f_{i,l}^{t_{i,l}}\left(z_i\right)\left(y_iy_{k}^{2\left[b_{ki}\right]_{+}}\left(1+z_ky_k+y_k^2\right)^{-b_{ki}}\right)^{e_i}\\
=&\sum_{\mathbf{e}',\mathbf{t}',r,s,\phi}\chi\left(Z_{\hat{\mathbf{e}},\hat{\mathbf{t}};\phi,r,s}(M)\right) \left(z_ky_k+y_k^2\right)^{\phi^{(2,2)}_{0}}\prod_{\substack{1\leq l,\\1\leq u,v\leq 2}}\left(g_{l}^{(u,v)}\left(y_k,z_k\right)\right)^{\phi_l^{(u,v)}}\prod_{i\neq k}f_{i,l}^{t_{i,l}}\left(z_i\right)y_i^{e_i}\\
     &y_k^{2\left(s_1+r_1+r_2-\left(\sum_l \left(2l-1\right)\phi_l^{(1,1)}+2l\phi_l^{(1,2)} +2l\phi_l^{(2,1)}+\left(2l+1\right)\phi_l^{(2,2)}\right)\right)}\left(1+z_ky_k+y_k^2\right)^{-\check{\beta}_+(k)+s_2-r_1-r_2+\sum_{l,v}(l-1)\phi_l^{(1,v)}+l\phi_l^{(2,v)}}.
\end{align*}
The final equality matches \eqref{rewrite of F} through the following verification:
\begin{equation*}
\setlength{\abovedisplayskip}{6pt}
    s_1=\sum_{l,u,v} l\phi_l^{(u,v)},\quad r_1=\sum_l (l-1)\phi_l^{(1,1)}+l\phi_l^{(1,2)}+l\phi_l^{(2,1)}+(l+1)\phi_l^{(2,2)}.  \qedhere
\end{equation*}
\end{proof}

\section{Application to generalized cluster algebras}\label{section: Application to generalized cluster algebras} 
\subsection{Generalized cluster algebras}
Recall that a matrix $B=\left(b_{i,j}\right)_{i, j=1}^{n}$ is said to be \textbf{skew-symmetrizable} if there is an $n$-tuple of positive integers $\mathbf{d}=\left(d_{1}, \ldots, d_{n}\right)$ such that $d_{i} b_{i, j}=-d_{j} b_{j ,i}$.

We start by fixing a semifield $\mathbb{P}$, whose addition is denoted by $\oplus$. Let $\mathbb{Z P}$ be the group ring of $\mathbb{P}$, and let $\mathbb{Q P}$ be the field of fractions of $\mathbb{Z} \mathbb{P}$. Let $w_{1}, \ldots, w_{n}$ be any algebraically independent variables, and let $\mathcal{F}=\mathbb{Q P}(w)$ be the field of rational functions in $w=\left(w_{1}, \ldots, w_{n}\right)$ with coefficients in $\mathbb{Q P}$.

\begin{defn}
\begin{enumerate}
    \item[(i)] A \textbf{(labeled) seed} in $\mathbb{P}$ is a triple $(\mathbf{x}, \mathbf{y}, B)$ such that
\begin{itemize}
    \item $B$ is a skew-symmetrizable matrix, called an \textbf{exchange matrix};
    \item $\mathbf{x}=\left(x_{1}, \ldots, x_{n}\right)$ is an $n$-tuple of elements in $\mathcal{F}$, called the \textbf{cluster}, and   $x_{1}, \ldots, x_{n}$ are called \textbf{cluster variables} or \textbf{$x$-variables};
    \item $\mathbf{y}=\left(y_{1}, \ldots, y_{n}\right)$ is an $n$-tuple of elements in $\mathbb{P}$, where $y_{1}, \ldots, y_{n}$ are called \textbf{coefficients} or \textbf{$y$-variables}.
\end{itemize}
\item [(ii)] A \textbf{(labeled) mutation pair} in $\mathbb{P}$ is a pair $(\mathbf{d}, \mathbf{z})$, where
\begin{itemize}
    \item $\mathbf{d}=\left(d_{1}, \ldots, d_{n}\right)$ is an $n$-tuple of positive integers, and we call these integers the \textbf{mutation degrees};
    \item $\mathbf{z}=\left(z_{i, s}\right)_{i=1, \ldots, n ; s=1, \ldots, d_{i}-1}$ is a family of elements in $\mathbb{P}$ satisfying the reciprocity condition
\begin{equation}\label{reciprocity condition}
    z_{i, s}=z_{i, d_{i}-s} \quad \text{for } 1\leq i\leq n \text{ and } s=1, \ldots, d_{i}-1.
\end{equation}
We call them the \textbf{frozen coefficients}, since they are not \enquote{mutated}, or simply the \textbf{$z$-variables}. We additionally set
\begin{equation*}
    z_{i, 0}=z_{i, d_{i}}=1
\end{equation*}
for $i=1,\ldots,n$.
\end{itemize}
\end{enumerate}
\end{defn}
\begin{defn}
 Let $(\mathbf{d}, \mathbf{z})$ be a mutation pair. For any seed $(\mathbf{x}, \mathbf{y}, B)$ in $\mathbb{P}$ and $k\in \{1, \ldots, n\}$, the \textbf{$(\mathbf{d}, \mathbf{z})$-mutation} of $(\mathbf{x}, \mathbf{y}, B)$ at $k$ is another seed $\left(\mathbf{x}^{\prime}, \mathbf{y}^{\prime}, B^{\prime}\right)=$ $\mu_{k}(\mathbf{x}, \mathbf{y}, B)$ in $\mathbb{P}$ defined by the following rule:
 \begin{align}
 & x_{i}^{\prime}= \begin{cases}\label{mutation of x}
x_{k}^{-1}\left(\prod_{j=1}^{n} x_{j}^{\left[- b_{j, k}\right]_{+}}\right)^{d_{k}} \frac{\sum_{s=0}^{d_{k}} z_{k, s} \hat{y}_{k}^{s}}{\bigoplus_{s=0}^{d_{k}} z_{k, s} y_{k}^{s}} & i=k; \\
 x_i & i \neq k,\end{cases}\\
 & y_{i}^{\prime}= \begin{cases}\label{mutation of y} y_{k}^{-1} & i=k; \\
y_{i}\left(y_{k}^{\left[b_{k, i}\right]_{+}}\right)^{d_{k}}\left(\bigoplus_{s=0}^{d_{k}} z_{k, s} y_{k}^{ s}\right)^{-b_{k ,i}} & i \neq k,\end{cases} \\
 & \text{where } \hat{y}_{k}=y_{k} \prod_{j=1}^{n} x_{j}^{b_{j ,k}},\\ \label{defn of hat-y}
     & b_{i, j}^{\prime}= \begin{cases}-b_{i ,j} & i=k \text { or } j=k; \\
b_{i, j}+d_{k}\left(\left[-b_{i, k}\right]_{+} b_{k, j}+b_{i, k}\left[b_{k, j}\right]_{+}\right) & i, j \neq k.\end{cases} 
 \end{align}
 When the data $(\mathbf{d}, \mathbf{z})$ is clearly assumed, we may drop the prefix and simply call it the \textbf{generalized mutation}. 
\end{defn}
\begin{pro}[{\cite[Proposition 3.2]{nakanishi2015}}]
    The $(\mathbf{d}, \mathbf{z})$-mutation $\mu_{k}$ is  an involution, that is, $\mu_{k}\left(\mu_{k}(\mathbf{x}, \mathbf{y}, B)\right)=(\mathbf{x}, \mathbf{y}, B)$.
\end{pro}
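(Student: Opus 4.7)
The plan is to verify each of the three components of the seed separately, namely that $B''=B$, $\mathbf{y}''=\mathbf{y}$, and $\mathbf{x}''=\mathbf{x}$ when we apply $\mu_k$ twice. The key technical input is the reciprocity condition \eqref{reciprocity condition}, which, after setting $z_{k,0}=z_{k,d_k}=1$, yields the identity
\begin{equation*}
\sum_{s=0}^{d_k} z_{k,s}\, t^{-s}=t^{-d_k}\sum_{s=0}^{d_k} z_{k,s}\, t^{s}
\end{equation*}
in any commutative group containing a unit $t$. This identity, applied with $t=y_k$ and with $t=\hat y_k$, is what forces the $z$-contributions in the mutation formulas \eqref{mutation of x} and \eqref{mutation of y} to cancel upon iteration.

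First I would handle $B''=B$. This is a purely combinatorial check that is identical to the cluster-algebra proof: the formula $b'_{ij}=-b_{ij}$ when $i=k$ or $j=k$ immediately gives back $b_{ij}$, while for $i,j\neq k$ one substitutes $b'_{ik}=-b_{ik}$, $b'_{kj}=-b_{kj}$ into the second case of the exchange-matrix mutation and uses $[x]_+-[-x]_+=x$ to collapse the double correction. Note that the prefactor $d_k$ plays no role here since it appears identically in both iterations. This step is straightforward; indeed Proposition \ref{mutation of Q} of the excerpt already records that this rule coincides with classical matrix mutation.

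Next I would turn to $\mathbf{y}''=\mathbf{y}$. The equality $y''_k=y_k$ is trivial from $y'_k=y_k^{-1}$. For $i\neq k$, plugging into \eqref{mutation of y} twice gives
\begin{equation*}
y''_i=y'_i\bigl((y'_k)^{[b'_{ki}]_+}\bigr)^{d_k}\Bigl(\bigoplus_s z_{k,s}(y'_k)^s\Bigr)^{-b'_{ki}}=y'_i\,y_k^{-d_k[-b_{ki}]_+}\Bigl(\bigoplus_s z_{k,s}y_k^{-s}\Bigr)^{b_{ki}}.
\end{equation*}
Applying the reciprocity identity to the last factor produces $y_k^{-d_k b_{ki}}(\bigoplus_s z_{k,s}y_k^s)^{b_{ki}}$, which exactly cancels the corresponding factor in $y'_i$ using $[b_{ki}]_+-[-b_{ki}]_+-b_{ki}=0$. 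Hence $y''_i=y_i$.

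Finally I would tackle $\mathbf{x}''=\mathbf{x}$, which is the main obstacle and the only place where all three types of data interact. The equality $x''_i=x_i$ for $i\neq k$ is immediate, so everything reduces to checking $x''_k=x_k$. Using $B''=B$ and $\mathbf{y}''=\mathbf{y}$ (already established), one computes $\hat y'_k=\hat y_k^{-1}$ directly from its definition \eqref{defn of hat-y}, so the $z$-quotient in $x''_k$ becomes
\begin{equation*}
\frac{\sum_{s}z_{k,s}\hat y_k^{-s}}{\bigoplus_{s}z_{k,s}y_k^{-s}}=\Bigl(\tfrac{y_k}{\hat y_k}\Bigr)^{d_k}\frac{\sum_{s}z_{k,s}\hat y_k^{s}}{\bigoplus_{s}z_{k,s}y_k^{s}},
\end{equation*}
again by the reciprocity identity, where $y_k/\hat y_k=\prod_j x_j^{-b_{jk}}$. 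Substituting this together with $(x'_k)^{-1}$ into the formula for $x''_k$, the $z$-factors cancel exactly, and the remaining monomial exponent of each $x_j$ with $j\neq k$ is $d_k\bigl(-[-b_{jk}]_+ + [b_{jk}]_+ - b_{jk}\bigr)=0$, giving $x''_k=x_k$. The delicate part is keeping the bookkeeping straight between the semifield operation $\oplus$ in the denominator and the ordinary sum in the numerator; because the reciprocity identity is formal, it holds in both, so no separate argument is needed there.
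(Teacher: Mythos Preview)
Your direct verification is correct: the reciprocity identity $\sum_s z_{k,s}t^{-s}=t^{-d_k}\sum_s z_{k,s}t^{s}$ is exactly the mechanism that makes the $z$-factors cancel upon iterating $\mu_k$, and your bookkeeping for the exponents of $y_k$ and of the $x_j$ is accurate. The paper itself gives no proof of this proposition---it simply cites \cite[Proposition 3.2]{nakanishi2015}---so there is nothing to compare against; your argument is the standard direct computation one would expect to find in the cited reference.
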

\begin{pro}[{\cite[Proposition 2.5]{Nakanishi_2015}}]
    Under the mutation $\mu_k$, the  $\hat{y}$-variables mutate in the same way as the $y$-variables, namely, 
\begin{equation}\label{mutation of hat-y}
    \hat{y}_{i}^{\prime}= \begin{cases}\hat{y}_{k}^{-1} & i=k \\ \hat{y}_{i}\left(\hat{y}_{k}^{\left[ b_{k, i}\right]_{+}}\right)^{d_{k}}\left(\sum_{s=0}^{d_{k}} z_{k, s} \hat{y}_{k}^{s}\right)^{-b_{k, i}} & i \neq k\end{cases}
\end{equation}
\end{pro}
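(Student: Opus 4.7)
The statement is a purely formal identity following from the three mutation rules \eqref{mutation of x}, \eqref{mutation of y}, \eqref{muation of B(A)} together with the definition \eqref{defn of hat-y} of $\hat{y}_k$. Consequently, my plan is a direct, line-by-line computation: expand $\hat{y}_i' := y_i' \prod_{j=1}^n (x_j')^{b_{j,i}'}$, substitute the mutation formulas, and simplify until the expression matches the right-hand side of \eqref{mutation of hat-y}. The case $i=k$ is immediate: since $b_{k,k}=0$, only the factors with $j\neq k$ contribute, and for those $x_j'=x_j$ and $b_{j,k}'=-b_{j,k}$, so $\hat{y}_k' = y_k^{-1}\prod_{j\neq k} x_j^{-b_{j,k}} = \hat{y}_k^{-1}$.

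For the case $i\neq k$, I plan to isolate the three sources of change and handle them separately. First, I split the product $\prod_j(x_j')^{b_{j,i}'}$ into the $j=k$ factor and the $j\neq k$ factors. In the latter, $x_j'=x_j$, and I substitute $b_{j,i}' = b_{j,i} + d_k([-b_{j,k}]_+ b_{k,i} + b_{j,k}[b_{k,i}]_+)$. In the former, I use $b_{k,i}'=-b_{k,i}$ and plug in the full formula for $x_k'$ from \eqref{mutation of x}. The key observation is that the term $\bigl(\prod_j x_j^{[-b_{j,k}]_+}\bigr)^{-d_k b_{k,i}}$ coming from $x_k'$ exactly cancels the contribution $\prod_{j\neq k} x_j^{d_k[-b_{j,k}]_+ b_{k,i}}$ coming from the $j\neq k$ product (the $j=k$ case is harmless because $b_{k,k}=0$). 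What remains on the $x$-side is $\prod_j x_j^{b_{j,i}} \cdot \prod_j x_j^{d_k b_{j,k}[b_{k,i}]_+}$, which by the definition \eqref{defn of hat-y} equals $(\hat{y}_i/y_i)\cdot(\hat{y}_k/y_k)^{d_k[b_{k,i}]_+}$.

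Combining this with $y_i' = y_i\,(y_k^{[b_{k,i}]_+})^{d_k}\bigl(\bigoplus_{s} z_{k,s}y_k^s\bigr)^{-b_{k,i}}$ from \eqref{mutation of y}, together with the factor $\Bigl(\dfrac{\sum_s z_{k,s}\hat{y}_k^s}{\bigoplus_s z_{k,s}y_k^s}\Bigr)^{-b_{k,i}}$ that was left over from $x_k'$, the $y_i$'s cancel, the $y_k^{d_k[b_{k,i}]_+}$ merges with $(\hat{y}_k/y_k)^{d_k[b_{k,i}]_+}$ to give $\hat{y}_k^{d_k[b_{k,i}]_+}$, and the two copies of $\bigoplus_{s} z_{k,s}y_k^s$ cancel (one from $y_i'$, the inverse from the denominator of $x_k'$). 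What is left is exactly $\hat{y}_i\,(\hat{y}_k^{[b_{k,i}]_+})^{d_k}\bigl(\sum_{s=0}^{d_k} z_{k,s}\hat{y}_k^s\bigr)^{-b_{k,i}}$, as desired.

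There is no real obstacle beyond careful bookkeeping; the only subtle point is the cancellation of the semifield-addition denominators in $x_k'$ against the same factor appearing in $y_i'$, which is precisely why the tropical term $\bigoplus$ disappears and is replaced by the ordinary sum $\sum$ in $\hat{y}_i'$. I will set up the computation so that this cancellation is visible in a single step, and will briefly remark that the splitting $[b_{j,k}]_+ - [-b_{j,k}]_+ = b_{j,k}$ is what ensures the exponents on each $x_j$ collapse to the claimed value.
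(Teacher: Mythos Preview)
Your computation is correct and is the standard direct verification; the paper itself does not supply a proof but simply cites \cite[Proposition 2.5]{Nakanishi_2015}, where the argument is precisely this kind of substitution-and-cancellation.
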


Let $\mathbb{T}_n$ be the $n$-regular tree with edges labeled by the set $\{1,\ldots,n\}$ so that the $n$ edges emanating from each vertex receive different labels. We write $t \overset{k}{\rule[3pt]{6mm}{0.05em}} t'$ if the vertices $t$ and $t'$ of $\mathbb{T}_n$ are connected by an edge labeled with $k$.
\begin{defn}
    \begin{enumerate}
        \item [(i)] A \textbf{$(\mathbf{d},\mathbf{z})$-cluster pattern} $\Sigma$ is an assignment of a seed $(B_t,\mathbf{x}_t,\mathbf{y}_t)$ to every vertex $t$ of the $n$-regular tree $\mathbb{T}_n$ such that $$(\mathbf{x}_{t'},\mathbf{y}_{t'},B_{t'})=\mu_k(\mathbf{x}_t,\mathbf{y}_t,B_t)$$ for any edge $t\overset{k}{\rule[3pt]{6mm}{0.05em}}t'$, where $\mu_k$ is the $(d,\mathbf{z})$-mutation at $k$. For the seed  $(B_t,\mathbf{x}_t,\mathbf{y}_t)$, we always write 
\begin{align*}
    \mathbf{x}_t=(x_{1;t},\ldots,x_{n;t}),\quad  \mathbf{y}_t=(y_{1;t},\ldots,y_{n;t}),\quad  B_t=(b_{i,j}^t).
\end{align*}
        \item [(ii)] The \textbf{$(\mathbf{d},\mathbf{z})$-cluster algebra} $\mathcal{A}(\mathbf{x},\mathbf{y},B)=\mathcal{A}(\mathbf{x}, \mathbf{y}, B;\mathbf{d},\mathbf{z})$ (also called a \textbf{generalized cluster algebra}) is the $\mathbb{ZP}$-subalgebra of $\mathcal{F}$ generated by all cluster variables appearing in $\Sigma$, more precisely
        $$\mathcal{A}(\mathbf{x}, \mathbf{y}, B;\mathbf{d},\mathbf{z})=\mathbb{ZP}[x_{i;t}:t\in \mathbb{T},1\leq i\leq n]\subseteq \mathcal{F}.$$
    \end{enumerate}
\end{defn}
\begin{rmk}
    If $\mathbf{d}=(1, \ldots, 1)$ and $\mathbf{z}$ is empty, the generalized cluster algebra reduces to the classical cluster algebra by Fomin and  Zelevinsky \cite{fomin2001}. 
\end{rmk}
As with classical cluster algebras, generalized cluster algebras also exhibit the Laurent phenomenon.
\begin{thm}[{\cite[Theorem 2.8]{Nakanishi_2015}}]\label{Laurent}
Let $\mathcal{A}=\mathcal{A}(\mathbf{x},\mathbf{y},B;\mathbf{d},\mathbf{z})$ be a generalized cluster algebra. Then any element of $\mathcal{A}$ can be expressed as a Laurent polynomial in the cluster $\mathbf{x}$ with coefficients in $\mathbb{Z}\mathbb{P}$.
\end{thm}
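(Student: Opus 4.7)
The approach I would take is to adapt the Fomin--Zelevinsky caterpillar argument to the setting of polynomial exchange relations, as in the original proof of Chekhov--Shapiro. Since $\mathcal{A}$ is generated as a $\mathbb{Z}\mathbb{P}$--algebra by the cluster variables $\{x_{i;t}\}_{t\in\mathbb{T}_n,\,1\le i\le n}$, it suffices to show that each such $x_{i;t}$ lies in the Laurent polynomial ring $\mathbb{Z}\mathbb{P}[\mathbf{x}^{\pm 1}]$ for the fixed initial cluster $\mathbf{x}=\mathbf{x}_{t_0}$. I would proceed by induction on the tree distance $d(t_0,t)$ in $\mathbb{T}_n$. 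The cases $d(t_0,t)\le 1$ are immediate from the exchange formula (\ref{mutation of x}), since each numerator there is already a polynomial in $\mathbf{x}_{t_0}$ with coefficients in $\mathbb{Z}\mathbb{P}$, and the denominator $\bigoplus_{s}z_{k,s}y_k^s\in\mathbb{P}$ is a unit in $\mathbb{Z}\mathbb{P}$.

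For the inductive step, the caterpillar lemma reduces matters to the two--step claim: whenever $t_0\stackrel{j}{\rule[3pt]{6mm}{0.05em}}t_1\stackrel{k}{\rule[3pt]{6mm}{0.05em}}t_2$ with $j\neq k$, the variable $x_{k;t_2}$ is a Laurent polynomial in $\mathbf{x}_{t_0}$. By the exchange relation at $t_1$ together with $x_{k;t_1}=x_{k;t_0}$, this amounts to showing that
\[
x_{k;t_0}\,x_{k;t_2}=\Bigl(\prod_{\ell} x_{\ell;t_1}^{[-b^{t_1}_{\ell,k}]_+}\Bigr)^{\!d_k}\Bigl(\sum_{s=0}^{d_k}z_{k,s}\hat{y}_{k;t_1}^{s}\Bigr)\Big/\Bigl(\bigoplus_{s=0}^{d_k}z_{k,s}y_{k;t_1}^{s}\Bigr)
\]
lies in $\mathbb{Z}\mathbb{P}[\mathbf{x}_{t_0}^{\pm 1}]$. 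Only $x_{j;t_1}$ differs from the corresponding initial variable, and (\ref{mutation of x}) gives $x_{j;t_1}=x_{j;t_0}^{-1}\cdot N_j$ with $N_j\in\mathbb{Z}\mathbb{P}[\mathbf{x}_{t_0}]$; substituting and clearing denominators produces a rational expression in $\mathbf{x}_{t_0}$ whose only possible singularity is at $x_{j;t_0}=0$.

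The main obstacle is the divisibility check that resolves this apparent singularity: one has to verify that after writing $\hat{y}_{k;t_1}$ in the initial variables via (\ref{mutation of hat-y}), the numerator of the resulting expression is divisible in $\mathbb{Z}\mathbb{P}[\mathbf{x}_{t_0}]$ by a sufficiently high power of $x_{j;t_0}$. Here the reciprocity condition (\ref{reciprocity condition}), $z_{k,s}=z_{k,d_k-s}$, is essential: it makes the exchange polynomial $P_k(u)=\sum_{s}z_{k,s}u^{s}$ palindromic, so that after the substitution $\hat{y}_{k;t_1}\mapsto \hat{y}_{k;t_0}\cdot P_j(\hat{y}_{j;t_0})^{-b^{t_1}_{j,k}}\cdot(\text{monomial in }\mathbf{x}_{t_0})$, the factors of $P_j(\hat{y}_{j;t_0})$ that appear in the numerator and denominator pair off symmetrically (the $s$-th term pairing with the $(d_k-s)$-th) and cancel cleanly. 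The verification is a polynomial identity in the $\hat{y}$--variables that generalizes the classical Fomin--Zelevinsky coprimality check; once it is in hand, the caterpillar induction on the number of edges in a path out of $t_0$ carries over verbatim.

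As an alternative that bypasses the caterpillar manipulations, once the representation--theoretic material of Section \ref{section: Application to generalized cluster algebras} is in place, one could instead establish a separation formula
\[
x_{i;t}=\mathbf{x}_{t_0}^{\check{\mathbf{g}}^{B,t_0}_{i;t}}\,F^{B,t_0}_{i;t}(\hat{\mathbf{y}},\mathbf{z})\Big/\bigl(F^{B,t_0}_{i;t}\!\mid_{\mathbb{P}}(\mathbf{y},\mathbf{z})\bigr),
\]
using Theorem \ref{thm: interpretation of g and F} to identify the numerator with $F_{\mathcal{M}^{B;t_0}_{i;t}}$ and the exponent with $\check{\mathbf{g}}_{\mathcal{M}^{B;t_0}_{i;t}}$. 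The Laurent property is then immediate, as the right--hand side is manifestly an element of $\mathbb{Z}\mathbb{P}[\mathbf{x}_{t_0}^{\pm 1}]$.
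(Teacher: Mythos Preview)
The paper does not prove this theorem at all: it is quoted verbatim from \cite{Nakanishi_2015} (who in turn attributes the Laurent phenomenon for generalized cluster algebras to Chekhov--Shapiro), and no argument is given in the present paper. So there is nothing to compare your proposal against on the paper's side; your first approach---adapting the Fomin--Zelevinsky caterpillar lemma to polynomial exchange relations, with the palindromicity condition~\eqref{reciprocity condition} driving the divisibility check---is indeed the line taken in the original references, and your sketch captures the main idea correctly. One small imprecision: the caterpillar lemma does not reduce to a single two--step check, but to verifying the Laurent property along a short caterpillar together with a coprimality condition on exchange polynomials; you would need to spell out that extra piece to make the induction close.

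Your alternative route via the separation formula and Theorem~\ref{thm: interpretation of g and F} is not a valid proof of Theorem~\ref{Laurent} in the generality stated. Theorem~\ref{thm: interpretation of g and F} is established in the paper only under the standing hypotheses that $B$ is skew-symmetric, $d_k\le 2$ for every $k$, and $(Q,\mathbf d)$ admits a nondegenerate locally free Jacobi-finite potential; it says nothing about general skew-symmetrizable $B$ or arbitrary mutation degrees. Moreover, the separation formula~\eqref{express x in term of F and g} is itself quoted from \cite{Nakanishi_2015}, where it is derived alongside (and not independently of) the Laurent phenomenon, so invoking it here would be circular. If you want a self-contained proof, stick with the caterpillar argument.
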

Recall that for every finite family of indeterminates $u_1,\ldots,u_n$, one can associate two semifields: the universal semifield $\mathbb{Q}_{\text{sf}}(u_1,\ldots,u_n)$ and the tropical semifield $\text{Trop}(u_1,\ldots,u_n)$ (cf. \cite[Definitions 2.1, 2.2]{fomin2006cluster}) where $\mathbb{Q}_{\text{sf}}(u_1,\ldots,u_n)$ consists of all rational functions in $u_1,\ldots,u_n$ that can be written as subtraction-free rational expressions and $\operatorname{Trop}(u_1,\ldots,u_n)$ is the multiplicative group of Laurent monomials $\prod_{j=1}^n u_j^{l_j}$ with the addition $\oplus$ defined by 
$$\prod_{j}u_j^{l_j}\oplus\prod_{j}u_j^{s_j}=\prod_{j}u_j^{\min(l_j,s_j)}.$$

Now, for each initial $(\mathbf{x},\mathbf{y},B)$, we
\begin{itemize}
    \item replace the $x$-, $y$-, and $z$-variables by formal indeterminates (denoted by the same symbols by abuse of notation);
    \item replace the semifield $\mathbb{P}$ by the tropical semifield $\operatorname{Trop}(\mathbf{y},\mathbf{z})$;
    \item replace $\mathcal{F}$ by $\mathbb{Q}(\mathbf{x}, \mathbf{y},\mathbf{z})$ and choose to perform all calculations here.
\end{itemize}
Since no subtraction occurs in the recursions \eqref{mutation of x}, we obtain in this way \textbf{$X$-functions} $X_{i;t} \in \mathbb{Q}_{\mathrm{sf}}(\mathbf{x}, \mathbf{y},\mathbf{z})$. 
Alternatively, performing the $y$-mutations \eqref{mutation of y} inside $\mathbb{Q}_{\mathrm{sf}}(y,\mathbf{z})$, we obtain \textbf{$Y$-functions} $Y_{j;t} \in \mathbb{Q}_{\mathrm{sf}}(\mathbf{\mathbf{y},\mathbf{z}})$. 
By the universality of the semifield $\mathbb{Q}_{\mathrm{sf}}(\mathbf{y},\mathbf{z})$, we can recover the original coefficient $y_{j;t}$ through the specialization $Y_{j;t}\big|\mathbb{P}$. Taking this specialization where $\mathbb{P} = \operatorname{Trop}(\mathbf{y},\mathbf{z})$, we get monomials $Y_{j;t}\big|_{\operatorname{Trop}(\mathbf{y},\mathbf{z})} = \prod_{i=1}^n y_i^{c_{i,j}^t},$ where we write $C_t$ for the resulting matrix whose columns $\mathbf{c}_{j;t} \in \mathbb{Z}^n$ are called \textbf{$c$-vectors}. 
Note that the $c$-vectors depend only on the initial exchange matrix $B$ and the mutation degrees $d_i$,  but not on the choice of initial cluster $\mathbf{x}$.
\begin{pro}[{\cite[Proposition 3.3]{Nakanishi_2015}}]
    Each $X$-function $X_{i;t}$ is contained in $\mathbb{Z}[\mathbf{x}^{\pm1},\mathbf{y},\mathbf{z}]$.
\end{pro}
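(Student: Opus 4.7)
The strategy is to prove the \emph{separation-of-additions formula}
\[
X_{i;t}(\mathbf{x},\mathbf{y},\mathbf{z}) \;=\; \mathbf{x}^{\mathbf{g}_{i;t}^{B;t_0}} \, F_{i;t}^{B;t_0}\!\bigl(\hat{\mathbf{y}},\mathbf{z}\bigr),
\]
together with the polynomiality $F_{i;t}^{B;t_0}\in\mathbb{Z}[\mathbf{y},\mathbf{z}]$ of the $F$-polynomials defined by the recursions \eqref{defn of g in gca}--\eqref{mutation of F}. Granted both assertions, the proposition is immediate: expanding $\hat{y}_j = y_j\prod_i x_i^{b_{i,j}}$ on the right produces a Laurent polynomial in $\mathbf{x}$ with coefficients in $\mathbb{Z}[\mathbf{y},\mathbf{z}]$, i.e. an element of $\mathbb{Z}[\mathbf{x}^{\pm 1},\mathbf{y},\mathbf{z}]$.

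I would prove both claims simultaneously by induction on the tree distance $d(t_0,t)$ in $\mathbb{T}_n$. The base case is trivial, since $X_{i;t_0} = x_i$, $\mathbf{g}_{i;t_0}^{B;t_0} = e_i$, and $F_{i;t_0}^{B;t_0} = 1$. For the inductive step along an edge $t\overset{k}{\rule[3pt]{6mm}{0.05em}} t'$, substitute the inductive hypothesis $X_{j;t} = \mathbf{x}^{\mathbf{g}_{j;t}}F_{j;t}(\hat{\mathbf{y}}_t,\mathbf{z})$ into the exchange relation \eqref{mutation of x} for $X_{k;t'}$, and compare the result with the simultaneous recursions for $\mathbf{g}$-vectors and $F$-polynomials. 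The algebraic identity that drives this comparison is
\[
\bigoplus_{s=0}^{d_k} z_{k,s}\, Y_{k;t}^{s} \;=\; F_{k;t}^{B;t_0}\bigl|_{\operatorname{Trop}(\mathbf{y},\mathbf{z})},
\]
which asserts that the tropical denominator appearing in \eqref{mutation of x} matches the tropical evaluation of the $F$-polynomial appearing in the recursion \eqref{mutation of F}; granting this, these contributions cancel and the substituted expression rearranges into the desired $\mathbf{x}^{\mathbf{g}_{k;t'}}F_{k;t'}(\hat{\mathbf{y}}_{t'},\mathbf{z})$.

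The main obstacle is verifying polynomiality of the $F$-polynomials, since a priori they are only subtraction-free rational functions. I would handle it by combining three ingredients. First, applying Theorem~\ref{Laurent} with the tropical semifield $\mathbb{P} = \operatorname{Trop}(\mathbf{y},\mathbf{z})$, for which $\mathbb{Z}\mathbb{P} = \mathbb{Z}[\mathbf{y}^{\pm 1},\mathbf{z}^{\pm 1}]$, already gives $X_{i;t}\in\mathbb{Z}[\mathbf{x}^{\pm 1},\mathbf{y}^{\pm 1},\mathbf{z}^{\pm 1}]$, whence $F_{i;t}^{B;t_0}\in\mathbb{Z}[\mathbf{y}^{\pm 1},\mathbf{z}^{\pm 1}]$ via the separation formula. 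Second, the subtraction-free character of \eqref{mutation of x} and \eqref{mutation of y} forces each $F_{i;t}$ to lie in $\mathbb{Q}_{\mathrm{sf}}(\mathbf{y},\mathbf{z})$, so its coefficients, viewed as a Laurent polynomial, are non-negative integers. Third, the identity $F_{i;t}|_{\operatorname{Trop}(\mathbf{y},\mathbf{z})} = 1$ is propagated inductively by applying \eqref{mutation of F} tropically and using the induction hypothesis at $t$. A Laurent polynomial in $\mathbf{y},\mathbf{z}$ with non-negative integer coefficients whose tropical evaluation equals the monomial~$1$ must have all exponents non-negative in every coordinate, forcing $F_{i;t}\in\mathbb{Z}[\mathbf{y},\mathbf{z}]$ and closing the induction.
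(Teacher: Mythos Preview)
The paper does not supply its own proof of this proposition; it is quoted from Nakanishi. Judging your argument on its own, the second ingredient contains a genuine error: the implication ``$F\in\mathbb{Q}_{\mathrm{sf}}(\mathbf{y},\mathbf{z})$ and $F\in\mathbb{Z}[\mathbf{y}^{\pm1},\mathbf{z}^{\pm1}]$ $\Rightarrow$ $F$ has non-negative coefficients'' is false. A one-variable counterexample is $y^{2}-y+1=(y^{3}+1)/(y+1)$, which is visibly subtraction-free yet has a negative coefficient. Consequently the tropical evaluation of a Laurent polynomial that merely lies in $\mathbb{Q}_{\mathrm{sf}}$ is \emph{not} computed as the componentwise minimum of its exponent vectors, and your final deduction ``tropical evaluation $=1$ $\Rightarrow$ all exponents non-negative'' collapses.

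Your third ingredient is also more delicate than stated. Propagating $F_{i;t}\big|_{\operatorname{Trop}(\mathbf{y},\mathbf{z})}=1$ through the recursion \eqref{mutation of F} requires the tropical sum $\bigoplus_{s} z_{k,s}\prod_i y_i^{\,s c_{i,k}^{t}}$ to be realized at a single value of $s$ simultaneously for all $i$; this is precisely sign-coherence of the $c$-vectors, a statement of the same depth as (and in the Fomin--Zelevinsky framework equivalent to) the constant-term-$1$ property you are aiming for. The route actually taken in \cite{Nakanishi_2015} (following Fomin--Zelevinsky~IV) sidesteps both issues: one applies the Laurent phenomenon not only at the initial seed but at each one-step mutation $\mu_i$ of it, and comparing the two Laurent expansions through the exchange relation for $x_i'$ forces polynomiality in $y_i$ directly; polynomiality in each $z_{k,s}$ is easier, since the $z$-variables never acquire negative exponents under \eqref{mutation of x} (the tropical denominator $\bigoplus_s z_{k,s}y_k^{s}$ contributes no $z$-factor).
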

Analogous to classical cluster algebras, the structure of generalized cluster algebras is controlled by a family of integer vectors called \textbf{$\mathbf{g}$-vectors}, and a family of integer polynomials called \textbf{$F$-polynomials}. Now, according to \cite[Proposition 3.15]{Nakanishi_2015}, the vectors $\mathbf{g}_{l;t}=\mathbf{g}_{l;t}^{B;t_0}$ can be defined by the initial condition 
\begin{equation}\label{defn of g in gca}
    \mathbf{g}_{l;t_0}=\mathbf{e}_l,\quad \text{for } 0\leq l\leq n;
\end{equation}
together with the recurrence relations 
\begin{align}
    \mathbf{g}_{l;t'}=\begin{cases}
    \mathbf{g}_{l;t} \quad\text{ $l\neq k$};\\
    -\mathbf{g}_{k;t}+\sum_{i=1}^n [-b_{i,k}^td_k]_+\mathbf{g}_{i;t}-\sum_{i=1}^n \mathbf{b}_i[-c_{i,k}^td_k]_+ \quad\text{ $l=k$},
    \end{cases}
\end{align}
for every edge $t\overset{k}{\rule[3pt]{6mm}{0.05em}}t'$ in $\mathbb{T}_n$.
Here $\mathbf{e}_1,\ldots,\mathbf{e}_n$ are the unit vectors in $\mathbb{Z}^n$, $\mathbf{b}_1,\ldots,\mathbf{b}_n$ are the columns of $B$.  

Similarly, by  \cite[Proposition 3.12]{Nakanishi_2015}, the polynomial $F_{l;t}=F_{l;t}^{B,t_0}(\mathbf{y},\mathbf{z})$  can be defined by the initial condition 
\begin{equation}
    F_{l;t_0}=1,\quad \text{for }  0\leq l\leq n;
\end{equation}
together with the recurrence relations  
\begin{align}\label{mutation of F}
    F_{l;t'}=\begin{cases}
    F_{l;t} \quad\text{ $l\neq k$};\\
    F_{k;t}^{-1}\left(\prod_{i=1}^n y_i^{[-c_{i,k}^t]_+}F_{i;t}^{[-b_{i,k}^t]_+}\right)^{d_k}\sum_{s=0}^{d_k}z_{k,s}\left(\prod_{i=1}^n u_i^{-c_{i,k}^t}F_{i;t}^{-b_{i,k}^t}\right)^s\quad\text{ $l=k$},
    \end{cases}
\end{align} 
for each edge $t\overset{k}{\rule[3pt]{6mm}{0.05em}}t'  \in \mathbb{T}_n$. Since \eqref{mutation of F} does not involve subtraction, every $F$-polynomial $F_{l;t}^{B,t_0}$ belongs to $\mathbb{Q}_{\mathrm{sf}}(\mathbf{\mathbf{y},\mathbf{z}})$. Note that every subtraction-free expression $F(\mathbf{y},\mathbf{z})$ can be evaluated in arbitrary semifield $\mathbb{P}$ for arbitrary  elements $\mathbf{y},\mathbf{z}$.

Now we will see the $\mathbf{g}$-vectors and $F$-polynomials play a crucial role in the theory of generalized cluster algebras. For each $l$ and $t$ as above, they determine a cluster variable $x_{l;t}\in\mathcal{F}=\mathbb{QP}(\mathbf{x})$ through (see \cite[Theorem 3.23]{Nakanishi_2015})
\begin{equation}\label{express x in term of F and g}
    x_{l;t}=\prod_{i=1}^nx_i^{g(i)}\frac{F_{l;t}^{B,t_0}|_{\mathcal{F}}(\hat{\mathbf{y}},\mathbf{z})}{F_{l;t}^{B,t_0}|_{\mathbb{P}}(\mathbf{y},\mathbf{z})}.
\end{equation}
where $(g(1),\cdots,g(n))=\mathbf{g}_{l;t}^{B,t_0}$.

We now fix $l$ and $t$, and discuss the dependence of $\mathbf{g}_{l;t}^{B,t_0}$ and $F_{l;t}^{B,t_0}$ on the initial vertex $t_0$ and the initial exchange matrix $B$. More precisely, for any $k\in[1,n]$ with $t_0\overset{k}{\rule[3pt]{6mm}{0.05em}}t_1$ and $B_1=\mu_k(B)$, we establish relations between the vectors $\mathbf{g}_{l,t}^{B,t_0}$ and $\mathbf{g}_{l,t}^{B_1,t_1}$, and the polynomials $F_{l,t}^{B,t_0}$ and $F_{l,t}^{B_1,t_1}$. To do this, we first denote by $\mathbf{h}_{l;t}^{B;t_0}=(h(1),\ldots,h(n))$ the integer vector given by
\begin{align}
    y_1^{h(1)}\cdots y_n^{h(n)}=F_{l;t}^{B,t_0}|_{\text{Trop}(\mathbf{y},\mathbf{z})}(y_i\xrightarrow{}y_i^{-1}\prod_{j\neq i}y_j^{d_j[-b_{j,i}]_+}).
\end{align}

\begin{thm}\label{mutation rule of g and F in GQP}
Suppose $t_0\overset{k}{\rule[3pt]{6mm}{0.05em}}t_1$, $B_1=\mu_k(B)$, and $\mathbf{y}'\in \mathbb{Q}_{\mathrm{sf}}(\mathbf{\mathbf{y},\mathbf{z}})$ is obtained from $\mathbf{y}$ by the mutation rule \eqref{mutation of hat-y}. Let $h(k)$ (resp. $h'(k)$) be the $k$-th component of $\mathbf{h}_{l;t}^{B;t_0}$ (resp. $\mathbf{h}_{l;t}^{B_1;t_1}$). Then the $\mathbf{g}$-vectors $\mathbf{g}_{l;t}^{B,t_0}=(g(1),\cdots,g(n))$ and $\mathbf{g}_{l;t}^{B_1,t_1}=(g'(1),\cdots,g'(n))$ are related as follows:
\begin{align}
    g'(i)=\begin{cases}\label{mutation rule of g in gca}
    -g(k) &\text{if $i=k$}\\
    g(i)+d_k[-b_{i,k}]_+g(k)-b_{i,k}h(k)& \text{if $i\neq k$}.
    \end{cases}
\end{align}
We also have 
\begin{equation}\label{relate of h in gca}
    d_kg(k)=h(k)-h'(k)
\end{equation}
and 
\begin{equation}\label{mutation of F in gca}
    (\sum_{s=0}^{d_k}z_{k,s}y_k^s)^{\frac{h(k)}{d_k}}F_{l;t}^{B,t_0}(y_1,\ldots, y_n)= (\sum_{s=0}^{d_k}z_{k,s}{y'_k}^s)^{\frac{h'(k)}{d_k}}F_{l;t}^{B_1,t_1}(y'_1,\ldots, y'_n).
\end{equation}
\end{thm}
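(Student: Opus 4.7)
The strategy is a direct combinatorial calculation within the theory of generalized cluster algebras, paralleling the classical Fomin--Zelevinsky proof for ordinary cluster algebras and its extension to the generalized setting in \cite{Nakanishi_2015}. The cornerstone is the separation formula \eqref{express x in term of F and g}, which expresses each cluster variable $x_{l;t}$ in $\mathbb{QP}(\mathbf{x})$ via its $\mathbf{g}$-vector and $F$-polynomial. Applied to both initial seeds $(\mathbf{x}_{t_0}, \mathbf{y}_{t_0}, B)$ and $(\mathbf{x}_{t_1}, \mathbf{y}_{t_1}, B_1)$, these two Laurent expressions for $x_{l;t}$ must coincide; the mutation rules \eqref{mutation of x}--\eqref{mutation of y} (together with the induced mutation \eqref{mutation of hat-y} for the $\hat{y}$-variables) then translate between $t_0$ and $t_1$, and all three claimed identities will be extracted from this single ambient identity by matching exponents in the appropriate specializations.

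I would prove the three identities in order. First, equation \eqref{relate of h in gca}: specialize the separation formula tropically in $\operatorname{Trop}(\mathbf{y},\mathbf{z})$, which kills the $\mathcal{F}$-dependent numerator and reduces the problem to matching tropical Laurent monomials. The tropical $y$-mutation $y'_k = y_k^{-1}$, $y'_i = y_i y_k^{d_k[b_{k,i}]_+}$ ($i\neq k$), together with the definition of $\mathbf{h}$ as a tropical evaluation of $F$, then forces $d_k g(k) = h(k) - h'(k)$ after bookkeeping of $y_k$-exponents. Second, equation \eqref{mutation of F in gca}: divide out the common Laurent monomial $\prod x_i^{g(i)}$ (legal once $g'(k) = -g(k)$ is read off from the defining recurrence), and convert the remaining identity between $\hat{y}$-polynomials into a $y$-polynomial one by invoking the reciprocity $z_{k,s}=z_{k,d_k-s}$ together with $y'_k = y_k^{-1}$. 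This gives the key identity
\[
\sum_{s=0}^{d_k} z_{k,s} y_k^s \;=\; y_k^{d_k} \sum_{s=0}^{d_k} z_{k,s}(y'_k)^{s},
\]
which, combined with \eqref{relate of h in gca} to absorb the $y_k^{d_k}$ factor into the exponent, produces the exchange-polynomial factor $(\sum z_{k,s}y_k^s)^{h(k)/d_k}$. Third, equation \eqref{mutation rule of g in gca}: match $x_i$-exponents ($i\neq k$) on the two sides of the separation formula. The mutation $x'_k = x_k^{-1}\bigl(\prod_j x_j^{[-b_{j,k}]_+}\bigr)^{d_k}(\cdots)$ contributes the term $d_k[-b_{i,k}]_+ g(k)$, while substituting $\hat{y}_k = y_k \prod_j x_j^{b_{j,k}}$ into the tropical denominator $F|_{\mathbb{P}}$ contributes the term $-b_{i,k} h(k)$.

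The main obstacle is the extra combinatorial bookkeeping forced by the mutation degrees $d_k > 1$ and by the presence of the $\mathbf{z}$-variables. In the classical case, the exchange polynomial $\sum_s z_{k,s} y_k^s$ collapses to $1+y_k$, but in the generalized case one must carefully track how the reciprocal-symmetry $z_{k,s} = z_{k,d_k-s}$ interacts with $y_k \mapsto y_k^{-1}$ to produce the clean identity required in the second step. I expect the most delicate calculation to be the simultaneous matching of $y_k$- and $z_{k,s}$-exponents in the tropical evaluation defining $\mathbf{h}$, since the $z$-variables in the generalized setting enter the tropical polynomial nontrivially (unlike in the classical case, where they are absent), and the factor $(\sum z_{k,s} y_k^s)^{h(k)/d_k}$ must emerge with integer exponent $h(k)/d_k$ from this tropical bookkeeping.
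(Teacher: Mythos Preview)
Your proposal is correct and follows essentially the same approach as the paper, which simply states that the result is proved using the technique in \cite[Proposition 6.8 and Proposition 6.9]{fomin2006cluster}. You have spelled out in more detail how that technique adapts to the generalized setting (tracking the $d_k$ and $\mathbf{z}$-variables through the separation formula and its tropical specialization), but the underlying strategy is identical.
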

\begin{proof}
This is proved using the technique in \cite[Proposition 6.8 and Proposition 6.9]{fomin2006cluster}.
\end{proof}
\subsection{Upper generalized cluster algebras}

Since the generalized cluster algebra $\mathcal{A}(\mathbf{x},\mathbf{y},B)$ is generated by cluster variables from the seeds mutation equivalent to the initial seed $(\mathbf{x},\mathbf{y},B)$, Theorem \ref{Laurent} can be rephrased as 
$$\mathcal{A}(\mathbf{x},\mathbf{y},B)\subseteq \bigcap_{(\mathbf{x},\mathbf{y},B)\in \Sigma}\mathcal{L}_{\mathbf{x}}$$
where $\mathcal{L}_{\mathbf{x}}=\mathbb{ZP}[\mathbf{x}^{\pm1}].$
\begin{defn}
The \textbf{upper generalized cluster algebra} with seed $(\mathbf{x},\mathbf{y},B;\mathbf{d},\mathbf{z})$ is $$\overline{\mathcal{A}}(\mathbf{x},\mathbf{y},B)=\overline{\mathcal{A}}(\mathbf{x},\mathbf{y},B;\mathbf{d},\mathbf{z}):=\bigcap_{(\mathbf{x},\mathbf{y},B)\in \Sigma}\mathcal{L}_{\mathbf{x}}.$$
\end{defn}
In general, there may be  infinitely many seeds mutation-equivalent to $(\mathbf{x},\mathbf{y},B)$. So the above definition is not very useful to test the membership in an upper generalized cluster algebra. However, the following theorem reduces this to a finite verification.
\begin{thm}\label{one step mutation}
Let $\mathbf{x}_k$ be the adjacent cluster obtained from $\mathbf{x}$ by applying the mutation at $k$. Define the \textbf{upper bounds}:
$$\mathcal{U}(\mathbf{x},\mathbf{y},B):=\bigcap_{1\leq k\leq n}\mathcal{L}_{\mathbf{x}_k}.$$
Suppose that $B(Q)$ has full rank, and $(\mathbf{x}',\mathbf{y}',B')=\mu_k(\mathbf{x},\mathbf{y},B)$ for some $1\leq k\leq n$, then $\mathcal{U}(\mathbf{x}',\mathbf{y}',B')=\mathcal{U}(\mathbf{x},\mathbf{y},B)$. In particular, $\mathcal{U}(\mathbf{x},\mathbf{y},B)=\overline{\mathcal{A}}(\mathbf{x},\mathbf{y},B).$
\end{thm}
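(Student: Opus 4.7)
The plan is to adapt the classical Berenstein--Fomin--Zelevinsky argument for upper cluster algebras to the generalized setting, where the binomial exchange relations are replaced by reciprocal polynomial ones. Since the mutation $\mu_k$ is an involution on seeds and matrix mutation preserves full rank, the desired equality $\mathcal{U}(\mathbf{x}',\mathbf{y}',B')=\mathcal{U}(\mathbf{x},\mathbf{y},B)$ is symmetric in the two seeds, so it suffices to establish the one-sided inclusion $\mathcal{U}(\mathbf{x},\mathbf{y},B)\subseteq \mathcal{U}(\mathbf{x}',\mathbf{y}',B')$; the reverse follows by interchanging the roles. Writing
\[
\mathcal{U}(\mathbf{x}',\mathbf{y}',B')=\bigcap_{j=1}^n \mathcal{L}_{\mathbf{x}'_j},\qquad \mathbf{x}'_j=\mu_j(\mathbf{x}',\mathbf{y}',B'),
\]
and observing that $\mathbf{x}'_k=\mathbf{x}$ by involutivity, I reduce the problem to two verifications: (a)~$\mathcal{U}(\mathbf{x},\mathbf{y},B)\subseteq \mathcal{L}_{\mathbf{x}}$, and (b)~$\mathcal{U}(\mathbf{x},\mathbf{y},B)\subseteq \mathcal{L}_{\mathbf{x}'_j}$ for each $j\ne k$.

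For (a), I recast each $\mathcal{L}_{\mathbf{x}_i}$ as a localization of $\mathcal{L}_{\mathbf{x}}$. The exchange relation $x_i x_i' = P_i$ with
\[
P_i \;=\; \Bigl(\prod_{j=1}^n x_j^{[-b_{j,i}]_+}\Bigr)^{d_i}\,\frac{\sum_{s=0}^{d_i}z_{i,s}\hat y_i^s}{\bigoplus_{s=0}^{d_i} z_{i,s}y_i^s}\;\in\; \mathbb{Z}\mathbb{P}[\mathbf{x}]
\]
allows the substitution $x_i'=P_i/x_i$, exhibiting $\mathcal{L}_{\mathbf{x}_i}$ as a subring of $\mathcal{L}_{\mathbf{x}}[x_i^{-1},P_i^{-1}]$. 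The crucial step is to show that the $2n$ elements $x_1,\dots,x_n,P_1,\dots,P_n$ are pairwise coprime in the UFD $\mathbb{Z}\mathbb{P}[\mathbf{x}]$. Granting this, a standard coprimality-of-localizations argument yields
\[
\bigcap_{i=1}^n \mathcal{L}_{\mathbf{x}}[x_i^{-1},P_i^{-1}] \;=\; \mathcal{L}_{\mathbf{x}},
\]
proving (a).

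For (b), I apply the same coprimality framework to the seed $(\mathbf{x}',\mathbf{y}',B')$, whose exchange matrix is still of full rank, to obtain the analogous intersection identities for its own exchange polynomials $P'_i$. The clusters $\mathbf{x}_j$ and $\mathbf{x}'_j$ share the $n-2$ variables $\{x_i:i\ne j,k\}$ and differ only in positions $j$ and $k$; manipulating the four exchange relations connecting $\mathbf{x}, \mathbf{x}_j, \mathbf{x}', \mathbf{x}'_j$ inside the appropriate localization, I would derive
\[
\mathcal{L}_{\mathbf{x}_j}\cap \mathcal{L}_{\mathbf{x}'}=\mathcal{L}_{\mathbf{x}}\cap \mathcal{L}_{\mathbf{x}'_j}.
\]
Since $\mathcal{U}(\mathbf{x},\mathbf{y},B)\subseteq \mathcal{L}_{\mathbf{x}_j}\cap \mathcal{L}_{\mathbf{x}_k}=\mathcal{L}_{\mathbf{x}_j}\cap \mathcal{L}_{\mathbf{x}'}$ and (a) gives $\mathcal{U}(\mathbf{x},\mathbf{y},B)\subseteq \mathcal{L}_{\mathbf{x}}$, the above identity forces $\mathcal{U}(\mathbf{x},\mathbf{y},B)\subseteq \mathcal{L}_{\mathbf{x}'_j}$. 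Finally, once the invariance $\mathcal{U}(\mathbf{x},\mathbf{y},B)=\mathcal{U}(\mathbf{x}',\mathbf{y}',B')$ is secured, iterating it along any mutation sequence shows $\mathcal{U}(\mathbf{x},\mathbf{y},B)\subseteq \mathcal{L}_{\widetilde{\mathbf{x}}}$ for every seed $\widetilde{\mathbf{x}}$ mutation-equivalent to $\mathbf{x}$, whence $\mathcal{U}(\mathbf{x},\mathbf{y},B)=\overline{\mathcal{A}}(\mathbf{x},\mathbf{y},B)$.

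The principal obstacle is the coprimality claim for the generalized exchange polynomials $P_i$. In the classical binomial case this is nearly automatic, but here $P_i$ is a reciprocal polynomial in $\hat y_i$ of degree $d_i$ times a fixed monomial in the remaining $x_j$'s, and one must rule out nontrivial common factors among distinct $P_i$'s. My approach is a Newton-polytope analysis: after stripping monomial factors, the support of $P_i$ is concentrated on the discrete ray generated by the $i$-th column $\mathbf{b}_i$ of $B$, and the full-rank hypothesis makes the directions $\langle\mathbf{b}_i\rangle$ pairwise distinct in $\mathbb{Q}^n$, so any common factor of $P_i$ and $P_j$ for $i\ne j$ must be a unit. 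Coprimality of $x_j$ with $P_i$ for $j\ne i$ follows from $b_{i,i}=0$ together with the explicit form of $P_i$. Once these coprimality assertions are in place, the remaining intersection-of-localizations manipulations become purely formal.
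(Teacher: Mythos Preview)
Your proposal is correct and follows precisely the route the paper indicates: the paper's entire proof consists of the single line ``This is proved using the technique in \cite[Corollary 1.9]{berenstein2005cluster},'' and your outline is a faithful elaboration of that BFZ technique adapted to the generalized (polynomial) exchange relations, including the key coprimality step where the full-rank hypothesis ensures the columns $\mathbf{b}_i$ are pairwise non-proportional.
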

\begin{proof}
This is proved using the technique in  {\cite[Corollary 1.9]{berenstein2005cluster}}.
\end{proof}
\subsection{\texorpdfstring{$H$}{}-based QP-interpretation of \texorpdfstring{$\mathbf{g}$}{}-vectors and \texorpdfstring{$F$}{}-polynomials}
In the rest of this section, we assume that $B$ is skew-symmetric and $d_k \leq 2$ for all $1 \leq k \leq n$. Then $(B,\mathbf{d})$ is associated to a pair $(Q,\mathbf{d})=(Q(B),\mathbf{d})$ defined in Section \ref{section: Quivers and Complete Path Algebras} by setting
$$\text{ $Q_0=\{1\ldots,n\}$, and for any two vertices $i\neq j$, there are $[b_{i,j}]_+$ arrows from $j$ to $i$ in $Q^\circ$}.$$ Assume there is a potential $\s$ such that $(Q,\mathbf{d},\s)$ is a nondegenerate, locally free and Jacobian-finite $H$-based QP. To a representation $\mathcal{M}=(M,V)$ of  $(Q,\mathbf{d},\s)$, we associate both the $\check{g}$-vector $\check{\mathbf{g}}_{\mathcal{M}}=(\check{g}(1),\ldots,\check{g}(n))$ given by Definition \ref{defn of g_M}, and the $F$-polynomial $F_{\mathcal{M}}=F_M$ given by Definition \ref{defn of F-pplynomial}. 

Now we define a family of $H$-based QP-representations $\mathcal{M}_{l;t}^{B;t_0}$ as follows: let
\begin{align*}
    t_0\overset{k_1}{\rule[3pt]{6mm}{0.05em}}t_1\overset{k_2}{\rule[3pt]{6mm}{0.05em}}\cdots\overset{k_p}{\rule[3pt]{6mm}{0.05em}}t_p=t
\end{align*}
be the (unique) path joining $t_0$ and $t_1$ in $\mathbb{T}_n$. We set 
\begin{align*}
    (Q(t),\mathbf{d},\s(t))=\mu_{k_p}\cdots\mu_{k_1}(Q,\mathbf{d},\s),
\end{align*}
which is well-defined since $(Q,\mathbf{d},\s)$ is nondegenerate. Let $E_l^{-}(Q(t),\mathbf{d},\s(t))$ be the negative generalized simple representation of $(Q(t),\mathbf{d},\s(t))$ at $l$. We observe that $E_l^{-}(Q(t),\mathbf{d},\s(t))$ is in general position. Then we set
\begin{align}\label{construction of family M}
\mathcal{M}_{l;t}^{B;t_0}=\mu_{k_p}\cdots\mu_{k_1}\left(E_l^{-}(Q(t),\mathbf{d},\s(t))\right)
\end{align}
which is well-defined since $(Q,\mathbf{d},\s)$ is locally free and Jacobian-finite. In view of Theorem \ref{mutation of gqp is involution }, Proposition \ref{mutation of rep is involution}, and Proposition \ref{general is mutation invarints}, we can assume that it is a general representation of $(Q,\mathbf{d},\s)$.
\begin{thm}\label{thm: interpretation of g and F}
    We have 
    \begin{align}\label{interpretation of g and F}
\mathbf{g}_{l;t}^{B;t_0}=\check{\mathbf{g}}_{\mathcal{M}_{l;t}^{B;t_0}},\quad F_{l;t}^{B;t_0}=F_{\mathcal{M}_{l;t}^{B;t_0}}.
    \end{align}
\end{thm}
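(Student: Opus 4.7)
The plan is to prove both equalities of Theorem~\ref{thm: interpretation of g and F} jointly by induction on the distance $p = d(t_0,t)$ in $\mathbb{T}_n$.

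For the base case $p = 0$, the representation $\mathcal{M}_{l;t_0}^{B;t_0} = E_l^{-}(Q,\mathbf{d},\mathcal{S}) = (0,H_l)$ has $M = 0$, so formula \eqref{checkgm} gives $\check{\mathbf{g}}_{E_l^{-}} = \mathbf{e}_l$, matching $\mathbf{g}_{l;t_0}^{B;t_0} = \mathbf{e}_l$ from \eqref{defn of g in gca}; and $F_{E_l^{-}} = 1 = F_{l;t_0}^{B;t_0}$ because the only quiver Grassmannian on the zero module is that of the zero subrepresentation.

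For the inductive step, fix $t$ with $d(t_0,t) = p \geq 1$ and let $t_0 \overset{k}{\rule[3pt]{6mm}{0.05em}} t_1$ be the first edge on the unique path from $t_0$ to $t$ in $\mathbb{T}_n$; set $B_1 = \mu_k(B)$. By construction \eqref{construction of family M} together with the involutivity of representation mutation (Theorem~\ref{mutation of rep is involution}), we have $\mathcal{M}_{l;t}^{B;t_0} = \mu_k\bigl(\mathcal{M}_{l;t}^{B_1;t_1}\bigr)$, and the inductive hypothesis applied to the configuration $(B_1,t_1,t)$ at distance $p-1$ yields $\check{\mathbf{g}}_{\mathcal{M}_{l;t}^{B_1;t_1}} = \mathbf{g}_{l;t}^{B_1;t_1}$ and $F_{\mathcal{M}_{l;t}^{B_1;t_1}} = F_{l;t}^{B_1;t_1}$. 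It remains to show that applying $\mu_k$ produces the desired match at $(B,t_0)$; equivalently, that the representation-theoretic mutation rules of Theorem~\ref{mutation rule of g} and Theorem~\ref{mutation of F-polynomial} are compatible with the GCA mutation rules of Theorem~\ref{mutation rule of g and F in GQP}.

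Comparing the two sets of mutation formulas reduces to a single identity between the generalized $h$-vector entry and the injective presentation multiplicity $\check{\beta}_+(k)$ of $\mathcal{M}_{l;t}^{B_1;t_1}$, namely a relation of the form $h(k) = -d_k\,\check{\beta}_+(k)$ (up to the sign convention identifying which base point $t_0$ or $t_1$ the $h$-vector refers to). Indeed, matching the exchange factor $(\sum_s z_{k,s}y_k^s)^{-\check{\beta}_+(k)}$ in \eqref{F-mutation} with the analogous factor $(\sum_s z_{k,s}y_k^s)^{h/d_k}$ in \eqref{mutation of F in gca} directly yields this identity, and under it, substituting $b^1_{k,i} = -b_{k,i}$ (since $B_1 = \mu_k(B)$) together with the $\check{\mathbf{g}}$-coherence relations $\check{\beta}_\pm(k) = [\pm\check{g}(k)]_+$ (valid for general representations by Lemma~\ref{general is g-coherent} and Theorem~\ref{pc}) into \eqref{chechgg'} recovers precisely \eqref{mutation rule of g in gca}.

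The main obstacle is this $h$-vector identity, which I would establish by a direct tropical analysis of $F_{\mathcal{M}}$: after the substitution $y_i \mapsto y_i^{-1}\prod_{j\neq i}y_j^{d_j[-b^1_{j,i}]_+}$, the $y_k$-exponent of a monomial $\mathbf{y}^{\mathbf{e}}$ becomes $-e_k + d_k\sum_{i\neq k}e_i[b^1_{i,k}]_+$; an extremal subrepresentation argument, using Corollary~\ref{g-vector of M} to identify $\dim\ker\beta_k = d_k\,\check{\beta}_+(k)$ for the maximal $k$-supported subrepresentation $N$ with $N_k = \ker\beta_k$, locates the tropical minimum at the value $-d_k\,\check{\beta}_+(k)$. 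The delicate point is that $F_{\mathcal{M}}$ is not subtraction-free when some $d_i = 2$ (because the polynomials $f_{i,l}(z_i)$ defined by the Chebyshev-like recursion \eqref{eq:recursion} carry negative coefficients), so a careful Grassmannian and Jordan-type stratification analysis is needed to verify that the extremal stratum contributes a coefficient surviving cancellation. Once this identity is in hand, the inductive step is routine algebraic substitution, and Corollaries~\ref{apply 1}--\ref{apply 3} follow from the representation-theoretic interpretation in the standard way.
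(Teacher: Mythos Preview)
Your overall strategy—induction on $d(t_0,t)$ and matching the representation-theoretic mutation formulas \eqref{chechgg'}, \eqref{F-mutation} against the GCA formulas \eqref{mutation rule of g in gca}, \eqref{mutation of F in gca}—is exactly the paper's. You have also correctly isolated the crux: the identity linking the $h$-vector entry to the injective-presentation multiplicity.

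Where you diverge is in how you propose to close that gap. You want to extract the tropical minimum of $F_{\mathcal{M}}$ by a direct extremal-subrepresentation argument, and you are right to flag the ``delicate point'' that the Chebyshev-type recursion \eqref{eq:recursion} makes $F_{\mathcal{M}}$ not obviously subtraction-free, so cancellations could in principle shift the tropical value. The paper sidesteps this entirely by strengthening the induction hypothesis to include the full $\mathbf{h}$-identity $\mathbf{h}_{\mathcal{M}_{l;t}^{B;t_0}}=\mathbf{h}_{l;t}^{B;t_0}$ alongside \eqref{interpretation of g and F}. In the inductive step, the already-known $k$-th component of $\mathbf{h}$ at the previous base point is exactly what is needed to match \eqref{chechgg'} with \eqref{mutation rule of g in gca} and \eqref{F-mutation} with \eqref{mutation of F in gca}, yielding the $\mathbf{g}$- and $F$-identities at the new base point. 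At that moment one knows $F_{\mathcal{M}_{l;t}^{B_1;t_1}}=F_{l;t}^{B_1;t_1}$, and the right-hand side lies in $\mathbb{Q}_{\mathrm{sf}}(\mathbf{y},\mathbf{z})$ by the subtraction-free recursion \eqref{mutation of F}. The tropical lemma (modeled on \cite[Proposition~3.3]{derksen2010quivers}) then applies \emph{under the hypothesis} $F_M\in\mathbb{Q}_{\mathrm{sf}}$ and recovers the full $\mathbf{h}$-identity at the new step—no cancellation analysis needed.

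In short: your worry about subtraction-freeness is legitimate, but rather than resolving it by a Grassmannian stratification argument, the paper bootstraps off the subtraction-freeness already guaranteed on the GCA side. This is the missing idea in your proposal; once you add the $\mathbf{h}$-identity to the induction package, the step you call ``delicate'' disappears.
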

\begin{proof}
 Let
\begin{align}
    0\xrightarrow{}\mathcal{M}_{l;t}^{B;t_0} \xrightarrow{} I(\check{\beta}_{+,\mathcal{M}_{l;t}^{B;t_0}})\xrightarrow{}I(\check{\beta}_{-,\mathcal{M}_{l;t}^{B;t_0}})\xrightarrow{}\cdots
\end{align}
be the minimal injective presentation of $\mathcal{M}_{l;t}^{B;t_0}$ in $\mathcal{P}(Q,\mathbf{d},\s)$. 
 We denote by $\mathbf{h}_{\mathcal{M}_{l;t}^{B;t_0}}=(h(1),\ldots,h(n))$ the integer vector given by 
\begin{align}\label{defn of h in GQP}
    h(k)=-d_k\check{\beta}_{-,\mathcal{M}_{l;t}^{B;t_0}}(k),
\end{align}
We prove \eqref{interpretation of g and F} together with the equality
\begin{align}\label{interpretation of h}
    \mathbf{h}_{\mathcal{M}_{l;t}^{B;t_0}} = \mathbf{h}^{B;t_0}_{l;t}
\end{align}
by induction on the distance between $t_0$ and $t$ in the tree $\mathbb{T}_n$. When $t = t_0$, the statement is clear. Now assume that \eqref{interpretation of g and F} and \eqref{interpretation of h} hold for some $l$ and $t$, and let $t_0 \overset{k}{\rule[3pt]{6mm}{0.05em}} t_1$ in $\mathbb{T}_n$. To finish the proof, it suffices to show that
\begin{enumerate}
    \item[(1)] $\mathbf{g}_{\mathcal{M}_{l;t}^{B_1;t_1}} = \mathbf{g}_{l; t}^{B_1; t_1}$,
    \item[(2)] $F_{\mathcal{M}_{l;t}^{B_1;t_1}} = F_{l; t}^{B_1; t_1}$,
    \item[(3)] $\mathbf{h}_{\mathcal{M}_{l;t}^{B_1;t_1}} = \mathbf{h}_{l; t}^{B_1; t_1}$.
\end{enumerate}
First, observe that $\E_{\mathcal{P}(Q(t),\mathbf{d},\s(t))}\left(E^-_l\left(Q(t),\mathbf{d},\s(t)\right)\right)=0$. Thus, $\E_{\mathcal{P}(Q,\mathbf{d},\s)}(\mathcal{M})=\E_{\mathcal{P}(Q(t),\mathbf{d},\s(t))}(E^-_l)=0$ via \eqref{cor1} which implies that $\mathcal{M}$ is in general position. Therefore, as a direct result of \eqref{gbeta}, \eqref{chechgg'} and \eqref{F-mutation}, the $\check{g}$-vector $\check{\mathbf{g}}_{\mathcal{M}_{l;t}^{B;t_0}}$ satisfies \eqref{relate of h in gca} and is related to $\check{\mathbf{g}}_{\mathcal{M}_{l;t}^{B_1;t_1}}$ through the same rule \eqref{mutation rule of g in gca} that expresses $\mathbf{g}_{l;t}^{B_1;t_1}$ in terms of $\mathbf{g}_{l;t}^{B;t_0}$. Moreover, the $F$-polynomials $F_{\mathcal{M}_{l;t}^{B;t_0}}$ and $F_{\mathcal{M}_{l;t}^{B_1;t_1}}$ obey \eqref{mutation of F in gca}. Thus, we obtain the proof of (1) and (2). Finally, to prove (3), we apply the following lemma to $\mathcal{M}_{l;t}^{B_1;t_1}$, where the proof technique parallels \cite[Proposition 3.3]{derksen2010quivers}.
\begin{lem}
Under the assumption that $F_M\in \mathbb{Q}_{\mathrm{sf}}(\mathbf{y},\mathbf{z})$, we have 
$$ y_1^{h(1)}\cdots y_n^{h(n)}=F_M|_{\mathrm{Trop}(\mathbf{y},\mathbf{z})}(y_i\xrightarrow{}y_i^{-1}\prod_{j\neq i}y_j^{d_j[-b_{j,i}]_+}).$$ 
\end{lem}
\end{proof}
\begin{cor}\label{apply 1}
    Each polynomial $F_{l;t}^{B,t_0}(\mathbf{y},\mathbf{z})$ has constant term $1$ and contains a unique maximal-degree monomial in the indeterminates $y_1,\ldots,y_n$ with coefficient 1.
\end{cor}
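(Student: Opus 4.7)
The plan is to combine Theorem \ref{thm: interpretation of g and F} with Proposition \ref{general rep is locally free} and then read off the two claims directly from the $F$-polynomial formula in Definition \ref{defn of F-pplynomial}. First, I invoke Theorem \ref{thm: interpretation of g and F} to replace $F_{l;t}^{B,t_0}$ by $F_{\mathcal{M}}$, where $\mathcal{M} = (M,V) = \mathcal{M}_{l;t}^{B;t_0}$ is the general representation built in \eqref{construction of family M}; since $(Q,\mathbf{d},\mathcal{S})$ is assumed nondegenerate, locally free, and Jacobian-finite, Proposition \ref{general rep is locally free} ensures $M$ itself is locally free.

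For the constant term, I set $\mathbf{e}=0$ in the expansion $F_M(\mathbf{y},\mathbf{z}) = \sum_{\mathbf{e},\mathbf{t}}\chi(\mathrm{Gr}_{\mathbf{e},\mathbf{t}}(M))\mathbf{f}^{\mathbf{t}}(\mathbf{z})\mathbf{y}^{\mathbf{e}}$: the zero subrepresentation is the only one of dimension vector $0$, its Jordan type is trivial, and $\mathbf{f}^{0}(\mathbf{z}) = 1$, so the constant term equals $\chi(\text{point})\cdot 1 = 1$. For the maximal $\mathbf{y}$-degree monomial, any subrepresentation $N \subseteq M$ satisfies $\underline{\dim} N \leq \underline{\dim} M$ coordinatewise, which immediately makes $\mathbf{y}^{\underline{\dim} M}$ both the unique coordinatewise maximum and the unique monomial of maximum total $\mathbf{y}$-degree appearing in $F_M$. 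The only $N$ realizing this dimension vector is $N=M$, so the coefficient of $\mathbf{y}^{\underline{\dim} M}$ (viewed as an element of $\mathbb{Z}[\mathbf{z}]$) equals $\mathbf{f}^{\mathbf{t}(M)}(\mathbf{z})$, where $\mathbf{t}(M)$ is the Jordan type of $M$ itself.

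The crux is therefore to show $\mathbf{t}(M) = 0$, which collapses the coefficient to $\mathbf{f}^{0}(\mathbf{z})=1$ and yields the statement as worded. At each vertex $k$ with $d_k = 2$, local freeness makes $M_k$ a free $H_k$-module, and since multiplication by $\varepsilon_k$ on $H_k=K[\varepsilon_k]/\langle\varepsilon_k^2\rangle$ has kernel $K\varepsilon_k$ equal to its image, I obtain $\ker M(\varepsilon_k) = \varepsilon_k M_k = \Img M(\varepsilon_k)$, so $M_{k,1}= 0$ and that vertex contributes $1$ to $\mathbf{f}^{\mathbf{t}(M)}(\mathbf{z})$. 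At vertices with $d_k = 1$, the stated conventions $z_{k,l}=1$ reduce the situation to the classical QP case and the contribution is automatically $1$. This short $H_k$-module check is the only nontrivial step; otherwise the corollary is an immediate consequence of the definitions and the locally free general-representation interpretation of $F_{l;t}^{B,t_0}$.
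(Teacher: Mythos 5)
Your proof is correct and follows the paper's intended route: Corollary \ref{apply 1} is stated as an immediate consequence of Theorem \ref{thm: interpretation of g and F}, with the constant term coming from the zero subrepresentation ($\mathbf{e}=0$, a point of Euler characteristic $1$) and the unique top monomial $\mathbf{y}^{\underline{\dim}M}$ coming from $N=M$, whose Jordan-type factor $\mathbf{f}^{\mathbf{t}(M)}(\mathbf{z})$ you correctly reduce to $1$ by local freeness of the general representation at vertices with $d_k=2$ and by the convention $z_{k,l}=1$ at vertices with $d_k=1$. No gaps.
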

\begin{cor}\label{apply 2}
For fixed $t, B, t_0$, and for all $l$, the $k$-th components of $\mathbf{g}_{l;t}^{B,t_0}$ are either all nonnegative, or all nonpositive.
\end{cor}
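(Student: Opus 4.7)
The plan is to lift the question to representation theory via Theorem~\ref{thm: interpretation of g and F}, which identifies $\mathbf{g}_{l;t}^{B,t_0}$ with the $\check{g}$-vector of the decorated representation $\mathcal{M}_{l;t}^{B;t_0}$ of $\mathcal{P}(Q,\mathbf{d},\s)$. Writing $\check{g}_{\mathcal{M}_{l;t}^{B;t_0}}(k) = \check{\beta}_-^{(l)}(k) - \check{\beta}_+^{(l)}(k)$ with the $\check{\beta}_\pm^{(l)}$ read off from the minimal injective presentation, I will reduce sign-coherence at index $k$ to the assertion that, across all $l$, either every $\check{\beta}_+^{(l)}(k) = 0$ or every $\check{\beta}_-^{(l)}(k) = 0$.

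The central step is to study the direct sum $\mathcal{N}_t := \bigoplus_{l=1}^n \mathcal{M}_{l;t}^{B;t_0}$. Since mutation of decorated representations is additive on direct sums (by the remarks following Theorem~\ref{mutation of rep is involution}) and since $\bigoplus_l E_l^-(Q(t)) = (0,H)$ in $\mathcal{P}(Q(t),\mathbf{d},\s(t))$, the representation $\mathcal{N}_t$ is obtained by applying the sequence of mutations $\mu_{k_p}\cdots\mu_{k_1}$ along the path from $t$ to $t_0$ to the decorated representation $(0,H)$. The latter has projective presentation $\bigoplus_l P_l \to 0$, which is the unique element of its presentation space and is hence trivially a general presentation. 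Iterating Proposition~\ref{general is mutation invarints} along the mutation sequence and invoking Lemma~\ref{mutation of pre and rep} so that each mutation of presentations corresponds to the mutation of representations, I conclude that $\mathcal{N}_t$ admits a general projective presentation in $\mathcal{P}(Q,\mathbf{d},\s)$.

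By Lemma~\ref{general is g-coherent} and Theorem~\ref{pc}, a general representation is both $g$-coherent and $\check{g}$-coherent; in particular $\mathcal{N}_t$ is $\check{g}$-coherent, meaning $\min\bigl(\check{\beta}_{+,\mathcal{N}_t}(k),\check{\beta}_{-,\mathcal{N}_t}(k)\bigr)=0$ for every $k$. Since the minimal injective presentation of a direct sum decomposes accordingly, $\check{\beta}_{\pm,\mathcal{N}_t}(k)=\sum_{l=1}^n \check{\beta}_\pm^{(l)}(k)$. Thus if $\check{\beta}_{+,\mathcal{N}_t}(k)=0$ then every $\check{\beta}_+^{(l)}(k)=0$ and $\mathbf{g}_{l;t}^{B,t_0}(k)=\check{\beta}_-^{(l)}(k)\geq 0$ for all $l$; otherwise $\check{\beta}_{-,\mathcal{N}_t}(k)=0$ and symmetrically $\mathbf{g}_{l;t}^{B,t_0}(k)\leq 0$ for all $l$. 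This delivers the required sign-coherence.

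The main technical obstacle is the repeated application of Proposition~\ref{general is mutation invarints} through a potentially long mutation sequence that traverses several Jacobian algebras along the path from $t$ to $t_0$. This is where the hypotheses that $(Q,\mathbf{d},\s)$ is nondegenerate and locally free become crucial: they ensure that at each intermediate step both the mutation and the reduction procedure are well-defined, and that the property ``general presentation'' persists through the entire chain.
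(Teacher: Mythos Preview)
Your proof is correct and follows essentially the same strategy as the paper: form the direct sum $\mathcal{N}_t=\bigoplus_l\mathcal{M}_{l;t}^{B;t_0}$, show it is $\check{g}$-coherent, and deduce sign-coherence from the additivity of the injective Betti numbers. Your justification that $\mathcal{N}_t$ is general (starting from $(0,H)$ and iterating Proposition~\ref{general is mutation invarints} along the mutation path) is in fact more explicit than the paper's, which invokes Lemma~\ref{general is g-coherent} directly and phrases the additivity step via $\ker\beta_{\mathcal{M},k}=\bigoplus_l\ker\beta_{k,l}$ rather than via minimal injective presentations.
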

\begin{proof}
Let $\mathcal{M}=\bigoplus _l\mathcal{M}_{l;t}^{B,t_0}$. Lemma \ref{general is g-coherent} implies that $\mathcal{M}$ is $\check{g}$-coherent, i.e., $$\min(\check{\beta}_{+,\mathcal{M}}(k),\check{\beta}_{-,\mathcal{M}}(k))=0.$$ 
Suppose that $\check{\beta}_{+,\mathcal{M}}(k)=0$,which means $\ker \beta_{\mathcal{M},k}=0$, where the map $\beta_{\mathcal{M},k}$ is given by \eqref{defn of beta}. Let $\beta_{k,l}$ denote the restriction of $\beta_{\mathcal{M},k}$ on $\mathcal{M}_l:=\mathcal{M}_{l;t}^{B,t_0}$; then $\ker\beta_{\mathcal{M},k}=0$ implies that $\check{\beta}_{+,\mathcal{M}_l}
=0$.  This shows that $\mathbf{g}_{\mathcal{M}_l}=\check{\beta}_{-,\mathcal{M}_l}$ is nonnegative since each $\mathcal{M}_l$ is  $\check{g}$-coherent. If $\check{\beta}_{-,\mathcal{M}}(k)=0$, then the same argument shows that the $k$-th coordinates of all these vectors are nonpositive.
\end{proof}
\begin{cor}\label{apply 3}
For every $t\in \mathbb{T}_n$, $\{\mathbf{g}_{l;t}^{B,t_0}\}_l$ is a $\mathbb{Z}$-basis of the lattice $\mathbb{Z}^n$.
\end{cor}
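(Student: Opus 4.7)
Proof proposal. The plan is to induct on the distance $d(t_0,t)$ in $\mathbb{T}_n$. The base case $t=t_0$ is immediate since $\mathbf{g}_{l;t_0}^{B,t_0}=\mathbf{e}_l$ by definition.

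For the inductive step, assume $\{\mathbf{g}_{l;t}^{B,t_0}\}_{l=1}^n$ is a $\mathbb{Z}$-basis of $\mathbb{Z}^n$ and let $t\overset{k}{\rule[3pt]{6mm}{0.05em}}t'$ with $d(t_0,t')=d(t_0,t)+1$. The defining recurrence \eqref{defn of g in gca} for the $\mathbf{g}$-vectors gives $\mathbf{g}_{l;t'}^{B,t_0}=\mathbf{g}_{l;t}^{B,t_0}$ for every $l\neq k$, so the integer matrices $G_t$ and $G_{t'}$ whose columns are the respective $\mathbf{g}$-vectors differ only in their $k$-th column. Consequently, it suffices to establish the congruence
\begin{equation*}
    \mathbf{g}_{k;t'}^{B,t_0}\;\equiv\;-\,\mathbf{g}_{k;t}^{B,t_0}\pmod{\operatorname{span}_{\mathbb{Z}}\{\mathbf{g}_{l;t}^{B,t_0}:l\neq k\}},
\end{equation*}
from which $\det G_{t'}=-\det G_t=\pm 1$ and hence the basis property follow.

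To prove the congruence, I will combine the representation-theoretic interpretation $\mathbf{g}_{l;t}^{B,t_0}=\check{\mathbf{g}}_{\mathcal{M}_{l;t}^{B;t_0}}$ of Theorem \ref{thm: interpretation of g and F} with the $\check{g}$-vector mutation rule \eqref{chechgg'}. By the construction \eqref{construction of family M}, $\mathcal{M}_{k;t}^{B;t_0}$ and $\mathcal{M}_{k;t'}^{B;t_0}$ arise by applying the \emph{same} sequence $\mu_{k_p}\cdots\mu_{k_1}$ of mutations to the representations $E_k^-(Q(t),\mathbf{d},\s(t))$ and $\mu_k(E_k^-(Q(t'),\mathbf{d},\s(t')))$, respectively. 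Since $E_k^-(Q(t'))$ has $M=0$, all maps $\alpha_k$, $\beta_k$, $\gamma_k$ vanish, and inspection of the construction in Section \ref{section: mutation of decorated Representations} yields $\mu_k(E_k^-(Q(t')))=E_k^+(Q(t))=(H_k,0)$, the positive generalized simple at vertex $k$ of $Q(t)$. Reading off the $\check{g}$-vectors of $E_k^-(Q(t))$ and $E_k^+(Q(t))$ directly from the resolutions of Lemma \ref{resolution of e_k} and the formula \eqref{checkgm}, one verifies that
\begin{equation*}
    \check{\mathbf{g}}_{E_k^-(Q(t))}+\check{\mathbf{g}}_{E_k^+(Q(t))}\in\operatorname{span}_{\mathbb{Z}}\{\mathbf{e}_l:l\neq k\}.
\end{equation*}

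It remains to propagate this congruence through the sequence $\mu_{k_p}\cdots\mu_{k_1}$. The mutation rule \eqref{chechgg'} is piecewise linear, restricting on each sign-coherent half-space $\{\check{g}(k_j)\geq 0\}$ or $\{\check{g}(k_j)\leq 0\}$ to a $\mathbb{Z}$-linear automorphism of $\mathbb{Z}^n$ that changes only the $k_j$-th column relative to the identity and that carries $\operatorname{span}_{\mathbb{Z}}\{\mathbf{e}_l:l\neq k_j\}$ to the span of the mutated columns indexed by $l\neq k_j$. The main obstacle is therefore to ensure that, at every intermediate stage, the full collection of $\check{g}$-vectors $\{\check{\mathbf{g}}_{\mathcal{M}_{l;\,\cdot}^{B;t_0}}\}_l$ together with the single auxiliary vector produced from $E_k^+$ lies on a common side of the hyperplane $\{\check{g}(k_j)=0\}$, so that the piecewise-linear rule acts as a \emph{single} $\mathbb{Z}$-linear map on the whole family. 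This uniform sign-coherence at every step is precisely what Corollary \ref{apply 2} supplies, applied to each intermediate $H$-based QP along the mutation path (which is nondegenerate and locally free by Proposition \ref{general is mutation invarints} and the hypotheses of Theorem \ref{thm: interpretation of g and F}). Once this sign-coherence is in place, the congruence valid at $Q(t)$ transports through each $\mu_{k_j}$ as a single $\mathbb{Z}$-linear relation, arriving at the desired congruence at $Q(t_0)=Q$ and closing the induction.
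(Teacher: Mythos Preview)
Your approach and the paper's differ in a basic way: you fix the initial vertex $t_0$ and move the target from $t$ to $t'$, tracking an auxiliary vector (coming from $E_k^+$) through the whole mutation sequence back to $t_0$; the paper instead fixes the target $t$ and moves the initial vertex from $t_0$ to an adjacent $t_1$ on the path to $t$. In the paper's setup the induction hypothesis applies directly to the family $\{\mathbf{g}^{B_1,t_1}_{l;t}\}_l$, and the change-of-initial-seed formula \eqref{chechgg'} (via Theorem~\ref{thm: interpretation of g and F}) relates this family to $\{\mathbf{g}^{B,t_0}_{l;t}\}_l$. Corollary~\ref{apply 2} says the $n$ values $\{g_l(k)\}_l$ are sign-coherent, so the piecewise-linear rule becomes a single map in $GL_n(\mathbb{Z})$ on the whole family, and the basis property transfers in one step.

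Your route, by contrast, requires sign-coherence of an \emph{enlarged} family of $n{+}1$ vectors at every intermediate vertex $k_j$: the $n$ vectors $\{\check{\mathbf{g}}_{\mathcal{M}^{t_j}_{l;t}}\}_l$ together with the auxiliary $\check{\mathbf{g}}_{\mathcal{M}^{t_j}_{k;t'}}$. Corollary~\ref{apply 2} only gives sign-coherence of the $n$-tuple attached to a \emph{single} target; it applies to $\{\mathcal{M}^{t_j}_{l;t}\}_l$ and, separately, to $\{\mathcal{M}^{t_j}_{l;t'}\}_l$. These two families share the $l\neq k$ part, so if some $l\neq k$ has $\check{g}_{\mathcal{M}^{t_j}_{l;t}}(k_j)\neq 0$ you are fine. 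But nothing rules out the degenerate case where all of these vanish, and then your very congruence forces $\check{g}_{\mathcal{M}^{t_j}_{k;t'}}(k_j)=-\check{g}_{\mathcal{M}^{t_j}_{k;t}}(k_j)$: the two $k$-th vectors lie on \emph{opposite} sides of the hyperplane whenever this common value is nonzero, so they are transformed by different linear pieces of \eqref{chechgg'}, and the congruence need not propagate unchanged. This is a genuine gap in the argument as written. The paper's one-step change of base avoids the problem entirely because it never introduces the auxiliary vector and only needs the $n$-vector sign-coherence that Corollary~\ref{apply 2} actually provides.
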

\begin{proof}
 We prove this by induction on the distance between $t_0$ and $t$ in $\mathbb{T}_n$. The base case where $t=t_0$ is clear. Let $\mathbf{g}_{l,t}^{B_1,t_1}=(g'_l(i))$. Suppose that  there exist $a_l\in \mathbb{Z}$ for $1\leq l\leq n$ such that $\sum a_l\mathbf{g}_{l,t}^{B_1,t_1}=0$. By Theorem \ref{mutation rule of g} and Theorem \ref{interpretation of g and F}, we see that
 \begin{align}
     \sum a_lg'_l(i)=0 \quad \text{iff}\quad \begin{cases}
     \sum a_lg_l(k)=0;\\
     \sum a_l\left(g_l(i)+d_k[b_{i,k}]_+[g_l(k)]_+-d_k[-b_{i,k}]_+[-g_l(k)]_+\right)=0.
     \end{cases}
 \end{align}
 By Proposition \ref{apply 2}, we may assume that $[-g_l(k)]_+=0$ for all $1\leq l\leq n$. Then it is straightforward to check that $\sum a_l\mathbf{g}_{l,t}^{B_1,t_1}$  implies $a_l=0$ for all $l$.
\end{proof}

\subsection{Generic character}
Throughout this subsection, we assume that $(Q,\mathbf{d},\s)$ is a nondegenerate and locally free $H$-based QP with $d_k\leq 2$ for all vertices $k$. Let $\mathcal{A}(Q,\mathbf{d})$ be the corresponding generalized cluster algebra. We assume that the coefficient semifield of the generalized cluster algebra $\mathcal{A}(Q,\mathbf{d})$ is the tropical semifield $\mathrm{Trop}(\mathbf{z})$ and all $y$-variables are set to 1.  Then $\mathbb{ZP}$ is the Laurent polynomial ring $\mathbb{Z}[\mathbf{z}^{\pm1}]$.
\begin{defn}
    We define the \textbf{cluster character}    $C:\Rep(Q,\mathbf{d},\mathcal{S})\to \mathbb{Z}[\mathbf{x}^{\pm 1},\mathbf{z}^{\pm 1}]$ by 
\begin{equation}\label{defn of cluster character}
C(\mathcal{M})=\mathbf{x}^{\check{g}(\mathcal{M})}F_{\mathcal{M}}(\hat{\mathbf{y}}, \mathbf{z}),
\end{equation}
where $\hat{\mathbf{y}}$ is given by \eqref{defn of hat-y}.
\end{defn}
Using  Lemma \ref{g-vector of M}, we can rewrite $\check{g}(i)$ as 
    $$d_i\check{g}(i)=\dim M_{i,\out}-\dim M_i-\rank \gamma_i=\sum_j[-b_{i,j}]_+d_ir_j -d_ir_i -\rank \gamma_i,$$
    where $r_i$ is the rank of the free $H_i$-module $M_i$.
Applying  \eqref{defn of hat-y}, we can reinterpret \eqref{defn of cluster character} as
\begin{align*}
        C(\mathcal{M})=\prod_{i=1}^nx_i^{-r_i}\sum_{\mathbf{e},\mathbf{t}} \chi(\mathrm{Gr}_{\mathbf{e},\mathbf{t}}(M)) \mathbf{f}^\mathbf{t}(\mathbf{z})\prod_{i=1}^nx_i^{-d_i^{-1}\rank \gamma_i+\sum_j[b_{i,j}]_+e_j-[-b_{ij}]_+(e_j-r_j)}.
    \end{align*} 
\begin{lem}\label{commute with C}
 The mutations commute with the cluster character $C$ for a generic representation $\mathcal{M}$. So $C(\mathcal{M})$ is an element in the upper cluster algebra $\overline{\mathcal{A}}(Q,\mathbf{d})$. 
\end{lem}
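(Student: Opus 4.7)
The plan is to establish the one-step mutation identity
\[
\mathbf{x}^{\check{\mathbf{g}}(\mathcal{M})}F_{\mathcal{M}}(\hat{\mathbf{y}},\mathbf{z}) \;=\; (\mathbf{x}')^{\check{\mathbf{g}}(\mu_k\mathcal{M})} F_{\mu_k\mathcal{M}}(\hat{\mathbf{y}}',\mathbf{z})
\tag{$\star$}
\]
in the ambient field $\mathcal{F}$, where $(\mathbf{x}',\mathbf{y}',B') = \mu_k(\mathbf{x},\mathbf{y},B)$. Once $(\star)$ is proved, the element $C(\mathcal{M})$, which is by construction a Laurent polynomial in $\mathbf{x}$ with coefficients in $\mathbb{Z}[\mathbf{z}^{\pm 1}]$, equals $C(\mu_k\mathcal{M})$ computed in the cluster $\mathbf{x}'$ and is therefore also a Laurent polynomial in $\mathbf{x}'$. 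Iterating along an arbitrary path in $\mathbb{T}_n$ (using Proposition \ref{general is mutation invarints} to ensure that each $\mu_{k_j}\cdots\mu_{k_1}\mathcal{M}$ remains a general representation, and Proposition \ref{locally free is mutation invariants} together with the nondegeneracy hypothesis to keep each mutated $H$-based QP locally free, nondegenerate, and $2$-acyclic) shows $C(\mathcal{M}) \in \mathcal{L}_{\mathbf{x}_t}$ for every seed, hence $C(\mathcal{M}) \in \overline{\mathcal{A}}(Q,\mathbf{d})$ by the defining intersection.

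The proof of $(\star)$ amounts to substituting three transformation rules into the right-hand side and matching exponents. From \eqref{mutation of x} (with all $y$-variables set to $1$ and coefficient semifield $\mathrm{Trop}(\mathbf{z})$, so the tropical denominator is $1$), we have $x_k' = x_k^{-1}\bigl(\prod_j x_j^{[-b_{j,k}]_+}\bigr)^{d_k}\sum_{s=0}^{d_k} z_{k,s}\hat{y}_k^{s}$ and $x_i' = x_i$ for $i\neq k$. Using the $\check{\mathbf{g}}$-vector mutation rule from Theorem \ref{mutation rule of g}, the Laurent monomial $(\mathbf{x}')^{\check{\mathbf{g}}(\mu_k\mathcal{M})}$ expands as $\mathbf{x}^{\check{\mathbf{g}}(\mathcal{M})}$ multiplied by a suitable power of $\sum_{l=0}^{d_k} z_{k,l}\hat{y}_k^{l}$ times compensating $x_j$-monomials. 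Applying the $\hat{y}$-mutation \eqref{mutation of hat-y} and collecting, the identity $(\star)$ reduces to exactly the formula of Theorem \ref{mutation of F-polynomial} evaluated at $\hat{\mathbf{y}}$; the substitution $\mathbf{y}\to\hat{\mathbf{y}}$ is legitimate because the $F$-mutation is a subtraction-free identity in $\mathbb{Q}_{\mathrm{sf}}(\mathbf{y},\mathbf{z})$.

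The bookkeeping separates into three independent verifications. For the exponent of $x_k$, the identity \eqref{gbeta}, $\check{g}(k) = \check{\beta}'_+(k)-\check{\beta}_+(k)$, matches precisely the exponents $-\check{\beta}_+(k)$ and $-\check{\beta}'_+(k)$ of the reciprocal polynomial $\sum_{l=0}^{d_k} z_{k,l}\hat{y}_k^{l}$ appearing on the two sides of Theorem \ref{mutation of F-polynomial}. For $i\neq k$, the contribution $d_k[-b_{k,i}]_+\check{\beta}_-(k) - d_k[b_{k,i}]_+\check{\beta}_+(k)$ from Theorem \ref{mutation rule of g}, combined with the $d_k[b_{j,k}]_+$ terms appearing in the $x_k$-mutation and the $b_{j,i}$-exponents coming from $\hat{y}_i' = \prod x_j'^{b_{j,i}'}$, collapses to the required power of $x_i$ via the $\check{g}$-coherence $\min(\check{\beta}_+(k),\check{\beta}_-(k))=0$ of a general representation and the standard identity $[-b]_+ - [b]_+ = -b$. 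The $\mathbf{z}$-exponents require no adjustment since $\mathbf{z}$ is frozen.

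The main obstacle is the exponent bookkeeping in the middle step: one must track how the factor $\sum_{l=0}^{d_k} z_{k,l}\hat{y}_k^{l}$ enters both through the $x_k$-mutation and through the $F$-polynomial mutation, and show that the two contributions add to zero. This is the $H$-based analogue of the classical Derksen–Weyman–Zelevinsky verification, with the only new subtlety being the factor $d_k$ attached to $[\pm b_{k,i}]_+$ terms (reflecting the mutation degree) and the need to evaluate the reciprocal polynomial in a non-binomial form; the $\check{g}$-coherence dichotomy handles the sign-splitting that in the classical case reduces to $\min(\check{\beta}_+,\check{\beta}_-)=0$ without a $d_k$-twist.
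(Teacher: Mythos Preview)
Your proof of the commutation identity $(\star)$ follows essentially the same route as the paper's: both reduce to Theorem~\ref{mutation of F-polynomial} evaluated at $\hat{\mathbf{y}}$ and verify that the remaining Laurent monomial factors match via Theorem~\ref{mutation rule of g}, the $x$-mutation rule~\eqref{mutation of x}, and the relation~\eqref{gbeta}. The paper organizes the monomial bookkeeping slightly more cleanly by first isolating the reduced identity
\[
\Bigl(\textstyle\sum_{s=0}^{d_k} z_{k,s}\hat{y}_k^{s}\Bigr)^{\check{\beta}_+(k)}\mathbf{x}^{\check{g}}
=\Bigl(\textstyle\sum_{s=0}^{d_k} z_{k,s}(\hat{y}_k')^{s}\Bigr)^{\check{\beta}_+'(k)}(\mathbf{x}')^{\check{g}'}
\]
and then checking this by direct substitution using~\eqref{mutation of x},~\eqref{mutation of hat-y},~\eqref{gbeta}, the reciprocity~\eqref{reciprocity condition}, and~\eqref{chechgg'}; but the content is the same as your three verifications.

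Where you differ is in deducing $C(\mathcal{M})\in\overline{\mathcal{A}}(Q,\mathbf{d})$. The paper invokes Theorem~\ref{one step mutation} (the upper-bounds theorem, which requires $B(Q)$ to have full rank) so that it suffices to check Laurentness in the $n$ adjacent clusters $\mathbf{x}_k$; this is done in one application of $(\star)$ per direction. You instead iterate $(\star)$ along an arbitrary path in $\mathbb{T}_n$, using Propositions~\ref{general is mutation invarints} and~\ref{locally free is mutation invariants} plus nondegeneracy to keep all intermediate data admissible, and conclude $C(\mathcal{M})\in\mathcal{L}_{\mathbf{x}_t}$ for every seed directly. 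Your argument is more elementary and does not need the full-rank hypothesis that the paper's route implicitly imports through Theorem~\ref{one step mutation}; the paper's route, on the other hand, only needs $(\star)$ once rather than along arbitrary paths.
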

\begin{proof}
The commuting property $\mu_{k}(C(\mathcal{M}))=C\left(\mu_{k}(\mathcal{M})\right)$ is equivalent to that
$$
\mathbf{x}^{\check{g}} F_{M}(\hat{\mathbf{y}},\mathbf{z})=\left(\mathbf{x}^{\prime}\right)^{\check{g}^{\prime}} F_{M^{\prime}}\left(\hat{\mathbf{y}}^{\prime},\mathbf{z}\right).
$$
Comparing with (\ref{F-mutation}), we see that it suffices to show that
$$
\left(\sum_{s=0}^{d_k}z_{k,s} \hat{y}_k^s\right)^{\check{\beta}_{+}(k)} \mathbf{x}^{\check{g}}=\left(\sum_{s=0}^{d_k} z_{k,s}\left(\hat{y}_k^\prime\right)^s\right)^{\check{\beta}^\prime_{+}(k)}\left(\mathbf{x}^{\prime}\right)^{\check{g}^{\prime}} .
$$
The following equalities are all equivalent to the above.
\begin{align*}
    \left(\sum_{s=0}^{d_k}z_{k,s} \hat{y}_k^s\right)^{\check
    {\beta}_{+}(k)} \mathbf{x}^{\check{g}}&=\left(\sum_{s=0}^{d_k}z_{k,s} \hat{y}_k^{-s}\right)^{\check{\beta}_+^\prime(k)}\left(x_k^{-1}\mathbf{x}^{d_k[-\mathbf{b}_k]_+}\left(\sum_{s=0}^{d_k} z_{k,s}\hat{y}_k^s\right)\right)^{\check{g}'(k)}\mathbf{x}^{\check{g}'-\check{g}'(k)\mathrm{e}_k} \tag{\eqref{mutation of x}, \eqref{mutation of hat-y}}\\
    \mathbf{x}^{\check{g}}&= \mathbf{x}^{-d_k\check{\beta}_+^\prime(k)\mathbf{b}_k+d_k\check{\beta}'(k)[-\mathbf{b}_k]_+ +\check{g}'-2\check{g}'(k)\mathrm{e}_k}\tag{\eqref{gbeta},\eqref{reciprocity condition}}
\end{align*}
The last equality can be easily verified using $\mathbf{b}_k=[\mathbf{b}_k]_+ - [-\mathbf{b}_k]_+$ for which $\mathbf{b}_k$ is the $k$-th column of the matrix $B(Q)$, and the mutation rule of $\check{g}$-vector in \eqref{chechgg'}. 

To show $C(\mathcal{M}) \in \overline{\mathcal{A}}(Q,\mathbf{d})$, we use the same argument as in that of \cite[Lemma 5.3]{fei2017cluster}. Let $\mathbf{x}_{k}$ be the cluster obtained from $\mathbf{x}$ by applying the mutation at $k$. By Theorem \ref{one step mutation} it suffices to show that $C(\mathcal{M}) \in \mathcal{L}_{\mathbf{x}_{k}}$ for any mutable vertex $k$. The expression of $C(\mathcal{M})$ with respect to the cluster $\mathbf{x}_{k}$ is given by $\mu_{k}(C(\mathcal{M}))$, which is $C\left(\mu_{k}(\mathcal{M})\right) \in \mathcal{L}_{\mathbf{x}_{k}}$. 
\end{proof}
\begin{defn}
    A \textbf{cluster monomial} in a generalized cluster algebra $\mathcal{A}(Q,\mathbf{d})$ is a monomial in cluster variables appearing in the cluster pattern $\Sigma$.  
\end{defn}
Suppose that $\prod_l(x_{l;t}^{B(Q),t_0})^{m_l}$ is a  generalized cluster monomial in $\mathcal{A}(Q,\mathbf{d})$ and $t_0\overset{\mathbf{k}}{\rule[3pt]{6mm}{0.05em}}t$ is the path joining $t_0$ and $t$ in $\mathbb{T}_n$.  Let $\mathcal{M}$ be the representation mutated from the negative representation $\bigoplus_lm_lE_l^-$ of $(Q',\mathbf{d},\s')$ via the sequence of mutations $\mu_\mathbf{k}$. Using \eqref{express x in term of F and g} and Theorem \ref{thm: interpretation of g and F}, we obtain $C(\mathcal{M})=\prod_l(x_{l;t}^{B(Q),t_0})^{m_l}$.

Suppose that an element $\omega \in \overline{\mathcal{A}}(Q,\mathbf{d})$ can be written as
$$
\omega=\mathrm{x}^{g(\omega)} F(\hat{\mathbf{y}},\mathbf{z}),
$$
where $F$ is a primitive rational polynomial in $\mathbb{Q}[\mathbf{\hat{y}},\mathbf{z}]$ , and $g(\omega) \in \mathbb{Z}^{n}$. By definition, a \textbf{primitive rational polynomial} is a ratio of two polynomials not divisible by any $\hat{y}_i
$.  
\begin{lem}
  If we assume that the matrix $B(Q)$ has full rank, then the elements $\hat{y}_{1}, \hat{y}_{2}, \ldots, \hat{y}_{p}$ are algebraically independent so that the vector $g(\omega)$ is uniquely determined.
\end{lem}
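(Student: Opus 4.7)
\medskip

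\noindent\textbf{Proof proposal.} Since all $y$-variables have been specialized to $1$ in the tropical setting, the definition of $\hat{y}_i$ in~\eqref{defn of hat-y} reduces to the Laurent monomial
\[
\hat{y}_i \;=\; \prod_{j=1}^{n} x_j^{b_{j,i}},
\]
whose exponent vector is precisely the $i$-th column of $B(Q)$. The plan is to first deduce algebraic independence of the $\hat{y}_i$ from linear independence of the columns of $B(Q)$ (the full-rank hypothesis), and then to leverage this together with the primitivity condition on $F$ to obtain uniqueness of $g(\omega)$.

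For the first step, I would argue that any polynomial relation $P(\hat y_1,\ldots,\hat y_n)=0$, expanded as $\sum_{v\in\mathbb{Z}_{\geq 0}^n} c_v\,\hat{\mathbf{y}}^v$, translates via the identification above into the Laurent polynomial identity $\sum_{v} c_v\,\mathbf{x}^{Bv}=0$ in $\mathbb{Q}[\mathbf{x}^{\pm 1}]$. The Laurent monomials $\mathbf{x}^d$ for distinct $d\in\mathbb{Z}^n$ are linearly independent over $\mathbb{Q}$, so the coefficients must vanish after collecting terms with the same $\mathbf{x}$-degree; full rank of $B(Q)$ makes the linear map $v\mapsto Bv$ injective on $\mathbb{Z}^n$, so the grouping is trivial and each $c_v$ is zero. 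This gives algebraic independence of $\hat y_1,\ldots,\hat y_n$ over $\mathbb{Q}$, and the algebraic independence over $\mathbb{Q}(\mathbf{z})$ follows because the $z_k$-variables are independent indeterminates distinct from the $x_i$'s.

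For uniqueness of $g(\omega)$, suppose $\omega=\mathbf{x}^{g_1}F_1(\hat{\mathbf{y}},\mathbf{z})=\mathbf{x}^{g_2}F_2(\hat{\mathbf{y}},\mathbf{z})$ with both $F_1,F_2$ primitive, write $F_i=p_i/q_i$ with $p_i,q_i\in\mathbb{Q}[\hat{\mathbf{y}},\mathbf{z}]$ not divisible by any $\hat y_j$, and set $h:=g_1-g_2$. Clearing denominators yields $\mathbf{x}^{h}\,p_1 q_2 = p_2 q_1$ in $\mathbb{Q}[\mathbf{x}^{\pm 1},\mathbf{z}]$. Expanding both sides as Laurent polynomials in $\mathbf{x}$ (with coefficients in $\mathbb{Q}[\mathbf{z}]$) and tracking supports in $\mathbb{Z}^n$, one sees that the $\mathbf{x}$-support of $p_2 q_1$ lies inside the lattice $B\mathbb{Z}^n$, while that of $\mathbf{x}^h p_1 q_2$ lies in the coset $h+B\mathbb{Z}^n$; comparing them forces $h\in B\mathbb{Z}^n$, and by injectivity of $B$ the expression $h=Bu$ determines a unique $u\in\mathbb{Z}^n$, giving $\mathbf{x}^h=\hat{\mathbf{y}}^u$. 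Writing $u=u_+-u_-$ with $u_\pm\in\mathbb{Z}_{\geq 0}^n$ of disjoint support, the equation $\hat{\mathbf{y}}^{u_-}p_2 q_1=\hat{\mathbf{y}}^{u_+} p_1 q_2$ lives in the UFD $\mathbb{Q}[\hat{\mathbf{y}},\mathbf{z}]$ where the $\hat y_j$ are distinct primes; comparing $\hat y_j$-adic valuations and using that neither $p_1 q_2$ nor $p_2 q_1$ is divisible by $\hat y_j$ forces $(u_+)_j=(u_-)_j$ for every $j$, hence $u=0$ and $h=0$.

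The only genuinely delicate point is the support-comparison step that places $h$ in the lattice $B\mathbb{Z}^n$: it relies crucially on the injectivity of $v\mapsto Bv$ (so that the $\mathbf{x}$-degree of a monomial $\hat{\mathbf{y}}^v\mathbf{z}^w$ determines $v$) rather than merely on algebraic independence. Once that observation is made, the remainder is routine UFD bookkeeping, and the lemma follows.
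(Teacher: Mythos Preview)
Your proof is correct and follows essentially the same approach the paper intends: the paper's own proof is simply ``Use the technique in \cite[Proposition 7.8]{fomin2006cluster},'' and your argument is a faithful expansion of exactly that technique---injectivity of $v\mapsto Bv$ from the full-rank hypothesis gives algebraic independence of the $\hat{y}_i$, and then the primitivity condition on $F$ combined with a lattice-support comparison forces $h=g_1-g_2$ into $B\mathbb{Z}^n$, after which the UFD structure of $\mathbb{Q}[\hat{\mathbf{y}},\mathbf{z}]$ finishes the job.
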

\begin{proof}
    Use the technique in \cite[Proposition 7.8]{fomin2006cluster}.
\end{proof}
 We call the vector $g(\omega)$ the $g$-vector of $\omega$. Note that the $g$-vector of $C(\mathcal{M})$ as in Lemma \ref{commute with C}  is $\check{g}(\mathcal{M})$. Definition implies at once that for two such elements $\omega_{1}, \omega_{2}$ we have that $g\left(\omega_{1} \omega_{2}\right)=g\left(\omega_{1}\right)+g\left(\omega_{2}\right)$. So the set $\mathbb{Z}^n$ of all $g$-vectors in $\overline{\mathcal{A}}(Q,\mathbf{d})$ forms a semigroup.
\begin{lem}\label{bases}
Assume that the matrix $B(Q)$ has full rank. Let $\Omega=\left\{\omega_{1}, \omega_{2}, \ldots, \omega_{s}\right\}$ be a subset of $\overline{\mathcal{A}}(Q,\mathbf{d})$ with well-defined $g$-vectors. If $g\left(\omega_{i}\right)$'s are all distinct, then $\Omega$ is linearly independent over $\mathbb{Z}$.
\end{lem}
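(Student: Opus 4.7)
The plan is to show that any relation $\sum_{i=1}^s c_i\omega_i=0$ with $c_i\in\mathbb{Z}$ forces each $c_i=0$ by isolating in the sum a single Laurent monomial $\mathbf{x}^v$ that appears in exactly one of the $\omega_i$, then inducting on $s$. Writing $\omega_i=\mathbf{x}^{g(\omega_i)}F_i(\hat{\mathbf{y}},\mathbf{z})$ and using $\hat{\mathbf{y}}^e=\mathbf{x}^{Be}$ (where $B=B(Q)$), I expand
\[
\omega_i=\sum_{e\in\operatorname{supp}(F_i)}a_{i,e}(\mathbf{z})\,\mathbf{x}^{g(\omega_i)+Be},\qquad a_{i,e}(\mathbf{z})\in\mathbb{Z}[\mathbf{z}^{\pm 1}].
\]
Since each $\omega_i\in\overline{\mathcal{A}}(Q,\mathbf{d})\subseteq\mathbb{Z}[\mathbf{z}^{\pm 1}][\mathbf{x}^{\pm 1}]$ is a Laurent polynomial, and $B$ is injective by the full-rank hypothesis, the map $e\mapsto g(\omega_i)+Be$ injects the finite set $\operatorname{supp}(F_i)$ onto a finite subset
\[
S_i=\{g(\omega_i)+Be:e\in\operatorname{supp}(F_i)\}\subset\mathbb{Z}^n.
\]

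Next I will choose a generic linear functional $\ell\colon\mathbb{Z}^n\to\mathbb{Q}$ satisfying two conditions: (a) $\ell$ is injective on $S_1\cup\cdots\cup S_s$, so that each $S_i$ admits a unique $\ell$-minimizer $v_i^{*}$; (b) the values $\ell(v_1^{*}),\ldots,\ell(v_s^{*})$ are pairwise distinct. Both (a) and (b) exclude only finitely many affine hyperplanes in $\mathbb{Q}^n$, so such an $\ell$ exists. Let $i^{*}$ be the index minimizing $\ell(v_i^{*})$. If $v_{i^{*}}^{*}\in S_j$ for some $j\ne i^{*}$, then $\ell(v_j^{*})\leq \ell(v_{i^{*}}^{*})\leq \ell(v_j^{*})$ (the first inequality because $v_j^{*}$ minimizes $\ell$ on $S_j$, the second by the choice of $i^{*}$), forcing $\ell(v_j^{*})=\ell(v_{i^{*}}^{*})$ and contradicting (b). Consequently, the exponent $v_{i^{*}}^{*}$ occurs in none of the other $\omega_j$.

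Since $\{\mathbf{x}^v\}_{v\in\mathbb{Z}^n}$ is a $\mathbb{Z}[\mathbf{z}^{\pm 1}]$-basis of $\mathbb{Z}[\mathbf{z}^{\pm 1}][\mathbf{x}^{\pm 1}]$, the coefficient of $\mathbf{x}^{v_{i^{*}}^{*}}$ in $\sum_i c_i\omega_i=0$ must vanish; by the previous step this coefficient is exactly $c_{i^{*}}\,a_{i^{*},e_{i^{*}}^{*}}(\mathbf{z})$ with $a_{i^{*},e_{i^{*}}^{*}}\neq 0$. As $\mathbb{Z}[\mathbf{z}^{\pm 1}]$ is an integral domain, $c_{i^{*}}=0$. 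Removing $\omega_{i^{*}}$ from the relation and applying induction on $s$ (the base case $s=1$ being immediate since $\omega_1\neq 0$) completes the proof. The only substantive point is the existence of the functional $\ell$ and the uniqueness of $v_{i^{*}}^{*}$ as a minimizer; both reduce to avoiding finitely many hyperplanes, which is possible precisely because each $S_i$ is finite---a consequence of the full-rank hypothesis on $B(Q)$ together with the fact that elements of $\overline{\mathcal{A}}(Q,\mathbf{d})$ are Laurent polynomials in $\mathbf{x}$.
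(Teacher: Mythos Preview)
Your overall strategy—isolate a Laurent monomial appearing in exactly one $\omega_i$ by minimizing a linear functional, then induct—is the right one, but the justification of condition (b) contains a genuine gap, and in fact your argument never invokes the hypothesis that the $g(\omega_i)$ are distinct.

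The problem is that the minimizer $v_i^{*}$ depends on $\ell$, so (b) is \emph{not} a condition of the form ``$\ell$ avoids finitely many hyperplanes.'' After imposing (a), the complement decomposes into finitely many open cones on which the tuple $(v_1^{*},\ldots,v_s^{*})$ is constant; on a given cone, (b) becomes $\ell(v_i^{*}-v_j^{*})\neq 0$, which is vacuous whenever $v_i^{*}=v_j^{*}$. It can happen that on \emph{every} cone some pair coincides (e.g.\ two $\omega$'s with the same Newton polytope but different interior support), so (b) may fail for all $\ell$. As stated, your argument would ``prove'' that any finite set of nonzero Laurent polynomials is linearly independent.

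The missing ingredient is precisely the distinctness of the $g$-vectors together with primitivity of the $F_i$. Primitivity forces $0\in\operatorname{supp}(F_i)$ (equivalently $g(\omega_i)\in S_i$), and since $\operatorname{supp}(F_i)\subset\mathbb{Z}_{\ge 0}^n$ the point $g(\omega_i)$ is a vertex of the Newton polytope of $\omega_i$. Because $B$ has full rank, the open cone $\{\ell:B^{T}\ell>0\}$ is nonempty; for any $\ell$ in this cone one has $\ell(Be)>0$ for every $0\ne e\in\mathbb{Z}_{\ge 0}^n$, hence $v_i^{*}=g(\omega_i)$ for all $i$. Now the hypothesis that the $g(\omega_i)$ are pairwise distinct yields (b) immediately (after a further generic perturbation within the cone to secure (a)). This is essentially the argument the paper defers to \cite[Lemma~5.5]{fei2017cluster}. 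Once you insert this step, the rest of your induction goes through unchanged.
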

\begin{proof}
The proof is the same as \cite[Lemma 5.5]{fei2017cluster}.
\end{proof}
It follows from Lemma \ref{commute with C} and Lemma \ref{bases}  that
\begin{thm}\label{base by cluster character}
Suppose that $(Q, \mathbf{d},\mathcal{S})$ is a nondegenerate and locally free $H$-based QP with $d_k\leq 2$ for all vertices $k$, and $B(Q)$ has full rank. Let $\mathcal{R}$ be a set of general decorated representations with all distinct $\check{g}$-vectors, then $C$ maps $\mathcal{R}$ (bijectively) to a set of $\mathbb{ZP}$-linearly independent elements in the upper generalized cluster algebra $\overline{\mathcal{A}}(Q,\mathbf{d})$.
\end{thm}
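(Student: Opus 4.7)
\medskip

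\noindent\textbf{Proof proposal.} The plan is to assemble three ingredients already prepared in this section: (i) the fact that $C(\mathcal{M})$ lies in $\overline{\mathcal{A}}(Q,\mathbf{d})$ for every general decorated representation $\mathcal{M}$, (ii) the identification of the intrinsic $g$-vector of $C(\mathcal{M})$ (as an element of the upper generalized cluster algebra) with $\check{g}(\mathcal{M})$, and (iii) a linear-independence criterion based on distinctness of $g$-vectors. The bijectivity and injectivity parts come essentially for free once (i)-(iii) are in hand; the subtle point is to upgrade the $\mathbb{Z}$-linear independence of Lemma \ref{bases} to $\mathbb{Z}\mathbb{P}$-linear independence.

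\medskip

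First, I would invoke Lemma \ref{commute with C} to conclude that each $C(\mathcal{M})\in \overline{\mathcal{A}}(Q,\mathbf{d})$. The assumptions on $(Q,\mathbf{d},\mathcal{S})$ (nondegenerate, locally free, $d_k\le 2$) and the genericity of $\mathcal{M}\in\mathcal{R}$ are exactly what is needed so that arbitrary iterated mutations $\mu_k(\mathcal{M})$ are well-defined and remain general (Corollary \ref{defn on mutation of M}, Proposition \ref{general is mutation invarints}), so Lemma \ref{commute with C} applies to $\mathcal{M}$. Next, by Definition \ref{defn of F-pplynomial} the $F$-polynomial $F_\mathcal{M}(\hat{\mathbf{y}},\mathbf{z})$ has constant term $1$ (the summand $\mathbf{e}=0$, $\mathbf{t}=0$), so $F_\mathcal{M}$ is not divisible by any $\hat{y}_i$; hence $C(\mathcal{M})=\mathbf{x}^{\check{g}(\mathcal{M})}F_\mathcal{M}(\hat{\mathbf{y}},\mathbf{z})$ is of the exact form used to define the intrinsic $g$-vector, and the remark preceding Lemma \ref{bases} gives $g(C(\mathcal{M}))=\check{g}(\mathcal{M})$. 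The full-rank hypothesis on $B(Q)$ guarantees algebraic independence of the $\hat{y}_i$, so this $g$-vector is well-defined. Since the $\check{g}$-vectors of elements in $\mathcal{R}$ are assumed distinct, $C$ is injective on $\mathcal{R}$.

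\medskip

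The core step is the $\mathbb{Z}\mathbb{P}$-linear independence, which I would establish by extending the proof of Lemma \ref{bases}. Suppose $\sum_{i=1}^{s} a_i(\mathbf{z})\,C(\mathcal{M}_i)=0$ in $\mathbb{Z}[\mathbf{x}^{\pm 1},\mathbf{z}^{\pm 1}]$ with $a_i\in\mathbb{Z}[\mathbf{z}^{\pm 1}]=\mathbb{Z}\mathbb{P}$, and write $C(\mathcal{M}_i)=\mathbf{x}^{g_i}F_i(\hat{\mathbf{y}},\mathbf{z})$ with $g_i=\check{g}(\mathcal{M}_i)$ pairwise distinct. Fix a term order on $\mathbb{Z}^n$ refining coordinate-wise comparison and let $g_{i^*}$ be the minimal $g_i$. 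Expanding $\hat{\mathbf{y}}$ via $\hat{y}_k=\prod_j x_j^{b_{j,k}}$ and using that $F_{i^*}$ has constant term $1$, the $\mathbf{x}$-monomial $\mathbf{x}^{g_{i^*}}$ appears in $\mathbf{x}^{g_{i^*}}F_{i^*}(\hat{\mathbf{y}},\mathbf{z})$ with coefficient $1$, while all other contributions $\mathbf{x}^{g_j}F_j(\hat{\mathbf{y}},\mathbf{z})$ with $j\neq i^*$ yield $\mathbf{x}$-monomials strictly above $g_{i^*}$ (by minimality of $g_{i^*}$ and the fact that the non-constant terms of $F_j$ multiply $\mathbf{x}^{g_j}$ by positive combinations of $\hat{\mathbf{y}}$-monomials, which shifts the $\mathbf{x}$-exponent only in directions dictated by the columns of $B(Q)$, preserving the inequality after a suitable choice of term order). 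Comparing coefficients of $\mathbf{x}^{g_{i^*}}$ forces $a_{i^*}(\mathbf{z})=0$ in $\mathbb{Z}[\mathbf{z}^{\pm 1}]$; repeating yields $a_i=0$ for all $i$.

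\medskip

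The main obstacle is the leading-term argument in the last step: unlike the $\mathbb{Z}$-linear case of Lemma \ref{bases}, we cannot simply use the distinctness of $g$-vectors to separate monomials, because each $F_i(\hat{\mathbf{y}},\mathbf{z})$ expands into many $\mathbf{x}$-monomials once $\hat{\mathbf{y}}$ is substituted, and it is conceivable that these expansions mix. The cure is to fix a term order on $\mathbb{Z}^n$ compatible with the structure of $\hat{\mathbf{y}}$ (for instance, one compatible with the columns of $B(Q)$, which are linearly independent by the full-rank hypothesis), so that multiplying $\mathbf{x}^{g_j}$ by any nontrivial $\hat{\mathbf{y}}$-monomial strictly increases the exponent in this order. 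Verifying the existence of such an order, and hence the unambiguity of the "minimum-$\mathbf{x}$-monomial" argument above, is the delicate point; it relies crucially on $\operatorname{rank} B(Q)=n$ and is precisely the reason for this hypothesis in the theorem. Once this is confirmed, the $\mathbb{Z}\mathbb{P}$-linear independence, and thus the theorem, follows.
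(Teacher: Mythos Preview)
Your proposal is correct and takes essentially the same approach as the paper, whose proof is a one-line appeal to Lemma \ref{commute with C} (membership in $\overline{\mathcal{A}}(Q,\mathbf{d})$) and Lemma \ref{bases} (linear independence via distinct $g$-vectors). You have simply unpacked these citations: your step (i) is Lemma \ref{commute with C}, your step (ii) is the remark preceding Lemma \ref{bases}, and your leading-term argument in step (iii) is exactly the content of Lemma \ref{bases} (whose proof is deferred to \cite[Lemma 5.5]{fei2017cluster}), carried out directly with $\mathbb{Z}\mathbb{P}$-coefficients rather than $\mathbb{Z}$-coefficients. The ``obstacle'' you flag --- choosing a linear functional $\lambda$ on $\mathbb{R}^n$ so that $\lambda(B\mathbf{e})>0$ for every nonzero $\mathbf{e}\in\mathbb{Z}_{\ge 0}^n$ --- is resolved by the full-rank hypothesis exactly as you say (pick $\lambda$ with $B^T\lambda\in\mathbb{R}_{>0}^n$, possible since $B^T=-B$ is surjective), and this is the standard move in the cited source.
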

\begin{defn}
Define the \textbf{generic character} $C_{\text{gen}}:\mathbb{Z}^n\to \mathbb{Z}[\mathbf{x}^{\pm 1},\mathbf{z}^{\pm 1}]$ by
\begin{equation}
    C_{\text{gen}}(\check{g})=\mathbf{x}^{\check{g}}F_{\ker(\check{g})}(\mathbf{y},\mathbf{z}).
\end{equation}
\end{defn}
\begin{pro}
The generic character is well-defined.
\end{pro}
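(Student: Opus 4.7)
The plan is to show that the formula $F_{\ker(\check{g})}(\mathbf{y},\mathbf{z})$ is independent of the particular choice of generic injective presentation of weight $\check{g}$. By Theorem \ref{pc}, the kernel of a general injective presentation in $\IHom_{\mathcal{P}}(\check{g})$ corresponds to a general representation of weight $\check{g}$ in the principal component $\mathrm{PC}(\check{g})$, so the claim reduces to showing that $M \mapsto F_M(\mathbf{y},\mathbf{z})$ is constant on a dense open subset of $\mathrm{PC}(\check{g})$. First, I would observe that on a dense open subset the dimension vector $\underline{\dim}(M)$ is constant (by the construction of $\mathrm{PC}(\check{g})$ as an irreducible component of some $\Rep_\alpha(\mathcal{P})$), and by Proposition \ref{general rep is locally free} the representation $M$ is locally free, so all the Jordan type data appearing in Definition \ref{defn of F-pplynomial} is well-defined and uniformly bounded.

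The next step is to observe that the sum defining $F_M$ is then finite, indexed by finitely many pairs $(\mathbf{e},\mathbf{t})$ with $\mathbf{e} \leq \alpha$ and $\mathbf{t}$ bounded by $\alpha$, so it suffices to show that each coefficient $\chi(\mathrm{Gr}_{\mathbf{e},\mathbf{t}}(M))$ is generically constant in $M$. The quiver Grassmannian $\mathrm{Gr}_{\mathbf{e}}(M)$ is the fiber over $M$ of an algebraic family over $\Rep_\alpha(\mathcal{P})$, and the Jordan-type stratum $\mathrm{Gr}_{\mathbf{e},\mathbf{t}}(M)$ is cut out inside it by constructible conditions on the nilpotent operator $M(\partial_{\varepsilon_k}\mathcal{S})$ acting on $\ker M(\varepsilon_k)/\Img M(\varepsilon_k)$ of submodules. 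Since the topological Euler characteristic is additive on constructible partitions and locally constant on families of complex algebraic varieties, it defines a constructible function on the base, and on the irreducible variety $\mathrm{PC}(\check{g})$ any constructible function is generically constant.

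The main obstacle is the algebraic-geometric bookkeeping of the Jordan-type stratification in families: one must verify that as both $M$ and a submodule $N \subseteq M$ vary, the Jordan type of $M(\partial_{\varepsilon_k}\mathcal{S})$ on $\ker M(\varepsilon_k)/\Img M(\varepsilon_k)$ of $N$ varies algebraically. Local freeness, guaranteed for generic $M$ by Proposition \ref{general rep is locally free}, ensures these quotients assemble into a vector bundle over the relevant locus, so that the conditions defining a fixed Jordan type are given by rank conditions on powers of the induced nilpotent operator, and their successive differences yield locally closed strata. Intersecting the finitely many dense open subsets of $\mathrm{PC}(\check{g})$ on which the dimension vector, local freeness, and each Euler characteristic $\chi(\mathrm{Gr}_{\mathbf{e},\mathbf{t}}(M))$ stabilize yields a dense open subset on which $F_M$ equals a fixed element of $\mathbb{Z}[\mathbf{y},\mathbf{z}]$; this element is the well-defined $F_{\ker(\check{g})}$.
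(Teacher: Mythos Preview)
Your proposal is correct and follows essentially the same strategy as the paper: reduce to showing that each coefficient $M\mapsto \chi(\mathrm{Gr}_{\mathbf{e},\mathbf{t}}(M))$ is a constructible function on the irreducible variety $\mathrm{PC}(\check g)$, hence generically constant. The paper makes the constructibility step slightly more concrete by building the incidence variety $Z(\alpha,\mathbf{e},\mathbf{t})\subset \mathrm{PC}(\check g)\times \prod_i \mathrm{Gr}\binom{\alpha_i}{e_i}$ and applying Dimca's pushforward formalism $CF(p)(1_{Z(\alpha,\mathbf{e},\mathbf{t})})$ directly, rather than arguing via vector bundles and rank conditions as you do; both routes are standard, and the paper's version has the advantage that it does not require first restricting to the locally free locus.
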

\begin{proof}
Building on the discussion immediately following \eqref{incidence variety}, we observe that for any open set $O\subseteq PC(\check{g})$, $p_1p_2^{-1}(O)$ is constructible and dense, and thus contains an open subset of $\IHom(\check{g})$. To show that $C_{\text{gen}}$ is well-defined, it suffices to show that $\chi(\mathrm{Gr}_{\mathbf{e},\mathbf{t}}(M))$ is constant on an open dense subset of $PC(\check{g})\subseteq \Rep_{\alpha}(Q)$.

We consider the  function:
\begin{align*}
\mu_{\mathbf{e},\mathbf{t}}:PC(\check{g})&\longrightarrow \mathbb{Z}\\
    M &\longmapsto \chi(\mathrm{Gr}_{\mathbf{e},\mathbf{t}}(M)).
\end{align*}
Using the fact that a constructible function on an irreducible variety admits a generic value, we need to show that the function $\mu_{\mathbf{e},\mathbf{t}}$ is constructible. To prove this, we need the following proposition:
\begin{pro}[{\cite[Proposition 4.1.31]{dimca2004sheaves}}] \label{construntive map}
For an algebraic variety $X$, the ring of constructible functions from $X$ to $\mathbb{Z}$ is denoted by $CF(X)$. Any morphism $f: X \to Y$ of complex algebraic varieties induces a well-defined push-forward homomorphism $CF(f): CF(X)\to CF(Y)$, determined by the property
$$CF(f)(1_Z)(y)=\chi(f^{-1}(y)\cap Z)$$
for any closed subvariety $Z$ in $X$ and any point $y\in Y$.
\end{pro}

Next, for any dimension vector $\mathbf{e}$ and Jordan type   $\mathbf{t}$, we consider the quasi-projective variety: 
\begin{align*}
     Z(\alpha,\mathbf{e},\mathbf{t})=\Big\{(\mathcal{M},\mathcal{N})\in PC(\check{g})\times  \prod_{i=1}^n \mathrm{Gr}\binom{\alpha_i}{e_i}\Big|& M(a)(N_{ta})\subseteq N_{ha}\text{ for all $a\in Q_1$},\\ 
     & \text{and the Jordan type of $N_i$ is $\mathbf{t}_i$}\Big\}.
\end{align*}
We have two natural projections:
\begin{center}
    \begin{tikzcd}
      & {Z(\alpha,\mathbf{e},\mathbf{t})} \arrow[ld, "p"'] \arrow[rd, "q"] &                                             \\
PC(\check{g}) &                                                                    & {\mathrm{Gr}\binom{\alpha}{\mathbf{e}}}
\end{tikzcd}
\end{center}
The projection $p$ is the composition of the inclusion $Z(\alpha,\mathbf{e},\mathbf{t})\to PC(\check{g})$ and the projection $PC(\check{g})\times \mathrm{Gr}\binom{\alpha_i}{e_i}\to PC(\check{g})$. Thus, $p$ is an algebraic variety morphism.

Applying Proposition \ref{construntive map} to the first projection $p:Z(\alpha,\mathbf{e},\mathbf{t})\to PC(\check{g})$, we conclude that $ \mu_{\mathbf{e},\mathbf{t}}=CF(p)(1_{Z(\alpha,\mathbf{e},\mathbf{t})})$.
\end{proof}
It follows from Theorem \ref{base by cluster character} that
\begin{thm}\label{bases in gca}
Suppose that  $(Q, \mathbf{d},\mathcal{S})$ is a nondegenerate and locally free $H$-based QP with $d_k\leq 2$ for all vertices $k$, and $B(Q)$ has full rank. Then the generic character $C_{\text{gen}}$ maps $\mathbb{Z}^n$ (bijectively) to a set of linearly independent elements in $\overline{\mathcal{A}}(Q,\mathbf{d})$ containing all cluster monomials.
\end{thm}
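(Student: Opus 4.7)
The plan is to identify the generic character $C_{\text{gen}}$ with the cluster character $C$ applied to a carefully chosen family of general representations, and then to invoke Theorem \ref{base by cluster character} and Lemma \ref{commute with C}. For each $\check{g}\in\mathbb{Z}^n$, I would fix a decorated representation $\mathcal{M}_{\check{g}}$ lying in the dense open subset of the principal component $\mathrm{PC}(\check{g})$ on which every Jordan-stratified Euler characteristic $\chi(\mathrm{Gr}_{\mathbf{e},\mathbf{t}}(M))$ attains its generic value; such a subset is nonempty by the well-definedness argument for $F_{\ker(\check{g})}$ established immediately before the theorem. Since the $\check{g}$-vector is constant on $\mathrm{PC}(\check{g})$ with value $\check{g}$, one has $\check{g}_{\mathcal{M}_{\check{g}}}=\check{g}$ and $F_{\mathcal{M}_{\check{g}}}=F_{\ker(\check{g})}$, whence $C_{\text{gen}}(\check{g})=C(\mathcal{M}_{\check{g}})$.

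Granted this identification, Lemma \ref{commute with C} immediately gives $C_{\text{gen}}(\check{g})\in\overline{\mathcal{A}}(Q,\mathbf{d})$. Because the family $\{\mathcal{M}_{\check{g}}\}_{\check{g}\in\mathbb{Z}^n}$ has pairwise distinct $\check{g}$-vectors by construction, Theorem \ref{base by cluster character} simultaneously yields the injectivity of $C_{\text{gen}}$ and the $\mathbb{ZP}$-linear independence of its image inside $\overline{\mathcal{A}}(Q,\mathbf{d})$.

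For the remaining claim that every cluster monomial belongs to the image, I would reuse the construction presented just after Lemma \ref{commute with C}. Given a cluster monomial $\omega=\prod_{l}(x_{l;t}^{B(Q),t_0})^{m_l}$ with mutation sequence $\mathbf{k}$ from $t_0$ to $t$ in $\mathbb{T}_n$, set $\mathcal{M}=\mu_{\mathbf{k}}\bigl(\bigoplus_{l}m_{l}E_{l}^{-}\bigr)$. Combining \eqref{express x in term of F and g} with Theorem \ref{thm: interpretation of g and F} and multiplicativity of the $F$-polynomial under direct sums gives $\omega=C(\mathcal{M})$. It then remains to show $C(\mathcal{M})=C_{\text{gen}}(\check{g}_{\mathcal{M}})$, which amounts to placing $\mathcal{M}$ in the dense open locus of $\mathrm{PC}(\check{g}_{\mathcal{M}})$ defining $F_{\ker(\check{g}_{\mathcal{M}})}$. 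The key input is that $\bigoplus_{l}m_{l}E_{l}^{-}$ has trivially vanishing $\E$-invariant, and Corollary \ref{cor1}(2) applied with $\mathcal{M}=\mathcal{N}$ shows that $\e_{\mathcal{P}}(-)$ on a single representation is preserved by mutation, so $\e_{\mathcal{P}(Q,\mathbf{d},\mathcal{S})}(\mathcal{M})=0$; hence $\mathcal{M}$ lies in the rigid locus of its principal component.

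The step I expect to be the main obstacle is this last one: showing that $\E$-vanishing actually forces $F_{\mathcal{M}}=F_{\ker(\check{g}_{\mathcal{M}})}$, i.e., that $\mathcal{M}$ lies in the specific generic locus used to define $F_{\ker(\check{g}_{\mathcal{M}})}$ rather than merely in some other dense open subset of the principal component. This can be carried out either by establishing that rigid representations are unique up to isomorphism in each principal component (an $H$-based analog of Plamondon's argument in \cite{plamondon2012}), or directly by transporting the evident equality $F_{\bigoplus_l m_l E_l^-}=F_{\ker(\sum_l m_l\mathbf{e}_l)}$ backward along $\mu_{\mathbf{k}}$ using the $F$-polynomial mutation rule of Theorem \ref{mutation of F-polynomial}. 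The restriction $d_k\leq 2$ and the local-freeness hypothesis enter essentially at this stage, since they underpin both Theorem \ref{mutation of F-polynomial} and the Jordan-stratification constructions behind $F_{\ker(\check{g})}$.
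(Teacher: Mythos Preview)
Your proposal is correct and follows the same approach as the paper, which simply states ``It follows from Theorem \ref{base by cluster character}'' and relies on the cluster-monomial discussion immediately after Lemma \ref{commute with C}. You have unpacked these implicit steps carefully, and your identification of the key point (that $\e_{\mathcal{P}}(\mathcal{M})=0$ forces $\mathcal{M}$ to lie in the generic locus defining $F_{\ker(\check{g}_{\mathcal{M}})}$) is exactly right.

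Regarding what you call the ``main obstacle'': it dissolves once you observe that $\e_{\mathcal{P}}(\mathcal{M})=0$ makes the $\operatorname{Aut}(g)$-orbit of $d_{\mathcal{M}}$ open and dense in $\PHom_{\mathcal{P}}(g_{\mathcal{M}})$ (by the lemma just before Proposition \ref{general is mutation invarints} identifying the normal space with $\E(d,d)$). Any dense open set intersects the constructible dense locus on which each $\chi(\mathrm{Gr}_{\mathbf{e},\mathbf{t}}(-))$ is generically constant, and on that intersection the representation is isomorphic to $M$; hence $F_{\mathcal{M}}=F_{\ker(\check{g}_{\mathcal{M}})}$. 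This is the ``uniqueness of rigid objects in a principal component'' route you mention, and it is immediate here without invoking \cite{plamondon2012} separately. Your alternative route via Theorem \ref{mutation of F-polynomial} also works but is more circuitous.
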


\section*{Acknowledgement}
This article is part of my PhD thesis, under the supervision of Professor Jiarui Fei. I would like to express my deepest gratitude to Professor Fei for his exceptional guidance, unwavering support, and great patience.

\appendix
 \section{Proofs by Adaptation to \texorpdfstring{$H$}{H}-based Setting}
Our proof adapts the stratification technique from \cite{derksen2008quivers} to the $H$-based setting.
\begin{proof}[Proof of Theorem \ref{splitting thm}]\label{app:thm splitting}
Let us first prove the existence of  \eqref{existence of split thm}.
There is nothing to prove if $(H, A, \s)$ is reduced, so let us assume that $\s^{\circ (2)} \neq 0$. Using Proposition \ref{triv} and replacing $\s$ if necessary by a cyclically equivalent potential, we can assume that $\s$ is of the form
\begin{align}\label{split thm 1}
    \s=\sum_{k=1}^{N}\left(a_{k} b_{k}+ a_k u_{k}+v_{k} b_{k}\right)+\s^{\prime}
\end{align}
where each $a_{k} b_{k}$ is a cyclic 2-path, the arrows $a_{1}, b_{1}, \cdots, a_{N}, b_{N}$ in $Q_1^\circ$ form a basis of $A^{\circ}_{\text{triv}}$, the elements $u_{k}$ and $v_{k}$ belong to $\mathfrak{m}^\circ(A)^{2}+H^\circ\mathfrak{m}^\circ(A)+\mathfrak{m}^\circ(A)H^\circ\subseteq \mathfrak{m}(A)^2$, and the potential $\s^{\prime}\in\mathfrak{m}^\circ(A)^3+\mathfrak{m}^\circ(A)^2H^\circ\subseteq \mathfrak{m}(A)^3$  is a linear combination of cyclic paths containing none of the arrows $a_{k}$ or $b_{k}$.

By Lemma \ref{two topologies concides}, the  $m^\circ$-adic and $m$-adic topologies coincide on $\overline{T_H(A)}$.
 We preserve all technical constructions of the automorphisms in the following results in exact accordance with \cite[Lemma 4.7, Proposition 4.10]{derksen2008quivers}, where the original proof relies on the $m$-adic topology of the completed path algebra. Thus, Lemma \ref{lem in split thm} and Proposition \ref{lem in reduce in split thm} follow directly from the same reasoning under the $m$-adic topology.
\begin{lem}[{\cite[Lemma 4.7]{derksen2008quivers}}]\label{lem in split thm}
For every potential $\s$ of the form (\ref{split thm 1}), there exists a quasi-unitriangular automorphism $\varphi$ of $\overline{T_H(A)}$ such that $\varphi(\s)$ is cyclically equivalent to a potential of the form (\ref{split thm 1}) with $u_{k}=v_{k}=0$ for all $k$.
\end{lem}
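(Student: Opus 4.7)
The plan is to construct $\varphi$ as a convergent infinite composition of quasi-unitriangular automorphisms, each of which eliminates the mixed terms $a_k u_k$ and $v_k b_k$ modulo contributions of strictly higher $\mathfrak{m}$-adic order. Concretely, suppose inductively that $\s$ has been brought to the form $\sum_{k=1}^N(a_k b_k + a_k u_k + v_k b_k) + \s'$ with $u_k, v_k \in \mathfrak{m}^p$ for some $p \geq 1$ (initially $p=1$), and $\s' \in \mathfrak{m}^{p+1}$ containing no $a_k$ or $b_k$. I would define the automorphism $\varphi_p$ on generators by
\[
\varphi_p(a_k) = a_k - v_k, \quad \varphi_p(b_k) = b_k - u_k, \quad \varphi_p(c) = c \text{ for other arrows in } Q^\circ_1, \quad \varphi_p|_H = \mathrm{id}.
\]
Since $u_k, v_k \in \mathfrak{m}^\circ(A)^2 + H^\circ\mathfrak{m}^\circ(A) + \mathfrak{m}^\circ(A)H^\circ$, the induced pair $(\varphi_p^{(1)},\varphi_p^{(2)})$ has $\varphi_p^{(1,1)} = \mathrm{id}_{A^\circ}$, so $\varphi_p$ is quasi-unitriangular by Proposition \ref{determined morphism}.

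Direct expansion then gives $\varphi_p(a_k b_k) = a_k b_k - a_k u_k - v_k b_k + v_k u_k$, while $\varphi_p(a_k u_k) \equiv a_k u_k$ and $\varphi_p(v_k b_k) \equiv v_k b_k$ modulo $\mathfrak{m}^{2p+1}$; hence the mixed first-order terms cancel, leaving
\[
\varphi_p(\s) = \sum_k a_k b_k + \sum_k v_k u_k + \varphi_p(\s') + R, \quad R \in \mathfrak{m}^{2p+1}.
\]
Using cyclical equivalence (Proposition \ref{cyceq}), every cyclic path in $\varphi_p(\s) - \sum a_k b_k$ that still involves some $a_k$ or $b_k$ can be rotated so that it either begins with $a_k$ or ends with $b_k$; paths devoid of these arrows are absorbed into a new $\s''$. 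This rewrites $\varphi_p(\s)$ as $\sum_k a_k b_k + \sum_k a_k u_k^{(p+1)} + \sum_k v_k^{(p+1)} b_k + \s''$, with $u_k^{(p+1)}, v_k^{(p+1)} \in \mathfrak{m}^{p+1}$. Thus a single application of $\varphi_p$ advances the ``depth'' of the unwanted terms by one.

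Iterating yields a sequence $\varphi_1, \varphi_2, \ldots$ of quasi-unitriangular automorphisms with $\varphi_p - \mathrm{id}$ taking $A$ into $\mathfrak{m}^p$. By Lemma \ref{two topologies concides} the $\mathfrak{m}^\circ$-adic and $\mathfrak{m}$-adic topologies on $\overline{T_H(A)}$ agree, so the partial compositions $\Phi_n = \varphi_n \circ \cdots \circ \varphi_1$ form a Cauchy sequence in the compact-open sense: for each $N$ and each element $x \in A$, the images $\Phi_n(x)$ stabilize modulo $\mathfrak{m}^N$. The limit $\varphi := \lim_{n \to \infty} \Phi_n$ is again a quasi-unitriangular automorphism, and by construction $\varphi(\s)$ is cyclically equivalent to a potential of the form \eqref{split thm 1} with $u_k = v_k = 0$ for all $k$, since the $u_k^{(p)}, v_k^{(p)}$ tend to $0$ in the $\mathfrak{m}$-adic topology.

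The principal technical hurdle is the convergence step in the $H$-based setting: $u_k$ and $v_k$ may carry loop factors $\varepsilon_i^l$, so the substitutions propagate through paths of variable loop-lengths, and naively one might worry that the depth gained in each iteration is measured only with respect to $\mathfrak{m}^\circ$ (the non-loop ideal). This is precisely where Lemma \ref{two topologies concides} is indispensable: because $H^\circ$ is nilpotent, any $\mathfrak{m}^N$-neighborhood of $0$ contains some $(\mathfrak{m}^\circ)^k$-neighborhood, so $\mathfrak{m}$-adic depth gains translate into genuine convergence in $\overline{T_H(A)}$. Once this is in place, the remainder of the argument parallels \cite[Lemma 4.7]{derksen2008quivers} verbatim with $\mathfrak{m}$ replacing the classical arrow ideal.
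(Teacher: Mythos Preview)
Your proposal is correct and follows essentially the same approach as the paper, which simply defers to \cite[Lemma 4.7]{derksen2008quivers} after noting (via Lemma \ref{two topologies concides}) that the $\mathfrak{m}$-adic and $\mathfrak{m}^\circ$-adic topologies coincide, so the original DWZ limiting argument carries over verbatim. Your reconstruction of that iterative substitution argument is accurate in outline; the only minor slip is the initial depth (one has $u_k,v_k\in\mathfrak{m}^2$, so the induction starts at $p=2$ rather than $p=1$), which does not affect the convergence.
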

\begin{pro}[{\cite[Proposition 4.10]{derksen2008quivers}}]\label{lem in reduce in split thm}
Let $(H,A,\s)$ and $\left(H,A, \s^{\prime}\right)$ be reduced $H$-based QPs such that $\s^{\prime}-\s \in J(\s)^{2}$. Then we have:
\begin{enumerate}
    \item[(1)] $J\left(\s^{\prime}\right)=J(\s)$.
    \item[(2)] $(H,A,\s)$ is right-equivalent to $\left(H,A,\s^{\prime}\right)$. More precisely, there exists an algebra automorphism $\varphi$ of $\overline{T_H(A)}$ such that $\varphi(\s)$ is cyclically equivalent to $\s^{\prime}$, and $\varphi(u)-u \in J(\s)$ for all $u \in \overline{T_H(A)}$.
\end{enumerate}
\end{pro}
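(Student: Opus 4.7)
The plan is to prove (1) by a short cyclic-Leibniz calculation and to prove (2) by the standard DWZ-style successive-approximation argument, now transplanted to the $H$-based setting. The indispensable new ingredient is Lemma \ref{two topologies concides}: it lets every convergence argument run in the $\mathfrak{m}^\circ$-adic topology, even though the filtrations generated by partial derivatives only see non-loop arrows.

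For part (1), I would write $T := \s'-\s = \sum_i x_i y_i$ with $x_i,y_i\in J(\s)$. For any $\xi\in (A^\circ)^\star$, Lemma \ref{cyclic leibniz rule} gives
\[
\partial_\xi(x_i y_i) \;=\; \Delta_\xi(x_i)\,\square\, y_i \;+\; \Delta_\xi(y_i)\,\square\, x_i,
\]
and every summand on the right is a sum of expressions of the form $v\, y_i\, u$ or $v\, x_i\, u$, which lies in $J(\s)$ because $J(\s)$ is a two-sided ideal. Hence $\partial_\xi \s' = \partial_\xi\s + \partial_\xi T \in J(\s)$, giving $J(\s')\subseteq J(\s)$. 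The reverse inclusion will be recovered from (2) via Proposition \ref{iso of jacobian algebra}.

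For (2), I would build $\varphi$ as the limit of a sequence $\varphi_n = \psi_n\circ\cdots\circ\psi_1$ of quasi-unitriangular automorphisms chosen so that $\varphi_n(\s)-\s'\in J(\s)^{n+2}$. The base case is the hypothesis $T\in J(\s)^2$. For the inductive step, given $\s_n:=\varphi_n(\s)$ with $\s_n-\s'\in J(\s)^{n+2}$, one writes this difference as a finite sum of products of the form $p\cdot \partial_{a_1}\s\cdot q_1\cdot \partial_{a_2}\s\cdots q_{n+1}\cdot \partial_{a_{n+2}}\s\cdot r$; passing to the trace space (Proposition \ref{trace}) one cyclically absorbs the outer factors into a single tail $\partial_{a}\s$, thereby defining an $H$-bimodule morphism $\rho_{n+1}\colon A\to J(\s)$ whose image lies in progressively higher powers of $\mathfrak{m}^\circ$. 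Setting $\psi_{n+1}^{(1)}=\mathrm{id}_A$, $\psi_{n+1}^{(2)}=\rho_{n+1}$, and extending via Proposition \ref{determined morphism}, the cyclic chain rule (Lemma \ref{chain rule}) yields $\psi_{n+1}(\s_n)-\s_n\equiv \s_n-\s'\pmod{J(\s)^{n+3}}$, advancing the approximation by one order. Because $\rho_{n+1}$ lands in an $\mathfrak{m}^\circ$-adically small ideal, Lemma \ref{two topologies concides} shows the composites $\varphi_n$ converge to an honest algebra automorphism $\varphi$ of $\overline{T_H(A)}$ with $\varphi|_H=\mathrm{Id}$ and $\varphi(\s)$ cyclically equivalent to $\s'$. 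Since each $\psi_n(u)-u$ lies in $J(\s)$ by construction, so does $\varphi(u)-u$; and Proposition \ref{iso of jacobian algebra} then gives $\varphi(J(\s))=J(\s')$, which combined with (1) forces $J(\s')=J(\s)$.

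The principal technical obstacle is the $H$-bimodule bookkeeping in the inductive step. In the classical DWZ framework one works over the semisimple ring $R=K^{Q_0}$, so rotating a cyclic product to absorb outer factors creates no difficulty. Here the loops $\varepsilon_i$ must pass through the product, and the resulting map $\rho_{n+1}$ must be checked to respect both the left and the right $H_i$-actions on each homogeneous component $\,_iA_j$, not merely the $R$-bimodule structure; in particular, when a tail $\partial_{a}\s$ ends in a power of $\varepsilon_{ta}$, one must carefully re-allocate those powers between the factors so that the rearrangement is $H$-bilinear. A secondary technicality is that the $\psi_n$ are only quasi-unitriangular, so $\psi_n|_{A^\circ}$ may have a nonzero component landing in $A/A^\circ$; one must verify that this neither obstructs the recursion (because the degree-one piece is still swallowed by the high $\mathfrak{m}^\circ$-adic filtration level of $\rho_n$) nor the convergence of the infinite composition, exactly where Lemma \ref{two topologies concides} does its work. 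Once these two points are in hand, the remainder of the argument is a line-by-line transcription of \cite[Proposition 4.10]{derksen2008quivers}.
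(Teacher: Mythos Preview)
Your approach matches the paper's: both invoke Lemma \ref{two topologies concides} to identify the $\mathfrak{m}^\circ$-adic and $\mathfrak{m}$-adic topologies, after which the DWZ successive-approximation proof of \cite[Proposition 4.10]{derksen2008quivers} transfers verbatim to the $H$-based setting. One small internal inconsistency in your write-up: you first set $\psi_{n+1}^{(1)}=\mathrm{id}_A$ but then (correctly) observe that the $\psi_n$ are only quasi-unitriangular --- the latter is what actually holds, since reducedness of $(H,A,\s)$ only guarantees that the $A^\circ$-component of each $\partial_a\s$ (hence of each correction $f_a\in J(\s)$) vanishes, not the full $A$-component.
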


The above argument makes it clear that the right-equivalence class of $(H,A_{\text {triv }}, \s_{\text {triv }})$ is determined by the right-equivalence class of $(H,A,\s)$. To prove Theorem \ref{splitting thm}, it remains to show that the same is true for $\left(H,A_{\text {red }}, \s_{\text {red }}\right)$. Changing notation a little bit, we need to prove the following.
\begin{pro}\label{reduce in split thm}
Let $(H,A,\s)$ and $\left(H,A, \s^{\prime}\right)$ be reduced $H$-based QPs, and $(H,C,\mathcal{T})$ a trivial $H$-based QP. If $(H,A \oplus C, \s+\mathcal{T})$ is right-equivalent to $\left(H,A \oplus C, \s^{\prime}+\mathcal{T}\right)$ then $(H,A,\s)$ is right-equivalent to $\left(H,A,\s^{\prime}\right)$.
\end{pro}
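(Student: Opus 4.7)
The plan is to adapt the uniqueness argument from the classical DWZ theory (\cite[Proposition 4.11]{derksen2008quivers}) to the $H$-based setting. The crucial enabling fact, as for the existence part of the Splitting Theorem, is Lemma \ref{two topologies concides}: the $\mathfrak{m}^\circ$-adic and $\mathfrak{m}$-adic topologies on $\overline{T_H(A\oplus C)}$ coincide, so topological convergence arguments from the classical case transfer once all modifications are checked to respect the $H$-bimodule structure.

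I would begin by fixing the given right-equivalence $\varphi \colon \overline{T_H(A\oplus C)} \to \overline{T_H(A\oplus C)}$ with $\varphi|_H = \mathrm{id}$ and $\varphi(\mathcal{S}+\mathcal{T})$ cyclically equivalent to $\mathcal{S}'+\mathcal{T}$. By Proposition \ref{determined morphism}, $\varphi$ is encoded by its restriction $\varphi|_{A\oplus C}$, whose linear part $\varphi^{(1)} \colon A\oplus C \to A\oplus C$ is an $H$-bimodule isomorphism and decomposes as a $2\times 2$ block matrix with blocks mapping among $A$ and $C$. The goal is to produce an algebra automorphism $\psi \colon \overline{T_H(A)} \to \overline{T_H(A)}$ fixing $H$ with $\psi(\mathcal{S})$ cyclically equivalent to $\mathcal{S}'$.

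The strategy is to successively modify $\varphi$ by composition with unitriangular automorphisms built using the structure of the trivial QP $(H,C,\mathcal{T})$---whose flexibility is guaranteed by Proposition \ref{triv}, thanks to the non-degenerate pairing between the arrows $a_k$ and $b_k$ appearing in $\mathcal{T}$---so as to eliminate cross-components degree by degree. Concretely, at step $d$ I would construct an $H$-algebra automorphism $\phi_d$ of $\overline{T_H(A\oplus C)}$ that is the identity modulo $\mathfrak{m}^\circ(A\oplus C)^{d+1}$, preserves the cyclic equivalence class of the image of $\mathcal{S}+\mathcal{T}$, and cancels those degree-$d$ components of $(\phi_{d-1}\cdots\phi_1)\varphi$ that send generators from $A$ into the closed ideal generated by $C$ (and conversely). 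By Lemma \ref{two topologies concides}, the infinite composition $\widetilde{\varphi} := \lim_{d \to \infty} \phi_d\cdots\phi_1\varphi$ converges in the $\mathfrak{m}^\circ$-adic topology and preserves the subalgebras $\overline{T_H(A)}$ and $\overline{T_H(C)}$ separately. Setting $\psi := \widetilde{\varphi}|_{\overline{T_H(A)}}$, I would apply the algebra quotient $\overline{T_H(A\oplus C)} \twoheadrightarrow \overline{T_H(A)}$ killing the ideal generated by $C$ to the equivalence $\widetilde{\varphi}(\mathcal{S}+\mathcal{T})$ cyclically equivalent to $\mathcal{S}'+\mathcal{T}$, recovering that $\psi(\mathcal{S})$ is cyclically equivalent to $\mathcal{S}'$ as desired.

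The main obstacle will be the construction of the cancelling automorphisms $\phi_d$ together with the verification that each preserves cyclic equivalence of the image of $\mathcal{S}+\mathcal{T}$. This requires careful use of the cyclic chain rule (Lemma \ref{chain rule}) and an analog of Lemma \ref{lem in split thm} adapted to the mixed $A$-$C$ setting: one needs the Hessian-type pairing coming from $\mathcal{T}$ on $C^\circ \otimes C^\circ$ to be non-degenerate, which Proposition \ref{triv} supplies, so that the linear equations determining the coefficients of $\phi_d$ at each degree can be solved. All such operations respect the $H$-bimodule structure because the trivial QP $(H,C,\mathcal{T})$ is itself built on an $H$-bimodule decomposition of $C$; the fact that the two topologies coincide (Lemma \ref{two topologies concides}) then guarantees that our iterative degree-by-degree construction, which is naturally indexed by the $\mathfrak{m}^\circ$-adic filtration, agrees with the convergence picture implicit in the classical DWZ argument based on the $\mathfrak{m}$-adic filtration.
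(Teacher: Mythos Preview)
Your proposal diverges from both the paper's proof and the original DWZ argument for \cite[Proposition 4.11]{derksen2008quivers}, and the divergence creates a genuine gap. The paper does \emph{not} try to diagonalize $\varphi$. Instead it defines $\psi$ directly as the composite $p\circ\varphi|_{\overline{T_H(A)}}$, where $p\colon\overline{T_H(A\oplus C)}\to\overline{T_H(A)}$ is the projection along the closed ideal $L$ generated by $C$. One then checks (i) $\psi$ is an automorphism, using that $\varphi(C)\subseteq J'\oplus L$ forces the block $\varphi_{A^\circ C^\circ}$ of $\varphi^{(1,1)}$ to vanish; and (ii) $\mathcal{S}'-\psi(\mathcal{S})$ is cyclically equivalent to an element of $\psi(J(\mathcal{S}))^2$, by showing $p\varphi(\mathcal{T})\in\psi(J)^2$ via the inclusion $L\subseteq J'+\varphi(L)$. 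The conclusion then follows from Proposition \ref{lem in reduce in split thm}, the $J(\mathcal{S})^2$-perturbation lemma, which you never invoke but which is the real engine of the argument.

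The gap in your approach is the simultaneous requirement that each $\phi_d$ both cancel degree-$d$ cross-terms of $\phi_{d-1}\cdots\phi_1\varphi$ \emph{and} preserve the cyclic equivalence class of $\mathcal{S}'+\mathcal{T}$. These two demands are in tension: to cancel the component of $\varphi(a)$ (for $a\in A$) landing in the ideal generated by $C$, you must post-compose with something that moves $A$-arrows, but such a move alters $\mathcal{S}'$; conversely, the non-degenerate pairing from $\mathcal{T}$ only lets you absorb modifications of $C$-arrows into $\mathcal{T}$, not modifications of $A$-arrows. Your appeal to the ``Hessian-type pairing'' does not address this asymmetry. Without a mechanism to control how the target potential drifts under the $\phi_d$'s, the final projection step does not yield $\psi(\mathcal{S})\sim_{\mathrm{cyc}}\mathcal{S}'$ but only $\psi(\mathcal{S})\sim_{\mathrm{cyc}}\mathcal{S}'+\text{(unknown correction)}$. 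The paper's route avoids this entirely by never attempting to make $\varphi$ block-diagonal and instead absorbing the discrepancy into $J(\mathcal{S})^2$, where Proposition \ref{lem in reduce in split thm} disposes of it.
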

 \begin{proof}
We abbreviate $J=J(\s)$ and $J^{\prime}=J\left(\s^{\prime}\right)$ (understood as the Jacobian ideals of $S$ and $\s^{\prime}$ in $\overline{T_H(A)}$). As in Proposition \ref{triv}, let $L$ denote the closure of the two-sided ideal in $\overline{T_H(A\oplus C)}$ generated by $C$. Then we have
\begin{align}\label{split 1}
    \overline{T_H(A\oplus C)}=\overline{T_H(A)} \oplus L, \quad J(\s+\mathcal{T})=J \oplus L, \quad J\left(\s^{\prime}+\mathcal{T}\right)=J^{\prime} \oplus L.
\end{align}
Let $\varphi$ be an automorphism of $\overline{T_H(A\oplus C)}$, such that $\varphi(\s+\mathcal{T})$ is cyclically equivalent to $\s^{\prime}+\mathcal{T}$. In view of (\ref{split 1}) and Proposition \ref{iso of jacobian algebra}, we have
\begin{align}
    \varphi(J \oplus L)=J^{\prime} \oplus L
\end{align}
Let $\psi: \overline{T_H(A)}\rightarrow \overline{T_H(A)}$ denote the restriction to $\overline{T_H(A)}$ of the composition $p \varphi$, where $p$ is the projection of $\overline{T_H(A\oplus C)}$ onto $\overline{T_H(A)}$ along $L$. In view of Proposition \ref{lem in reduce in split thm}, it suffices to show the following:
\begin{align}\label{split 2}
\psi \text { is an automorphism of } \overline{T_H(A)} \text { such that } 
\end{align}
$$
    \s^{\prime}-\psi(\s) \text { is cyclically equivalent to an element of } \psi\left(J^{2}\right)
$$
(indeed, assuming (\ref{split 2}) and using Proposition \ref{iso of jacobian algebra}, we see that $\psi\left(J^{2}\right)=J(\psi(\s))^{2}$, hence one can apply Proposition \ref{lem in reduce in split thm} to potentials $S^{\prime}$ and $\psi(S)$ ).

Clearly, $\psi$ is an algebra homomorphism, so can be represented by a pair $\left(\psi^{(1)}, \psi^{(2)}\right)$ as in Proposition \ref{determined morphism}. To show that $\psi$ is an automorphism of $\overline{T_H(A)}$, it suffices to show that $\psi^{(1)}$ is an $H$-bimodule automorphism of $A$. Precisely, it suffices to show that $\psi^{(1,1)}$ is an $R$-bimodule automorphism of $A^\circ$. By the definition, if we write the $R$-bimodule automorphism $\varphi^{(1,1)}$ of $A^\circ \oplus C^\circ$ as a matrix
$$\begin{pmatrix}
    \varphi_{A^\circ A^\circ} & \varphi_{A^\circ C^\circ} \\
\varphi_{C^\circ A^\circ} & \varphi_{C^\circ C^\circ}
\end{pmatrix}$$
then $\psi^{(1,1)}=\varphi_{A^\circ A^\circ}$. Since
$$
\varphi(C^\circ) \subset \varphi(J \oplus L)=J^{\prime} \oplus L \subseteq (\mathfrak{m}^\circ(A)^{2}+\mathfrak{m}^\circ H^\circ+H^\circ\mathfrak{m}^\circ) \oplus L
$$
it follows that $\varphi_{A^\circ C^\circ}=0$, implying that $\psi^{(1,1)}=\varphi_{A^\circ A^\circ}$ is an $R$-bimodule automorphism of $A^\circ$, and that $\psi$ is an automorphism of $\overline{T_H(A)}$.

Since $\s^{\prime}+T$ is cyclically equivalent to $\varphi(\s+\mathcal{T})$, the same is true for the potentials obtained from them by applying the projection $p$; it follows that $\s^{\prime}-\psi(\s)$ is cyclically equivalent to $p \varphi(\mathcal{T})$. Since $\mathcal{T} \in C^{2}$, the claim that $\s^{\prime}-\psi(\s)$ is cyclically equivalent to an element of $\psi\left(J^{2}\right)$ follows from the fact that $p \varphi(L) \subseteq \psi(J)$, or, equivalently, that $\varphi(L) \subseteq \varphi(J)+L$. Applying the inverse automorphism $\varphi^{-1}$ to both sides, it suffices to show that $L \subseteq J+\varphi^{-1}(L)$. Using the obvious symmetry between $J$ and $J^{\prime}$, it is enough to show the inclusion $L \subseteq J^{\prime}+\varphi(L)$.

Let us abbreviate $M=\mathfrak{m}^\circ(A \oplus C)$, and $I=J^{\prime}+\varphi(L)$. Since $\varphi(J) \subseteq J^{\prime} \oplus L$, and $J \subseteq (\mathfrak{m}^\circ(A)+H^\circ)^{2}$, it follows that $\varphi(J) \subseteq J^{\prime} \oplus\left(L \cap (M+H^\circ)^{2}\right)\subseteq J^{\prime}+(M+H^\circ)L+L(M+H^\circ)$. Therefore, we have
$$
L \subseteq J^{\prime}+L=\varphi(J)+\varphi(L) \subseteq I+(M+H^\circ) L+L (M+H^\circ)
$$
Substituting this upper bound for $L$ into its right hand side, we deduce the inclusion
$$
L \subseteq I+(M+H^\circ)^{2} L+(M+H^\circ) L (M+H^\circ)+L (M+H^\circ)^{2}
$$
Continuing in the same way, for every $n>0$, we have the inclusion
$$
L \subseteq I+\sum_{k=0}^{n} (M+H^\circ)^{k} L (M+H^\circ)^{n-k}\subseteq I+(M+H^\circ)^n
$$
In view of (\ref{toplogy in complete algebra}) and Lemma \ref{two topologies concides}, it follows that $L$ is contained in $\overline{I}$, the closure of $I$ in $\overline{T_H(A\oplus C)}$. However, it is easy to see that $I=J^{\prime}+\varphi(L)$ is closed in $\overline{T_H(A\oplus C)}$ (indeed, the closedness of $I$ is equivalent to that of $\varphi^{-1}(I)=\varphi^{-1}\left(J^{\prime}\right)+L$, and so, by symmetry, it is enough to show that $\varphi(J)+L$ is closed; but this is clear since $\varphi(J)+L=p^{-1}(\psi(J))$ is the inverse image of the closed ideal $\psi(J)$ of $\overline{T_H(A)}$). This completes the proofs of Proposition \ref{reduce in split thm} and Theorem \ref{splitting thm}.
 \end{proof}
\end{proof}
\begin{proof}[Proof of Theorem \ref{mutation uniquele right-equivalence class of the GQP}]\label{app:mutation uniquele right-equivalence class of the GQP}
   Let $\widehat{A}$ be the finite-dimensional $H$-bimodule given by
\begin{align}
    \widehat{A}=A \oplus\left(e_{k} A\right)^{\star} \oplus\left(A e_{k}\right)^{\star} .
\end{align}
The natural embedding $A \rightarrow \widehat{A}$ identifies $\overline{T_H(A)}$ with a closed subalgebra in $\overline{T_H(\widehat{A})}$. We also have a natural embedding $\widetilde{A} \rightarrow \overline{T_H(\widehat{A})}$ (sending each arrow $[b \varepsilon_k^l a]$ to the product $b \varepsilon_k^l a$). This allows us to identify $\overline{T_H(\widetilde{A})}$ with another closed subalgebra in $\overline{T_H(\widehat{A})}$, namely, with the closure of the linear span of the paths $\varepsilon_{ha_1}^{l_1}\widehat{a}_{1} \cdots \widehat{a}_{d}\varepsilon_{ta_d}^{l_d+1}$ such that $\widehat{a}_{1} \notin e_{k} A$ and $\widehat{a}_{d} \notin A e_{k}$. Under this identification, the potential $\widetilde{\s}$ given by (\ref{mutation of s}) and viewed as an element of $\overline{T_H(\widehat{A})}$ is cyclically equivalent to the potential
$$
\s+\sum_{l=0}^{d_k-1}\left(\left(\sum_{b \in Q^\circ_{1}  \cap A^{\circ} e_{k}} \varepsilon_k^{d_k-l-1} b^{\star} b\right)\left(\sum_{a \in Q^\circ_{1} \cap e_{k} A} \varepsilon_k^l a a^{\star}\right)\right).
$$

Taking this into account, we see that Theorem \ref{mutation uniquele right-equivalence class of the GQP} becomes a consequence of the following lemma.
\begin{lem}\label{lem of mutation uniquele right-equivalence class of the GQP}
Every automorphism $\varphi$ of $\overline{T_H(A)}$ can be extended to an automorphism $\widehat{\varphi}$ of $\overline{T_H(\widehat{A})}$ satisfying
\begin{align}\label{in lem1 of mutation uniquele right-equivalence class of the GQP}
    \widehat{\varphi}\left(\overline{T_H(\widetilde{A})}\right)=\overline{T_H(\widetilde{A})}
\end{align}
and
\begin{align}\label{in lem2 of  mutation uniquele right-equivalence class of the GQP}
    \widehat{\varphi}\left(\sum_{a \in Q_{1} \cap e_{k} A} a a^{\star}\right)=\sum_{a \in Q_{1} \cap e_{k} A} a a^{\star}, \quad \widehat{\varphi}\left(\sum_{b \in Q_{1} \cap A e_{k}} b^{\star} b\right)=\sum_{b \in Q_{1} \cap A e_{k}} b^{\star} b.
\end{align}
\end{lem}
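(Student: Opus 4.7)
The plan is to follow DWZ's analogous extension construction, adapted to the $H$-based setting. Decompose $\varphi|_A = (\varphi^{(1)}, \varphi^{(2)})$ using Proposition \ref{determined morphism}, set $\widehat{\varphi}|_A = \varphi$, and then construct $\widehat{\varphi}$ on the dual arrows $a^\star \in (e_k A)^\star$ and $b^\star \in (A e_k)^\star$ by forcing conditions (\ref{in lem2 of  mutation uniquele right-equivalence class of the GQP}). I write $\widehat{\varphi}(a^\star) = \sum_{a' \in Q_1^\circ \cap e_k A} Y_{a,a'}\,(a')^\star$ with each $Y_{a,a'} \in e_{ta}\overline{T_H(\widehat{A})}e_{ta'}$; the first identity in (\ref{in lem2 of  mutation uniquele right-equivalence class of the GQP}) then becomes the system $\sum_a \varphi(a)\,Y_{a,a'} = a'$ for each $a' \in Q_1^\circ \cap e_k A$. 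At the linear level this reads $\sum_a \varphi^{(1)}(a)\,Y_{a,a'}^{(1)} = a'$, which is solvable because $\varphi^{(1)}|_{e_k A}$ is an invertible $H$-bimodule map on $\bigoplus_j e_k A e_j$; $Y_{a,a'}^{(1)}$ is the $(a,a')$-entry of its inverse. Higher-order corrections are produced iteratively by successive approximation in the $\mathfrak{m}^\circ$-adic topology, with convergence guaranteed by Lemma \ref{two topologies concides} and the invertibility of the linear part. A symmetric construction defines $\widehat{\varphi}(b^\star) = \sum_{b'} (b')^\star Z_{b,b'}$ from the second identity in (\ref{in lem2 of  mutation uniquele right-equivalence class of the GQP}). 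Proposition \ref{determined morphism} then promotes $\widehat{\varphi}$ to an algebra homomorphism of $\overline{T_H(\widehat{A})}$, which is an automorphism because its linear part is an $H$-bimodule isomorphism of $\widehat{A}$ (given by $\varphi^{(1)}$ on $A$ and by bimodule transpose inverses on the two dual components).

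To verify (\ref{in lem1 of mutation uniquele right-equivalence class of the GQP}), I check $\widehat{\varphi}$ on each generator of $\widetilde{A}$. For $c \in \widetilde{A}$ not incident to $k$, $\widehat{\varphi}(c) = \varphi(c) \in e_{hc}\overline{T_H(A)}e_{tc}$ with $hc, tc \neq k$, so the constituent paths begin at a vertex with head $\neq k$ and end at a vertex with tail $\neq k$, and therefore lie in $\overline{T_H(\widetilde{A})}$. For composite arrows $b\varepsilon_k^l a$ (with $a\colon i \to k$, $b\colon k \to j$, $i,j \neq k$), $\widehat{\varphi}(b\varepsilon_k^l a) = \varphi(b)\,\varepsilon_k^l\,\varphi(a)$ lies in $e_j\overline{T_H(A)}e_k \cdot \varepsilon_k^l \cdot e_k\overline{T_H(A)}e_i$, so its paths again open at $j \neq k$ and close at $i \neq k$. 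For the duals, the construction $\widehat{\varphi}(a^\star) = \sum_{a'} Y_{a,a'}(a')^\star$ automatically lies in $\overline{T_H(\widetilde{A})}$: each constituent path terminates with the dual letter $(a')^\star \notin A e_k$, while its leftmost letter has head $ta \neq k$ (hence not in $e_k A$); the symmetric observation handles $\widehat{\varphi}(b^\star)$.

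The main obstacle, in my view, is the bimodule bookkeeping in the iterative construction of $Y_{a,a'}$ and $Z_{b,b'}$. One has to track the $H_k$-$H_j$-bimodule structure on each block $e_k A e_j$ and on its dual, and verify at each stage of the iteration that the residual $\sum_a \varphi(a)Y_{a,a'} - a'$ lies in the appropriate bimodule submodule so that the ``matrix inverse'' of $\varphi^{(1)}|_{e_k A}$ produces corrections that again respect the bimodule structure. This $\varepsilon_k$-equivariance is precisely what is absent from DWZ's classical argument, and it is the main source of technical difficulty in the present setting. Once this bookkeeping is established, the construction converges in the $\mathfrak{m}^\circ$-adic topology and yields the desired $\widehat{\varphi}$.
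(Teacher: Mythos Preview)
Your ansatz $\widehat{\varphi}(a^\star)=\sum_{a'}Y_{a,a'}\,(a')^\star$ with $Y_{a,a'}\in e_{ta}\overline{T_H(\widehat A)}e_{ta'}$ has a genuine gap: it omits the right $H_k$-action on the dual arrows. A general element of $e_{ta}\,\mathfrak{m}^\circ(\widehat A)\,e_k$ contains terms of the form $Y\,(a')^\star\varepsilon_k^{m}$ with $m\ge 1$, and these extra degrees of freedom are indispensable. Concretely, write $\varphi(a_p)=\sum_{q,m}\varepsilon_k^{m}a_q\,C_{m;q,p}+(\text{higher order})$ as in the paper's equation (\ref{action of varphi on a}). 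Your system $\sum_a\varphi(a)Y_{a,a'}=a'$ then requires, already at linear order, that for \emph{every} $m=0,\dots,d_k-1$ the $\varepsilon_k^{m}a_q$-coefficient matches; that is $d_ks$ scalar equations in the $s$ unknowns $(Y_{p,a'}^{(0)})_p$. A one-line counterexample: take $s=1$, $d_k=2$, $\varphi^{(1)}(a)=a+\varepsilon_k a$ (which \emph{is} an $H$-bimodule automorphism of $e_kAe_{ta}$). Then $(a+\varepsilon_k a)\,Y^{(0)}=a$ forces $Y^{(0)}=e_{ta}$ and $Y^{(0)}=0$ at once. So the iteration never gets off the ground, and this is precisely the ``$\varepsilon_k$-equivariance'' issue you flagged but did not resolve.

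The paper handles this by enlarging the ansatz to
\[
\widehat{\varphi}(a_p^\star)=(C_0+C_1)^{-1}\sum_{l=0}^{d_k-1}D_l\begin{pmatrix}a_1^\star\\\vdots\\a_s^\star\end{pmatrix}\varepsilon_k^{\,l},
\]
defines the matrices $D_l$ by an explicit recursion in $l$ (a finite, closed-form construction rather than an $\mathfrak{m}^\circ$-adic iteration), and then verifies the first identity of (\ref{in lem2 of  mutation uniquele right-equivalence class of the GQP}) only \emph{up to cyclic equivalence}: one uses $\varepsilon_k^{m}a_q(\cdots)a_{p'}^\star\varepsilon_k^{l}\sim a_q(\cdots)a_{p'}^\star\varepsilon_k^{l+m}$ to collapse the double $\varepsilon_k$-grading to a single one, after which the $D_l$-recursion makes the surplus terms vanish. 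Cyclic equivalence (not literal equality) is all that the application to Theorem \ref{mutation uniquele right-equivalence class of the GQP} requires. So to repair your argument you need both to enrich the ansatz with right $\varepsilon_k$-twists and to weaken the target from equality to cyclic equivalence.
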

   In order to extend $\varphi$ to an automorphism $\widehat{\varphi}$ of $\overline{T_H(\widetilde{A})}$, we need only to define the elements $\widehat{\varphi}\left(a^{\star}\right)$ and $\widehat{\varphi}\left(b^{\star}\right)$ for all arrows $a \in Q_{1}^\circ \cap e_{k} A$ and $b \in Q_{1}^\circ \cap A e_{k}$.

We first deal with $\widehat{\varphi}\left(a^{\star}\right)$. Let $Q^\circ_{1} \cap e_{k} A=\left\{a_{1}, \ldots, a_{s}\right\}$. In view of Proposition \ref{determined morphism}, the action of $\varphi$ on these arrows is given by
\begin{equation}\label{action of varphi on a}
\setlength{\arraycolsep}{1.5pt}
\begin{aligned}
    \left(\begin{array}{llll}
\varphi\left(a_{1}\right), & \varphi\left(a_{2}\right), & \cdots, & \varphi\left(a_{s}\right)
\end{array}\right)&=\left(\begin{array}{llll}
a_{1}, & a_{2}, & \cdots, & a_{s}
\end{array}\right)\left(C_{0}+C_{1}\right)+\varepsilon_k(\begin{array}{llll}
a_{1}, & a_{2}, & \cdots ,& a_{s}
\end{array})C_2\\&+\cdots+\varepsilon_k^{d_k-1}(\begin{array}{llll}
a_{1}, & a_{2}, & \cdots, & a_{s}
\end{array})C_{d_k},
\end{aligned}
\end{equation}
where:
\begin{itemize}
    \item $C_{0}$ is an invertible $s \times s$ matrix with entries in $K$ such that its $(p, q)$-entry is 0 unless $t\left(a_{p}\right)=t\left(a_{q}\right)$;
    \item $C_{1}$ is a $s \times s$ matrix whose $(p, q)$-entry belongs to $\left(\mathfrak{m}^\circ(A)+H^\circ\right)_{t\left(a_{p}\right), t\left(a_{q}\right)}$;
    \item  $C_{l}$ is a $s \times s$ matrix whose $(p, q)$-entry belongs to $\left(\mathfrak{m}^\circ(A)+H\right)_{t\left(a_{p}\right), t\left(a_{q}\right)}$ for $2\leq l\leq d_k$.
\end{itemize}
Note that $C_{0}+C_{1}$ is invertible, and its inverse is of the same form: indeed, we have
$$
\left(C_{0}+C_{1}\right)^{-1}=\left(I+C_{0}^{-1} C_{1}\right)^{-1} C_{0}^{-1}=\left(I+\sum_{n=1}^{\infty}(-1)^{n}\left(C_{0}^{-1} C_{1}\right)^{n}\right)C_{0}^{-1}.
$$

Now we define the elements $\widehat{\varphi}\left(a_{p}^{\star}\right)$ by setting
\begin{align*}
    \left(\begin{array}{c}
\widehat{\varphi}\left(a_{1}^{\star}\right) \\
\widehat{\varphi}\left(a_{2}^{\star}\right) \\
\vdots \\
\widehat{\varphi}\left(a_{s}^{\star}\right)
\end{array}\right)=\left(C_{0}+C_{1}\right)^{-1}\left(\left(\begin{array}{c}
a_{1}^{\star} \\
a_{2}^{\star} \\
\vdots \\
a_{s}^{\star}
\end{array}\right)+D_1\left(\begin{array}{c}
a_{1}^{\star} \\
a_{2}^{\star} \\
\vdots \\
a_{s}^{\star}
\end{array}\right)\varepsilon_k+\cdots+D_{d_k-1}\left(\begin{array}{c}
a_{1}^{\star} \\
a_{2}^{\star} \\
\vdots \\
a_{s}^{\star}
\end{array}\right)\varepsilon_k^{d_k-1}\right)
\end{align*}
where
$$D_l=\sum_{(i_1,\ldots,i_m)\in\Omega_l}(-1)^m C_{i_1+1}(C_0+C_1)^{-1}C_{i_2+1}(C_0+C_1)^{-1}\cdots C_{i_m+1}(C_0+C_1)^{-1},$$
and
$$\Omega_l=\{(i_1,\ldots,i_m)\mid 1\leq i_t\leq l \text{ and } i_1+\cdots i_m=l \text{ for some $m\in \mathbb{N}_+$}\}.$$
It follows that
$$
\setlength{\arraycolsep}{1.5pt}
\begin{aligned}
\widehat{\varphi}\left(\sum_{p} a_{p} a_{p}^{\star}\right) & =\left(\begin{array}{llll}
\widehat{\varphi}\left(a_{1}\right), & \widehat{\varphi}\left(a_{2}\right), & \cdots ,& \widehat{\varphi}\left(a_{s}\right)
\end{array}\right)\left(\begin{array}{c}
\widehat{\varphi}\left(a_{1}^{\star}\right) \\
\widehat{\varphi}\left(a_{2}^{\star}\right) \\
\vdots \\
\widehat{\varphi}\left(a_{s}^{\star}\right)
\end{array}\right)
\end{aligned}
$$
is cyclically equivalent to
\begin{align*}
\setlength{\arraycolsep}{1.5pt}
    \sum_{p} a_pa_p^\star+(\begin{array}{llll}
        a_1,&a_2,&\cdots,&a_s
    \end{array})\left(\sum_{l=1}^{d_k-1}(D_l+\sum_{i=1}^lC_{i+1}(C_0+C_1)^{-1}D_{l-i})\right)\left(\begin{array}{c}
          a_1^\star \\
          a_2^\star \\
          \vdots \\
          a_s^\star
    \end{array}\right)
\end{align*}
with the convention $D_0=\text{Id}$. We check that $(D_l+\sum_{i=1}^lC_{i+1}(C_0+C_1)^{-1}D_{l-i})=0$ by induction on $l$. It is easy to see that $D_1=-C_2(C_0+C_1)^{-1}D_0$, so  we finish the argument for $l=1$. Now assume that $l\geq 2$, and that our assertion holds for any integers less than $l$. Then we have that 
\begin{align*}
    D_l&=\sum_{(i_1,\ldots,i_m)\in\Omega_l}(-1)^m C_{i_1+1}(C_0+C_1)^{-1}C_{i_2+1}(C_0+C_1)^{-1}\cdots C_{i_m+1}(C_0+C_1)^{-1}\\
    &=-\sum_{i_1=1}^l C_{i_1+1}(C_0+C_1)\sum_{(i_2,\ldots,i_m)\in \Omega_{l-i_1}}(-1)^{m-1}C_{i_2+1}(C_0+C_1)^{-1}\cdots C_{i_m+1}(C_0+C_1)^{-1}\\
    &=-\sum_{i_1=1}^lC_{i_1+1}(C_0+C_1)^{-1}D_{l-i_1},
\end{align*}completing our inductive step.

For $b \in Q_{1} \cap A e_{k}$, we define $\widehat{\varphi}\left(b^{\star}\right)$ in a similar way. Namely, let $Q^{\circ}_{1} \cap A e_{k}=$ $\left\{b_{1}, \ldots, b_{t}\right\}$. As above, the action of $\varphi$ on these arrows is given by
\begin{equation}\label{action of varphi on b}
\begin{aligned}
    \left(\begin{array}{c}
\varphi\left(b_{1}\right) \\
\varphi\left(b_{2}\right) \\
\vdots \\
\varphi\left(b_{t}\right)
\end{array}\right)=\left(F_{0}+F_{1}\right)\left(\begin{array}{c}
b_{1} \\
b_{2} \\
\vdots \\
b_{t}
\end{array}\right)+F_2\left(\begin{array}{c}
b_{1} \\
b_{2}\\
\vdots \\
b_{t}
\end{array}\right)\varepsilon_k+\cdots+F_{d_k}\left(\begin{array}{c}
b_{1} \\
b_{2}\\
\vdots \\
b_{t}
\end{array}\right)\varepsilon_k^{d_k-1},
\end{aligned}
\end{equation}
where:
\begin{itemize}
    \item $F_{0}$ is an invertible $t \times t$ matrix with entries in $K$ such that its $(p, q)$-entry is 0 unless $h\left(b_{p}\right)=h\left(b_{q}\right)$;
    \item $F_{1}$ is a $t \times t$ matrix whose $(p, q)$-entry belongs to $\left(\mathfrak{m}^\circ(A)+H^\circ\right)_{h\left(b_{p}\right), h\left(b_{q}\right)}$; 
    \item $F_{l}$ is a $t \times t$ matrix whose $(p, q)$-entry belongs to $\left(\mathfrak{m}^\circ(A)+H\right)_{h\left(b_{p}\right), h\left(b_{q}\right)}$ for $1\leq l\leq d_k$.
\end{itemize}
As above, we see that $D_{0}+D_{1}$ is invertible, and its inverse is of the same form. Now we define the elements $\widehat{\varphi}\left(b_{q}^{\star}\right)$ by setting
\setlength{\arraycolsep}{1.5pt}
\begin{align*}
    &\left(\begin{array}{llll}
\widehat{\varphi}\left(b_{1}^{\star}\right), & \widehat{\varphi}\left(b_{2}^{\star}\right), & \cdots, & \widehat{\varphi}\left(b_{t}^{\star}\right)
\end{array}\right)\\&=\left(\left(\begin{array}{llll}
b_{1}^{\star}, & b_{2}^{\star}, & \cdots, & b_{t}^{\star}
\end{array}\right)+\left(\begin{array}{llll}
b_{1}^{\star}, & b_{2}^{\star}, & \cdots, & b_{t}^{\star}
\end{array}\right)\varepsilon_k G_1+ \cdots+\left(\begin{array}{llll}
b_{1}^{\star} & b_{2}^{\star} & \cdots & b_{t}^{\star}
\end{array}\right)\varepsilon_k^{d_k-1} G_{d_k-1}\right)\left(F_{0}+F_{1}\right)^{-1}
\end{align*}
where
$$G_l=\sum_{(i_1,\ldots,i_m)\in\Omega_l}(-1)^m(F_0+F_1)^{-1} G_{i_1+1}(F_0+F_1)^{-1}G_{i_2+1}\cdots (C_0+C_1)^{-1}G_{i_n+1}.$$
It follows that
$$
\setlength{\arraycolsep}{1.5pt}
\begin{aligned}
\widehat{\varphi}\left(\sum_{q} b_{q}^{\star} b_{q}\right) & =\left(\begin{array}{llll}
\widehat{\varphi}\left(b_{1}^\star\right), & \widehat{\varphi}\left(b_{2}^{\star}\right), & \cdots, & \widehat{\varphi}\left(b_{t}^{\star}\right)
\end{array}\right)\left(\begin{array}{c}
\widehat{\varphi}\left(b_{1}\right) \\
\widehat{\varphi}\left(b_{2}\right) \\
\vdots \\
\widehat{\varphi}\left(b_{t}\right)
\end{array}\right) 
\end{aligned}
$$ is cyclically equivalent to 
\setlength{\arraycolsep}{1.5pt}
\begin{align*}
    \sum_q b^\star b_q+(\begin{array}{llll}
        b_1^\star,&b_2^\star,&\cdots,&b_t^\star
    \end{array})\left(\sum_{l=1}^{d_k-1}(G_l+\sum_{j=1}^lG_{l-j}(C_0+C_1)^{-1}F_{l+1})\right)\left(\begin{array}{c}
          b_1 \\
          b_2 \\
          \vdots \\
          b_t
    \end{array}\right).
\end{align*}
One can check that $G_l+\sum_{j=1}^lG_{l-j}(C_0+C_1)^{-1}F_{l+1}=0$ and so $\widehat{\varphi}\left(\sum_{q} b_{q}^{\star} b_{q}\right)=\sum_q b_q^\star b_q$.
The condition (\ref{in lem2 of  mutation uniquele right-equivalence class of the GQP}) is then clearly satisfied; the construction also makes clear that the automorphism $\widehat{\varphi}$ of $\overline{T_H(\widehat{A})}$ preserves the subalgebra $\overline{T_H(\widetilde{A})}$. As a consequence
of Proposition \ref{determined morphism}, $\widehat{\varphi}$ restricts to an automorphism of $\overline{T_H(\widetilde{A})}$, verifying (\ref{in lem1 of mutation uniquele right-equivalence class of the GQP}) and completing the proofs of Lemma \ref{lem of mutation uniquele right-equivalence class of the GQP}.
 \end{proof}
\begin{proof}[Proof of Theorem \ref{mutation of gqp is involution }]\label{app: mutation of gqp is involution}
     Let $(H,A,\s)$ be a reduced $H$-based  QP satisfying (\ref{condition on mutation of GQP 1}) and (\ref{condition on nutation of GQP 2}). Let $\widetilde{\mu}_{k}(H,A,\s)=(H,\widetilde{A}, \widetilde{\s})$ and $\widetilde{\mu}_{k}^{2}(H,A, \s)=\widetilde{\mu}_{k}(H,\widetilde{A}, \widetilde{\s})=(H,\widetilde{\widetilde{A}}, \widetilde{\widetilde{\s}})$. In view of Theorem \ref{splitting thm} and Proposition \ref{trival part of jacobian}, it is enough to show that
    \begin{align}
        \text{$(H,\widetilde{\widetilde{A}}, \widetilde{\widetilde{\s}})$ is right-equivalent to $(H,A,\s) \oplus(H,C,\mathcal{T})$, where $(H,C,\mathcal{T})$ is a trivial $H$-based QP.}
    \end{align}
    Using (\ref{mutation of A}) twice, and identifying $\left(e_{k} A\right)^{\star}$ with $A^{\star} e_{k}$, and $\left(A e_{k}\right)^{\star}$ with $e_{k} A^{\star}$, where $A^{\star}$ is the dual $H$-bimodule of $A$, we conclude that
    \begin{align}\label{1 in mutation of gqp is involution}
        \widetilde{\widetilde{A}}=A \oplus A e_{k} A \oplus A^{\star} e_{k} A^{\star} .
    \end{align}
    Furthermore, the basis of arrows in $\widetilde{\widetilde{A}}$ consists of the original set of arrows $Q_{1}^\circ$ in $A$ together with the arrows $[b \varepsilon_k^l a] \in A e_{k} A$ and $\left[a^{\star} \varepsilon_k^l b^{\star}\right] \in A^{\star} e_{k} A^{\star}$ for $a \in Q_{1}^\circ \cap e_{k} A$ and $b \in Q_{1}^\circ \cap A e_{k}$, $0\leq l\leq d_k-1$.

Then we see that the potential $\widetilde{\widetilde{\s}}$ is given by
\begin{align}
    \widetilde{\widetilde{\s}}=[[\s]]+\left[\Delta_{k}(A)\right]+\Delta_{k}(\widetilde{A})=[\s]+\sum_{\substack{a, b \in Q^\circ_{1}: h(a)=t(b)=k\\0\leq l\leq d_k-1-l}}\left([b \varepsilon_k^la]\left[a^{\star}\varepsilon_k^{d_k-1-l} b^{\star}\right]+\left[a^{\star} \varepsilon_k^{d_k-1-l} b^{\star}\right] b \varepsilon_k^l a\right),
\end{align}
hence it is cyclically equivalent to
\begin{align}\label{2 in mutation of gqp is involution}
    \s_{1}=[\s]+\sum_{\substack{a, b \in Q_{1}^\circ: h(a)=t(b)=k\\0\leq l\leq d_k-1-l}}([b \varepsilon_k^la]+b \varepsilon_k^l a)\left[a^{\star} \varepsilon_k^{d_k-1-l} b^{\star}\right].
\end{align}

Let us abbreviate
$$
(H,C,\mathcal{T})=\left(H,A e_{k} A \oplus A^{\star} e_{k} A^{\star}, \sum_{\substack{a, b \in Q_{1}^\circ: h(a)=t(b)=k\\0\leq l\leq d_k-1-l}}[b \varepsilon_k^la]\left[a^{\star} \varepsilon_k^{d_k-1-l} b^{\star}\right]\right).
$$
This is a trivial $H$-based QP (cf. Proposition \ref{triv}); therefore to prove Theorem \ref{mutation of gqp is involution } it suffices to show that the $H$-based QP $\left(H,\widetilde{\widetilde{A}}, \s_{1}\right)$ given by (\ref{1 in mutation of gqp is involution}) and (\ref{2 in mutation of gqp is involution}) is right-equivalent to $(H,A,\s) \oplus$ $(H,C,\mathcal{T})$. We proceed in several steps.

\textbf{Step 1}: Let $\varphi_{1}$ be the change of arrows automorphism of $\overline{T_H(\widetilde{\widetilde{A}})}$ multiplying each arrow $b \in Q_{1}^\circ  \cap A e_{k}$ by $-1$ , and fixing the rest of the arrows in $\widetilde{\widetilde{A}}$. Then the potential $\s_{2}=\varphi_{1}\left(\s_{1}\right)$ is given by
$$
\s_{2}=[\s]+\sum_{\substack{a, b \in Q_{1}: h(a)=t(b)=k\\0\leq l\leq d_k-1-l}}([b \varepsilon_k^la]-b \varepsilon_k^l a)\left[a^{\star} \varepsilon_k^{d_k-1-l} b^{\star}\right].
$$

\textbf{Step 2}: Let $\varphi_{2}$ be the unitriangular automorphism of $\overline{T_H(\widetilde{\widetilde{A}})}$ sending each arrow $[b \varepsilon_k^l a] \in A e_{k} A$ to $[b \varepsilon_k^l a]+b \varepsilon_k^l a$, and fixing the rest of the arrows in $\widetilde{\widetilde{A}}$. Remembering the definition of $[\s]$, we can write $[\s]^{\circ (2)}$ as $\sum_{a,b,l}c_{a,b,l}[b\varepsilon_k^l a]$ where $c_{a,b,l}\in \bar{e}_kA^\circ \bar{e}_k$. It is easy to see that the potential $\varphi_{2}\left(\s_{2}\right)$ is cyclically equivalent to a potential of the form
$$
S_{3}=\s+\sum_{\substack{a, b \in Q_{1}: h(a)=t(b)=k\\0\leq l\leq d_k-1-l}}[b \varepsilon_k^la](\left[a^{\star} \varepsilon_k^{d_k-1-l} b^{\star}\right]+c_{a,b,l}+f(a, b,l))
$$
for some elements $f(a, b,l) \in \mathfrak{m}^\circ\left(A \oplus A e_{k} A\right)^2+H^\circ\mathfrak{m}^\circ\left(A \oplus A e_{k} A\right)+\mathfrak{m}^\circ(A\oplus Ae_kA)H^\circ$.

\textbf{Step 3}: Let $\varphi_{3}$ be the  automorphism of $\overline{T_H(\widetilde{\widetilde{A}})}$ sending each arrow $\left[a^{\star}\varepsilon_k^{d_k-1-l} b^{\star}\right] \in A^{\star} e_{k} A^{\star}$ to $\left[a^{\star} \varepsilon_k^{d_k-1-l} b^{\star}\right]-c_{a,b,l}-f(a, b,l)$, and fixing the rest of the arrows in $\widetilde{\widetilde{A}}$. Then we have $\varphi_{3}\left(\s_{3}\right)=\s+\mathcal{T}$.

Combining these three steps, we conclude that the $H$-based QP $\left(H,\widetilde{\widetilde{A}}, \s_{1}\right)$ is right-equivalent to $(H,\widetilde{\widetilde{A}}, \s+\mathcal{T})=(H,A,\s) \oplus(H,C,\mathcal{T})$, finishing the proof of Theorem \ref{mutation of gqp is involution }.
 \end{proof}

\begin{proof}[Proof of Lemma \ref{lemma 2 in jacion algebra invaraints}]\label{app: lemma 2 in jacion algebra invaraints}
We first show that the morphism constructed in \ref{lemma 2 in jacion algebra invaraints} is an epimorphism. It is enough to prove the following two facts:
    \begin{align}\label{1 in lem in jacobian algebra mutation invariants}
       \overline{T_H(\widetilde{A})}_{\hat{k}, \hat{k}}=\overline{T_H(\widetilde{A}_{\hat{k}, \hat{k}})}+J(\widetilde{S})_{\hat{k}, \hat{k}}
      \end{align} 
      \begin{align}\label{2 in lem in jacobian algebra mutation invariants}
      \left[J(S)_{\hat{k}, \hat{k}}\right] \subseteq \overline{T_H(\widetilde{A}_{\hat{k}, \hat{k}})} \cap J(\widetilde{S})_{\hat{k}, \hat{k}} 
    \end{align}
    
To show (\ref{1 in lem in jacobian algebra mutation invariants}), we note that if a path  $\varepsilon_{ha_1}^{l_1}\widetilde{a}_{1} \cdots \widetilde{a}_{d}\varepsilon_{ta_d}^{l_{d+1}}\in \overline{T_H(\widetilde{A})}_{\hat{k}, \hat{k}}$ does not belong to $\overline{T_H(\widetilde{A}_{\hat{k}, \hat{k}})}$ then it must contain one or more factors of the form $a^{\star}\varepsilon_k^l b^{\star}$ with $h(a)=t(b)=k$. In view of (\ref{mutation of s}), we have
\begin{align}\label{3 in lem in jacobian algebra mutation invariants}
    a^{\star}\varepsilon_k^l b^{\star}=\partial_{[b \varepsilon_k^{d_k-1-l}a]} \widetilde{\s}-\partial_{[b \varepsilon_k^{d_k-1-l} a]}[\s] .
\end{align}
Substituting this expression for each factor $a^{\star} \varepsilon_k^l b^{\star}$, we see that $\varepsilon_{ha_1}^{l_1}\widetilde{a}_{1} \cdots \widetilde{a}_{d}\varepsilon_{ta_d}^{l_{d+1}} \in \overline{T_H(\widetilde{A}_{\hat{k}, \hat{k}})}+J(\widetilde{S})_{\hat{k}, \hat{k}}$, as desired.

To show (\ref{2 in lem in jacobian algebra mutation invariants}), we note that $J(S)_{\hat{k}, \hat{k}}$ is easily seen to be the closure of the ideal in $\overline{T_H(A)}_{\hat{k}, \hat{k}}$ generated by the elements $\partial_{c} S$ for all arrows $c \in Q^\circ_{1}$ with $t(c) \neq k$ and $h(c) \neq k$, together with the elements $\left(\partial_{a} S\right) \varepsilon_k^la^{\prime}$ for $a, a^{\prime} \in Q^\circ_{1} \cap e_{k} A$, and $b^{\prime}\varepsilon_k^l \left(\partial_{b} S\right)$ for $b, b^{\prime} \in Q^\circ_{1} \cap A e_{k}$. Let us apply the map $u \mapsto[u]$ to these generators. First, we have: $\left[\partial_{c} \s\right]=\partial_{c}\s$. 
With a little bit more work (using (\ref{3 in lem in jacobian algebra mutation invariants})), we obtain 
\begin{align*}
    \left[\left(\partial_{a} \s\right)\varepsilon_k^l a^{\prime}\right]
    & =\sum_{\substack{t(b)=k\\0\leq s\leq d_k-1}}\left(\partial_{[b\varepsilon_k^s a]}[\s]\right)\left[b\varepsilon_k^{l+s} a^{\prime}\right] \\
    & =\sum_{\substack{t(b)=k\\0\leq s\leq d_k-1}}\left(\partial_{[b \varepsilon_k^s a]} \widetilde{\s}-a^{\star}\varepsilon_k^{d_k-1-s} b^{\star}\right)\left[b \varepsilon_k^{l+s} a^{\prime}\right] \\
    &=\sum_{\substack{t(b)=k\\0\leq s\leq d_k-1}}\partial_{[b \varepsilon_k^s a]} \widetilde{\s}\left[b \varepsilon_k^{l+s} a^{\prime}\right]-\sum_{\substack{t(b)=k\\ l\leq s\leq d_k-1}} a^{\star}\varepsilon_k^{d_k-1-s+l}b^{\star}\left[b \varepsilon_k^s a'\right]\\
    & =\sum_{\substack{t(b)=k\\ 0\leq s\leq d_k-1}}\partial_{[b \varepsilon_k^s a]} \widetilde{\s}\left[b \varepsilon_k^{l+s} a^{\prime}\right] -a^{\star}\varepsilon_k^{l}\sum_{\substack{t(b)=k\\0\leq s\leq d_k-1}}\varepsilon_k^{d_k-1-s}b^{\star}\left[b \varepsilon_k^s a'\right]\\
    & =\sum_{\substack{t(b)=k\\0\leq s\leq d_k-1}}\partial_{[b \varepsilon_k^s a]} \widetilde{\s}\left[b \varepsilon_k^{l+s} a^{\prime}\right] -a^{\star}\varepsilon_k^l\partial_{a^{\prime \star}}\widetilde{\s}, 
\end{align*}and 
\begin{align*}
    \left[b^{\prime}\varepsilon_k^l \left(\partial_{b} \s\right)\right]  
    & =\sum_{\substack{h(a)=k\\0\leq s\leq d_k-1}}\left[b^{\prime} \varepsilon_k^{l+s}a\right]\left(\partial_{[b\varepsilon_k^s a]}[\s]\right) \\
    & =\sum_{\substack{h(a)=k\\0\leq s\leq d_k-1}}\left[b^{\prime} \varepsilon_k^{l+s} a\right]\left(\partial_{[b\varepsilon_k^s a]} \widetilde{\s}-a^{\star} \varepsilon_k^{d_k-1-s}b^{\star}\right) \\
   & =\sum_{\substack{h(a)=k\\0\leq s\leq d_k-1}}\left[b^{\prime}\varepsilon_k^{l+s} a\right]\left(\partial_{[b\varepsilon_k^s a]} \widetilde{S}\right)-\left(\partial_{b^{\prime} \star} \widetilde{S}\right)\varepsilon_k^l b^{\star}.
\end{align*}
This implies the desired inclusion in (\ref{2 in lem in jacobian algebra mutation invariants}). We have now shown that the morphism (let us denote it by $\alpha$ ) in Lemma \ref{lem in jacobian algebra mutation invariants} is an epimorphism. We next prove it is actually an isomorphism. To do this, we construct the left inverse algebra homomorphism $\beta: \mathcal{P}(H,\widetilde{A}, \widetilde{\s})_{\hat{k}, \hat{k}} \rightarrow \mathcal{P}(H,A,\s)_{\hat{k}, \hat{k}}$ (so that $\beta \alpha$ is the identity map on $\mathcal{P}(H,A,\s)_{\hat{k}, \hat{k}}$). We define $\beta$ as the composition of three maps. First, we apply the epimorphism $\mathcal{P}(H,\widetilde{A}, \widetilde{\s})_{\hat{k}, \hat{k}} \rightarrow \mathcal{P}(H,\widetilde{\widetilde{A}}, \widetilde{\widetilde{\s}})_{\hat{k}, \hat{k}}$ defined in the same way as $\alpha$. Remembering the proof of Theorem \ref{mutation of gqp is involution } and using the notation introduced there, we then apply the isomorphism $\mathcal{P}(H,\widetilde{\widetilde{A}}, \widetilde{\widetilde{\s}})_{\hat{k}, \hat{k}} \rightarrow \mathcal{P}(H,A \oplus C, \s+\mathcal{T})_{\hat{k}, \hat{k}}$ induced by the automorphism $\varphi_{3} \varphi_{2} \varphi_{1}$ of $\overline{T_H(A \oplus C)}$. Finally, we apply the isomorphism $\mathcal{P}(H,A \oplus C, \s+\mathcal{T})_{\hat{k}, \hat{k}} \rightarrow \mathcal{P}(H,A, \s)_{\hat{k}, \hat{k}}$ given in Proposition \ref{trival part of jacobian}.
Since all the maps involved are algebra homomorphisms, it is enough to check that $\beta \alpha$ fixes the generators $p(c)$ and $p(b \varepsilon_k^l a)$ of $\mathcal{P}(H,A,\s)_{\hat{k}, \hat{k}}$, where $p$ is the projection $\overline{T_H(A)} \rightarrow \mathcal{P}(H,A,\s)$, and $a, b, c$ have the same meaning as above. This is done by direct tracing of the definitions.
\end{proof}
\begin{proof}[Proof of Proposition \ref{finite dim is invariants}]\label{app:finite dim is invariants}
    We start by showing that finite dimensionality of $\mathcal{P}(H,A,\s)$ follows from a seemingly weaker condition.
    \begin{lem}\label{lem in finite dim is invariants}
        Let $J \subseteq \mathfrak{m}^\circ(A)$ be a closed ideal in $\overline{T_H(A)}$. Then the quotient algebra $\overline{T_H(A)}/ J$ is finite dimensional provided the subalgebra $\overline{T_H(A)}_{\hat{k}, \hat{k}} / J_{\hat{k}, \hat{k}}$ is finite dimensional. In particular, the Jacobian algebra $\mathcal{P}(H,A,\s)$ is finite-dimensional if and only if so is the subalgebra $\mathcal{P}(H,A, \s)_{\hat{k}, \hat{k}}$.
    \end{lem}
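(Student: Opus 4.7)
The plan is to reduce the finite dimensionality of $R/J$ (where $R=\overline{T_H(A)}$) to that of the single block $B:=R_{\hat{k},\hat{k}}/J_{\hat{k},\hat{k}}$ via the corner decomposition afforded by the idempotent pair $\{e_k,\bar{e}_k\}$, with $\bar{e}_k=1-e_k$. The ``only if'' direction is immediate, because $B=\bar{e}_k(R/J)\bar{e}_k$ is naturally a direct summand of $R/J$ as a $K$-vector space. For the ``if'' direction, the key intermediate claim to establish is the block decomposition
\begin{align*}
\bar{e}_k R e_k &= R_{\hat{k},\hat{k}}\cdot(\bar{e}_k A e_k)\cdot H_k,\\
e_k R \bar{e}_k &= H_k\cdot(e_k A\bar{e}_k)\cdot R_{\hat{k},\hat{k}},\\
e_k R e_k &= H_k + H_k\cdot(e_k A\bar{e}_k)\cdot R_{\hat{k},\hat{k}}\cdot(\bar{e}_k A e_k)\cdot H_k,
\end{align*}
all as subspaces of $R$, with the obvious identity $\bar{e}_k R\bar{e}_k=R_{\hat{k},\hat{k}}$.

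To establish these, I would first work at the level of $T_H(A)$ and use an ``excursion'' decomposition of path basis elements. For a nonzero path $p$ with source $k$ and target in $\hat{k}$, locate the first exit from $k$ in traversal order: this gives a unique factorization $p=u_p\cdot b_p\cdot\varepsilon_k^{l_p}$ with $u_p\in T_H(A)_{\hat{k},\hat{k}}$, $b_p\in Q_1^\circ\cap\bar{e}_k A e_k$, and $0\leq l_p<d_k$. A general element $r=\sum_p c_p p\in\bar{e}_k R e_k$ is then regrouped as $r=\sum_{b,l}U_{b,l}\cdot b\cdot\varepsilon_k^l$, where $U_{b,l}=\sum_{p\colon b_p=b,\,l_p=l}c_p u_p$. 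Since the length of $u_p$ is strictly less than that of $p$, for any $N$ only finitely many terms contribute modulo $(\mathfrak{m}^\circ)^N$, so each $U_{b,l}$ is $\mathfrak{m}^\circ$-adically Cauchy and defines an element of $R_{\hat{k},\hat{k}}$ (using Lemma \ref{two topologies concides} to ensure that $R_{\hat{k},\hat{k}}$ is closed). An analogous two-sided decomposition, isolating both the first exit and the last return at $k$, handles the non-trivial cyclic paths in $e_k R e_k$.

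Given the block decomposition, finite dimensionality of $R/J$ follows directly. For the block $\bar{e}_k(R/J)e_k$, the map $R_{\hat{k},\hat{k}}\otimes_K(\bar{e}_k A e_k)\otimes_K H_k\to R/J$, $q\otimes b\otimes\varepsilon_k^l\mapsto qb\varepsilon_k^l+J$, is surjective onto this block; since $J$ is a two-sided ideal containing $J_{\hat{k},\hat{k}}$, so that $J_{\hat{k},\hat{k}}\cdot R\subseteq J$, it factors through $B\otimes_K(\bar{e}_k A e_k)\otimes_K H_k$, which is finite dimensional by hypothesis. The same argument treats the two other mixed blocks and (with the two-sided version) the corner $e_k Re_k$, noting that $H_k$ itself has dimension $d_k$. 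Summing the four finite contributions gives that $R/J$ is finite dimensional. The Jacobian algebra statement follows by specialization to $J=J(\s)$.

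The main technical obstacle is the convergence of the regrouping $r\mapsto(U_{b,l})$ in the completion: one must verify that $R_{\hat{k},\hat{k}}\cdot(\bar{e}_k A e_k)\cdot H_k$ genuinely coincides with $\bar{e}_k R e_k$ rather than being only a dense subspace. The length bound on $u_p$ is precisely what provides this control, and the coincidence of the two topologies on $R$ then allows the decomposition to pass from $T_H(A)$ to $R$. It is worth noting that no property of $J$ beyond closedness and containment in $\mathfrak{m}^\circ$ enters the argument, which is why the lemma holds in the stated generality.
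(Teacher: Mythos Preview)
Your proposal is correct and follows essentially the same approach as the paper's proof: decompose $R/J$ into the four idempotent corners $e_kRe_k$, $e_kR\bar e_k$, $\bar e_kRe_k$, $\bar e_kR\bar e_k$, exhibit each mixed block as (a quotient of) finitely many copies of $R_{\hat k,\hat k}$ sandwiched between the finite-dimensional spaces $H_k$ and $e_kA\bar e_k$, $\bar e_kAe_k$, and then use that $J_{\hat k,\hat k}\cdot R\subseteq J$ to pass to $B=R_{\hat k,\hat k}/J_{\hat k,\hat k}$. The paper carries this out explicitly for the corner $R_{k,k}/J_{k,k}$ via the surjection from $K^{d_k}\times\mathrm{Mat}_{d_ks\times d_kt}(R_{\hat k,\hat k})$ and remarks that the other two mixed corners are similar; your write-up is slightly more explicit about the convergence of the path regrouping in the completion, but the underlying argument is the same.
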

    \begin{proof}
    For an $H$-bimodule $B$, we denote
$$
B_{k, \hat{k}}=e_{k} B \bar{e}_{k}=\bigoplus_{j \neq k} B_{k, j}, \quad B_{\hat{k}, k}=\bar{e}_{k} B e_{k}=\bigoplus_{i \neq k} B_{i, k}.
$$
We need to show that if $\overline{T_H(A)}_{\hat{k}, \hat{k}} / J_{\hat{k}, \hat{k}}$ is finite dimensional then so is each of the spaces $\overline{T_H(A)}_{k, \hat{k}} / J_{k, \hat{k}}, \overline{T_H(A)}_{\hat{k}, k} / J_{\hat{k}, k}$ and $\overline{T_H(A)}_{k, k} / J_{k, k}$. Let us treat $\overline{T_H(A)}_{k, k} / J_{k, k}$; the other two cases are done similarly (and a little simpler).

Let
$$
Q^\circ_{1} \cap A_{k, \hat{k}}=\left\{a_{1}, \ldots, a_{s}\right\}, \quad Q^\circ_{1} \cap A_{\hat{k}, k}=\left\{b_{1}, \ldots, b_{t}\right\}
$$
We have
$$
\overline{T_H(A)}_{k, k}= H_k\bigoplus\left( \bigoplus_{l, m,f,h} \varepsilon_k^f a_{l} \overline{T_H(A)}_{\hat{k}, \hat{k}} b_{m}\varepsilon_k^h\right) .
$$
It follows that there is a surjective map $\alpha: K^{d_k} \times \operatorname{Mat}_{d_ks \times d_kt}\left(\overline{T_H(A)}_{\hat{k}, \hat{k}}\right) \rightarrow \overline{T_H(A)}_{k, k} / J_{k, k}$ given by
$$
\setlength{\arraycolsep}{1.5pt}
\alpha(\sum c_l, C)=p(\sum c_l \varepsilon_{k}^l+\left(\begin{array}{lllllllll}
a_{1} & a_{2} & \cdots & a_{s} &\cdots&  \varepsilon_k^{d_k-1}a_{1} & \varepsilon_k^{d_k-1}a_{2} & \cdots & \varepsilon_k^{d_k-1}a_{s} 
\end{array}\right) C\left(\begin{array}{c}
b_{1} \\
b_{2} \\
\vdots \\
b_{t}\\
\vdots\\
b_{1} \varepsilon_k^{d_k-1}\\
b_{2} \varepsilon_k^{d_k-1}\\
\vdots \\
b_{t}\varepsilon_k^{d_k-1}
\end{array}\right))
$$
where $K^{d_k}$ is $d_k$ copies of $K$, $\operatorname{Mat}_{d_ks \times d_kt}(B)$ stands for the space of $d_ks \times d_kt$ matrices with entries in $B$, and $p$ is the projection $\overline{T_H(A)} \rightarrow \overline{T_H(A)} / J$. The kernel of $\alpha$ contains the space $\operatorname{Mat}_{d_ks \times d_kt}\left(J_{\hat{k}, \hat{k}}\right)$, hence $\overline{T_H(A)}_{k, k} / J_{k, k}$ is isomorphic to a quotient of the finite-dimensional space $K^{d_k} \times\operatorname{Mat}_{s \times t}\left(\overline{T_H(A)} _{\hat{k}, \hat{k}} / J_{\hat{k}, \hat{k}}\right)$. Thus, $\overline{T_H(A)}_{k, k} / J_{k, k}$ is finite dimensional, as desired.
    \end{proof}
To finish the proof of Proposition \ref{finite dim is invariants}, suppose that $\mathcal{P}(H,A, \s)$ is finite dimensional. Then $\mathcal{P}(H,\widetilde{A}, \widetilde{\s})_{\hat{k}, \hat{k}}$ is finite dimensional by Proposition \ref{jacobian algebra mutation invariants}. Applying Lemma \ref{lem in finite dim is invariants} to the $H$-based QP $(H,\widetilde{A}, \widetilde{\s})$, we conclude that $\mathcal{P}(H,\widetilde{A}, \widetilde{\s})$ is finite dimensional, as desired.
\end{proof}
\begin{proof}[Proof of Proposition \ref{prop of eixstence of nondenerate GQP}]\label{app:prop of eixstence of nondenerate GQP}
We proceed by induction on $l$. First let us deal with the case $l=1$, that is, with a single mutation $\mu_{k}$. Recall that $\mu_{k}(H,A,\s)=(H,\overline{A}, \overline{\s})$ is the reduced part of the $H$-based QP $\widetilde{\mu}_{k}(H,A,\s)=(H,\widetilde{A}, \widetilde{\s})$ given by (\ref{mutation of A}) and (\ref{mutation of s}). It is clear from the definition that $\widetilde{\s}=\widetilde{G}(\s)$ for a polynomial map $\widetilde{G}: K^{\mathcal{C}(A)} \rightarrow K^{\mathcal{C}(\widetilde{A})}$. Now let us apply Proposition \ref{condition of 2-cyclic} to the quiver with the arrow span $\widetilde{A}$. We see that there exists a polynomial function of the form $D_{c_{1}, \ldots, c_{N}}^{d_{1}, \ldots, d_{N}}$ on $K^{\mathcal{C}(\widetilde{A})}$ (see (\ref{det}), where we have changed the notation for the arrows to avoid the notation conflict with Section \ref{section:mutation of GQP}) such that the reduced part $(H,\overline{A}, \overline{\s})$ of an $H$-based QP $(H,\widetilde{A}, \widetilde{\s})$ is 2-acyclic whenever $\widetilde{\s} \in U\left(D_{c_{1}, \ldots, c_{N}}^{d_{1}, \ldots, d_{N}}\right)$. Furthermore, for $\widetilde{S} \in U\left(D_{c_{1}, \ldots, c_{N}}^{d_{1}, \ldots, d_{N}}\right)$, the $H$-based QP $(H,\overline{A}, \overline{\s})$ is right-equivalent to $\left(H,A^{\prime}, E(\widetilde{\s})\right)$ for some regular map $E: U\left(D_{c_{1}, \ldots, c_{N}}^{d_{1}, \ldots, d_{N}}\right) \rightarrow K^{\mathcal{C}\left(A^{\prime}\right)}$, where $A^{\prime}=\mu_{k}(A)$. We now define a polynomial function $F: K^{\mathcal{C}(A)} \rightarrow K$ and a regular map $G: U(F) \rightarrow K^{\mathcal{C}\left(A^{\prime}\right)}$ by setting
\begin{align}\label{1 in prop of eixstence of nondenerate GQP}
    F=D_{c_{1}, \ldots, c_{N}}^{d_{1}, \ldots, d_{N}} \circ \widetilde{G}, \quad G=E \circ \widetilde{G}
\end{align}
To finish the argument for $l=1$, it remains to show that $F$ is not identically equal to zero. But this is clear from the definitions (\ref{det}) and (\ref{mutation of s}), since the oriented 2-cycles in $\widetilde{A}^\circ$ (up to cyclical equivalence) are of the form $c[b \varepsilon_k^s a]$ and so are in a bijection with the oriented 3-cycles $c b \varepsilon_k^s a$ in $A$ that pass through $k$.

Now assume that $l \geq 2$, and that our assertion holds if we replace $l$ by $l-1$. Let $A_{1}=\mu_{k_{1}}(A)$, so $A^{\prime}=\mu_{k_{l}} \cdots \mu_{k_{2}}\left(A_{1}\right)$. By the inductive assumption, there exist a nonzero polynomial function $F^{\prime}: K^{\mathcal{C}\left(A_{1}\right)} \rightarrow K$ and a regular map $G^{\prime}: U\left(F^{\prime}\right) \rightarrow K^{\mathcal{C}\left(A^{\prime}\right)}$ such that, for any $H$-based QP $\left(H,A_{1},\s_{1}\right)$ with $\s_{1} \in U\left(F^{\prime}\right)$, the $H$-based QP $\mu_{k_{l}} \cdots \mu_{k_{2}}\left(H,A_{1}, \s_{1}\right)$ is right-equivalent to $\left(H,A^{\prime}, G^{\prime}\left(\s_{1}\right)\right)$. Also by the already established case $l=1$, there exists a non-zero polynomial function $F^{\prime \prime}: K^{\mathcal{C}\left(A_{1}\right)} \rightarrow K$ such that, for any $H$-based QP $\left(H,A_{1}, \s_{1}\right)$ with $\s_{1} \in U\left(F^{\prime \prime}\right)$, the $H$-based QP $\mu_{k_{1}}\left(H,A_{1},\s_{1}\right)$ is 2-acyclic, hence is right-equivalent to some $H$-based QP on $A$. Since the based field $K$ is assumed to be infinite, we have $U\left(F^{\prime}\right) \cap U\left(F^{\prime \prime}\right) \neq \emptyset$. Choose $\s_{1}^{(0)} \in U\left(F^{\prime}\right) \cap U\left(F^{\prime \prime}\right)$, and let $\left(H,A, \s_{0}\right)=\mu_{k}\left(H,A_{1},\s_{1}^{(0)}\right)$. By Theorem \ref{mutation of gqp is involution }, we have $\mu_{k}\left(H,A,\s_{0}\right)=\left(H,A_{1}, \s_{1}^{(0)}\right)$. By the above argument for $l=1$, there exist a nonzero polynomial function $F_{1}: K^{\mathcal{C}(A)} \rightarrow K$ and a regular map $G_{1}: U\left(F_{1}\right) \rightarrow K^{\mathcal{C}\left(A_{1}\right)}$ (of the type (\ref{1 in prop of eixstence of nondenerate GQP})) such that $\mu_{k}(H,A,\s)=\left(H,A_{1}, G_{1}(\s)\right)$ for $\s\in U\left(F_{1}\right)$. In particular, we have $G_{1}\left(\s_{0}\right)=\s_{1}^{(0)}$ implying that $F^{\prime} \circ G_{1}$ is a nonzero polynomial function on $K^{\mathcal{C}(A)}$. It follows that the nonzero polynomial function $F(\s)=F_{1}(\s) F^{\prime}\left(G_{1}(\s)\right)$ and the regular map $G=G^{\prime} \circ G_{1}: U(F) \rightarrow K^{\mathcal{C}\left(A^{\prime}\right)}$ are well-defined and satisfy all the required conditions. This completes the proof of Proposition \ref{prop of eixstence of nondenerate GQP}.
\end{proof}

\begin{proof}[Proof of Proposition \ref{mutation of right-equivalence class of the representation}]\label{app: mutation of right-equivalence class of the representation}
Let $\varphi$ be an automorphism of $\overline{T_H(A)}$, and let $\mathcal{M}^{\prime}=\left(H,A, \varphi(\s), M^{\prime}, V^{\prime}\right)$ be the $H$-based QP-representation defined as follows: $V^{\prime}=V$ and $M^{\prime}=M$ as $H$-modules, while the $\overline{T_H(A)}$-actions in $M$ and $M^{\prime}$ are related by
     \begin{align}\label{eq in mutation of right-equivalence class of the representation}
        M(u)=M'(\varphi(u)) \quad(u \in \overline{T_H(A)}) 
     \end{align}
(note that we indeed define a representation of $(H,A,\varphi(\s))$ in view of Proposition \ref{iso of jacobian algebra}). To prove Proposition \ref{mutation of right-equivalence class of the representation}, it suffices to show that the representations $\widetilde{\mu}_{k}(\mathcal{M})$ and $\widetilde{\mu}_{k}\left(\mathcal{M}^{\prime}\right)$ are right-equivalent.

We retain all the above notation related to $\mathcal{M}$ and $\widetilde{\mu}_{k}(\mathcal{M})$; in particular, $\alpha_k, \beta_k$ and $\gamma_k$ stand for the $H_k$-linear maps in the triangle (\ref{triangle}). Let $\alpha^{\prime}_k, \beta^{\prime}_k$ and $\gamma^{\prime}_k$ denote the corresponding maps for the representation $\mathcal{M}^{\prime}$. Our first order of business is to relate these maps to $\alpha_k, \beta_k$ and $\gamma_k$.

Recall (\ref{action of varphi on a}) and (\ref{action of varphi on b}), after a change of notation we can write the action of $\varphi$ on the arrows $a_{1}, \ldots, a_{s}$ as follows:
\begin{equation}\label{varphi on a}
\setlength{\arraycolsep}{1.5pt}
\begin{aligned}
    \left(\begin{array}{llll}
\varphi\left(a_{1}\right), & \varphi\left(a_{2}\right), & \cdots, & \varphi\left(a_{s}\right)
\end{array}\right)&=\sum_{0\leq l\leq d_k-1}\varepsilon_k^l\left(\begin{array}{llll}
a_{1}, & a_{2}, & \cdots, & a_{s}
\end{array}\right)C_{l}
\end{aligned}
\end{equation}
where $C_0$ is an invertible $s\times s$ matrix. Similarly, the action of $\varphi$ on the arrows $b_{1}, \ldots, b_{s}$ can be written as
\begin{equation}\label{varphi on b}
\begin{aligned}
    \left(\begin{array}{c}
\varphi\left(b_{1}\right) \\
\varphi\left(b_{2}\right) \\
\vdots \\
\varphi\left(b_{t}\right)
\end{array}\right)=\sum_{0\leq l\leq d_k-1}F_l\left(\begin{array}{c}
b_{1} \\
b_{2} \\
\vdots \\
b_{t}
\end{array}\right)\varepsilon_k^{l},
\end{aligned}
\end{equation}
where $F_0$ is an invertible $s\times s$ matrix. Therefore, recalling (\ref{defn of alpha}) and using the language of linear algebra, we express  $\alpha_k$ as a matrix 
\begin{equation}\label{relate of alpah and alpha'}
\setlength{\arraycolsep}{1.5pt}
    \begin{aligned}
    \alpha_k&=M\left(\begin{array}{lllllllll}
(a_{1}, & a_{2}, & \cdots, & a_{s}), &\cdots, & \varepsilon_k^{d_k-1}(a_{1}, & a_{2}, & \cdots, & a_{s})
\end{array}\right)\\
&=M'\left(\begin{array}{lllllllll}
(\varphi\left(a_{1}\right), & \varphi\left(a_{2}\right), & \cdots, & \varphi\left(a_{s}\right)),&\cdots,&\varepsilon_k^{d_k-1}(\varphi(a_{1}), &\varphi(a_{2}),& \cdots, & \varphi(a_{s}))
\end{array}\right)\\
&=\alpha'_k M'(C)
\end{aligned}
\end{equation}
where $$C=\left(\begin{array}{llll}
    C_0 &0&\cdots&0 \\
    C_1 & C_0&\cdots& 0\\
    \vdots&\vdots &\vdots&\vdots\\
    C_{d_k-1}&C_{d_k-2}&\cdots&C_0
\end{array}\right).$$
Similarly,
\begin{align}\label{relate of beta and beta'}
   \beta_k=M\left(\begin{array}{ccc}
        \varepsilon_k^{d_k-1}\left(\begin{array}{cc}
            b_1 \\
        b_2\\
        \vdots\\
        b_t
        \end{array}\right)\\
        \vdots\\
        \left(\begin{array}{cc}
            b_1 \\
        b_2\\
        \vdots\\
        b_t
        \end{array}\right)
   \end{array}\right)=M'(F)\beta'_k, 
\end{align}
where 
$$F=\left(\begin{array}{llll}
    F_0 &0&\cdots&0 \\
    F_1 & F_0&\cdots& 0\\
    \vdots&\vdots &\ddots&\vdots\\
    F_{d_k-1}&F_{d_k-2}&\cdots&F_0
\end{array}\right).$$
Here $M'(C)$ and $M'(F)$ are understood as an $H_k$-automorphism of $M'_{k,\In}=M_{k,\In}$ and $M'_{k,\out}=M_{k,\out}$, respectively. 

Turning to the maps $\gamma_k$ and $\gamma^{\prime}_k$, we claim that they are related by
\begin{align}\label{relate of gamma and gamma'}
    \gamma^{\prime}_k=M'(C) \gamma_k M'(F).
\end{align}
 Recalling (\ref{defn of gamma}) and using the language of linear algebra, we can express $\gamma_k$ as a matrix  
 \begin{align*}
     \gamma_k=\left(\begin{array}{llll}
         \gamma^0 & 0 & \cdots & 0  \\
         \gamma^1 & \gamma^0 &\cdots & 0\\
         \vdots&\vdots&\ddots&\vdots\\
         \gamma^{d_k-1} &\gamma^{d_k-2} &\cdots &\gamma^0
     \end{array}\right)
 \end{align*}
where $\gamma^l$ is an $s\times t$ matrix whose component $\gamma^l_{p,q}:M_{hb_q}\to M_{ta_p}$ is given by
\begin{align}
\gamma^l_{p,q}=M(\partial_{[b_q\varepsilon_k^la_p]}\s).
\end{align}
We use Lemma \ref{chain rule} to write
\begin{align}\label{defn of gamma(p,q,l)}
   (\gamma_{p, q}^{l})'
   &=M'\left(\partial_{\left[b_{q} \varepsilon_k^l a_{p}\right]} \varphi(S)\right) \\
   &=M'\left(\sum_{c} \Delta_{\left[b_{q} \varepsilon_k^l a_{p}\right]}(\varphi(c)) \square \varphi\left(\partial_{c} S\right)\right) 
\end{align}
where the sum is over all arrows $c$ in $(\widetilde{A})^\circ_{\hat{k}, \hat{k}}$. If $c$ is one of the arrows in $A^\circ$, then by (\ref{eq in mutation of right-equivalence class of the representation}) we have
\begin{align}
    M'(\varphi\left(\partial_{c} S\right))=M\left(\partial_{c} S\right)=0;
\end{align}
remembering the definition (\ref{defn of delta}), we see that $c$ does not contribute to (\ref{defn of gamma(p,q,l)}). Thus, we have
\begin{align}\label{gamma(p,q,l)prime}
    (\gamma_{p, q}^{l})'=M'\left(\sum_{p^{\prime}, q^{\prime},f} \Delta_{\left[b_{q} \varepsilon_k^la_{p}\right]}\left(\varphi\left(b_{q^{\prime}}\varepsilon_k^f a_{p^{\prime}}\right)\right) \square \varphi\left(\partial_{\left[b_{q^{\prime}}\varepsilon_k^f a_{p^{\prime}}\right]} S\right)\right).
\end{align}
Remembering (\ref{defn of  square}), and using (\ref{varphi on a}) and (\ref{varphi on b}), we see that the summand with $\left(p^{\prime}, q^{\prime}\right)=(p, q)$ in (\ref{gamma(p,q,l)prime}) contains among its terms the $(p, q)$-entry of the matrix
$$
M'\left(\sum_{f+g+h=l}C_f \varphi\left(\partial_{\left[b_{q} \varepsilon_k^g a_{p}\right]} \s\right) F_h\right)=\sum_{f+g+h=l} M'(C_f) \gamma^g_{p,q} M'(F_h).
$$
Thus, to prove (\ref{relate of gamma and gamma'}), it remains to show that the rest of the terms in (\ref{gamma(p,q,l)prime}) add up to 0. Again using the definitions (\ref{defn of delta}) and (\ref{defn of  square}), we can rewrite the rest of the sum in (\ref{gamma(p,q,l)prime}) as $S_{1}+S_{2}$, where
$$
\begin{aligned}
& S_{1}=M'\left(\sum_{p^{\prime}, q^{\prime},f} \Delta_{\left[b_{q} \varepsilon_k^la_{p}\right]}\left(\varphi\left(b_{q^{\prime}}\right)\right) \square \varphi\left(\varepsilon_k^fa_{p^{\prime}} \cdot \partial_{\left[b_{q^{\prime}} \varepsilon_k^fa_{p^{\prime}}\right]} S\right)\right),\\
& S_{2}=M'\left(\sum_{p^{\prime}, q^{\prime},f} \Delta_{\left[b_{q} \varepsilon_k^la_{p}\right]}\left(\varphi\left(a_{p^{\prime}}\right)\right) \square \varphi\left(\partial_{\left[b_{q^{\prime}} \varepsilon_k^fa_{p^{\prime}}\right]} S \cdot b_{q^{\prime}}\varepsilon_k^f\right)\right).
\end{aligned}
$$
It remains to observe that
$$
S_{1}=M'\left( \sum_{q^{\prime}} \Delta_{\left[b_{q} \varepsilon_k^la_{p}\right]}\left(\varphi\left(b_{q^{\prime}}\right)\right) \square \varphi\left(  \partial_{b_{q^{\prime}}} S\right)\right)=0
$$
since $M'(\varphi\left(\partial_{b_{q^{\prime}}} S\right))=M\left(\partial_{b_{q^{\prime}}} S\right)=0$; and similarly,
$$
S_{2}=M'\left(\sum_{p^{\prime}} \Delta_{\left[b_{q} \varepsilon_k^la_{p}\right]}\left(\varphi\left(a_{p^{\prime}}\right)\right) \square \varphi\left(\partial_{a_{p^{\prime}}} S\right)\right)=0.
$$
In view of (\ref{relate of alpah and alpha'}), (\ref{relate of beta and beta'}) and (\ref{relate of gamma and gamma'}), we have 
\begin{equation}\label{relate of ker alpha}
    \begin{aligned}
    \setlength{\arraycolsep}{5pt}
    \begin{array}{ll}
        \ker \alpha_k= M'(C^{-1})(\ker \alpha_k^\prime),& \Img \alpha_k=\Img \alpha_k^\prime,\\
        \ker \beta_k=\ker \beta_k^\prime,& \Img \beta_k=M'(F)(\Img\beta_k^\prime)\\
        \ker \gamma_k=M'(F)(\ker \gamma_k^\prime),& \Img \gamma_k=M'(C^{-1})(\Img \gamma_k^\prime).
    \end{array}
    \end{aligned}
\end{equation}
Recall that the spaces $\overline{M}$ and $\overline{V}$ in the decorated representation $\widetilde{\mu}_{k}(\mathcal{M})=(\overline{M}, \overline{V})$ of $(H,\widetilde{A}, \widetilde{\s})$ are given by (\ref{mutation of Mk 1}) and (\ref{mutation of Mk}). We express the decorated representation $\widetilde{\mu}_{k}\left(\mathcal{M}^{\prime}\right)=\left(\overline{M^{\prime}}, \overline{V^{\prime}}\right)$ of $(H,\widetilde{A}, \widetilde{\varphi(\s)})$ in the same way, with the maps $\alpha_k, \beta_k$ and $\gamma_k$ replaced by $\alpha^{\prime}_k, \beta^{\prime}_k$ and $\gamma^{\prime}_k$. In particular, we have $\overline{V^{\prime}}=\overline{V}$, and $\overline{M^{\prime}}{ }_{i}=\overline{M}_{i}=M_{i}$ for $i \neq k$. To specify the actions of $\overline{T_H(A)}$ in $\overline{M}$ and $\overline{M^{\prime}}$, we need to choose the splitting data $(\rho, \sigma)$ and $\left(\rho^{\prime}, \sigma^{\prime}\right)$ as in (\ref{splitting data rho}) and (\ref{soplitting data sigma}). Note that, in view of (\ref{relate of ker alpha}), we can choose
\begin{align}\label{splitting data of alpha'}
    \rho^{\prime}=M'(F^{-1}) \rho M'(F), \quad \sigma^{\prime}=M'(C) \sigma M(C^{-1});
\end{align}
here with some abuse of notation we use the same notation $M'(C^{-1})$ for the isomorphism  $\ker\alpha_k^{\prime} \rightarrow\ker\alpha_k$ and the induced isomorphism $\ker \alpha_k^{\prime} / \Img \gamma_k^{\prime} \rightarrow \ker \alpha_k / \Img_k \gamma_k$.

Everything is now in place for defining the desired right-equivalence $(\widehat{\varphi}, \psi, \eta)$ between $\widetilde{\mu}_{k}(\mathcal{M})$ and $\widetilde{\mu}_{k}\left(\mathcal{M}^{\prime}\right)$. First of all, we define $\widehat{\varphi}: \overline{T_H(\widetilde{A})} \rightarrow\overline{T_H(\widetilde{A})}$ as the right-equivalence between $(H,\widetilde{A}, \widetilde{\s})$ and $(H,\widetilde{A}, \widetilde{\varphi(\s)})$ constructed in the proof of Lemma \ref{lem of mutation uniquele right-equivalence class of the GQP}. In particular, we have
\begin{align*}
    \left(\begin{array}{c}
        \varepsilon_k^{d_k-1}\left(\begin{array}{c}
\widehat{\varphi}\left(a_{1}^{\star}\right) \\
\widehat{\varphi}\left(a_{2}^{\star}\right) \\
\vdots \\
\widehat{\varphi}\left(a_{s}^{\star}\right)
\end{array}\right)\\
\vdots\\
\left(\begin{array}{c}
\widehat{\varphi}\left(a_{1}^{\star}\right) \\
\widehat{\varphi}\left(a_{2}^{\star}\right) \\
\vdots \\
\widehat{\varphi}\left(a_{s}^{\star}\right)
\end{array}\right)
    \end{array}\right)=C^{-1}\left(\begin{array}{c}
        \varepsilon_k^{d_k-1}\left(\begin{array}{c}
            a_1^\star \\
        a_2^\star\\
        \vdots\\
        a_s^\star
        \end{array}\right)\\
        \vdots\\
        \left(\begin{array}{c}
           a_1^\star \\
        a_2^\star\\
        \vdots\\
        a_s^\star
        \end{array}\right)
   \end{array}\right)
\end{align*}
\begin{align*}
    &\left(\begin{array}{lllllllll}
(\widehat{\varphi}(b_{1}^\star), & \widehat{\varphi}(b_{2}^\star), & \cdots, & \widehat{\varphi}(b_{t}^\star)), &\cdots, & \varepsilon_k^{d_k-1}(\widehat{\varphi}(b_{1}^\star), & \widehat{\varphi}(b_{2}^\star), & \cdots, & \widehat{\varphi}(b_{t}^\star))
\end{array}\right)\\
&=\left(\begin{array}{lllllllll}
(b_1^\star,&b_2^\star,&\cdots,&b_t^\star),&\cdots,&\varepsilon_k^{d_k-1}(b_1^\star,&b_2^\star,&\cdots,&b_t^\star)
\end{array}\right)F^{-1}
\end{align*}
Next we define $\psi: \overline{M} \rightarrow \overline{M^{\prime}}$ as the identity map on $\oplus_{i \neq k} \overline{M}_{i}=\oplus_{i \neq k} M_{i}=\oplus_{i \neq k} \overline{M^{\prime}}{ }_{i}$, and the restriction $\left.\psi\right|_{\overline{M}_{k}}: \overline{M}_{k} \rightarrow{\overline{M^{\prime}}}_{k}$ given by the block-diagonal matrix
\begin{align}\label{defn of psi}
\psi|_{\overline{M}_{k}}=\left(\begin{array}{cccc}
M'(F^{-1}) & 0 & 0 & 0 \\
0 & M'(C) & 0 & 0 \\
0 & 0 & M'(C) & 0 \\
0 & 0 & 0 & I
\end{array}\right)  
\end{align}
(this is well-defined in view of (\ref{relate of ker alpha})). Finally, we define $\eta: \overline{V} \rightarrow \overline{V^{\prime}}$ simply as the identity map. 

The only thing to check is the equality $\psi \circ \overline{M}(c)=\overline{M'}(\widehat{\varphi}(c)) \circ \psi$ for all $c \in A$. And the only case that may require some consideration is when $c$ is one of the arrows $a_{p}^{\star}$ or $b_{q}^{\star}$. Unraveling the definitions, it suffices to show that
$$
\overline{\beta}_k=M'(C^{-1}) \overline{\beta^{\prime}}_k \circ \psi|_{\overline{M}_{k}},\quad \psi|_{\overline{M}_{k}} \circ \overline{\alpha}_k=\overline{\alpha^{\prime}}_k M'(F^{-1})
$$
But this is an immediate consequence of the definitions (\ref{defn of psi}) and (\ref{defn of mutation of alpha and beta}) (we also need an analogue of (\ref{defn of mutation of alpha and beta}) for the maps $\overline{\beta^{\prime}}$ and $\overline{\alpha^{\prime}}$, using the splitting data (\ref{splitting data of alpha'})). This completes the proof of Proposition \ref{mutation of right-equivalence class of the representation}.
\end{proof}

\bibliography{Ref}
\bibliographystyle{amsplain}
\end{document}